\newtheorem{theorem}{Theorem}[section]
\newtheorem{proposition}[theorem]{Proposition}
\theoremstyle{definition}
\newtheorem{lemma}[theorem]{Lemma}
\newtheorem{definition}[theorem]{Definition}
\newtheorem{corollary}[theorem]{Corollary}
\newtheorem{problem}[theorem]{Problem}
\newtheorem*{claim*}{Claim}
\let\norm\undefined 
\DeclarePairedDelimiter\norm{\lVert}{\rVert}
\begin{document}
\title[An invitation to model theory and C*-algebras]{An invitation to model
theory and C*-algebras}
\author[Martino Lupini]{Martino Lupini}
\address{Martino Lupini, Mathematics Department, California Institute of
Technology, 1200 East California Boulevard, Mail Code 253-37, Pasadena, CA
91125}
\email{lupini@caltech.edu}
\urladdr{http:n//www.lupini.org/}
\thanks{This survey originates from the lecture notes for the masterclass on ``Applications of Model Theory to Operator Algebras'' given by the author at the University of Houston from July 31st to August 4th, 2017, and
supported by the NSF grant DMS-1700316. We are grateful to the organizers,
Mehrdad Kalantar and Mark Tomforde, as well as the Department of Mathematics
at the University of Houston, for their hospitality in such an occasion. The
author has also been partially supported by the NSF Grant DMS-1600186.}
\dedicatory{}
\subjclass[2000]{Primary 03C07, 46L05; Secondary 03C20, 46L35}
\keywords{C*-algebra, strongly self-absorbing, model theory for metric
structures, ultrapoduct, ultrapower, axiomatizable, model-theoretic forcing,
building models by games, omitting types}
\maketitle

\begin{abstract}
We present an introductory survey to first order logic for metric structures
and its applications to C*-algebras.
\end{abstract}

\section{Introduction}


This survey is designed as an introduction to the study of C*-algebras from
the perspective of model theory for metric structures. The intended
readership consists of anyone interested in learning about this subject, and
naturally includes both logicians and operator algebraists. Considering
this, we will not assume in these notes any previous knowledge of model
theory, nor any in-depth knowledge of functional analysis, beyond a standard
graduate-level course. A familiarity with C*-algebra theory and the
classification programme \cite{elliott_regularity_2008} might be useful as a
source of motivation and examples. Several facts from C*-algebra theory will
be used, and detailed references provided. Most of the references will be to
the comprehensive monographs \cite%
{blackadar_operator_2006,rordam_classification_2002}.

Logic for metric structures is a generalization of classical (or \emph{%
discrete}) logic, suitable for applications to \emph{metric objects} such as
C*-algebras. The monograph \cite{ben_yaacov_model_2008} presents a quick but
complete introduction to this subject, and explains how the fundamental
results from classical model theory can be recast in the metric setting. The
model-theoretic study of C*-algebras has been initiated in \cite%
{farah_model_2013,farah_model_2014,farah_model_2014-1} where, in particular,
it shown how C*-algebras fit into the framework of model theory for metric
structures. The motivations behind this study are manifold. With no pretense
of exhaustiveness, we attempt to illustrate some of them here.

The most apparent contribution of first-order logic is to provide a \emph{%
syntactic }counterpart to the semantic construction of ultraproducts and
ultrapowers: the notion of formulas. Formulas allow one to express the
fundamental properties of ultrapowers of C*-algebras (saturation) and
diagonal embeddings into the ultrapower (elementarity). These general
principles underpin most of the applications of the ultraproduct
construction in C*-algebra theory, as they have appeared in various places
in the literature under various names---Kirchberg's $\varepsilon $-test,
reindexing arguments, etc. Isolating such general principles provides a
valuable service of clarification and uniformization in the development of
C*-algebra theory. In particular, this allows one to distinguish between, on
one hand, what is just an instance of \textquotedblleft general
nonsense\textquotedblright\ and, on the other, what is a salient point where
C*-algebras theory is crucially used.

This abstract model-theoretic point of view also makes it easier to
recognize analogies between different contexts. Furthermore, it provides a
language to formalize such analogies as precise mathematical statements,
rather than just intuitive ideas. For instance, this paradigm can be applied
to some aspects of the \emph{equivariant} theory of C*-algebras, which
studies C*-algebras endowed with a group action (C*-dynamical systems). At
least when the acting group is compact or discrete, C*-dynamical systems fit
in the setting of first-order logic \cite{gardella_rokhlin_2017}. Adopting
this perspective, one can naturally and effortlessly transfer ideas and
arguments from the nonequivariant theory, as long as these are presented in
terms of model-theoretic notions and principles. An instance of this
phenomenon is the general theory of strongly self-absorbing C*-algebras,
which admits a natural model-theoretic treatment; see Section \ref%
{Section:strongly}. The equivariant analog of such a notion has been
recently introduced and studied by\ Szab\'{o} in a series of papers \cite%
{szabo_strongly_2015,szabo_strongly_2016,szabo_strongly_2017}, where the
theory is developed in close parallel to the nonequivariant setting.

Beyond the motivations above, model theory provides the right tools for the
study of ultrapowers and central sequence algebras \emph{per se}. Questions
on the number of nonisomorphic ultrapowers arise naturally in operator
algebra theory, and can be traced back to McDuff's study of central
sequences in the context of factors \cite{mcduff_central_1970}. Many of such
questions have been answered in \cite%
{farah_model_2013,farah_model_2014,farah_model_2014-1} through the
application of rather general model-theoretic principles. We will explore
some of these problems in Section \ref{Section:continuum}, focusing on the
situation under the Continuum Hypothesis. Ultrapowers and central sequence
algebras are in fact just some instances of naturally occurring
\textquotedblleft massive C*-algebras\textquotedblright . Other examples are
Calkin algebras and, more generally, corona algebras. Deep questions about
such algebras have been recently addressed in \cite%
{phillips_calkin_2007,farah_all_2011,farah_all_2011-1,eagle_saturation_2015,farah_calkin_2013,coskey_automorphisms_2014,farah_homeomorphisms_2012,vignati_nontrivial_2017,farah_rigidity_2016,farah_calkin_2016,farah_countable_2013}%
. Methods from model theory, set theory, and forcing are key components of
this line of research.

A further thread of applications of model theory comes from the use of
techniques for constructing \textquotedblleft generic
objects\textquotedblright , and the potential of using these techniques to
construct interesting new examples of C*-algebras. This is particularly
relevant considering that one of the most important open problems in
C*-algebra theory (the UCT problem) depends on the existence of other
methods of constructing nuclear C*-algebras other than the standard
constructions of C*-algebra theory; see Section \ref{Section:nuclear}. Using
the techniques of model-theoretic forcing and building models by games, many
deep open problems in operator algebra theory have been reformulated in
terms of model-theoretic notions, in the hope that these might be more
amenable to a direct attack. Examples of these problems include the famous
Connes Embedding Problem and some its C*-algebraic counterparts (the
Kirchberg Embedding Problem, the MF problem); see \cite%
{farah_model_2017,goldbring_enforceable_2017,goldbring_robinson_2017,goldbring_kirchbergs_2015}%
.

While the list above does not exhaust the applications of model theory to
operator algebras, we hope it will sufficiently motivate the choice of
topics of this survey. After a general introduction to the logic for metric
structures (Section \ref{Section:one}), we will explain how many classes of
C*-algebras can be described through formulas (Section \ref{Section:axiom}).
Ultraproducts and their model-theoretic properties are considered in Section %
\ref{Section:ultra}, and the question on the number of ultraproducts in
Section \ref{Section:continuum}. The important class of strongly
self-absorbing C*-algebra and its model-theoretic treatment is the subject
of Section \ref{Section:strongly}. We conclude in Section \ref%
{Section:nuclear} with a quick introduction to the classification programme
of nuclear C*-algebras, a description of the model-theoretic content of
nuclearity and other regularity properties, and an outlook on the
applications of model-theoretic forcing to produce interesting examples of
nuclear C*-algebras.

No result presented in this survey is original, although the presentation of
some of the material is new. The results concerning the general theory of
first-order logic for metric structures can be found in \cite%
{ben_yaacov_model_2008,farah_model_2014}. Axiomatizability of the classes of
C*-algebras presented here and many more is contained in \cite%
{farah_model_2017,carlson_omitting_2014}, as well as the model-theoretic
description of nuclearity and other regularity properties. The theorem on
the number of nonisomorphic ultrapowers of C*-algebras is one of the main
results of \cite{farah_model_2013,farah_model_2014}, together with the
corresponding fact for II$_{1}$ factors. The model-theoretic treatment of
strongly self-absorbing C*-algebras is the subject of \cite%
{farah_relative_2017}. The model-theoretic proof of the characterization of $%
D$-absorption presented here is in some respects original, although heavily
inspired by the proofs from the literature \cite[Theorem 2.2]%
{toms_strongly_2007}, especially those in the equivariant setting from \cite%
{szabo_strongly_2015}. Finally, the technique of model-theoretic forcing in
the metric setting has been first considered in \cite{ben_yaacov_model_2009}
and then further developed in \cite%
{farah_omitting_2014,goldbring_enforceable_2017,farah_model_2017}.

\subsubsection*{Acknowledgments}

We are grateful to Piotr Koszmider, Andrew Ostergaard, and Richard Timoney
for their comments on a preliminary version of the present survey.

\section{First-order logic for metric structures\label{Section:one}}

\subsection{Languages\label{Subsection:single-sorted}}

Model theory focuses on the study of \emph{classes }of objects, rather than
single objects on their own. The important notion of \emph{language }or 
\emph{signature }has the purpose of formalizing the assertion that a certain
class is made of objects \textquotedblleft of the same
kind\textquotedblright . It also allows one to make explicit which
operations on the given objects are being considered. Formally, a \emph{%
language} (or signature) usually denoted by $L$,\emph{\ }is a collection of 
\emph{symbols}. These symbols are of two kinds: \emph{function symbols} and 
\emph{relation symbols}. Each function symbol $f$ in the language $L$ has
attached a natural number $n_{f}$, called its \emph{arity}, and a function $%
\varpi ^{f}:[0,+\infty )^{n_{f}}\rightarrow \lbrack 0,+\infty )$ continuous
at $0$ and vanishing at $0$, called its \emph{continuity modulus}.
Similarly, each \emph{relation symbol }$R$ in the language $L$ has attached
a natural number $n_{R}$, called its arity, a function $\varpi
^{R}:[0,+\infty )^{n_{R}}\rightarrow \lbrack 0,+\infty )$ continuous at $0$
and vanishing at $0$, called its continuity modulus, and a compact internal $%
J_{R}$ of $\mathbb{R}$, called its \emph{bound}. The language $L$ includes a
distinguished relation symbol, called the \emph{metric symbol}. This is
denoted by $d$, and it has arity $2$, and continuity modulus $\varpi
^{d}\left( t_{0},t_{1}\right) =t_{0}+t_{1}$. As customary we will call \emph{%
binary }a symbol of arity $2$, and \emph{unary }a symbol of arity $1$. The
arity $n_{f}$ of a function symbol $f$ is allowed to be $0$, in which case
one says that $f$ is a \emph{constant symbol}.

Given a language $L$, one can then define the notion of $L$-structure.
Briefly, an $L$-structure is a set endowed with functions and relations
corresponding to the symbols in $L$. Precisely, an $L$-structure is a
complete metric space $\left( M,d^{M}\right) $ together with assignments $%
f\mapsto f^{M}$ and $R\mapsto R^{M}$, which assign to each function symbol $%
f $ in $L$ its \emph{interpretation }$f^{M}$ in $M$, and to each relation
symbol $R$ in $L$ its \emph{interpretation} $R^{M}$ in $M$. These
interpretations are required to satisfy the following properties. If $f$ is
a function symbol of arity $n_{f}$ and continuity modulus $\varpi ^{f}$,
then $f^{M}$ is a function $f^{M}:M^{n_{f}}\rightarrow M$ such that, for
every $\bar{a},\bar{b}\in M^{n_{f}}$, $d^{M}\left( f\left( \bar{a}\right)
,f\left( \bar{b}\right) \right) \leq \varpi ^{f}\left( d^{M}\left( \bar{a},%
\bar{b}\right) \right) $. When the arity $n_{f}$ of $f$ is zero, i.e.\ when $%
f$ is a constant symbol, by convention the set $M^{n_{f}}$ consists of a
single point, and the function $f^{M}:M^{n_{f}}\rightarrow M$ can be simply
seen as a \emph{distinguished element }of $M$. If $R$ is a relation symbol
of arity $n_{R}$, continuity modulus $\varpi ^{R}$, and bound $J^{R}$, then $%
R^{M}$ is a function $R^{M}:M^{n_{R}}\rightarrow J_{R}\subset \mathbb{R}$
such that, for every $\bar{a},\bar{b}\in M^{n_{f}}$, $\left\vert R\left( 
\bar{a}\right) -R\left( \bar{b}\right) \right\vert \leq \varpi ^{R}\left(
d^{M}\left( \bar{a},\bar{b}\right) \right) $. Furthermore, the
interpretation of the \emph{metric symbol }$d$ of $L$ is required to be
equal to the metric $d^{M}$ of $M$ (consistently with the notation above).

To summarize, an $L$-structure is a space endowed with some extra operations
(functions and relations) as prescribed by the language. Furthermore, the
language contains names (or symbols) for such operations. This allows one to
uniformly and unambiguously refer to such operations when considering the
class of all $L$-structures.

\subsection{Metric groups\label{Subsection:metric-groups}}

At this point, examples are in order. We consider for now examples from
metric geometry and group theory. The first natural example is the language $%
L$ containing no function symbol, and whose unique relation symbol is the
distinguished symbol for the metric $d$. One also has to specify in $L$ a
bound $J_{d}$ for $d$, which we can choose to be $\left[ 0,1\right] $. The
continuity modulus for $d$ can be defined to be $\varpi ^{d}\left(
t_{0},t_{1}\right) =t_{0}+t_{1}$. This completely defines a language $L$ in
the sense above.\ It is clear that, for such a language, an $L$-structure is
just a complete metric space $\left( M,d^{M}\right) $ whose metric attains
values in $\left[ 0,1\right] $. Thus the class of $L$-structure consists of
the class of complete metric spaces of diameter at most $1$.

A slightly more sophisticated example can be obtained by adding to this
language some function symbols, to describe some additional algebraic
structure which might be present on a complete metric space. The first
natural example is the case of a language $L$ containing a single binary
function symbol, to describe a binary operation. Since we want to think of
it as a binary operation, we denote such a binary function symbol by $\cdot $
, for which we use the usual infix notation. We also need to prescribe a
continuity modulus for such a binary function symbol, which we define to be $%
\left( t_{0},t_{1}\right) \mapsto t_{0}+t_{1}$. As above, we assume that the
unique relation symbol in $L$ is the metric symbol. In this case, an $L$%
-structure is a complete metric space $\left( M,d^{M}\right) $ with diameter 
$1$ endowed with a binary operation $\cdot ^{M}$. The choice of continuity
modulus for the symbol $\cdot $ in $L$ forces $\cdot ^{M}$ to satisfy%
\begin{equation}
d^{M}(a_{0}\cdot ^{M}b_{0},a_{1}\cdot ^{M}b_{1})\leq d^{M}\left(
a_{0},a_{1}\right) +d^{M}\left( b_{0},b_{1}\right) \text{.\label%
{Equation:bi-invariant}}
\end{equation}%
Conversely, any complete metric space with diameter $1$ endowed with a
binary operation satisfying Equation \eqref{Equation:bi-invariant} can be
seen as an $L$-structure. In this case, the class of $L$-structures contains
the important example of \emph{bi-invariant metric groups}. A bi-invariant
metric group is a complete metric space $\left( G,d^{G}\right) $ endowed
with a group operation $\cdot ^{G}$ with the property that left and right
translations, i.e.\ the maps $x\mapsto ax$ and $x\mapsto xa$ for $a\in G$,
are isometries. It is clear from the discussion above that a bi-invariant
metric group can be seen as a structure in the language $L$ just described.

Bi-invariant metric groups arise naturally in operator algebras, group
theory, and metric geometry. For instance, for every $n\in \mathbb{N}$, the
group $U_{n}$ of $n\times n$ unitary matrices is a bi-invariant metric group
when endowed with the metric $d\left( u,v\right) =2^{-1/2}\norm{u-v}_{2}$.
Here $\left\Vert a\right\Vert _{2}$ denotes the normalized \emph{%
Hilbert--Schmidt norm }(or Frobenius norm) $\tau \left( a^{\ast }a\right)
^{1/2}$ of a matrix $a$, where $\tau $ is the canonical trace of $n\times n$
matrices, suitable normalized so that $\tau \left( 1\right) =1$. This is a
particular instance of a more general class of examples, arising from von
Neumann algebra theory. If $M$ is a von Neumann algebra endowed with a
faithful normalized trace $\tau $, then the unitary group $U(M)$ of $M$ is a
bi-invariant metric group. The metric now is defined, as above, by $d\left(
u,v\right) =\left\Vert u-v\right\Vert _{2}$, where $\left\Vert a\right\Vert
_{2}$ denotes the $2$-norm $\tau \left( a^{\ast }a\right) ^{1/2}$ of $a$
with respect to the trace $\tau $. The case of unitary groups of matrices is
recovered in the case when $M$ is a full matrix algebra.

One can also consider different metrics on the unitary group $U_{n}$. For
instance, one can consider the metric $d\left( u,v\right) =\left\Vert
u-v\right\Vert $ induced by the \emph{operator norm }of matrices, which can
be concretely defined as the largest singular value. It is clear that left
and right translations in $U_{n}$ are isometries also with respect to this
metric, and so it yields another example of bi-invariant metric group.
Again, this is a particular instance of a more general class of examples
arising from C*-algebra theory. Indeed, if $A$ is a C*-algebra, then one can
consider the unitary group $U(A)$ as a bi-invariant metric group endowed
with a metric induced by the norm of $A$.

Generalizing the examples above, one can consider an arbitrary
unitary-invariant complete metric $d$ bounded by $1$ on $M_{n}\left( \mathbb{%
C}\right) $, and then endow the unitary group $U_{n}$ with the bi-invariant
metric induced by $d$. Any choice of such a unitary-invariant metric gives
rise to a different $L$-structure. Beside the ones considered above, an
important example of unitary-invariant metric $d$ is the (normalized) rank
metric, defined by $d\left( a,b\right) =\mathrm{rank}\left( a-b\right) /n$,
where $\mathrm{rank}$ denotes usual rank of matrices. Clearly, such a metric
is not only unitary-invariant, but also invariant with respect to
multiplication by arbitrary invertible matrices. Therefore, it defines an
invariant metric also in the group $\mathrm{GL}_{n}\left( \mathbb{C}\right) $
of invertible $n\times n$ complex matrices.

Another class of examples arises by considering, for $n\in \mathbb{N}$, the
symmetric group $S_{n}$, consisting of permutations of the set $\left\{
1,2,\ldots ,n\right\} $. In this case, the bi-invariant metric is given by
the (normalized) \emph{Hamming metric }%
\begin{equation*}
d\left( \sigma ,\tau \right) =\frac{1}{n}\left\vert \left\{ i\in \left\{
1,2,\ldots ,n\right\} :\sigma \left( i\right) \neq \tau \left( i\right)
\right\} \right\vert \text{.}
\end{equation*}
Finally, any (discrete) group $\Gamma $ can be regarded as a bi-invariant
metric group with respect to the \emph{trivial} $\left\{ 0,1\right\} $%
-valued metric defined by $d\left( g,h\right) =1$ whenever $g,h\in \Gamma $
are distinct.

We mention in passing that the classes of metric groups introduced here play
a crucial role in defining and studying important $\emph{regularity}$
properties for countable discrete groups. Indeed, given a countable discrete
group $\Gamma $ and a class $\mathcal{C}$ of bi-invariant metric groups, one
says that $\Gamma $ is $\mathcal{C}$-approximable if there exist strictly
positive real numbers $\delta _{g}$ for $g\in \Gamma \setminus \left\{
1\right\} $ such that, for every finite subset $F$ of $\Gamma \setminus
\left\{ 1\right\} $ and for every $\varepsilon >0$, there exists a
bi-invariant metric group $\left( G,d^{G},\cdot ^{G}\right) $ in $\mathcal{C}
$ and a function $\Phi :\Gamma \rightarrow G$ such that $\Phi \left(
1\right) =1$, $d\left( \Phi \left( gh\right) ,\Phi \left( g\right) \Phi
\left( h\right) \right) <\varepsilon $, and $d\left( \Phi \left( g\right)
,1\right) >\delta _{g}$ for every $g,h\in F$ \cite[Definition 1.6]%
{thom_about_2012}. By varying the class $\mathcal{C}$, one obtains various
regularity properties for countable discrete groups:

\begin{itemize}
\item when $\mathcal{C}$ is the class of permutation groups $S_{n}$ for $%
n\in \mathbb{N}$ endowed with the Hamming distance, a group is $\mathcal{C}$%
-approximable if and only if it is \emph{sofic }\cite%
{gromov_endomorphisms_1999,pestov_hyperlinear_2008,capraro_introduction_2015}%
;

\item when $\mathcal{C}$ is the class of unitary groups $U_{n}$ for $n\in 
\mathbb{N}$ endowed with the Hilbert-Schmidt distance, a group is $\mathcal{C%
}$-approximable if and only if it is $\emph{hyperlinear}$, which is in turn
equivalent to the assertion that the corresponding group von Neumann algebra
satisfies the Connes Embedding Problem \cite{ozawa_about_2004};

\item when $\mathcal{C}$ is the class of unitary groups $U_{n}$ for $n\in 
\mathbb{N}$ endowed with the operator norm, a group is $\mathcal{C}$%
-approximable if and only if it is \emph{matricially finite }or \emph{MF }%
\cite{blackadar_generalized_1997,carrion_groups_2013}, which is in turn
equivalent to the assertion that the corresponding group C*-algebra is
quasidiagonal \cite[Theorem 2.8]{carrion_groups_2013};

\item when $\mathcal{C}$ is the class of groups $\mathrm{GL}_{n}\left( 
\mathbb{C}\right) $ endowed with the rank metric, a group is $\mathcal{C}$%
-approximable if and only if it is \emph{linear sofic} \cite%
{arzhantseva_linear_2017}.
\end{itemize}

The interest on these properties is due on one hand to the fact that several
long-standing open problems in group theory---such as Gottschalk's
conjecture \cite{gromov_endomorphisms_1999}, Kaplansky's direct finiteness
conjecture \cite{elek_sofic_2004}, and the algebraic eigenvalue conjecture 
\cite{thom_sofic_2008}---have been settled for groups satisfying these extra
regularity properties. On other hand, these extra assumptions seem to be
very generous, to the point that no countable discrete group that does \emph{%
not }satisfy any of the approximation properties mentioned above is
currently known.

To conclude this detour on metric groups, and this list of examples of
languages, we mention a language closely related to the one just considered.
We let $L$ be a language consisting of a binary function symbol $\cdot $ as
above, together with a unary function symbol suggestively denoted by $%
\mathrm{inv}$ with the identity map as continuity modulus, and a constant
symbol $1$. A bi-invariant metric group $G$ can be naturally regarded as a
structure in this richer language $L$, where the interpretation of the unary
function $\mathrm{inv}$ is just the function $g\mapsto g^{-1}$ assigning to
an element of $G$ its inverse, and the constant symbol $1$ is interpreted as
the identity element of $G$. This example showcases an important point: a
given object, such as a bi-invariant metric group, can be seen as a
structure in possibly different languages. The choice of the language allows
one to keep track of which operations one is considering.

\subsection{Languages with domains of quantification\label%
{Subsection:domains}}

Ultimately, we would like to regard unital C*-algebras as structures in a
suitable language $L$ containing symbols for all the C*-algebra operations.
While not impossible, it is somewhat inconvenient to regard C*-algebras as
structures in the restricted setting considered in Subsection \ref%
{Subsection:single-sorted}. Naturally, such a language $L$ would contain
binary function symbols $+$ and $\cdot $ for sum and multiplication, a unary
function symbol for the adjoint operation, constant symbols for the additive
and multiplicative neutral elements, and a unary function symbols for the
scalar multiplication function $x\mapsto \lambda x$ for any given $\lambda
\in \mathbb{C}$, in addition to the distinguished metric symbol. However, in
view of the requirements on \emph{bounds }on relation symbols (including the
metric symbol) one can only consider \emph{bounded }metric spaces as
structures.\ It is therefore natural to then restrict to the \emph{unit ball 
}$A^{1}$ of a given C*-algebra $A$. This is not a real restriction, since it
is clear that all the information about $A$ is already present in $A^{1}$.
This however makes it problematic to have a function symbol for the addition
operation, since $A^{1}$ is not invariant under addition. A solution to this
would be replace the binary function symbol for addition with a binary
function symbol to denote the \emph{average operation }$\left( x,y\right)
\mapsto \left( x+y\right) /2$. (More generally, one could consider for $n\in 
\mathbb{N}$ and $\left( \lambda _{1},\ldots ,\lambda _{n}\right) \in \mathbb{%
C}^{n}$ such that $\sum_{i=1}^{n}\left\vert \lambda _{i}\right\vert \leq 1$,
an $n$-ary function symbol for the function $\left( x_{1},\ldots
,x_{n}\right) \mapsto \lambda _{1}x_{1}+\cdots +\lambda _{n}x_{n}$.) While
this is possible, the corresponding notion of structure that one obtains
seems to be far from the way in which, in practice, C*-algebras are regarded
as structures by C*-algebraists.

We pursue therefore a different path, which consists in introducing a more
general framework than the one considered in Subsection \ref%
{Subsection:single-sorted}. This is the framework of languages with \emph{%
domains of quantification}. Briefly, in this setting one adds to the
language a collection of symbols (domains of quantification) to be
interpreted as closed subsets of the structure. In this case, all the
requirements concerning the interpretations of function and relation
symbols, including the boundedness requirement on the metric, are only
imposed \emph{relatively }to a given domain, or tuple of domains. This
allows one consider structures, such as C*-algebras, where the metric is
globally unbounded, although it is bounded when restricted to any given
choice of domains of quantifications. In the case of C*-algebras, the
domains of quantifications will be interpreted as the balls centered at the
origin.

We now present the details. As we have just mentioned, in this setting, a
language $L$ is endowed with a collection $\mathcal{D}$ of \emph{domains of
quantification}. The set $\mathcal{D}$ is endowed with an ordering, which is
upward directed. In this case, for each function symbol $f$ in $L$ and for
each choice of \emph{input domains }$D_{1},\ldots ,D_{n_{f}}$, the language $%
L$ prescribes:

\begin{itemize}
\item its arity $n_{f}$;

\item an \emph{output domain }$D_{D_{1},\ldots ,D_{n_{f}}}^{f}$;

\item a continuity modulus\emph{\ }$\varpi _{D_{1},\ldots
,D_{n_{f}}}^{f}:[0,+\infty )^{n_{f}}\rightarrow \lbrack 0,+\infty )$.
\end{itemize}

Similarly, for each relation symbol $R$ in $L$ and for each choice of \emph{%
input domains }$D_{1},\ldots ,D_{n_{f}}$, the language $L$ prescribes:

\begin{itemize}
\item its arity $n_{R}$;

\item a \emph{bound }$J_{D_{1},\ldots ,D_{n_{f}}}^{R}$;

\item a continuity modulus $\varpi _{D_{1},\ldots ,D_{n_{R}}}^{R}:[0,+\infty
)^{n_{f}}\rightarrow \lbrack 0,+\infty )$.
\end{itemize}

Again, the language $L$ is assumed to contain a distinguished binary
function symbol $d$ (metric symbol).

In this case, an $L$-structure is a complete metric space $\left(
M,d^{M}\right) $ together with assignments:

\begin{itemize}
\item $D\mapsto D^{M}$ from the set of domains of quantification in $L$ to
the collection of closed subsets of $M$;

\item $f\mapsto f^{M}$ from the set of function symbols in $L$ to the
collection of functions $M^{n}\rightarrow M$ for $n\in \mathbb{N}$;

\item $R\mapsto R^{M}$ from the set of relation symbols in $L$ to the
collection of functions $M^{n}\rightarrow \mathbb{R}$ for $n\in \mathbb{N}$;
\end{itemize}

satisfying the following properties:

\begin{enumerate}
\item the collection $\left\{ D^{M}:D\in \mathcal{D}\right\} $ of closed
subsets of $M$ has dense union;

\item the assignment $D\mapsto D^{M}$ is order-preserving, where the
collection of closed subsets of $M$ is ordered by inclusion;

\item for every function symbol $f$ and choice of input domains $%
D_{1},\ldots ,D_{n_{f}}$, and $\bar{a},\bar{b}\in D_{1}\times \cdots \times
D_{n_{f}}$, one has that $f^{M}\left( \bar{a}\right) \in D_{D_{1},\ldots
,D_{n_{f}}}^{f}$ and $d^{M}\left( f\left( \bar{a}\right) ,f\left( \bar{b}%
\right) \right) \leq \varpi _{D_{1},\ldots ,D_{n_{f}}}^{f}\left( d(\bar{a},%
\bar{b})\right) $;

\item for every relation symbol $R$ and choice of input domains $%
D_{1},\ldots ,D_{n_{R}}$, and $\bar{a},\bar{b}\in D_{1}\times \cdots \times
D_{n_{R}}$, one has that $R^{M}\left( \bar{a}\right) \in J_{D_{1},\ldots
,D_{n_{f}}}^{R}$ and $\left\vert R\left( \bar{a}\right) -R\left( \bar{b}%
\right) \right\vert \leq \varpi _{D_{1},\ldots ,D_{n_{f}}}^{f}\left( d(\bar{a%
},\bar{b})\right) $.
\end{enumerate}

We conclude by noting that any language as defined in Subsection \ref%
{Subsection:single-sorted} can be seen as a particular instance of a
language with domains of quantification as defined in this section, by
declaring that the set $\mathcal{D}$ of domains of quantification is a
singleton $\left\{ D\right\} $. In this case, we omit the reference to such
a unique domain in the quantifiers, and write simply $\sup_{x}$ and $%
\inf_{x} $ instead of $\sup_{x\in D}$ and $\inf_{x\in D}$.

\subsection{C*-algebras as structures\label{Subsection:C*-language}}

We are now ready to discuss how C*-algebras can be seen as structures in
continuous logic, when one consider the framework introduced in Subsection %
\ref{Subsection:domains}.\ We briefly recall that a C*-algebra is,
abstractly, a complex algebra $A$ endowed with an involution $a\mapsto
a^{\ast }$ and a complete norm $a\mapsto \left\Vert a\right\Vert $
satisfying $\left\Vert ab\right\Vert \leq \left\Vert a\right\Vert \left\Vert
b\right\Vert $ for $a,b\in A$, and the C*-identity $\left\Vert a^{\ast
}a\right\Vert =\left\Vert a\right\Vert ^{2}$ for $a\in A$. For every $n\in 
\mathbb{N}$, the algebraic tensor product $M_{n}\left( \mathbb{C}\right)
\otimes A=M_{n}\left( A\right) $ is endowed with a canonical norm which
turns into a C*-algebra. We will always regard $M_{n}\left( A\right) $ as a
C*-algebra endowed with such a norm. We will only consider \emph{unital }%
C*-algebras, which are moreover endowed with a multiplicative identity
(unit) $1$.

The Gelfand--Neimark theorem guarantees that one can concretely represent
any (abstract) C*-algebras as a closed subalgebra of the algebra $B(H)$ of
bounded linear operators on a Hilbert space $H$. In this case the involution
is the map assigning to an operator its Hermitian adjoint, the norm is the
operator norm given by $\left\Vert T\right\Vert =\sup \left\{ \left\Vert
T\xi \right\Vert :\xi \in H,\left\Vert \xi \right\Vert \leq 1\right\} $ for $%
T\in B(H)$, and $1$ is the identity operator.

The language of C*-algebras $L^{\text{C*}}$ consists of:

\begin{itemize}
\item a sequence $\left\{ D_{n}:n\in \mathbb{N}\right\} $ of domains of
quantifications, linearly ordered by setting $D_{n}<D_{m}$ if and only if $%
n<m$;

\item binary function symbols for addition and multiplication;

\item for every $\lambda \in \mathbb{C}$, a unitary function symbol for the
scalar multiplication function $x\mapsto \lambda x$;

\item constant symbols for $0$ and $1$;

\item a unary relation symbol for the norm, as well as the metric symbol;

\item for every $n\in \mathbb{N}$, a $n^{2}$-ary relation symbol for the
norm of elements of $M_{n}\left( A\right) $.
\end{itemize}

We want to regard a C*-algebra $A$ as an $L^{\text{C*}}$-structure, where
the domain $D_{n}$ is interpreted as the ball of $A$ of center $0$ and
radius $n$, the function and relation symbols are interpreted in the obvious
way. Keeping this in mind, it is clear that one can define continuity
moduli, output domains, and bounds for the given function and relation
symbols in such a way that any C*-algebra meets the requirements that an $L^{%
\text{C*}}$-structure has by definition to satisfy. For example, let us
consider the function symbol for multiplication, and the input domains $%
D_{n} $ and $D_{m}$. In this case, one can declare the output domain to be $%
D_{nm}$, and the continuity modulus to be the function $\left(
t_{1},t_{2}\right) \mapsto mt_{1}+nt_{2}$.

\subsection{Formulas\label{Subsection:formulas}}

One of the upshots of regarding a class of objects as structures in
continuous logic is to obtain a corresponding notion of first-order
property. These are the properties that can be expressed through \emph{%
formulas}. We begin with the \emph{syntax }of formulas, and describe how
formulas in a given language are defined. So let us fix a language $L$, and
define the notion of $L$-formula. Intuitively, an $L$-formula is an
expression that describes a property of an $L$-structure, or of a tuple of
elements of an $L$-structure, by only referring to the given $L$-structure,
its elements, and its operations which are given by the interpretations of
the function and relation symbols in $L$.

Before we introduce the notion of $L$-formula, we need to consider the
notion of $L$-term. Informally, an $L$-term is an expression that described
how elements of a given $L$-structure can be combined together by using the
function symbols in $L$. To make this precise, we suppose that we have a
collection of symbols, usually denoted by $x,y,z,\ldots $, possibly with
decorations such as $x_{1},x_{2},x_{3},\ldots $, which we call \emph{%
variables}. Variables are used in the definition of terms and formulas, and
they should be thought of as \textquotedblleft
place-holders\textquotedblright , which can be possibly replaced by elements
of a structure. (This is analogous to the role of variables $\bar{x}$ in a
polynomial $p\left( \bar{x}\right) $ with coefficients in a ring $R$. These
variables \textquotedblleft substituted\textquotedblright\ by elements of $R$
when considering the corresponding polynomial function. In fact, this usage
of variables is a particular instance of the usage from model theory.) We
assume that to each variable $x$ is uniquely attached a domain of
quantification $D$, which we call the domain of $x$.

Now one can say briefly that an $L$-term is any expression that can be
formed starting from variables and constant symbols, and applying function
symbols from $L$. More extensively, one declare that:

\begin{itemize}
\item variables are $L$-terms;

\item constant symbols are $L$-terms;

\item if $t_{1},\ldots ,t_{n}$ are $L$-terms, and $f$ is an $n$-ary function
symbol in $L$, then $\left( f\left( t_{1},\ldots ,t_{n}\right) \right) $ is
an $L$-term.
\end{itemize}

Given an $L$-term $t$, one can speak of the variables that appear in $t$.
Formally, if $x$ is a variable, one can define the property that $x$ appears
in $t$ by induction on the complexity of $t$ as follows: if $t$ is a
variable, then $x$ appears in $t$ if and only if $x$ is equal to $t$; if $t$
is a constant symbol then $x$ does not appear in $t$; if $t=f\left(
t_{1},\ldots ,t_{n}\right) $ then $x$ appears in $t$ if and only if $x$
appears in $t_{i}$ for some $i\in \left\{ 1,2,\ldots ,n\right\} $. One then
write $t\left( x_{1},\ldots ,x_{k}\right) $ to denote the fact that the
variables that appear in $t$ are within $x_{1},\ldots ,x_{k}$.

The notion of $L$-term allows one to easily define the notion of \emph{%
atomic }$L$-formula. This is an expression $\varphi $ of the form $R\left(
t_{1},\ldots ,t_{n}\right) $, where $R$ is an $n$-ary function symbol in $L$
and $t_{1},\ldots ,t_{n}$ are $L$-terms. If $t_{1},\ldots ,t_{n}$ are
variables within $x_{1},\ldots ,x_{k}$, then one says that $\varphi $ has
free variables within $x_{1},\ldots ,x_{k}$, and write $\varphi \left(
x_{1},\ldots ,x_{k}\right) $.

Starting from the notion of atomic formula, one can define the arbitrary
formulas. Informally, formulas are expressions obtained by combining atomic
formulas by using \emph{logical connectives} and \emph{quantifiers}. In
classical (discrete) first-order logic, the logical connectives are the
usual symbols $\wedge $, $\vee $, $\lnot $, $\rightarrow $, $\leftrightarrow 
$, which can be thought of as expressions to denote Boolean functions. In
logic for metric structures, these Boolean functions are replaced with
arbitrary \emph{continuous }functions $q:\mathbb{R}^{d}\rightarrow \mathbb{R}
$. On the side of quantifiers, in the discrete setting these are the
expressions $\forall x$ and $\exists x$, where $x$ is a variable. In this
case, one also says that $\forall x$ and $\exists x$ are quantifiers over $x$%
. In the continuous setting, for each domain of quantification and for each
variable $x$ one has quantifiers $\sup_{x\in D}$ and $\inf_{x\in D}$ (this
justifies the name of \emph{domains of quantification}). One then formally
defines $L$-formulas by induction as follows:

\begin{itemize}
\item atomic $L$-formulas are $L$-formulas;

\item if $\varphi _{1},\ldots ,\varphi _{n}$ are $L$-formulas and $q:\mathbb{%
R}^{n}\rightarrow \mathbb{R}$ is a continuous function, then $q\left(
\varphi _{1},\ldots ,\varphi _{n}\right) $ is an $L$-formula;

\item if $\varphi $ is an $L$-formula, and $x$ is a variable with domain $D$%
, then $\inf_{x\in D}\varphi $ and $\sup_{x\in D}\varphi $ are $L$-formulas.
\end{itemize}

For brevity, a sequence of quantifiers $\sup_{x_{1}\in D_{1}}\cdots
\sup_{x_{n}\in D_{n}}$ is abbreviated by $\sup_{\bar{x}\in \bar{D}}$ where $%
\bar{x}$ is the tuple of variables $\left( x_{1},\ldots ,x_{n}\right) $ and $%
\bar{D}$ is the corresponding tuple of domains $\left( D_{1},\ldots
,D_{n}\right) $.

The variables that appear in an $L$-formula can be bound or free, depending
on whether they are in the \emph{scope }of a quantifier over them or not.
Formally, one declares when a variable $x$ appears freely in $\varphi $ by
induction on the complexity of $\varphi $ as follows:

\begin{itemize}
\item if $\varphi $ is an atomic formula $R\left( t_{1},\ldots ,t_{n}\right) 
$, then $x$ appears freely in $\varphi $ iff it appears in $t_{i}$ for some $%
i\in \left\{ 1,2,\ldots ,n\right\} $;

\item $x$ appears freely in $q\left( \varphi _{1},\ldots ,\varphi
_{n}\right) $ iff it appears freely in $\varphi _{i}$ for some $i\in \left\{
1,2,\ldots ,n\right\} $;

\item $x$ appears freely in $\inf_{y\in D}\varphi $ or $\sup_{y\in D}\varphi 
$ iff $x$ appears freely in $\varphi $ \emph{and} $x$ is different from $y$.
\end{itemize}

If the free variables of $\varphi $ are within $x_{1},\ldots ,x_{k}$, then
we write $\varphi \left( x_{1},\ldots ,x_{k}\right) $. In this case, $%
\varphi $ should be thought of as an expression describing how close a given 
$k$-tuple of elements of an $L$-structure is to satisfy a certain property.
When $\varphi $ has \emph{no }free variables, one says that $\varphi $ is an 
$L$-sentence. In this case, $\varphi $ should be thought of as an expression
describing how close a given structure is to satisfying a certain property.

As mentioned above, the purpose of formulas is to describe properties of
structures, or elements of structures. This is made precise by the semantic
notion of \emph{interpretation }of a formula in a given structure. We begin
with the interpretation of an $L$-term $t\left( x_{1},\ldots ,x_{k}\right) $
in an $L$-structure $M$, which is going to be a function $%
t^{M}:D_{1}^{M}\times \cdots \times D_{k}^{M}\rightarrow M$, $\bar{a}\mapsto
t^{M}(\bar{a})$, where $\left( D_{1},\ldots ,D_{k}\right) $ are the domains
of $\left( x_{1},\ldots ,x_{k}\right) $. Briefly, this is defined by
replacing the variables of $t$ with the given tuple $\bar{a}$ of elements of 
$M$, and by replacing constant symbols and function symbols $f$ with their
interpretations. Formally, this is defined, again, by induction of the
complexity, as follows:

\begin{itemize}
\item if $t\left( \bar{x}\right) =x_{i}$ for some $i\in \left\{ 1,2,\ldots
,k\right\} $, then $t^{M}\left( \bar{a}\right) =a_{i}$;

\item if $t$ is a constant symbol $c$, then $t^{M}(\bar{a})=c^{M}$;

\item if $t=f\left( t_{1},\ldots ,t_{n}\right) $, then $t^{M}\left( \bar{a}%
\right) =f^{M}(t_{1}^{M}(\bar{a}),\ldots ,t_{n}^{M}(\bar{a}))$.
\end{itemize}

Starting from the interpretations of terms, one can define the
interpretation of an $L$-formula $\varphi \left( x_{1},\ldots ,x_{k}\right) $%
, which is going to be a function $\varphi ^{M}:D_{1}^{M}\times \cdots
\times D_{k}^{M}\rightarrow \mathbb{R}$, $\bar{a}\mapsto \varphi ^{M}(\bar{a}%
)$. As above, this can be briefly defined by replacing all the terms,
relation symbols, and domains of quantifications that appear, with their
interpretation in $M$. Precisely, one can define this by induction on the
complexity, as follows:

\begin{itemize}
\item if $\varphi $ is the atomic formula $R\left( t_{1},\ldots
,t_{n}\right) $, then $\varphi ^{M}(\bar{a})=R^{M}(t_{1}(\bar{a}),\ldots
,t_{n}(\bar{a}))$;

\item if $\varphi $ is equal to $q\left( \varphi _{1},\ldots ,\varphi
_{n}\right) $, then $\varphi ^{M}(\bar{a})=q\left( \varphi _{1}^{M}(\bar{a}%
),\ldots ,\varphi _{n}^{M}(\bar{a})\right) $;

\item if $\varphi $ is equal to $\inf_{x\in D}\psi $, then $\varphi ^{M}(%
\bar{a})=\inf_{x\in D^{M}}\psi ^{M}(\bar{a})$, and similarly with $\sup $.
\end{itemize}

Clearly, when $\varphi $ is an $L$-sentence, its interpretation $\varphi
^{M} $ can be seen simply as a single real number. At this point, let us
pause and see what the notions of terms and formulas just introduced
correspond in the examples of languages that we have seen so far.

When $L$ is the language only containing the metric symbol, the only terms
are just single variables, and the only atomic formulas are of the form $%
d\left( x,y\right) $ where $x,y$ are variables. The interpretation of such a
formula in a complete metric space $\left( M,d^{M}\right) $ is the function $%
M\times M\rightarrow \mathbb{R}$, $\left( a,b\right) \mapsto d^{M}\left(
a,b\right) $. In this case, an example of sentence is the formula $\varphi $
given by $\sup_{x}\sup_{y}d\left( x,y\right) $. The interpretation of $%
\varphi $ is a metric space $\left( M,d^{M}\right) $ is, clearly, the
diameter of $M$.

A slightly more interesting example comes from considering the language $L$
for bi-invariant metric groups. In this case, a \emph{term }in the free
variables $x_{1},\ldots ,x_{n}$ can be seen as a \emph{parenthesized word }%
in the variables $x_{1},\ldots ,x_{n}$, such as $\left( x_{1}\cdot \left(
x_{2}\cdot x_{3}\right) \right) $. (Formally, the terms $\left( x_{1}\cdot
\left( x_{2}\cdot x_{3}\right) \right) $ and $\left( \left( x_{1}\cdot
x_{2}\right) \cdot x_{3}\right) $ are distinct terms, although they have the
same interpretation in any bi-invariant metric group, or more generally in
any $L$-structure for which the interpretation of the binary function symbol 
$\cdot $ is an associative operation.) The interpretation of a term $t\left( 
\bar{x}\right) $ in a bi-invariant metric group $G$ is then the function $%
\bar{a}\mapsto t^{G}(\bar{a})$ that replaces the variables $\bar{x}$ with
the tuple $\bar{a}$, and then computes the products in $G$. An example of $L$%
-sentence in this case is given by the $L$-formula $\sup_{x}\sup_{y}d\left(
x\cdot y,y\cdot x\right) $. The interpretation of such an $L$-sentence $%
\varphi $ in a bi-invariant metric group is then a real number, which is the
supremum of distances of commutators in $G$ from the identity. This can be
thought of as a measure of how far $G$ is from being abelian. Clearly, $G$
is abelian if and only if $\varphi ^{G}=0$.

We conclude this series of examples by considering the case of the language
for C*-algebras $L^{\text{C*}}$. In this case, an $L^{\text{C*}}$-term in
the variables $x_{1},\ldots ,x_{n}$ can be seen as a complex polynomial with
constant term in the variables $x_{1},\ldots ,x_{n}$ and their
\textquotedblleft formal adjoint\textquotedblright\ $x_{1}^{\ast },\ldots
,x_{n}^{\ast }$ ($\ast $-polynomials). This is strictly speaking not
entirely correct, since the term $\left( \left( x_{1}+x_{2}\right) ^{\ast
}\right) $, for instance, is not equal to the term $\left( \left(
x_{1}\right) ^{\ast }+\left( x_{2}\right) ^{\ast }\right) $. However, they
have the same interpretation in every C*-algebra, and hence we can for all
purposes identify them, and write them simply as $x_{1}^{\ast }+x_{2}^{\ast
} $. The same applies to the terms $\left( \left( x_{1}+x_{2}\right)
+x_{3}\right) $ and $\left( x_{1}+\left( x_{2}+x_{3}\right) \right) $, which
we write simply as $x_{1}+x_{2}+x_{3}$. Thus, an atomic $L^{\text{C*}}$%
-formula in the free variables $x_{1},\ldots ,x_{n}$ can be seen as an
expression of the form $\left\Vert \mathfrak{p}(x_{1},\ldots
,x_{n})\right\Vert $ where $\mathfrak{p}(x_{1},\ldots ,x_{n})$ is a
*-polynomial in the variables $x_{1},\ldots ,x_{n}$. Its interpretation in a
C*-algebra $A$ is the function $A^{n}\rightarrow \mathbb{R}$, $\bar{a}%
\mapsto \mathfrak{p}(\bar{a})$. An example of $L^{\text{C*}}$-sentence in
this setting is, for instance the $L^{\text{C*}}$-formula $\sup_{x\in
D_{1}}\left\vert \left\Vert x^{\ast }x\right\Vert -\left\Vert x\right\Vert
^{2}\right\vert $. (Recall that in $L^{\text{C*}}$ we have domains of
quantification $D_{n}$ for $n\in \mathbb{N}$, which are interpreted in a
C*-algebra as the balls of radius $n$ centered at the origin.) Clearly, the
interpretation of such an $L^{\text{C*}}$-sentence is equal to $0$ in any
C*-algebra, in view of the C*-identity.

\subsection{Multi-sorted languages}

One can consider a further generalization of the framework introduced in
Subsection \ref{Subsection:domains}, by allowing \emph{multi-sorted languages%
}. In this setting, the language prescribes a collection $\mathcal{S}$ of
sorts. Each sort $S$ in $\mathcal{S}$ comes with a corresponding collection $%
\mathcal{D}_{S}$ of domains of quantification for $S$.\ Furthermore, each $n$%
-ary function and relation symbol has a prescribed $n$-tuple of $\emph{input}
$ sorts and, in the case of function symbols, an output sort as well. A
structure $M$ then consists of a family $\left( M^{S}\right) _{S\in \mathcal{%
S}}$ of metric spaces, one for each sort, together with the corresponding
interpretations of domains of quantification, and function and relation
symbols, subject to the same requirements as in Subsection \ref%
{Subsection:domains}. All the notions and results that we will present admit
natural generalizations to the case of multi-sorted languages.

\section{Axiomatizability and definability\label{Section:axiom}}

\subsection{Axiomatizable classes}

As we have mentioned in Subsection \ref{Section:one}, model theory focuses
on the study of \emph{classes }of objects of the same kind, rather than
single objects on their own. We have introduced the notion of language, in
order to make precise what it means that a class consists objects of the
same kind. We have also defined the notion of formula in a given language,
which is an expression that allows one to describe properties of an
arbitrary tuple of elements of a structure or, in the case of sentences
(formulas with no free variables), of the structure itself. This leads to
the important notion of elementary or axiomatizable property. By considering
the class of structures that satisfy the given property, one can
equivalently speak of elementary or axiomatizable class.

To give the precise definition, let us fix a language $L$, and a class of $L$%
-structure $\mathcal{C}$. Recall that given an $L$-sentence $\varphi $ and a
real number $r$, we let $\varphi ^{M}$ be the interpretation of $\varphi $
in $M$, which is a real number. An $L$-\emph{condition }is an expression $%
\varphi \leq r$ where $\varphi $ is an $L$-sentence and $r\in \mathbb{R}$ is
a real number. An $L$-structure $M$ satisfies the condition $\varphi \leq r$
if and only if $\varphi ^{M}\leq r$.

\begin{definition}
\label{Definition:axiomatizable}The class $\mathcal{C}$ is $L$-\emph{%
axiomatizable} or $L$-\emph{elementary }if there exists a family of $L$%
-conditions $\varphi _{i}\leq r_{i}$ for $i\in I$ such that, given an
arbitrary $L$-structure $M$, we have that $M$ belongs to $\mathcal{C}$ if
and only if $M$ satisfies the condition $\varphi _{i}\leq r_{i}$ for every $%
i\in I$. In this case, one refers to the conditions $\varphi _{i}\leq r_{i}$
for $i\in I$ as \emph{axioms }for $\mathcal{C}$.

One then says that a property $\mathcal{P}$ is $L$-\emph{elementary} if the
class of $L$-structures that satisfy $\mathcal{P}$ is elementary.
\end{definition}

It is clear that in the definition of $L$-axiomatizable class, up to
replacing each sentence $\varphi _{i}$ with the sentence $\max \left\{
\varphi _{i}-r_{i},0\right\} $, one can always assume that $r_{i}=0$ and
that $\varphi _{i}$ only attains non-negative values. In practice, when the
language $L$ is clear from the context, one simply speaks of axiomatizable
or elementary class, omitting the explicit reference to the language $L$.
Intuitively, the assertion that a property is $L$-elementary means that it
can be described by only referring to the elements of a given structure and
to the operations of the structure which are named by the language $L$.

In order to gain some familiarity with the concept of axiomatizable class,
let us look at examples, drawn from the list of languages and structures
that we have considered in Subsection \ref{Section:one}. We have introduced
above the class of bi-invariant metric groups. For convenience, we can
regard these objects as structures in the language $L$ that contains,
besides the metric symbol, a binary function symbol $\cdot $ for the
operation, a unary function symbol \textrm{inv }for the inverse map, and a
constant symbol $1$ for the neutral element. Thus the class $\mathcal{C}$ of
bi-invariant metric groups forms a class of $L$-structures. This class is
easily seen to be axiomatizable, as witnessed by the axiom:

\begin{itemize}
\item $\sup_{x}d\left( x\cdot 1,x\right) \leq 0$, which prescribes that $1$
is interpreted as a neutral element;

\item $\sup_{x}d\left( x\cdot \mathrm{inv}\left( x\right) ,1\right) \leq 0$,
which prescribes that \textrm{inv}$\left( x\right) $ is interpreted as the
inverse of $x$;

\item $\sup_{x,y,z}d\left( \left( x\cdot \left( y\cdot z\right) \right)
,\left( \left( x\cdot y\right) \cdot z\right) \right) \leq 0$, which
prescribes that the operation is associative;

\item $\sup_{x_{0},x_{1},y}\max \left\{ \left\vert d\left(
x_{0}y,x_{1}y\right) -d\left( x_{0},x_{1}\right) \right\vert ,\left\vert
d\left( yx_{0},yx_{1}\right) -d\left( x_{0},x_{1}\right) \right\vert
\right\} \leq 0$, which forces left and right translations to be isometric.
\end{itemize}

Several natural properties of bi-invariant metric groups are elementary in
this language. For instance, the property of being abelian is elementary, as
witnessed by the single axiom $\sup_{x,y}d\left( xy,yx\right) \leq 0$.

Given a collection $\mathcal{C}$ of structures in a language $L$, one of the
fundamental problems of the model-theoretic study of $\mathcal{C}$ is
understanding which sub-classes of $\mathcal{C}$ (including $\mathcal{C}$
itself) are $L$-axiomatizable. While in some cases this might be apparent,
other cases might be more subtle. Often a proof of axiomatizability of a
given property might require obtaining an equivalent \textquotedblleft
explicit\textquotedblright\ or \textquotedblleft
quantitative\textquotedblright\ characterization of such a property, which
is easily seen to be captured by $L$-sentences. Finally, there are many
natural properties which turn out to be \emph{not }elementary (although
there might be other ways to describe them model-theoretically).

Due to the importance of this task, model theory has developed many useful
criteria that can be used in axiomatizability proofs. Often such criteria
provide a \textquotedblleft softer\textquotedblright\ approach, which allows
one to prove that a certain class is axiomatizable without the need of
explicitly write down axioms for it. Here, we will content ourselves to
verify that certain classes of structures are axiomatizable by directly
applying the definition. In the next section, we will consider the subtle
problem of axiomatizability for various important classes of C*-algebras.

\subsection{Axiomatizability in C*-algebras\label%
{Subsection:axiomatizability-C*}}

A substantial amount of the recent efforts in the model-theoretic study of
C*-algebras has been directed into understanding which classes of
C*-algebras are axiomatizable in the canonical language $L^{\text{C*}}$
described in Subsection \ref{Subsection:C*-language}, starting from the
class of C*-algebras itself. Recall that we are tacitly assuming all
C*-algebras to be unital.

\subsubsection{C*-algebras}

A C*-algebra $A$ is a unital Banach algebra with a conjugate-linear
involution $a\mapsto a^{\ast }$ (unital Banach *-algebra) satisfying the
C*-algebra identity $\left\Vert a^{\ast }a\right\Vert =\left\Vert
a\right\Vert ^{2}$. It is fairly obvious that one can write down sentences
that describe that an $L^{\text{C*}}$-structure satisfying such properties.
For example, the assertion that the involution is conjugate linear is
captured by the family of axioms 
\begin{equation*}
\sup_{x\in D_{n}}\left\Vert \left( \lambda x\right) ^{\ast }-\overline{%
\lambda }x^{\ast }\right\Vert \leq 0\text{,}
\end{equation*}%
where $n\in \mathbb{N}$, $\lambda $ varies among all complex numbers, and $%
\overline{\lambda }$ denotes the conjugate of $\lambda $. Similarly,
submultiplicativity of multiplication is reflected by the conditions%
\begin{equation*}
\sup_{x,y\in D_{n}}\left( \left\Vert xy\right\Vert -\left\Vert x\right\Vert
\left\Vert y\right\Vert \right) \leq 0
\end{equation*}%
for $n\in \mathbb{N}$, while the C*-identity is captured by the conditions 
\begin{equation*}
\sup_{x\in D_{n}}\left\vert \left\Vert x^{\ast }x\right\Vert -\left\Vert
x\right\Vert ^{2}\right\vert \leq 0
\end{equation*}%
for $n\in \mathbb{N}$.

The only tricky point is that, as we discussed, when we regard a C*-algebra
as an $L^{\text{C*}}$-structure, we insist that the domain $D_{n}$ is
interpreted as the ball of radius $n$ centered at the origin. Now, in
general this need not be true in an arbitrary $L^{\text{C*}}$-structure.
Therefore we need to add axioms that enforce this behaviour of the
interpretation of domains. For $n\in \mathbb{N}$, we can consider the
conditions 
\begin{equation}
\sup_{x\in D_{n}}\left\Vert x\right\Vert \leq n\text{,\label{Equation:ball-1}%
}
\end{equation}%
which clearly guarantees that the norm of any element in the interpretation
of $D_{n}$ is at most $n$. At this point, we are only missing axioms that
guarantee that any element of norm at most $n$ actually belongs to $D_{n}$.
This is made sure by the axioms%
\begin{equation}
\sup_{x\in D_{m}}\inf_{y\in D_{n}}\left( \left\Vert x-y\right\Vert -\max
\left\{ \left\Vert x\right\Vert -n,0\right\} \right) \leq 0\text{\label%
{Equation:ball}}
\end{equation}%
for $n\leq m$. Indeed, observe that if $x$ is an element of a C*-algebra $A$
which belongs to the ball of radius $m$ centered at the origin, if one
actually has that $\left\Vert x\right\Vert \leq n$, then $x$ can also be
seen as an element $y$ of the ball of radius $n$, witnessing that %
\eqref{Equation:ball} holds.\ Otherwise if $n<\left\Vert x\right\Vert \leq m$%
, then $y:=\frac{n}{\left\Vert x\right\Vert }x$ is an element of the ball of
radius $n$ such that $\left\Vert x-y\right\Vert \leq \left\Vert x\right\Vert
-n$, again witnessing that \ref{Equation:ball} holds.

In order to see that these axioms are sufficient to enforce the desired
behaviour on the interpretation of the domains of quantification $D_{n}$ for 
$n\in \mathbb{N}$, suppose that $M$ is an $L^{\text{C*}}$-structure
satisfying \eqref{Equation:ball-1} and \eqref{Equation:ball}, as well as the
axioms for unital Banach *-algebras satisfying the C*-identity. We claim
then that the interpretation $D_{n}^{M}$ is precisely the ball of $M$ of
radius $n$ centered at the origin. Indeed, suppose that $x\in M$ is such
that $\left\Vert x\right\Vert \leq n$.\ Then, by the definition of
structure, $\bigcup_{n\in \mathbb{N}}D_{n}^{M}$ is dense in $M$. Hence, for
every $\varepsilon >0$ there exists $m\in \mathbb{N}$ and $y\in D_{m}^{M}$
such that $\left\Vert x-y\right\Vert \leq \varepsilon $. After replacing $y$
with $\frac{y}{1+\varepsilon /n}$ we can assume that $\left\Vert
y\right\Vert \leq n$. Therefore by \eqref{Equation:ball} there exists $z\in
D_{n}^{M}$ such that $\left\Vert y-z\right\Vert \leq \varepsilon $ and hence 
$\left\Vert x-z\right\Vert \leq 2\varepsilon $. Since this holds for every $%
\varepsilon >0$, since $D_{n}^{M}$ is closed we conclude that $x\in
D_{n}^{M} $. Conversely if $x\in D_{n}^{M}$ then $\left\Vert x\right\Vert
\leq n$ by \eqref{Equation:ball-1}.

In conclusion, we have shown that the class of C*-algebras is axiomatizable
in the language $L^{\text{C*}}$ introduced in Subsection \ref%
{Subsection:C*-language}. This paves up the way of establishing similar
results for other subclasses that naturally arise in C*-algebra theory. This
can be seen as the necessary first step towards the application of methods
from logic to C*-algebras.

\subsubsection{Abelian C*-algebras}

The first natural class to consider is the class of \emph{abelian}
C*-algebras, which are the C*-algebras for which the multiplication is
commutative. This very definition makes it clear that this class is
axiomatizable, by the axiom $\sup_{x,y\in D_{1}}\left\Vert xy-yx\right\Vert
\leq 0$. Abelian C*-algebras are precisely those of the form $C\left(
X\right) $ for some compact Hausdorff space $X$ (endowed with the pointwise
operations and the supremum norm). This motivates the assertion that
arbitrary C*-algebras can be regarded as a noncommutative analog of compact
Hausdorff spaces, and C*-algebra theory as noncommutative topology.

The class of \emph{nonabelian }C*-algebras is also axiomatizable, although
this is not immediately obvious from the definition. However, this is made
it apparent by the following equivalent characterization: a C*-algebra is
nonabelian if and only if it contains an element $x$ such that $\left\Vert
x\right\Vert =1$ and $x^{2}=0$ \cite[Proposition II.6.4.14]%
{blackadar_operator_2006}. Thus the class of nonabelian C*-algebra is
axiomatizable as witnessed by the condition $\inf_{x\in D_{1}}\left\Vert
x^{2}\right\Vert -\left\Vert x\right\Vert \leq -1$.

\subsubsection{Nontrivial C*-algebras}

A C*-algebra $A$ is nontrivial if it has dimension at least $2$ or,
equivalent, $A$ is not isomorphic to $\mathbb{C}$ with its canonical
C*-algebra structure. If $A$ is nontrivial, then it contains a selfadjoint
element $a\in A$ such that the abelian C*-subalgebra $A_{0}$ of $A$
generated by $a$ and $1$ has dimension at least $2$. Thus $A_{0}$ is
isomorphic to the algebra $C\left( X\right) $ of continuous function over a
compact Hausdorff space $X$ with at least $2$ points. Hence by normality of $%
X$ we can find positive elements $b,c\in A_{0}$ such that $\left\Vert
b\right\Vert =\left\Vert c\right\Vert =\left\Vert b-c\right\Vert =1$. Recall
that any positive element $b$ in a C*-algebra is of the form $a^{\ast }a$.
This shows that the class of nontrivial C*-algebras is axiomatized by the
condition%
\begin{equation*}
\sup_{x,y\in D_{1}}\min \left\{ \left\Vert x\right\Vert ,\left\Vert
y\right\Vert ,\left\Vert x^{\ast }x-y^{\ast }y\right\Vert \right\} \geq 1%
\text{.}
\end{equation*}

\subsubsection{$n$-subhomogeneous C*-algebras}

As a generalization of the class of abelian C*-algebras, one can consider
the class of $n$-subhomogeneous C*-algebras for some $n\in \mathbb{N}$.\
Recall that a C*-algebra is $n$-subhomogeneous if and only if all its
irreducible representation are $k$-dimensional for some $k\leq n$. When $n=1$%
, this recovers the class of abelian C*-algebras. It is not obvious by this
definition that the class of $n$-subhomogeneous C*-algebras is
axiomatizable. Indeed, this definition refers to entities, such as
irreducible representations, that are \emph{external }to the algebra itself.
We would rather need an equivalent characterization that only refers to the 
\emph{elements }of the algebra and their relations as expressed by the norm
and *-algebra operations. Such a characterization can be extracted from a
theorem of Amitsur--Levitzki, which isolates an algebraic relation that is
satisfied by all the elements of $M_{k}(\mathbb{C})$ for $k\leq n$, but is
not satisfied by some elements of $M_{d}\left( \mathbb{C}\right) $ whenever $%
d>n$. This relation is given by the expression%
\begin{equation}
\sum_{\sigma \in S_{2n}}\mathrm{sgn}\left( \sigma \right) x_{\sigma \left(
1\right) }x_{\sigma \left( 2\right) }\cdots x_{\sigma \left( 2n\right) }=0%
\text{\label{Equation:AL}}
\end{equation}%
where $S_{2n}$ denotes the group of permutations of the set $\left\{
1,2,\ldots ,2n\right\} $, and $\mathrm{sgn}\left( \sigma \right) \in \left\{
\pm 1\right\} $ denotes the parity of the given permutation. This allows one
to conclude that a C*-algebra $A$ is $n$-subhomogeneous if and only if any $%
n $-tuple of elements of $A$ satisfies \eqref{Equation:AL}. In order words,
the class of $n$-subhomogeneous C*-algebras is axiomatized by the condition 
\begin{equation*}
\sup_{x\in D_{1}}\left\Vert \sum_{\sigma \in S_{2n}}\mathrm{sgn}\left(
\sigma \right) x_{\sigma \left( 1\right) }x_{\sigma \left( 2\right) }\cdots
x_{\sigma \left( 2n\right) }\right\Vert \leq 0\text{.}
\end{equation*}%
A similar argument shows that the class of algebras that are \emph{not }$n$%
-subhomogeneous is also axiomatizable.

While the above discussion is reassuring, it turns out that several natural
classes of C*-algebras which are key importance in modern C*-algebra theory
and in the classification program are \emph{not }axiomatizable. These
include the classes of simple C*-algebras, nuclear C*-algebra, exact
C*-algebras, UHF C*-algebras, AF C*-algebras, and several other. To see why
this is the case, we will need to develop a bit more machinery, so we
postpone the proof to Subsection \ref{Subsection:nonelementary}. On the
positive side, these classes of algebras can still be captured
model-theoretically, although in a slightly more sophisticated way. This
will be subject of Subsection \ref{Subsection:nuclearity}.

\subsection{Definable sets\label{Subsection:definable-sets}}

The notion and study of definability is arguably one of the cornerstones of
model theory, both in the discrete setting and in the continuous one. In
discrete first-order logic, a subset of a structure is definable whenever
can be written as the set of elements that satisfy a certain formula. The
naive analogue of this definition in the metric setting turns out to be too
generous. The right generalization involves the notion, which is unique to
the metric setting, of \emph{stability} of formulas and relations.

As usual, we begin with a general discussion of stability in logic for
metric structures, before specifying the analysis to C*-algebras. Let us
therefore fix an arbitrary language $L$ (with domains of quantification),
and an elementary class $\mathcal{C}$. (To fix the ideas, one can think of $%
L $ to be the language of C*-algebras, and $\mathcal{C}$ be the class of all
C*-algebras.) Fix also a tuple $\bar{x}=\left( x_{1},\ldots ,x_{n}\right) $
of variables with corresponding domains $\bar{D}=\left( D_{1},\ldots
,D_{n}\right) $, and let $\mathfrak{F}\left( \bar{x}\right) $ be the
collection of $L$-formulas with free variables from $\bar{x}$. Then $%
\mathfrak{F}\left( \bar{x}\right) $ admits a natural real Banach algebra
structure, induced from the algebra structure on $\mathbb{R}$. For instance,
the sum of formulas $\varphi ,\psi $ is just the formula $\varphi +\psi $.
Furthermore, one can define a seminorm on $\mathfrak{F}\left( \bar{x}\right) 
$ by setting%
\begin{equation*}
\left\Vert \varphi \right\Vert =\sup \left\{ \varphi ^{M}(\bar{a}):M\in 
\mathcal{C},\bar{a}\in D_{1}^{M}\times \cdots \times D_{n}^{M}\right\} \text{%
.}
\end{equation*}%
The Hausdorff completion $\mathfrak{M}\left( \bar{x}\right) $ of $\mathfrak{F%
}\left( \bar{x}\right) $ with respect to such a seminorm is then a Banach
algebra, whose elements are called \emph{definable predicates }(in the
language $L$ relative to the class $\mathcal{C}$). After identifying a
formula with the corresponding element of $\mathfrak{M}\left( \bar{x}\right) 
$, one can regard formulas as definable predicates. Conversely, definable
predicates are precisely the uniform limits of formulas. Given a definable
predicate $\varphi $ and a structure $M\in \mathcal{C}$ one can define its
interpretation $\varphi ^{M}$, which is a function $\varphi
^{M}:D_{1}^{M}\times \cdots \times D_{k}^{M}\rightarrow \mathbb{R}$.

As a natural \emph{completion }of the space of formulas, definable
predicates make it easier to develop the theory smoothly. At the same time,
definable predicates can for all purposes being replaced by formulas, and
vice versa. For instance, it is easy to see that in the definition of
axiomatizable class---Definition \ref{Definition:axiomatizable}---one can
equivalently consider definable predicates rather than sentences. While this
is an obvious observation, we state it explicitly due to its importance.

\begin{proposition}
\label{Proposition:axiomatize-with-predicates}Suppose that $L$ is a
language, and $\mathcal{C}$ is a class of $L$-structure. Then $\mathcal{C}$
is $L$-axiomatizable if and only if there exist a family $\left( \varphi
_{i}\right) _{i\in I}$ of \emph{definable predicates} in the language $L$
with no free variables and a family $\left( r_{i}\right) _{i\in I}$ of real
numbers such that, for every $L$-structure $M$, $M\in \mathcal{C}$ if and
only if $\varphi _{i}^{M}\leq r_{i}$ for every $i\in I$.
\end{proposition}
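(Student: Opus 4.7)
The plan is to exploit the defining property of a definable predicate — that it is a uniform limit of $L$-formulas in the relevant seminorm — in order to pass freely between conditions on predicates and conditions on honest sentences. The forward direction is immediate, since every $L$-sentence is tautologically a definable predicate (its own image in the completion $\mathfrak{M}$), so any axiomatization by sentences is simultaneously an axiomatization by definable predicates.

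For the converse, I would suppose $\mathcal{C}$ is cut out, among all $L$-structures, by conditions $\varphi_i \le r_i$ with each $\varphi_i$ a definable predicate with no free variables. By construction of the completion, for each $i$ I can choose a sequence of $L$-sentences $\psi_{i,n}$ and real numbers $\epsilon_{i,n} \ge 0$ with $\epsilon_{i,n} \to 0$ such that $|\psi_{i,n}^{M} - \varphi_i^{M}| \le \epsilon_{i,n}$ for every $L$-structure $M$. I then claim that the genuine first-order family of conditions
\[
\psi_{i,n} \le r_i + \epsilon_{i,n}, \qquad i \in I,\ n \in \mathbb{N},
\]
axiomatizes $\mathcal{C}$. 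Indeed, if $M \in \mathcal{C}$, then $\varphi_i^{M} \le r_i$ gives $\psi_{i,n}^{M} \le \varphi_i^{M} + \epsilon_{i,n} \le r_i + \epsilon_{i,n}$; conversely, if $M$ satisfies all of these conditions, then for every $i$ and $n$ we have $\varphi_i^{M} \le \psi_{i,n}^{M} + \epsilon_{i,n} \le r_i + 2\epsilon_{i,n}$, and letting $n \to \infty$ yields $\varphi_i^{M} \le r_i$, so $M \in \mathcal{C}$.

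The main (and essentially the only) conceptual obstacle lies in the framework: definable predicates were introduced in the text as elements of the Hausdorff completion of $\mathfrak{F}$ with respect to the seminorm $\sup_{M \in \mathcal{C}}$, in which case they are \emph{a priori} interpretable only on structures in $\mathcal{C}$. For the statement of the proposition to make sense, $\varphi_i^M$ must be defined for every $L$-structure $M$; this forces one to regard the definable predicates appearing here as elements of the completion taken with respect to the supremum over \emph{all} $L$-structures. Once this choice is made explicit, the uniform approximation argument above goes through verbatim, and the proposition follows.
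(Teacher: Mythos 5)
Your proof is correct and is exactly the uniform-approximation argument the paper has in mind; the paper itself omits the proof, dismissing the proposition as "an obvious observation." Your closing remark about the seminorm is a genuine and worthwhile clarification: since $\mathfrak{M}(\bar{x})$ was defined as the completion with respect to the supremum over $\mathcal{C}$, the interpretation $\varphi_i^{M}$ for arbitrary $M$ is only well defined (and the approximation $|\psi_{i,n}^{M}-\varphi_i^{M}|\leq\epsilon_{i,n}$ only holds uniformly) if one takes the completion over all $L$-structures, which is indeed the reading the proposition requires.
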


Among definable predicates, there is a particularly important class: the
stable\emph{\ }ones.

\begin{definition}
\label{Definition:stable-predicate}A definable predicate $\varphi \left( 
\bar{x}\right) $ is \emph{stable} if it satisfies the following: for every $%
\varepsilon >0$ there exists $\delta >0$ such that if $M\in \mathcal{C}$ and 
$\bar{a}\in D_{1}^{M}\times \cdots \times D_{n}^{M}$ satisfies $\left\vert
\varphi \left( \bar{a}\right) \right\vert <\delta $, then there exists $\bar{%
b}\in D_{1}^{M}\times \cdots \times D_{n}^{M}$ such that $d\left( \bar{a},%
\bar{b}\right) <\varepsilon $ and $\varphi \left( \bar{b}\right) =0$.
\end{definition}

The terminology just introduced is consistent with \cite%
{carlson_omitting_2014}, although the term \textquotedblleft \emph{weakly }%
stable\textquotedblright\ is used in \cite{farah_model_2017} in lieu of
\textquotedblleft stable\textquotedblright , to match the corresponding
notion of \textquotedblleft weakly stable relation\textquotedblright\ from
the C*-algebra literature.

Verifying that a given predicate is stable can be a subtle problem, and such
stability problems arise frequently in operator algebras, geometric group
theory, and metric geometry. Their solution, both in the positive and in the
negative, often requires to use or develop deep and interesting theory. At
the same time, establishing that a given predicate is indeed stable often
has numerous and interesting consequences for the class of structures under
consideration. In the theory of operator algebras, a problem closely related
to stability is the problem of \emph{liftability }of relations; see \cite%
{loring_lifting_1997}.

The notion of stable predicate allows one to introduce the notion of
definability. As above, we assume that $\mathcal{C}$ is an elementary class
of $L$-structure. Suppose now that $S:M\mapsto S\left( M\right) $ is an
assignment of closed subspaces $S\left( M\right) \subset D_{1}^{M}\times
\cdots \times D_{n}^{M}$ to structures $M$ in $\mathcal{C}$.

\begin{definition}
The assignment $S:M\mapsto S\left( M\right) $ is a \emph{definable set }if
there exists a stable definable predicate $\varphi $ such that $S\left(
M\right) $ is the \emph{zeroset} $Z\left( \varphi ^{M}\right) =\left\{ \bar{a%
}\in D_{1}^{M}\times \cdots \times D_{n}^{M}:\varphi ^{M}\left( \bar{a}%
\right) =0\right\} $.
\end{definition}

Among other things, the importance of definable sets lies in the fact that
they can be allowed as additional \emph{domains of quantification }without
changing the notion of definable predicate and axiomatizable class. This is
the content of the following proposition, established in \cite[Theorem 9.17]%
{ben_yaacov_model_2008}; see also \cite[Theorem 3.2.2]{farah_model_2017}.

\begin{proposition}
\label{Proposition:quantify}Suppose that $S:M\mapsto S\left( M\right)
\subset D_{1}^{M}\times \cdots \times D_{n}^{M}$ is a definable set for the
elementary class of $L$-structures $\mathcal{C}$. If $\bar{x}=\left(
x_{1},\ldots ,x_{n}\right) $ is a tuple of variables with domains $\left(
D_{1},\ldots ,D_{n}\right) $, $\bar{y}=\left( y_{1},\ldots ,y_{k}\right) $
is a tuple of variables with domains $\left( E_{1},\ldots ,E_{k}\right) $,
and $\varphi \left( \bar{x},\bar{y}\right) \in \mathfrak{M}\left( \bar{x},%
\bar{y}\right) $ is a definable predicate in the free variables $\left( \bar{%
x},\bar{y}\right) $, then there exists a definable predicate $\psi \left( 
\bar{y}\right) \in \mathfrak{M}\left( \bar{y}\right) $ such that, for every
structure $M$ in $\mathcal{C}$ and $\bar{b}\in E_{1}^{M}\times \cdots \times
E_{k}^{M}$,%
\begin{equation*}
\psi ^{M}\left( \bar{b}\right) =\inf \left\{ \varphi ^{M}\left( \bar{a},\bar{%
b}\right) :\bar{a}\in S\left( M\right) \right\} \text{.}
\end{equation*}%
The same conclusion holds when replacing $\inf $ with $\sup $.
\end{proposition}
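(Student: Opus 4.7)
The plan is to realize $\psi$ as the uniform limit of a sequence of definable predicates obtained by adding to $\varphi$ a \emph{penalty term} which suppresses contributions from $\bar{x}$ lying far from $S(M)$. Let $\theta(\bar{x}) \in \mathfrak{M}(\bar{x})$ be a stable definable predicate with $Z(\theta^M) = S(M)$ for every $M \in \mathcal{C}$; such a $\theta$ exists by the definition of a definable set. I first record two standard facts about a definable predicate like $\varphi$, both inherited from its defining uniform limit of formulas by an $\varepsilon/3$ argument: $\varphi$ is uniformly bounded by some $K \geq 1$ on $\bar{D}^M \times \bar{E}^M$ for all $M \in \mathcal{C}$, and $\varphi$ admits a uniform continuity modulus $\omega$ in the $\bar{x}$ variable, independent of $M$.

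Fix $\varepsilon \in (0,1)$. Choose $\eta > 0$ with $\omega(\eta) < \varepsilon$, and then by stability of $\theta$ choose $\delta > 0$ such that $\theta^M(\bar{a}) < \delta$ implies the existence of $\bar{a}' \in S(M)$ with $d(\bar{a}, \bar{a}') < \eta$. Pick a continuous $\rho \colon [0,\infty) \to [0,\infty)$ with $\rho \equiv 0$ on $[0, \delta/2]$ and $\rho \equiv 2K+1$ on $[\delta, \infty)$. The expression
\[
\psi_\varepsilon(\bar{y}) := \inf_{\bar{x} \in \bar{D}} \bigl[\varphi(\bar{x}, \bar{y}) + \rho(\theta(\bar{x}))\bigr]
\]
is a definable predicate in $\bar{y}$, since the operations used to form it (composition with the continuous connective $\rho$, addition, and the quantifier $\inf_{\bar{x} \in \bar{D}}$) extend continuously from $\mathfrak{F}$ to its Hausdorff completion $\mathfrak{M}$. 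Assuming $S(M) \neq \emptyset$, any $\bar{a}_0 \in S(M)$ has $\rho(\theta^M(\bar{a}_0)) = 0$, yielding $\psi_\varepsilon^M(\bar{b}) \leq \inf\{\varphi^M(\bar{a}, \bar{b}) : \bar{a} \in S(M)\} \leq K$. Conversely, if $\theta^M(\bar{a}) \geq \delta$ then $\varphi^M(\bar{a}, \bar{b}) + \rho(\theta^M(\bar{a})) \geq -K + (2K+1) = K+1 > \psi_\varepsilon^M(\bar{b}) + \varepsilon$, so any $\bar{a}$ achieving the infimum within $\varepsilon$ must satisfy $\theta^M(\bar{a}) < \delta$; stability then supplies $\bar{a}' \in S(M)$ with $d(\bar{a}, \bar{a}') < \eta$, and uniform continuity gives $\varphi^M(\bar{a}', \bar{b}) \leq \varphi^M(\bar{a}, \bar{b}) + \varepsilon \leq \psi_\varepsilon^M(\bar{b}) + 2\varepsilon$. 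Combining the two bounds,
\[
\bigl|\psi_\varepsilon^M(\bar{b}) - \inf\{\varphi^M(\bar{a}, \bar{b}) : \bar{a} \in S(M)\}\bigr| \leq 2\varepsilon
\]
uniformly in $M \in \mathcal{C}$ and $\bar{b} \in E_1^M \times \cdots \times E_k^M$.

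Hence $\{\psi_{1/n}\}_{n \geq 1}$ is Cauchy in the seminorm on $\mathfrak{M}(\bar{y})$, and its limit $\psi$ is a definable predicate whose interpretation realizes the desired infimum. The $\sup$-case is handled by the symmetric construction with $-\rho(\theta(\bar{x}))$ and $\sup$ in place of $\inf$. The main obstacle is the calibration of the penalty $\rho$: it must be a legitimate continuous connective vanishing at $0$, so that it contributes nothing on $S(M)$, while simultaneously being sharp enough to translate the purely qualitative stability hypothesis on $\theta$ into a uniform, quantitative approximation. Without stability, $\{\theta^M < \delta\}$ would not be controlled by a fixed neighborhood of $S(M)$, and this strategy would collapse—which is precisely why definable sets (not arbitrary zerosets) are the right objects over which to quantify.
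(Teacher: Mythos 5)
Your argument is correct. The paper itself does not prove Proposition \ref{Proposition:quantify} --- it cites \cite[Theorem 9.17]{ben_yaacov_model_2008} and \cite[Theorem 3.2.2]{farah_model_2017} --- so the relevant comparison is with the standard argument in those references. That argument is also a penalty-term construction, but it is phrased through the distance predicate $\bar{a}\mapsto \mathrm{dist}(\bar{a},S(M))$, which in the Ben Yaacov et al.\ framework is taken as the \emph{definition} of definability of $S$; one then approximates $\inf_{\bar{a}\in S(M)}\varphi^M(\bar{a},\bar{b})$ by $\inf_{\bar{x}}\bigl(\varphi(\bar{x},\bar{y})+\Delta(\mathrm{dist}(\bar{x},S))\bigr)$ for suitable continuous $\Delta$ calibrated against the continuity modulus of $\varphi$. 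Your version instead works directly from the stability-based definition of definable set used in this survey: the stable predicate $\theta$ plays the role of the distance function, and the quantitative link between $\{\theta^M<\delta\}$ and an $\eta$-neighborhood of $S(M)$ is exactly what stability provides. The bookkeeping is sound: the cutoff $2K+1$ forces $\varepsilon$-near-minimizers into $\{\theta^M<\delta\}$, stability pushes them into $S(M)$ at cost $\omega(\eta)<\varepsilon$, and the resulting uniform $2\varepsilon$-approximation makes $(\psi_{1/n})$ Cauchy in the seminorm on $\mathfrak{M}(\bar{y})$. Two small points you handled appropriately but should keep visible in a polished write-up: the hypothesis $S(M)\neq\varnothing$ for all $M\in\mathcal{C}$ is genuinely needed (otherwise the infimum is $+\infty$ and cannot be the value of a bounded definable predicate), and the uniform boundedness and uniform (in $M$) continuity modulus of the definable predicate $\varphi$ must be extracted from an approximating formula by the $\varepsilon/3$ argument you indicate, since only formulas carry moduli prescribed directly by the language.
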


One way to think about Proposition \ref{Proposition:quantify} is that, if $%
\varphi \left( \bar{x},\bar{y}\right) $ is a definable predicate, then the
expression%
\begin{equation*}
\inf_{\bar{x}\in S}\varphi \left( \bar{x},\bar{y}\right)
\end{equation*}%
is also, or more precisely can be identified with, a definable predicate.
This gives us much more flexibility in constructing definable predicates,
and it will be used in crucial way to show that certain classes of
structures are elementary.

At this point, examples of the general notions just introduced are in order.
As usual, we being with examples from the theory of bi-invariant metric
groups, where it is easier to fix the ideas. We consider the language $L$
for bi-invariant metric groups consisting of function symbols for group
operations, beside the metric symbol, and the elementary class $\mathcal{C}$
of bi-invariant metric group. A natural definable predicate, which is in
fact a formula, that one can consider in this setting is $\varphi \left(
x,y\right) =d\left( x\cdot y,y\cdot x\right) $. Clearly, such a formula
measures how close two elements of a given bi-invariant metric group are to
be commuting. Several important problems in operator algebras and metric
geometry boils down to the question of whether $\varphi $ is stable for a
certain class of bi-invariant metric groups. An important result of
Voiculescu shows that $\varphi $ is \emph{not }stable for the class $\left\{
U_{n}:n\in \mathbb{N}\right\} $ of unitary groups endowed with the operator
norm \cite{voiculescu_asymptotically_1983}. On the other hand, such a
formula is stable for the class of unitary groups endowed with the
normalized Hilbert--Schmidt norm, as shown by Glebsky \cite%
{glebsky_almost_2010}. Recently the analogous assertion for the class $%
\left\{ S_{n}:n\in \mathbb{N}\right\} $ of permutation groups with the
normalized Hamming distance has been established by Arzhantseva--Paunescu 
\cite{arzhantseva_almost_2015}. This important result relies on the
Elek--Szab\'{o} result on essential uniqueness of sofic representations of
amenable groups \cite{elek_sofic_2011}.

\subsection{Definability in C*-algebras\label{Subsection:definability-C*}}

Understanding which subsets of C*-algebras are definable is a basic but
fundamental problem, that underpins most of the further model-theoretic
analysis of C*-algebras. It turns out that, fortunately, several important
subsets of C*-algebras are indeed definable. Let us therefore consider the
language of C*-algebras $L^{\text{C*}}$ and the class $\mathcal{C}$ of $L^{%
\text{C*}}$-structures consisting of all C*-algebras. In the following we
will consider definable predicates and definable sets with respect to such a
class $\mathcal{C}$.

\subsubsection{The unitary group}

We being by considering the $\emph{unitary}$ \emph{group }$U\left( A\right) $
of a C*-algebra $A$. This is the set of elements $u$ of $A$ such that $%
uu^{\ast }=u^{\ast }u=1$. Since $U\left( A\right) $ is a subset of the unit
ball of $A$, the assignment $A\mapsto U\left( A\right) \subset D_{1}^{A}$
fits in the framework considered in Subsection \ref%
{Subsection:definable-sets}. Naturally, this is the zeroset of the formula $%
\max \left\{ \left\Vert xx^{\ast }-1\right\Vert ,\left\Vert x^{\ast
}x-1\right\Vert \right\} $.\ The fact that such a formula is stable can be
verified by using the \emph{polar decomposition }of operators \cite[Theorem
3.2.17]{pedersen_analysis_1989}. Indeed, if $A\subset B\left( H\right) $ is
a C*-algebra, and $a\in A$, then one can write $a=u\left( a^{\ast }a\right)
^{1/2}$ where $u\in B\left( H\right) $, $u^{\ast }u$ is the orthogonal
projection onto the orthogonal complement of the kernel of $\left( a^{\ast
}a\right) ^{1/2}$, and $uu^{\ast }$ is the orthogonal projection onto the
orthogonal complement of the kernel of $\left( aa^{\ast }\right) ^{1/2}$.
Now, if $\left\Vert a^{\ast }a-1\right\Vert \leq \delta $ and $\left\Vert
aa^{\ast }-1\right\Vert \leq \delta $ for some small enough $\delta $, $%
\left( a^{\ast }a\right) ^{1/2}$ and $\left( aa^{\ast }\right) ^{1/2}$ are
invertible, which forces $u$ to be a unitary and to belong to $A$. Finally, 
\begin{equation*}
\left\Vert u-a\right\Vert \leq \left\Vert 1-\left( a^{\ast }a\right)
^{1/2}\right\Vert
\end{equation*}%
which can be made arbitrarily small by choosing $\delta $ small enough. A
similar argument shows that the set of \emph{isometries }(i.e.\ elements $v$
of $A$ satisfying $v^{\ast }v=1$) is also definable.

\subsubsection{Positive contractions}

A similar argument allows one to conclude that the set $A_{+}^{1}$ of
positive contractions in $A$ is a definable set. Recall that an element $a$
of $A$ is \emph{positive }if it is a positive operator in any faithful
representation $A\subset B\left( H\right) $. This is equivalent to the
assertion that $a$ is of the form $b^{\ast }b$ for some $b\in A$. A positive
contraction is just an positive element of norm at most $1$. Therefore the
set of positive contraction is the zeroset of the formula $\inf_{y\in
D^{1}}\left\Vert x-y^{\ast }y\right\Vert $, which is obviously stable. In a
similar way, considering the stable formula $\left\Vert x^{\ast
}-x\right\Vert $, shows that the set $A_{\mathrm{sa}}^{1}$ of selfadjoint
elements of norm at most $1$ is definable as well.

\subsubsection{Projections}

We now consider the set of \emph{projections }in a C*-algebra $A$.\ These
are the selfadjoint elements $p$ of $A$ satisfying $p^{2}=p$. By definition,
this is the zeroset of the formula $\varphi \left( x\right) $ given by $\max
\left\{ \left\Vert x^{\ast }-x\right\Vert ,\left\Vert x^{2}-x\right\Vert
\right\} $. To see that such a formula is stable, suppose that $a\in A$ is
an element of norm at most $1$ satisfying $\left\Vert a^{\ast }-a\right\Vert
\leq \delta $ and $\left\Vert a^{2}-a\right\Vert \leq \delta $ for some $%
\delta \in \left( 0,1/2\right) $. Consider $a_{0}:=\left( a+a^{\ast }\right)
/2$ and observe that $\left\Vert a_{0}\right\Vert \leq 1$, $\left\Vert
a-a_{0}\right\Vert <\delta /2$ and hence $\left\Vert
a_{0}^{2}-a_{0}\right\Vert <2\delta $. This implies that the spectrum of $%
a_{0}$ is contained in $\left( -\varepsilon ,\varepsilon \right) \cup
(1-\varepsilon ,1]$ where $\varepsilon =\frac{1-\sqrt{1-8\delta }}{2}\leq
\delta $. Therefore the function 
\begin{equation*}
f(t)=\left\{ 
\begin{array}{cc}
0 & \text{if }t\in (-\delta ,\delta )\text{,} \\ 
1 & \text{if }t\in (1-\delta ,1]%
\end{array}%
\right.
\end{equation*}%
is continuous on the spectrum of $a$. By continuous functional calculus, one
can consider the element $p:=f\left( a_{1}\right) $ of $A$. Since the
spectrum of $p$ is the range of $f$, it is contained in $\left\{ 0,1\right\} 
$, which implies that $p$ is a projection. Furthermore $\left\Vert
p-a_{1}\right\Vert $ is equal to the supremum of $\left\vert
f(t)-t\right\vert $ where $t$ ranges in the spectrum of $a$, which is at
most $\delta $. In conclusion $\left\Vert p-a_{0}\right\Vert \leq \left\Vert
p-a_{1}\right\Vert +\left\Vert a_{1}-a_{0}\right\Vert <\delta +2\delta \leq
3\delta $. In conclusion, this shows that if $a$ is an element of the unit
ball of a C*-algebra $A$ satisfying $\varphi ^{A}\left( a\right) <\delta $,
then there exists $p$ in the unit ball of $A$ such that $\varphi ^{A}\left(
p\right) =0$ and $\left\Vert p-a\right\Vert <3\delta $.

In a similar fashion, one can show that the set of $n$-tuples $\left(
p_{1},\ldots ,p_{n}\right) $ of \emph{pairwise orthogonal} projections of $A$
is definable. (Two projections $p,q$ are orthogonal if $pq=qp=0$.) Let us
consider for simplicity the case $n=2$. Let $\varphi \left( x,y\right) $ be
the formula%
\begin{equation*}
\max \left\{ \left\Vert x^{\ast }-x\right\Vert ,\left\Vert
x^{2}-x\right\Vert ,\left\Vert y-y^{\ast }\right\Vert ,\left\Vert
y^{2}-y\right\Vert ,\left\Vert xy\right\Vert ,\left\Vert yx\right\Vert
\right\} \text{.}
\end{equation*}%
Clearly, the zeroset of $\varphi $ is the set of pairs of orthogonal
projections. In order to see that $\varphi $ is stable, fix $\varepsilon >0$%
, and suppose that $a,b$ are elements of the unit ball of a C*-algebra $A$
satisfying $\varphi ^{A}\left( a,b\right) <\delta <\varepsilon /2$ for some
small enough $\delta $. By stability of the formula defining projections, we
can assume that $a,b$ are already projections. Then one can set $p=a$ and
consider $b_{0}:=\left( 1-p\right) b\left( 1-p\right) \in \left( 1-p\right)
A\left( 1-p\right) $. Since $\left\Vert ab\right\Vert <\delta $ and $%
\left\Vert ba\right\Vert <\delta $ one has that $\left\Vert
b_{0}-b\right\Vert <2\delta $. Again by stability of the formula defining
projections applied to the C*-algebra $\left( 1-p\right) A\left( 1-p\right) $%
, for $\delta $ small enough there exists a projection $q\in \left(
1-p\right) A\left( 1-p\right) $ such that $\left\Vert q-b_{0}\right\Vert
<\varepsilon /2$ and hence $\left\Vert q-b\right\Vert <\varepsilon $.
Observing that $q\in A$ is orthogonal to $p$ concludes the proof that $%
\varphi $ is stable.

\subsubsection{Partial isometries\label{Partial isometries}}

Recall that a \emph{partial isometry }in a C*-algebra $A$ is an element $%
v\in A$ such that $v^{\ast }v$ is a projection, called the support
projection of $v$.\ This implies that $vv^{\ast }$ is also a projection,
called the range projection of $v$. We want to show that the set of partial
isometries is definable. The following lemma can be extracted from the
classical paper of Glimm classifying UHF algebras; see \cite[Lemma 1.9]%
{glimm_certain_1960}.

\begin{lemma}
Suppose that $\delta \in \left( 0,1/40\right) $ and $A$ is a C*-algebra. Fix
a concrete representation $A\subset B\left( H\right) $ of $A$ as an algebra
of operators on a Hilbert space $H$. Suppose that $p,q\in A$ are
projections, and $\tilde{p},\tilde{q}\in B\left( H\right) $ are projections
such that $\left\Vert p-\tilde{p}\right\Vert <\delta $, $\left\Vert q-\tilde{%
q}\right\Vert <\delta $. Assume that there exist a partial isometry $\tilde{v%
}\in B\left( H\right) $ and $a\in A$ such that $\tilde{v}^{\ast }\tilde{v}=%
\tilde{p}$, $\tilde{v}\tilde{v}^{\ast }=\tilde{q}$, $\left\Vert a-\tilde{v}%
\right\Vert <\delta $, and $\left\Vert a\right\Vert \leq 1$. Then there
exists a partial isometry $v\in A$ such that $v^{\ast }v=p$, $vv^{\ast }=q$,
and $\left\Vert v-\tilde{v}\right\Vert <30\delta $.
\end{lemma}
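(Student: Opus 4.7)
The plan is to follow the classical polar decomposition strategy, but carried out entirely inside $A$. First I would replace $a$ by the compression $b:=qap\in A$. Because $\tilde v^*\tilde v=\tilde p$ and $\tilde v\tilde v^*=\tilde q$ force $\tilde q\tilde v\tilde p=\tilde v$, the telescoping identity
\[
qap-\tilde q\tilde v\tilde p=(q-\tilde q)ap+\tilde q(a-\tilde v)p+\tilde q\tilde v(p-\tilde p),
\]
together with the fact that $a$, $\tilde v$ and the four projections involved all have norm at most $1$, yields $\|b-\tilde v\|\le 3\delta$.

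Next I would polish $b$ into an honest partial isometry by dividing out by $|b|$ inside the corner $pAp$. Writing $h:=b^*b\in pAp$ and expanding $h-\tilde v^*\tilde v=b^*(b-\tilde v)+(b-\tilde v)^*\tilde v$ gives $\|h-\tilde p\|\le 6\delta$, hence $\|h-p\|\le 7\delta$. Since $7\delta<1$, the element $h$ is positive and invertible in the unital C*-algebra $pAp$, so continuous functional calculus produces $h^{-1/2}\in pAp$ with $\|h^{-1/2}-p\|$ of order $\delta$ (the mean value theorem applied to $t\mapsto t^{-1/2}$ on $[1-7\delta,1+7\delta]$ gives a constant comfortably below $5$). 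Setting
\[
v:=b\,h^{-1/2}\in A,
\]
one checks at once that $v^*v=h^{-1/2}h\,h^{-1/2}=p$, and the triangle bound $\|v-\tilde v\|\le\|b\|\,\|h^{-1/2}-p\|+\|b-\tilde v\|$ gives $\|v-\tilde v\|<30\delta$ with plenty of room to spare.

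The delicate point, which I expect to be the conceptual heart of the proof, is verifying $vv^*=q$ on the nose, not merely approximately. Since $b=qb$ and $b^*=b^*q$, the operator $vv^*=bh^{-1}b^*$ lies in $qAq$, and it is straightforward to check that it is a projection with $vv^*\le q$ in the projection order; hence $q-vv^*$ is itself a projection. Any nonzero projection has norm $1$, so it suffices to prove $\|vv^*-q\|<1$. The estimate $\|h^{-1}-p\|=O(\delta)$ yields $\|vv^*-bb^*\|=O(\delta)$, while the same telescoping trick as in the first step produces $\|bb^*-q\|\le 7\delta$, so altogether $\|vv^*-q\|<1$ precisely because of the assumption $\delta<1/40$. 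The remainder is routine bookkeeping to make sure that the various $O(\delta)$ constants fit comfortably inside the advertised slack of $30\delta$.
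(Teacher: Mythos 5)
Your argument is correct, and since the survey gives no proof of this lemma (it only cites Glimm's Lemma 1.9), there is nothing in the paper to diverge from: what you write is exactly the standard Glimm-type argument behind that citation — compress to $b=qap$, polar-decompose via $v=b(b^*b)^{-1/2}$ computed in the corner $pAp$, and upgrade $vv^*\le q$ to $vv^*=q$ by showing $\left\Vert vv^*-q\right\Vert<1$. The quantitative steps all check out: $\left\Vert b-\tilde v\right\Vert<3\delta$, $\left\Vert h-p\right\Vert<7\delta$ with $7\delta<1$ guaranteeing invertibility of $h$ in $pAp$, $\left\Vert h^{-1/2}-p\right\Vert\le\tfrac12(1-7\delta)^{-3/2}\cdot 7\delta<5\delta$, hence $\left\Vert v-\tilde v\right\Vert<8\delta$, and $\left\Vert vv^*-q\right\Vert\le\left\Vert h^{-1}-p\right\Vert+\left\Vert bb^*-q\right\Vert<16\delta<1$ precisely because $\delta<1/40$.
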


We can use this lemma to show that the set of \emph{partial isometries }of $%
A $ is definable.\ The set of partial isometries is the zeroset of the
formula $\max \{\left\Vert x^{\ast }x-\left( x^{\ast }x\right)
^{2}\right\Vert ,\left\Vert xx^{\ast }-\left( xx^{\ast }\right)
^{2}\right\Vert \}$. We claim that such a formula is stable. Suppose that $%
a\in A\subset B\left( H\right) $ satisfies $\left\Vert a^{\ast }a-\left(
a^{\ast }a\right) ^{2}\right\Vert \leq \delta \leq 1/1600$. By functional
calculus this implies that $\left\Vert \left( a^{\ast }a\right)
^{1/2}-\left( a^{\ast }a\right) \right\Vert \leq \delta $. Using the polar
decomposition of operators, we can write $a=\tilde{v}\left( a^{\ast
}a\right) ^{1/2}$, where $\tilde{v}\in B\left( H\right) $ is a partial
isometry with support projection the orthogonal projection $\tilde{p}$ onto $%
\mathrm{\mathrm{Ker}}\left( a\right) ^{\bot }=\mathrm{\mathrm{Ker}}\left(
a^{\ast }a\right) ^{\bot }=\overline{\mathrm{Ran}(a^{\ast }a)}$ and range
projection the orthogonal projection onto $\mathrm{Ker}\left( a^{\ast
}\right) ^{\bot }=\mathrm{\mathrm{Ker}}\left( aa^{\ast }\right) ^{\bot }=%
\overline{\mathrm{Ran}\left( aa^{\ast }\right) }$. (For an operator $T\in
B\left( H\right) $ we let $\mathrm{\mathrm{Ker}}\left( T\right) $ be the
kernel $\left\{ \xi \in H:T\xi =0\right\} $ of $T$, and $\overline{\mathrm{%
Ran}\left( T\right) }$ the closure of the range $\left\{ T\xi :\xi \in
H\right\} $ of $T$.) Now for $\xi \in \mathrm{Ran}(a^{\ast }a)$ we have that 
$\xi =\left( a^{\ast }a\right) \eta $ for some $\eta \in H$ and hence%
\begin{equation*}
\left\Vert \left( a^{\ast }a\right) \xi -\xi \right\Vert =\left\Vert \left(
a^{\ast }a\right) ^{2}\eta -\left( a^{\ast }a\right) \eta \right\Vert \leq
\left\Vert \left( a^{\ast }a\right) ^{2}-\left( a^{\ast }a\right)
\right\Vert \leq \delta \text{.}
\end{equation*}%
This shows that $\left\Vert a^{\ast }a-\tilde{p}\right\Vert \leq \delta $.
Similarly, one has that $\left\Vert aa^{\ast }-\tilde{q}\right\Vert \leq
\delta $. Thus 
\begin{equation*}
\left\Vert a-\tilde{v}\right\Vert =\left\Vert \tilde{v}\left( a^{\ast
}a\right) ^{1/2}-\tilde{v}\right\Vert \leq \left\Vert \left( a^{\ast
}a\right) ^{1/2}-p\right\Vert \leq \sqrt{\delta }\text{.}
\end{equation*}%
Furthermore by stability of formula defining projections we have that there
exist projections $p,q\in A$ such that $\left\Vert p-\left( a^{\ast
}a\right) ^{1/2}\right\Vert \leq 3\delta $ and $\left\Vert q-\left( aa^{\ast
}\right) ^{1/2}\right\Vert \leq 3\delta $. Therefore $\left\Vert p-\tilde{p}%
\right\Vert \leq 4\delta \leq \sqrt{\delta }$ and $\left\Vert q-\tilde{q}%
\right\Vert \leq 4\delta \leq \sqrt{\delta }$. Therefore by Lemma there
exists a partial isometry $v\in A$ with support projection $p$ and range
projection $q$ such that $\left\Vert v-\tilde{v}\right\Vert \leq 30\sqrt{%
\delta }$.

\subsubsection{Murray--von Neumann equivalence}

Murray--von Neumann equivalence is an important relation among projections
in a C*-algebra, which is crucial for the definition of the $K_{0}$-group.
Recall that an element $v$ of a C*-algebra $A$ is a \emph{partial isometry}
if $v^{\ast }v$ is a projection (support projection), and $vv^{\ast }$ is a
projection (range projection). Two projections $p,q\in A$ are \emph{%
Murray--von Neumann equivalent}---in formulas $p\sim q$---if there exists a
partial isometry $v\in A$ such that $p$ is the support projection of $v$ and 
$q$ is the range projection of $v$. We recall that if $p,q\in A$ are
projections such that $\left\Vert p-q\right\Vert <1$ then $p\sim q$.

We want to show that the relation of Murray--von Neumann equivalence of
projections (as a set of pairs) is definable. Indeed, consider the definable
predicate $\varphi \left( x,y\right) $ given by%
\begin{equation*}
\max \left\{ \left\Vert x^{2}-x\right\Vert ,\left\Vert x-x^{\ast
}\right\Vert ,\left\Vert y^{2}-y\right\Vert ,\left\Vert y-y^{\ast
}\right\Vert ,\inf_{z\text{ partial isometry}}\max \left\{ \left\Vert
z^{\ast }z-x\right\Vert ,\left\Vert zz^{\ast }-y\right\Vert \right\}
\right\} \text{.}
\end{equation*}%
Then we claim that the zeroset of $\varphi $ in a C*-algebra $A$ is the
relation of Murray--von Neumann equivalence of projections. Indeed, if $%
\varphi ^{A}\left( p,q\right) =0$ then clearly $p,q$ are projections.
Furthermore, there exists a partial isometry $v\in A$ such that $\left\Vert
v^{\ast }v-p\right\Vert <1$ and $\left\Vert vv^{\ast }-q\right\Vert <1$.
This implies $p\sim v^{\ast }v\sim vv^{\ast }\sim q$. It follows easily from
stability of the formula defining projections that $\varphi $ is stable as
well. This shows that the relation of Murray--von Neumann equivalence of
projections is a definable set. Adding the clauses $\left\vert \left\Vert
x\right\Vert -1\right\vert $ and $\left\vert \left\Vert y\right\Vert
-1\right\vert $ shows that the relation of Murray--von Neumann equivalence
of \emph{nonzero} projections is a definable set. A similar argument shows
that the set of $n$-tuples of pairwise orthogonal and pairwise Murray--von
Neumann equivalent projections is definable.

\subsubsection{Infinite projections}

Suppose that $A$ is a C*-algebra. Recall that for projections $p,q\in A$ one
sets $p\leq q$ if $pq=qp=p$. Observe that if $p\leq q$ and $p\neq q$, then $%
\left\Vert p-q\right\Vert =1$. A nonzero projection $r\in A$ is called

\begin{itemize}
\item \emph{infinite }if there is a nonzero projection $r_{0}\leq r$ such
that $r_{0}\neq r$ and $r_{0}\sim r$,

\item \emph{properly infinite }if there exist nonzero orthogonal
Murray--von\ Neumann equivalent projections $p,q\in A$ such that $p+q=r$.
\end{itemize}

We claim that the set of infinite projections in a C*-algebra is definable.
As we have observed above, there is a definable predicate $\psi \left(
x,y\right) $ whose zeroset is the relation of Murray--von Neumann
equivalence of \emph{nonzero} projections. We can therefore consider the
definable predicate $\theta \left( x\right) $ given by%
\begin{equation*}
\inf_{y\in D_1}\max \left\{ \psi \left( x,y\right) ,\left\Vert
yx-y\right\Vert ,\left\Vert xy-y\right\Vert ,\left\vert 1-\left\Vert
x-y\right\Vert \right\vert \right\} \text{.}
\end{equation*}%
It is clear by the above remarks that the set of infinite projections is
contained in the zeroset of $R$. In order to show that $\theta $ is a \emph{%
stable }definable predicate whose zeroset is the set of infinite projections
it remains to show the following: for every $\varepsilon >0$ there exists $%
\delta >0$ such that for every C*-algebra $A$ and $p\in A$ satisfying $%
\theta ^{A}\left( p\right) <\delta $ there exists an infinite projection $%
p^{\prime }\in A$ such that $\left\Vert p-p^{\prime }\right\Vert
<\varepsilon $. Let us thus consider $p\in A$ such that $\theta ^{A}\left(
p\right) <\delta $. By stability of the predicate $p$, we can assume that $p$
is itself a nonzero projection. Furthermore, by stability of the predicate $%
Q $, we can assume, up to replacing $\delta $ with a smaller positive
number, that there exists a nonzero projection $q\in A$ such that $q\sim p$, 
$\left\Vert pq-q\right\Vert <\delta $, $\left\Vert qp-q\right\Vert <\delta $%
, and $\left\vert 1-\left\Vert p-q\right\Vert \right\vert <\delta $.
Consider now $pqp\in pAp$ and observe that, again by stability of the
predicate defining projections, and upon replacing $\delta $ with a smaller
positive number, we can find a projection $q^{\prime }\in pAp$ such that $%
\left\Vert q^{\prime }-q\right\Vert <\delta $. This guarantees that $%
pq^{\prime }=q^{\prime }p=q^{\prime }$ and $\left\vert 1-\left\Vert
p-q^{\prime }\right\Vert \right\vert <2\delta $. As long as $\delta <1/2$
this ensures that $q^{\prime }$ is a nonzero projection such that $q^{\prime
}\leq p$, $q^{\prime }\neq p$, and $q^{\prime }\sim q\sim p$. Therefore $p$
is itself infinite projection, concluding the proof that $\theta $ is stable.

We now claim that the set of properly infinite projections in a C*-algebra
is definable. Indeed, let now $\varphi \left( x\right) $ be the definable
predicate whose zeroset is the set of nonzero\emph{\ }projections. Then we
can consider, in view of definability of the relation of Murray--von Neumann
equivalence of \emph{orthogonal nonzero }projections, the definable
predicate $\eta \left( x\right) $ given by%
\begin{equation*}
\max \left\{ \varphi \left( x\right) ,\inf_{y,z\text{ nonozero orthogonal
projections, }y\sim z}\left\Vert x-(y+z)\right\Vert \right\} \text{.}
\end{equation*}%
As above, it is clear that the set of properly infinite projections is
contained in the zeroset of $S$. In order to show that $\eta $ is a stable
definable predicate whose zeroset is the set of properly infinite
projections, it remains to show the following: for every $\varepsilon >0$
there exists $\delta >0$ such that if $A$ is a C*-algebra and $r\in A$ is
such that $\eta ^{A}\left( r\right) <\delta $ then there exists a properly
infinite projection $r^{\prime }\in A$ such that $\left\Vert r-r^{\prime
}\right\Vert <\varepsilon $. Suppose then that $r\in A$ satisfies $\eta
^{A}\left( r\right) <\delta $. Again, by stability of $\varphi $ we can
suppose that $r$ is itself a nonzero projection. Furthermore there exist
orthogonal nonzero projections $p,q\in A$ such that $\left\Vert r-\left(
p+q\right) \right\Vert <\delta $. Thus $r^{\prime }:=p+q$ is a nonzero
purely infinite projection such that $\left\Vert r-r^{\prime }\right\Vert
<\delta $, concluding the proof.

\subsubsection{Scalars}

Recall that we are tacitly assuming C*-algebras to be unital. Thus, after
identifying a complex number with the corresponding scalar multiple of the
unit, we can identify $\mathbb{C}$ with a subalgebra of any given
C*-algebra. Below we let $x$ be a variable with corresponding domain $D_{1}$%
. Then we have that $\left[ 0,1\right] $ is definable, as the zeroset of the
stable formula $\left\Vert x-\left\Vert x\right\Vert 1\right\Vert $. Using
this fact and Proposition \ref{Proposition:quantify}, one can conclude that $%
\inf_{t_{0},t_{1}\in \left[ 0,1\right] }\left\Vert x-\left(
t_{0}-t_{1}\right) \right\Vert $ is also a definable predicate, which is
obviously stable. Its zeroset is $\left[ -1,1\right] $. Analogously, the set 
$\mathbb{D}=\left\{ \lambda \in \mathbb{C}:\left\vert \lambda \right\vert
\leq 1\right\} $ is the zeroset of the definable predicate $%
\inf_{t_{0},t_{1}\in \lbrack -1,1]}\left\Vert x-\left( t_{0}+it_{1}\right)
\right\Vert $, and the set $\mathbb{T}=\left\{ \lambda \in \mathbb{C}%
:\left\vert \lambda \right\vert =1\right\} $ is the zeroset of the definable
predicate $\max \{\left\Vert x^{\ast }x-1\right\Vert ,\left\Vert xx^{\ast
}-1\right\Vert ,\inf_{t_{0},t_{1}\in \lbrack -1,1]}\left\Vert x-\left(
t_{0}+it_{1}\right) \right\Vert \}$. Thus all these sets are definable.

\subsection{More axiomatizable classes of C*-algebras\label%
{Subsection:axiomatize-C*-more}}

At this point, we will use the possibility of quantifying over definable
sets guaranteed by Proposition \ref{Proposition:quantify} to show that
several other important classes of C*-algebras are axiomatizable.

\subsubsection{Stably finite C*-algebra}

Suppose that $A$ is a C*-algebra. An isometry $v$ in $A$ is an element of $A$
satisfying $v^{\ast }v=1$. A C*-algebra $A$ is \emph{finite }if every
isometry is a unitary, and \emph{stably finite }if $M_{n}\left( A\right) $
is finite for every $n\in \mathbb{N}$. A similar proof as in the case of
partial isometries---see \S \ref{Partial isometries}---shows that the set of
isometries is definable. Therefore the class of finite C*-algebra is
elementary, as witnessed by the axiom%
\begin{equation*}
\sup_{v\text{ isometry}}\left\Vert vv^{\ast }-1\right\Vert \leq 0\text{.}
\end{equation*}%
Since the language of C*-algebras contains relation symbols for the norm in $%
M_{n}\left( A\right) $ for $n\in \mathbb{N}$, one can similarly conclude
that the set of isometries in $M_{n}\left( A\right) $ is definable.
Henceforth, the same argument shows that the class of stably finite
C*-algebras is axiomatizable as well.

\subsubsection{Real rank zero C*-algebras}

By definition, a C*-algebra has \emph{real rank zero }if the set of
selfadjoint elements with finite spectrum is dense in the set of all
selfadjoint elements. While this definition does not make it apparent that
this is a definable property, we can consider the following useful
equivalent characterization. A C*-algebra has real rank zero if and only if
for every pair of positive elements $a,b$ in $A$ of norm at most $1$ and
every $\varepsilon >0$ there exists a projection $p\in A$ such that $\max
\left\{ \left\Vert pa\right\Vert ,\left\Vert \left( 1-p\right) b\right\Vert
\right\} <\left\Vert ab\right\Vert ^{1/2}+\varepsilon $ \cite[Theorem 2.6]%
{brown_c*-algebras_1991}. In other words, a C*-algebra has real rank zero if
and only if it satisfies the condition%
\begin{equation*}
\sup_{\substack{ x,y\in D_{1}  \\ x,y\text{ positive}}}\inf_{\substack{ z\in
D_{1}  \\ z\text{ projection}}}\max \left\{ \left\Vert zx\right\Vert
,\left\Vert \left( 1-z\right) y\right\Vert \right\} -\left\Vert
xy\right\Vert ^{1/2}\leq 0\text{.}
\end{equation*}%
By Proposition \ref{Proposition:quantify} and the fact that the sets of
projections and positive contractions are definable, this witnesses that
real rank zero C*-algebras form an axiomatizable class.

\subsubsection{Purely infinite simple C*-algebras}

The notion of \emph{purely infinite} C*-algebra can be defined in terms of
the notion of Cuntz equivalence of positive elements. This is an equivalence
relation for positive elements in a C*-algebra, generalizing the relation of
Murray--von Neumann equivalence for projections. Suppose that $A$ is a
C*-algebra, and $a,b$ are positive elements of $A$. Then one sets $a\precsim
b$ if there exists a sequence $\left( x_{n}\right) $ in $A$ such that $%
x_{n}^{\ast }bx_{n}\rightarrow a$ for $n\rightarrow +\infty $. Then $a$ and $%
b$ are \emph{Cuntz equivalent}, in formulas $a\sim b$, if $a\precsim b$ and $%
b\precsim a$. It can be shown that this is indeed an equivalence relation
which, in the case of projections, coincides with Murray-von Neumann
equivalence.

For a selfadjoint element $a$ of a C*-algebra $A$ we let $a_{+}$ be the 
\emph{positive part }of $A$. In other words, $a_{+}$ is the element $f\left(
a\right) $ of $A$ where $f:\mathbb{R}\rightarrow \mathbb{R}$ is the function%
\begin{equation*}
f\left( t\right) =\left\{ 
\begin{array}{cc}
0 & \text{if }t\leq 0 \\ 
t & \text{otherwise.}%
\end{array}%
\right.
\end{equation*}%
The following lemma is proved in \cite[Lemma 2.4]{kirchberg_infinite_2002}.

\begin{lemma}
\label{Lemma:subequivalence}If $a,b$ are positive elements of a C*-algebra $%
A $ of norm at most $1$, and $n\in \mathbb{N}$ is such that $a\precsim (b-%
\frac{1}{n^{2}})_{+}$ then there exists $c\in A$ such that $\left\Vert
c\right\Vert \leq 2n$ and $a=c^{\ast }bc$.
\end{lemma}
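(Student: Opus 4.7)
The plan is to construct $c$ explicitly via continuous functional calculus on $b$, applied to $a^{1/2}$. Specifically, I would take a continuous function $g : [0,\|b\|] \to [0,n]$ with $g(t) = t^{-1/2}$ on $[1/n^2, \|b\|]$, extended continuously on $[0, 1/n^2]$ with $g(0) = 0$ (for instance by linear interpolation, which makes $\|g\|_\infty = n$), and set $c := g(b)\, a^{1/2} \in A$. The key identity is that $\psi(t) := g(t)^2\, t$ is continuous and satisfies $\psi \equiv 1$ on $[1/n^2, \|b\|]$, so that in the computation of $c^*bc$ the $b$ is ``absorbed'' by $g(b)^2$ on the relevant part of the spectrum.

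To set this up I would first observe that by the definition of $a \precsim (b - 1/n^2)_+$, there is a sequence $(x_k)$ in $A$ with $x_k^*(b - 1/n^2)_+ x_k \to a$. Each term lies in the hereditary subalgebra $B := \overline{(b - 1/n^2)_+\, A\, (b - 1/n^2)_+}$, hence so does $a$. Being a C*-subalgebra, $B$ is closed under continuous functional calculus applied to its positive elements, and therefore $a^{1/2} \in B$ as well. A short computation shows $B \subseteq \overline{(b - 1/n^2)_+^{1/2}\, A}$ (the closed left ideal generated by $(b-1/n^2)_+^{1/2}$), so in particular $a^{1/2}$ lies in this closed left ideal.

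Next I would expand
\[
c^* b\, c \;=\; a^{1/2}\bigl(g(b)\, b\, g(b)\bigr) a^{1/2} \;=\; a^{1/2}\, \psi(b)\, a^{1/2}.
\]
Since $\psi \equiv 1$ on $[1/n^2, \|b\|]$, continuous functional calculus gives $\psi(b)\,(b - 1/n^2)_+^{1/2} = (b - 1/n^2)_+^{1/2}$, hence $\psi(b)\, y = y$ for every $y \in \overline{(b - 1/n^2)_+^{1/2}\, A}$. Applied to $y = a^{1/2}$, this yields $\psi(b)\, a^{1/2} = a^{1/2}$; taking adjoints also gives $a^{1/2}\, \psi(b) = a^{1/2}$. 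Substituting back, $c^* b c = a^{1/2} \cdot a^{1/2} = a$. For the norm, $\|c\|^2 = \|c^*c\| = \|a^{1/2}\, g(b)^2\, a^{1/2}\| \leq \|g\|_\infty^2\, \|a\| \leq n^2$, so $\|c\| \leq n \leq 2n$ (the extra factor of $2$ in the stated bound leaves room for a cruder estimate if one prefers a different choice of $g$).

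The main obstacle I expect is not the functional-calculus manipulations but rather the clean identification $a^{1/2} \in \overline{(b - 1/n^2)_+^{1/2}\, A}$: one needs to verify carefully that the hereditary subalgebra generated by $(b - 1/n^2)_+$ coincides with that of $(b - 1/n^2)_+^{1/2}$ and sits inside the above closed left ideal. Once this small but essential hereditary-subalgebra fact is in hand, everything else reduces to the spectral identity $\psi \equiv 1$ on $[1/n^2, \|b\|]$ together with the bound $\|g\|_\infty = n$.
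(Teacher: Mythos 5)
There is a genuine gap, and it is fatal to the construction as written. Your first step claims that each approximant $x_k^*\,(b-\tfrac{1}{n^2})_+\,x_k$ lies in the hereditary subalgebra $B=\overline{(b-\tfrac{1}{n^2})_+\,A\,(b-\tfrac{1}{n^2})_+}$, hence that $a\in B$. This is false: writing $e=(b-\tfrac{1}{n^2})_+$, an element of the form $x^*ex=(e^{1/2}x)^*(e^{1/2}x)$ lies in the closed two-sided \emph{ideal} generated by $e$, but not in $\overline{eAe}$; it is the ``flip'' $(e^{1/2}x)(e^{1/2}x)^*$ that lies in $\mathrm{Her}(e)$. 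Cuntz subequivalence $a\precsim e$ never forces $a$ into $\mathrm{Her}(e)$. A concrete counterexample: in $A=M_2(\mathbb{C})$ take $b=\mathrm{diag}(1,0)$, $a=\mathrm{diag}(0,1)$, $n=2$, so $(b-\tfrac14)_+=\mathrm{diag}(\tfrac34,0)$ and $a=x^*(b-\tfrac14)_+x$ for $x=\tfrac{2}{\sqrt3}E_{12}$, whence $a\precsim(b-\tfrac14)_+$. Here $a\notin\mathrm{Her}\bigl((b-\tfrac14)_+\bigr)=\mathbb{C}E_{11}$, your $g(b)=E_{11}$, and your $c=g(b)a^{1/2}=E_{11}E_{22}=0$, so $c^*bc=0\neq a$ (while the correct $c=E_{12}$ does exist). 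Everything downstream of the false inclusion --- in particular the identity $\psi(b)\,a^{1/2}=a^{1/2}$ --- collapses with it.

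The functional-calculus half of your argument is the right idea, but it must be applied to a different square root of $a$. The actual proof (this survey only cites \cite[Lemma 2.4]{kirchberg_infinite_2002} and does not reprove it) first uses Rørdam's lemma on Cuntz subequivalence to produce, from $a\precsim(b-\tfrac{1}{n^2})_+$, an element $v\in A$ with $v^*v=a$ and $vv^*\in\mathrm{Her}\bigl((b-\delta)_+\bigr)$ for some $0<\delta<\tfrac{1}{n^2}$ (one cannot avoid shrinking the cut-off here; this loss is precisely why the stated bound is $2n$ rather than $n$). From $vv^*\in\mathrm{Her}((b-\delta)_+)$ one gets $v\in\overline{(b-\delta)_+^{1/2}A}$ via $v=\lim_m (vv^*)^{1/m}v$, and then your computation goes through verbatim with $c:=g(b)v$ in place of $g(b)a^{1/2}$, where now $g(t)=t^{-1/2}$ on $[\delta,\|b\|]$: one finds $c^*bc=v^*\psi(b)v=v^*v=a$ and $\|c\|\leq\delta^{-1/2}\|a\|^{1/2}\leq 2n$ for a suitable choice of $\delta$. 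The missing ingredient you would need to supply is exactly this replacement of $a^{1/2}$ by $v$; without it the hereditary-subalgebra fact you flagged as ``small but essential'' is not merely unverified but false.
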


A nontrivial C*-algebra $A$ is \emph{purely infinite }if it has no
nontrivial abelian quotients, and whenever $a,b$ are nonzero positive
elements of $A$ such that $b$ belongs to the closed two-sided ideal
generated by $a$, then $a\precsim b$ \cite[Proposition 4.1.1]%
{rordam_classification_2002}. A C*-algebra is \emph{simple }if it contains
no nontrivial closed two-sided ideals. For simple C*-algebras, being purely
infinite is equivalent to the assertion that $A$ is nontrivial, and whenever 
$a,b$ are nonzero positive elements of $A$, $a\sim b$. Furthermore, for
nontrivial simple C*-algebras being purely infinite is equivalent to the
assertion that $A$ has real rank zero, and any nonzero projection in $A$ is
properly infinite.

We have already seen that the class of real rank zero C*-algebras is
axiomatizable. Furthermore, the sets of nonzero projections and of properly
infinite projections are both definable, which easily implies that the class
of C*-algebras with the property that any nonzero projection is properly
infinite, is axiomatizable. Let now $\mathcal{C}$ be the axiomatizable class
of real rank C*-algebras with the property that any nonzero projection is
properly infinite. In view of the remarks above, for a C*-algebra $A$ in $%
\mathcal{C}$ being simple (and hence purely infinite) is equivalent to the
assertion that any two nonzero positive elements of $A$ are Cuntz
equivalent. Since any nonzero positive element $a$ of $A$ is Cuntz
equivalent to any nonzero positive scalar multiple of $A$, this is in turn
equivalent to the assertion that for any two positive elements $a,b$ of $A$
satisfying $\left\Vert a\right\Vert >1/2$ and $\left\Vert b\right\Vert >1/2$
one has that $a\sim b$. By Lemma \ref{Lemma:subequivalence} applied in the
case when $n=2$ this is in turn equivalence to the assertion that if $a,b$
are positive elements of $A$ then for every $\varepsilon >0$ there exists $%
c\in A$ such that $\left\Vert c\right\Vert \leq 4$ and $\left\Vert a-c^{\ast
}bc\right\Vert <\varepsilon $. This condition is clearly axiomatized by the
condition%
\begin{equation*}
\sup_{x,y\in D_{1}}\min \left\{ \left\Vert x\right\Vert -1/2,\left\Vert
y\right\Vert -1/2,\inf_{z\in D_{1}}\left\Vert a-\left( 4z\right) ^{\ast
}b\left( 4z\right) \right\Vert \right\} \leq 0\text{.}
\end{equation*}%
This shows that the class of purely infinite simple C*-algebras is
axiomatizable.

\section{Ultraproducts and ultrapowers\label{Section:ultra}}

\subsection{Ultraproducts in the logic for metric structures}

Ultraproducts and ultrapowers are a fundamental construction in model
theory, both in its discrete version and its generalization for metric
structures. We introduce this construction in the general setting of an
arbitrary language $L$. We work in the setting of languages with domains of
quantification introduced in Subsection \ref{Subsection:domains}.

Let then $I$ be a set, to be considered as an \emph{index set}. (The reader
can consider the case when $I=\mathbb{N}$, for simplicity.) An \emph{%
ultrafilter }$\mathcal{U}$ over $I$ is a nonempty collection of subsets of $%
I $ with the property that $\varnothing \notin \mathcal{U}$, if $A,B\in 
\mathcal{U}$ then $A\cap B\in \mathcal{U}$, and for every $A\in I$ either $%
A\in \mathcal{U}$ or $I\setminus A\in \mathcal{U}$. One should consider $%
\mathcal{U}$ as a notion of largeness, where a set $A\subset I$ is large if
it belongs to $\mathcal{U}$. In the spirit of this interpretation, $\mathcal{%
U}$ can be thought of as a finitely-additive $\left\{ 0,1\right\} $-valued
measure on $I$ which is defined for arbitrary subsets of $A$ by 
\begin{equation*}
A\mapsto \left\{ 
\begin{array}{cc}
1 & \text{if }A\in \mathcal{U}\text{,} \\ 
0 & \text{if }A\notin \mathcal{U}\text{.}%
\end{array}%
\right.
\end{equation*}%
Conversely, any finitely-additive $\left\{ 0,1\right\} $-valued measure on $%
I $ which is defined on all subsets of $I$ arises from an ultrafilter in
this fashion. An ultrafilter $\mathcal{U}$ over $I$ is \emph{principal} if
it contains a finite set (whence it contains a singleton), and \emph{%
nonprincipal} otherwise. A stronger properly than being nonprincipal is
being \emph{countably incomplete}, which means that it contains a sequence $%
\left( X_{n}\right) $ of elements with empty intersection. When $I$ is
countable, these two notions coincide.

Fix an ultrafilter $\mathcal{U}$ over $I$. Consistently with the
interpretation of ultrafilters as notions of largeness, following \cite%
{todorcevic_introduction_2010} we introduce the notation of \emph{%
ultrafilter quantifiers}. Let $P$ be a property that elements of $I$ may or
may not have. Then we write $\left( \mathcal{U}i\right) P\left( i\right) $
if the set of elements $i$ of $I$ for which $P$ holds belongs to $\mathcal{U}
$.

Suppose now that $f$ is a continuous function $f$ from $I$ to a compact
Hausdorff space $X$. One can then define the limit $\lim_{i\rightarrow 
\mathcal{U}}f\left( i\right) \in X$. This is the unique element $t$ of $X$
such that, for every neighborhood $U$ of $t$ in $X$, $\left( \mathcal{U}%
i\right) f\left( i\right) \in U$. It is clear that, since $X$ is Hausdorff,
there exists at most one such an element of $X$. In order to see that such
an element of $X$ exists, consider the collection $\mathcal{F}$ of nonempty
closed subsets of the form%
\begin{equation*}
\overline{\left\{ f\left( i\right) :i\in A\right\} }
\end{equation*}%
for $A\in \mathcal{U}$. Then $\mathcal{F}$ satisfies the finite intersection
property, and by compactness of $X$ one has that $\bigcap \mathcal{F}$ is
nonempty. If $x$ is an element of $\bigcap \mathcal{F}$ and $U$ is an open
neighborhood of $x$, then we claim that $\left( \mathcal{U}i\right) f\left(
i\right) \in U$. Indeed, if this is not the case, then $\left( \mathcal{U}%
i\right) f\left( i\right) \notin U$. Thus there exists $A\in \mathcal{U}$
such that $\overline{\left\{ f\left( i\right) :i\in A\right\} }\subset
X\setminus U$. Since $\overline{\left\{ f\left( i\right) :i\in A\right\} }%
\in \mathcal{F}$, this contradicts the fact that $x\in \bigcap \mathcal{F}$.

Suppose now that $\left( M_{i}\right) _{i\in I}$ is a family of $L$%
-structures indexed by $I$. The ultraproduct $\prod_{\mathcal{U}}M_{i}$ is
the $L$-structure $M$ defined as follows. For every domain $D$ in the
language $L$, consider the product $\prod_{i\in I}D^{M_{i}}$. This is
naturally endowed with a pseudometric given by $d\left( \boldsymbol{a},%
\boldsymbol{b}\right) =\lim_{i\rightarrow \mathcal{U}}d\left(
a_{i},b_{i}\right) $. Here and in the following, we denote by $\boldsymbol{a}
$ an $I$-sequence $\left( a_{i}\right) _{i\in I}$ with $a_{i}\in M_{i}$ for $%
i\in I$. Then one can define $M^{D}$ to be the metric space obtained from
such a pseudometric. If $\boldsymbol{a}$ is an element of $\prod_{i\in
I}D^{M_{i}}$, then we let $\left[ \boldsymbol{a}\right] $ be the
corresponding element of $D^{M}$. If $D_{0},D_{1}$ are domains such that $%
D_{0}\leq D_{1}$, then by definition of structure one has that $%
D_{0}^{M_{i}}\subset D_{1}^{M_{i}}$ for every $i\in I$. Thus one can
canonically identify isometrically $D_{0}^{M}$ with a subspace of $D_{1}^{M}$%
. Since the collection $\mathcal{D}$ of domains is directed, the union $%
\bigcup_{D\in \mathcal{D}}D^{M}$ is itself a metric space, and we let $M$ to
be the completion of such a metric space.

We now define the interpretation of function symbols in $M$. Suppose that $f$
is an $n$-ary function symbol in $L$ and $D_{1},\ldots ,D_{n}$ are domains.
Then one can consider the corresponding output domain $D=D_{D_{1},\ldots
,D_{n}}^{f}$ and the continuity modulus $\varpi =\varpi _{D_{1},\ldots
,D_{n}}^{f}$ as prescribed by the language $L$. Then, for every $i\in I$,
the interpretation of $f$ in $M_{i}$ gives a function $%
f^{M_{i}}:D_{1}^{M_{i}}\times \cdots \times D_{n}^{M_{i}}\rightarrow
D^{M_{i}}$ which is uniformly continuous with modulus $\varpi $. Therefore
one can define $f^{M}:D_{1}^{M}\times \cdots \times D_{n}^{M}\rightarrow
D^{M}$ by setting%
\begin{equation*}
f^{M}\left( \left[ \boldsymbol{a}^{(1)}\right] ,\ldots ,\left[ \boldsymbol{a}%
^{(n)}\right] \right) =[(f^{M_{i}}(a_{i}^{(1)},\ldots ,a_{i}^{(n)}))_{i\in
I}]\text{.}
\end{equation*}%
This is again a uniformly continuous function with modulus $\varpi $.
Letting $D_{1},\ldots ,D_{n}$ range among all the domains in the language $L$
one can then define, by extending it to the completion, the function $%
f^{M}:M^{n}\rightarrow M$, which is still uniformly continuous with modulus $%
\varpi $.

Suppose now that $R$ is an $n$-ary relation symbol in $L$ and $D_{1},\ldots
,D_{n}$ are domains. Then one can consider the corresponding output domain $%
D=D_{D_{1},\ldots ,D_{n}}^{f}$, the continuity modulus $\varpi =\varpi
_{D_{1},\ldots ,D_{n}}^{f}$, and the bound $J=J_{D_{1},\ldots ,D_{n}}^{R}$
as prescribed by the language $L$. Then for every $i\in I$, the
interpretation of $R$ in $M_{i}$ gives a function $R^{M_{i}}:D_{1}^{M_{i}}%
\times \cdots \times D_{n}^{M_{i}}\rightarrow J$ which is uniformly
continuous with modulus $\varpi $. Therefore one can define $%
R^{M}:D_{1}^{M}\times \cdots \times D_{n}^{M}\rightarrow J$ by setting%
\begin{equation*}
R^{M}\left( \left[ \boldsymbol{a}^{(1)}\right] ,\ldots ,\left[ \boldsymbol{a}%
^{(n)}\right] \right) =\lim_{i\rightarrow \mathcal{U}}R^{M_{i}}(a_{i}^{(1)},%
\ldots ,a_{i}^{(n)})\in J\text{.}
\end{equation*}%
This is again a uniformly continuous function with modulus $\varpi $.
Letting $D_{1},\ldots ,D_{n}$ range among all the domains in the language $L$
one can then define, by extending it to the completion, the function $%
R^{M}:M^{n}\rightarrow \mathbb{R}$, which is still uniformly continuous with
modulus $\varpi $.

\subsection{\L os' theorem}

\L os' theorem is the fundamental result in model theory that relates the
construction of ultraproducts with notion of formulas. Let us adopt the
notation of the previous section.

Assume that $t$ is an $L$-term with variables within $x_{1},\ldots ,x_{n}$
which have $D_{1},\ldots ,D_{n}$ as corresponding domains. Then one can
easily see by induction on the complexity of $t$ that one can define an
output domain $D=D_{D_{1},\ldots ,D_{n}}^{t}$ and a continuity modulus $%
\varpi =\varpi _{D_{1},\ldots ,D_{n}}^{t}$ in terms of the output domains
and continuity moduli of the function symbols in $L$, such that for any $L$%
-structure $N$ the interpretation $t^{N}$ of $t$ in $N$ is a function $%
t^{N}:D_{1}^{N}\times \cdots \times D_{n}^{N}\rightarrow D^{N}$ with
continuity modulus $\varpi $. In particular, this guarantees that, if $M_{%
\mathcal{U}}$ is the ultraproduct $\prod_{\mathcal{U}}M_{i}$, then the
function $D_{1}^{M_{\mathcal{U}}}\times \cdots \times D_{n}^{M_{\mathcal{U}%
}}\rightarrow D^{M_{\mathcal{U}}}$, 
\begin{equation*}
\left( \left[ \boldsymbol{a}^{(1)}\right] ,\ldots ,\left[ \boldsymbol{a}%
^{(n)}\right] \right) \mapsto \lbrack t^{M_{i}}(a_{i}^{(1)},\ldots
,a_{i}^{(n)})]
\end{equation*}%
is a well-defined function with continuity modulus $\varpi $. Furthermore,
it is also easy to show by induction on the complexity of the term $t$, and
using the definition of the interpretation of function symbols in $M_{%
\mathcal{U}}$, that such a function coincides with the interpretation $t^{M_{%
\mathcal{U}}}$ of the term $t$ in $M_{\mathcal{U}}$.

Suppose now that $\varphi $ is an $L$-formula with free variables within $%
x_{1},\ldots ,x_{n}$. Again, one can show by induction on the complexity of $%
\varphi $ that one can define a bound $J=J_{D_{1},\ldots ,D_{n}}^{\varphi }$
and a continuity modulus $\varpi =\varpi _{D_{1},\ldots ,D_{n}}^{\varphi }$%
---in terms of the bounds, output domains, and continuity moduli of the
terms, connectives, and relation symbols that appear in $\varphi $---such
that for any $L$-structure $M$ the interpretation $\varphi ^{M}$ of $\varphi 
$ in $M$ is a function $\varphi ^{M}:D_{1}^{M}\times \cdots \times
D_{n}^{M}\rightarrow J$ with continuity modulus $\varpi $. Again, this
guarantees that, if $M_{\mathcal{U}}$ is the ultraproduct $\prod_{\mathcal{U}%
}M_{i}$, the function $D_{1}^{M_{\mathcal{U}}}\times \cdots \times D_{n}^{M_{%
\mathcal{U}}}\rightarrow J$, 
\begin{equation*}
\left( \left[ \boldsymbol{a}^{(1)}\right] ,\ldots ,\left[ \boldsymbol{a}%
^{(n)}\right] \right) \mapsto \lim_{i\rightarrow \mathcal{U}}\varphi
^{M_{i}}(a_{i}^{(1)},\ldots ,a_{i}^{(n)})]
\end{equation*}%
is well defined and uniformly continuous with modulus $\varpi $.
Furthermore, an induction on the complexity of $\varphi $ shows that such a
function coincides with the interpretation of $\varphi $ in $M_{\mathcal{U}}$%
. Summarizing, we have the following statement, which is the content of \L %
os' theorem.

\begin{theorem}
\label{Theorem:Los}Let $L$ be a language, and $\mathcal{U}$ be an
ultrafilter on a set $I$. Fix an $I$-sequence $\left( M_{i}\right) _{i\in I}$
of $L$-structures, and denote by $M_{\mathcal{U}}$ their ultraproduct $%
\prod_{\mathcal{U}}M_{i}$. Then for any $L$-formula $\varphi \left(
x_{1},\ldots ,x_{n}\right) $ with free variables within $x_{1},\ldots ,x_{n}$
with domains $D_{1},\ldots ,D_{n}$, and for every $\left( \left[ \boldsymbol{%
a}^{(1)}\right] ,\ldots ,\left[ \boldsymbol{a}^{(n)}\right] \right) \in
D_{1}^{M_{\mathcal{U}}}\times \cdots \times D_{n}^{M_{\mathcal{U}}}$ one has
that%
\begin{equation*}
\varphi ^{M_{\mathcal{U}}}\left( \left[ \boldsymbol{a}^{(1)}\right] ,\ldots ,%
\left[ \boldsymbol{a}^{(n)}\right] \right) =\lim_{i\rightarrow \mathcal{U}%
}\varphi ^{M_{i}}(a^{(1)},\ldots ,a_{i}^{(n)})\text{.}
\end{equation*}%
In particular, if $\varphi $ is a sentence, then%
\begin{equation*}
\varphi ^{M_{\mathcal{U}}}=\lim_{i\rightarrow \mathcal{U}}\varphi ^{M_{i}}%
\text{.}
\end{equation*}
\end{theorem}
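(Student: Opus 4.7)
The plan is to prove the theorem by induction on the complexity of the formula $\varphi$, after first establishing the analogous ultraproduct-commutation statement at the level of $L$-terms. For terms a parallel induction handles the three recursive cases: a variable $x_j$ trivially gives $x_j^{M_{\mathcal{U}}}([\boldsymbol{a}^{(1)}],\ldots,[\boldsymbol{a}^{(n)}]) = [\boldsymbol{a}^{(j)}]$; a constant symbol passes through by the definition of its interpretation in $M_{\mathcal{U}}$; and for $t = f(t_1,\ldots,t_k)$ one applies the inductive hypothesis to each $t_\ell$ and then invokes the definition of $f^{M_{\mathcal{U}}}$ on equivalence classes of sequences. This yields
\[ t^{M_{\mathcal{U}}}([\boldsymbol{a}^{(1)}],\ldots,[\boldsymbol{a}^{(n)}]) = \bigl[\bigl(t^{M_i}(a_i^{(1)},\ldots,a_i^{(n)})\bigr)_{i\in I}\bigr], \]
together with a uniform continuity modulus inherited from the moduli of the symbols appearing in $t$, exactly as sketched in the paragraphs preceding the theorem.

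With terms in hand, the atomic case $\varphi = R(t_1,\ldots,t_k)$ follows by combining the term identity with the definition of $R^{M_{\mathcal{U}}}$ as an ultralimit of the $R^{M_i}$. For a continuous connective $\varphi = q(\varphi_1,\ldots,\varphi_m)$ the inductive step reduces to the standard fact that continuous functions commute with ultralimits of bounded nets: each $\varphi_j^{M_i}(\bar{a}_i)$ lies in a fixed compact bound $J_j$ prescribed by the language, and uniform continuity of $q$ on $J_1 \times \cdots \times J_m$ gives
\[ \lim_{i\to\mathcal{U}} q\bigl(\varphi_1^{M_i}(\bar{a}_i),\ldots,\varphi_m^{M_i}(\bar{a}_i)\bigr) = q\bigl(\lim_{i\to\mathcal{U}} \varphi_1^{M_i}(\bar{a}_i),\ldots,\lim_{i\to\mathcal{U}} \varphi_m^{M_i}(\bar{a}_i)\bigr), \]
and the inductive hypothesis identifies each inner ultralimit with the corresponding $\varphi_j^{M_{\mathcal{U}}}$-value.

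The main obstacle is the quantifier step, which I handle for $\varphi(\bar{x}) = \inf_{y \in D} \psi(\bar{x},y)$; the $\sup$ case is symmetric. Writing $r_i := \inf_{b \in D^{M_i}} \psi^{M_i}(\bar{a}_i,b)$, the target identity reads
\[ \inf_{[\boldsymbol{b}] \in D^{M_{\mathcal{U}}}} \psi^{M_{\mathcal{U}}}\bigl([\boldsymbol{a}^{(1)}],\ldots,[\boldsymbol{a}^{(n)}],[\boldsymbol{b}]\bigr) = \lim_{i\to\mathcal{U}} r_i. \]
The inequality $\geq$ uses only the inductive hypothesis applied to $\psi$: any $[\boldsymbol{b}] \in D^{M_{\mathcal{U}}}$ is represented by some $(b_i)$ with $b_i \in D^{M_i}$, so $\psi^{M_{\mathcal{U}}}(\ldots,[\boldsymbol{b}]) = \lim_{\mathcal{U}} \psi^{M_i}(\bar{a}_i,b_i) \geq \lim_{\mathcal{U}} r_i$, and passing to the infimum preserves the bound. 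For $\leq$, fix $\varepsilon > 0$ and choose $b_i \in D^{M_i}$ with $\psi^{M_i}(\bar{a}_i,b_i) < r_i + \varepsilon$ for every $i$; the assembled sequence $\boldsymbol{b}$ gives a genuine element $[\boldsymbol{b}] \in D^{M_{\mathcal{U}}}$, essentially by the definition of $D^{M_{\mathcal{U}}}$ as equivalence classes in $\prod_i D^{M_i}$, and the inductive hypothesis yields $\psi^{M_{\mathcal{U}}}(\ldots,[\boldsymbol{b}]) \leq \lim_{\mathcal{U}} r_i + \varepsilon$. The essential feature being exploited is that coordinatewise $\varepsilon$-selection against the infimum in each factor automatically produces a legitimate element of the quantification domain in the ultraproduct, with no further regularity assumed on the $M_i$. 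Letting $\varepsilon \to 0$ completes the inductive step, and specializing to $n = 0$ yields the sentence statement.
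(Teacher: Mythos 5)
Your proof is correct and follows essentially the same route as the paper: an induction on the complexity of terms establishing $t^{M_{\mathcal{U}}}([\boldsymbol{a}^{(1)}],\ldots,[\boldsymbol{a}^{(n)}]) = [(t^{M_i}(a_i^{(1)},\ldots,a_i^{(n)}))_{i\in I}]$, followed by an induction on the complexity of formulas using the uniform continuity moduli and compact bounds prescribed by the language. The only difference is one of detail: the paper leaves the quantifier step as an assertion, whereas you spell out the two inequalities via coordinatewise $\varepsilon$-selection, which is the standard and correct way to complete that step.
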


It follows from \L os' theorem that and axiomatizable class of C*-algebras
is closed under ultraproducts. From Theorem \ref{Theorem:Los} one can deduce
that the same conclusions hold for definable predicates rather than
formulas. Using this fact, one can reformulate semantically the assertion
that a predicate $P$ is stable in terms of ultraproducts, as follows:

\begin{proposition}
\label{Proposition:characterize-stable}Suppose that $P\left( \bar{x}\right) $
is a definable predicate for the elementary class of $L$-structures $%
\mathcal{C}$. Then the following assertions are equivalent:

\begin{enumerate}
\item $P\left( \bar{x}\right) $ is stable;

\item for any sequence $\left( M_{n}\right) _{n\in \mathbb{N}}$ of
structures in $\mathcal{C}$, for any nonprincipal ultrafilter $\mathcal{U}$
over $\mathbb{N}$, any tuple $\bar{a}$ in $M:=\prod_{\mathcal{U}}M_{n}$
satisfying the condition $P\left( \bar{x}\right) =0$ admits a representative
sequence $\left( \bar{a}^{(n)}\right) _{n\in \mathbb{N}}$ of tuples $\bar{a}%
^{(n)}$ in $M_{n}$ such that $\left( \mathcal{U}n\right) $, $P^{M_{n}}\left( 
\bar{a}^{(n)}\right) =0$.
\end{enumerate}
\end{proposition}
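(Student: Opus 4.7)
The plan is to establish both implications by combining Łoś's theorem (Theorem \ref{Theorem:Los}) with a diagonal construction on representing sequences. The harder direction is (1)$\Rightarrow$(2), where stability has to be transferred from single structures to a representing sequence via countable incompleteness of $\mathcal{U}$; the reverse direction is a short contrapositive using Łoś.

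For (1)$\Rightarrow$(2), fix $\bar{a} = [\boldsymbol{a}] \in M_{\mathcal{U}} := \prod_{\mathcal{U}} M_n$ with representative $\boldsymbol{a} = (\bar{a}^{(n)})_{n}$ and $P^{M_{\mathcal{U}}}(\bar{a}) = 0$. By Theorem \ref{Theorem:Los} (extended to definable predicates as uniform limits of formulas) one gets $\lim_{n \to \mathcal{U}} P^{M_n}(\bar{a}^{(n)}) = 0$. Stability supplies, for each $k \geq 1$, a threshold $\delta_k > 0$ such that $|P^{N}(\bar{c})| < \delta_k$ forces the existence of $\bar{c}'$ in the same product of domains with $P^N(\bar{c}') = 0$ and $d(\bar{c},\bar{c}') < 1/k$. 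Set $A_k = \{n : |P^{M_n}(\bar{a}^{(n)})| < \delta_k\} \in \mathcal{U}$, and replace $A_k$ by $A_1 \cap \cdots \cap A_k$ to make these sets decreasing in $k$. Since $\mathcal{U}$ is nonprincipal on $\mathbb{N}$ it is countably incomplete, so pick a decreasing sequence $\mathbb{N} = Y_0 \supseteq Y_1 \supseteq \cdots$ with each $Y_k \in \mathcal{U}$ and $\bigcap_k Y_k = \emptyset$, and let $Z_k = A_k \cap Y_k \in \mathcal{U}$. Define $k(n) = \max\{k : n \in Z_k\}$, which is a well-defined natural number since $\bigcap_k Z_k = \emptyset$. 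Because $\{n : k(n) \geq K\} \supseteq Z_K \in \mathcal{U}$ for every $K$, we have $\lim_{n\to\mathcal{U}} k(n) = \infty$. For each $n$ with $k(n) \geq 1$, apply stability in $M_n$ to obtain $\bar{b}^{(n)}$ in the appropriate domains with $P^{M_n}(\bar{b}^{(n)}) = 0$ and $d(\bar{a}^{(n)}, \bar{b}^{(n)}) < 1/k(n)$; for the remaining $n$ set $\bar{b}^{(n)}$ arbitrarily in those domains. Applying Łoś's theorem to the metric symbol yields $[\boldsymbol{b}] = \bar{a}$ in $M_{\mathcal{U}}$, while $P^{M_n}(\bar{b}^{(n)}) = 0$ on the $\mathcal{U}$-large set $\{n : k(n) \geq 1\}$, which is exactly (2).

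For (2)$\Rightarrow$(1), argue by contrapositive. If $P$ is not stable, then some $\varepsilon > 0$ produces, for each $n$, a structure $M_n \in \mathcal{C}$ and a tuple $\bar{a}^{(n)}$ in the relevant domains such that $|P^{M_n}(\bar{a}^{(n)})| < 1/n$ and yet no $\bar{b}^{(n)}$ satisfies both $d(\bar{a}^{(n)}, \bar{b}^{(n)}) < \varepsilon$ and $P^{M_n}(\bar{b}^{(n)}) = 0$. Pick any nonprincipal ultrafilter $\mathcal{U}$ on $\mathbb{N}$ and form $M_{\mathcal{U}} = \prod_{\mathcal{U}} M_n$. Łoś's theorem gives $P^{M_{\mathcal{U}}}([\boldsymbol{a}]) = 0$, so (2) furnishes a representative $(\bar{b}^{(n)})$ of $[\boldsymbol{a}]$ with $P^{M_n}(\bar{b}^{(n)}) = 0$ on a $\mathcal{U}$-large set. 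Since $[\boldsymbol{b}] = [\boldsymbol{a}]$ in $M_{\mathcal{U}}$ means $\lim_{n \to \mathcal{U}} d(\bar{a}^{(n)}, \bar{b}^{(n)}) = 0$, both conditions $d(\bar{a}^{(n)}, \bar{b}^{(n)}) < \varepsilon$ and $P^{M_n}(\bar{b}^{(n)}) = 0$ hold simultaneously on a $\mathcal{U}$-large set, contradicting the choice of $\bar{a}^{(n)}$.

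The main obstacle is the diagonalization in (1)$\Rightarrow$(2): stability provides a modulus $\delta_k$ for each accuracy $1/k$ that is \emph{uniform} across $\mathcal{C}$, but assembling a single representative sequence $(\bar{b}^{(n)})$ whose coordinatewise distances to $\bar{a}^{(n)}$ vanish along $\mathcal{U}$ requires organizing these local choices coherently, which is precisely what the countable-incompleteness construction of $k(n) \to \infty$ achieves. No ingredient beyond Łoś's theorem, stability, and this standard diagonal trick is needed.
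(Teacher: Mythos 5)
Your proposal is correct and follows essentially the same route as the paper: the (1)$\Rightarrow$(2) direction uses the identical diagonalization via stability moduli $\delta_k$, a decreasing chain of $\mathcal{U}$-large sets with empty intersection, and the index $k(n)\to\infty$ along $\mathcal{U}$ (the paper builds the empty intersection into the sets $J_m$ by requiring $n\geq m$, while you intersect with an auxiliary chain $Y_k$ — a cosmetic difference). Your explicit contrapositive for (2)$\Rightarrow$(1) correctly fills in what the paper only sketches as "easily proved by contradiction."
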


\begin{proof}
We prove that (1) implies (2). Suppose that $P\left( \bar{x}\right) $ is
stable. Thus for every $m\in \mathbb{N}$ there exists $\delta _{m}\in
(0,2^{-m}]$ such that, if $M$ is a structure in $\mathcal{C}$ and $\overline{%
b}$ is a tuple in $M$ satisfying the condition $\left\vert P\left( \bar{x}%
\right) \right\vert \leq \delta _{m}$, then there is a tuple $\bar{a}$ in $M$
such that $d\left( \bar{a},\overline{b}\right) \leq 2^{-m}$, and $\bar{a}$
satisfies the condition $P\left( \bar{x}\right) =0$.

Suppose that $\mathcal{U}$, $\left( M_{n}\right) _{n\in \mathbb{N}}$, and $%
\overline{\boldsymbol{a}}$ are as in (2). Fix any representative sequence $(%
\bar{b}^{(n)})_{n\in \mathbb{N}}$ of $\overline{\boldsymbol{a}}$. Then, for
every $m\in \mathbb{N}$, $\left( \mathcal{U}n\right) $, $\bar{b}^{(n)}$
satisfies the condition $\left\vert P\left( \bar{x}\right) \right\vert \leq
\delta _{m}$. Thus, for every $m\in \mathbb{N}$, the set%
\begin{equation*}
J_{m}:=\left\{ n\in \mathbb{N}:n\geq m\text{ and }\bar{b}^{(n)}\text{
satisfies the condition }\left\vert P\left( \bar{x}\right) \right\vert \leq
\delta _{m}\right\}
\end{equation*}%
belongs to $\mathcal{U}$. Since $\bigcap_{m\in \mathbb{N}}J_{m}=\varnothing $%
, for every $n\in \mathbb{N}$ there exists a largest $m\left( n\right) \in 
\mathbb{N}$ such that $i\in J_{m\left( n\right) }$, if it exists. Otherwise,
we set $m\left( n\right) =0$.

For $n\in \mathbb{N}$ such that $m\left( n\right) >0$ define $\bar{a}^{(n)}$
to be a tuple in $M_{n}$ satisfying the conditions $d(x,\bar{b}^{(n)})\leq
2^{-m}$ and $P\left( \bar{x}\right) =0$. Observe that such a tuple exists by
the choice of $\delta _{m}$ and the definition of $m\left( n\right) $. If $%
m\left( n\right) =0$ define $\bar{a}^{(n)}$ arbitrarily. If $m\in \mathbb{N}$%
, then we have that the set of $n\in \mathbb{N}$ such that $d(a^{(n)},\bar{b}%
^{(n)})\leq 2^{-m}$ and $P^{M_{n}}(\bar{a}^{(n)})=0$ contains $J_{m}$, which
belongs to $\mathcal{U}$. Therefore $\left( \bar{a}^{(n)}\right) $ is a
representative sequence for $\overline{\boldsymbol{a}}$. This concludes the
proof that (1) implies (2). The converse implication can be easily proved
reasoning by contradiction.
\end{proof}

\subsection{Ultraproducts of C*-algebras}

The general notion of ultraproduct in the logic for metric structures
recovers the usual notion of ultraproduct of C*-algebras, when these are
considered as structures in the language $L^{\text{C*}}$ introduced in
Subsection \ref{Subsection:C*-language}. Explicitly, suppose that $\mathcal{U%
}$ is an ultrafilter over a set $I$, and $\left( A_{i}\right) _{i\in I}$ is
an $I$-sequence of C*-algebras. Then one can let $\ell ^{\infty }\left(
A_{i}\right) _{i\in I}$ be the C*-algebra consisting of all bounded
sequences $\boldsymbol{a}\in \prod_{i\in I}A_{i}$ endowed with the supremum
norm $\left\Vert \boldsymbol{a}\right\Vert =\sup_{i\in I}\left\Vert
a_{i}\right\Vert $. This C*-algebra contains the closed two-sided ideal $J_{%
\mathcal{U}}$ consisting of those elements $\boldsymbol{a}\in \ell ^{\infty
}\left( A_{i}\right) _{i\in I}$ such that $\lim_{i\rightarrow \mathcal{U}%
}\left\Vert a_{i}\right\Vert =0$. Then $\prod_{\mathcal{U}}A_{i}$ is by
definition the quotient of $\ell ^{\infty }\left( A_{i}\right) _{i\in I}$ by 
$J_{\mathcal{U}}$. If $\left[ \boldsymbol{a}\right] $ is the image in $%
\prod_{\mathcal{U}}A_{i}$ of an element $\boldsymbol{a}$ of $\ell ^{\infty
}\left( A_{i}\right) _{i\in I}$, then $\left\Vert \left[ \boldsymbol{a}%
\right] \right\Vert =\lim_{i\rightarrow \mathcal{U}}\left\Vert
a_{i}\right\Vert $. This clearly shows that such a notion of ultraproduct
indeed coincides with the notion of ultraproduct of C*-algebras as $L^{\text{%
C*}}$-structures.

Let now $A$ be a C*-algebra, and $\mathcal{U}$ is an ultrafilter. One can
then consider the ultrapower $A^{\mathcal{U}}$, and identify $A$ as a
C*-subalgebra of $A^{\mathcal{U}}$ via the diagonal embedding. The \emph{%
relative commutant }$A^{\prime }\cap A^{\mathcal{U}}$ is the set of elements 
$a$ of $A^{\mathcal{U}}$ that commute with every element of $A$.

\subsection{Quantifier-free formulas and weakly semiprojective C*-algebras 
\label{Subsection:quantifier-free}}

Suppose that $L$ is a language. An $L$-formula $\varphi $ is \emph{%
quantifier-free }if no quantifier appears in $\varphi $. Equivalently, $%
\varphi $ is of the form $q\left( \varphi _{1},\ldots ,\varphi _{n}\right) $
where $q:\mathbb{R}^{n}\rightarrow \mathbb{R}$ is a continuous function and $%
\varphi _{1},\ldots ,\varphi _{n}$ are atomic formulas. The notion of \emph{%
positive quantifier-free }formula is defined similarly, where one
furthermore demands that the connective $q:\mathbb{R}^{n}\rightarrow \mathbb{%
R}$ be nondecreasing, in the sense that $q\left( \bar{s}\right) \leq q\left( 
\bar{r}\right) $ whenever $\bar{s},\bar{r}\in \mathbb{R}^{n}$ satisfy $%
s_{i}\leq r_{i}$ for $i=1,2,\ldots ,n$.

One can then define the notion of (positive) \emph{quantifier-free }%
definable predicate, by replacing arbitrary formulas with (positive)
quantifier-free ones. A condition $\varphi \left( \bar{x}\right) \leq r$ is
then called quantifier-free if the definable predicate $\varphi \left( \bar{x%
}\right) $ is quantifier-free. A (positive) \emph{quantifier-free definable
set} is then a definable set, whose definability is witnessed by a
(positive) quantifier-free definable predicate. For instance, the unitary
group or the set of projections in a C*-algebra are positive quantifier-free
definable sets.

Let us consider now the language of C*-algebras $L^{\text{C*}}$, and let $%
\varphi \left( \bar{x}\right) $ be a positive quantifier-free definable
predicate. One can then define the \emph{universal C*-algebra }$A_{\varphi }$%
, if it exists, satisfying the condition $\varphi \left( \bar{x}\right) =0$.
This is a C*-algebra $A_{\varphi }$ containing a tuple $\bar{a}$ satisfying
the condition $\varphi \left( \bar{x}\right) =0$ and generating $A_{\varphi
} $ as a C*-algebra, which satisfies the following universal property: if $B$
is any C*-algebra, and $\bar{b}$ is a tuple in $B$ satisfying $\varphi
\left( \bar{x}\right) =0$, then there exists a unital *-homomorphism $\Phi
:A_{\varphi }\rightarrow B$ such that $\Phi \left( \bar{a}\right) =\bar{b}$.
For instance, in the case of the condition $\max \left\{ \left\Vert x^{\ast
}x-1\right\Vert ,\left\Vert xx^{\ast }-1\right\Vert \right\} =0$ whose
zeroset is the unitary group, the universal C*-algebra is the algebra $%
C\left( \mathbb{T}\right) $ of continuous functions over the set $\mathbb{T}$
of complex numbers of modulus $1$. For the condition%
\begin{equation*}
\max \left\{ \left\Vert x_{i}^{2}-x_{i}\right\Vert ,\left\Vert
x_{i}-x_{i}^{\ast }\right\Vert ,\left\Vert x_{i}x_{j}\right\Vert :1\leq
i\neq j\leq n\right\} =0
\end{equation*}%
whose zeroset is the set of $n$-tuples of pairwise orthogonal projections,
the universal C*-algebra is $\mathbb{C}^{n}$.

One can equivalently reformulate stability of a positive quantifier-free
definable predicate $\varphi \left( \bar{x}\right) $ in terms of properties
of the universal C*-algebra $A_{\varphi }$, if it exists. Suppose that $%
\left( B_{n}\right) $ is a sequence of C*-algebras, and $\mathcal{U}$ is an
ultrafilter over $\mathbb{N}$. Given an element $J$ of $\mathcal{U}$, one
can define a canonical quotient mapping $\pi _{J}:\ell ^{\infty }\left(
B_{n}\right) _{n\in J}\rightarrow \prod_{\mathcal{U}}B_{n}$ mapping $\left(
a_{n}\right) _{n\in J}$ to the element of $\prod_{\mathcal{U}}B_{n}$ having $%
\left( a_{n}\right) _{n\in J}$ as representing sequence.

\begin{definition}
A C*-algebra $A$ is \emph{weakly semiprojective }if, for every sequence $%
\left( B_{n}\right) _{n\in \mathbb{N}}$ of C*-algebras, for every
ultrafilter $\mathcal{U}$ over $\mathbb{N}$, and for every unital
*-homomorphism $\Phi :A\rightarrow \prod_{\mathcal{U}}B_{n}$, there exists $%
J\in \mathcal{U}$ and a unital *-homomorphism $\hat{\Phi}:A\rightarrow \ell
^{\infty }\left( B_{n}\right) _{n\in J}$ such that $\Phi =\pi _{J}\circ \hat{%
\Phi}$.
\end{definition}

Suppose now that $\varphi $ is a positive quantifier-free definable
predicate such that the condition $\varphi \left( \bar{x}\right) =0$ has a
universal C*-algebra $A_{\varphi }$.\ Then one can easily deduce from
Proposition \ref{Proposition:characterize-stable} the following
characterization, which recovers \cite[Theorem 4.1.4]{loring_lifting_1997}.

\begin{theorem}
\label{Theorem:weakly-semiprojective}The positive quantifier-free definable
predicate $\varphi $ is stable if and only if $A_{\varphi }$ is weakly
semiprojective.
\end{theorem}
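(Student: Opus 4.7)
The plan is to use the universal property of $A_\varphi$ to translate between unital *-homomorphisms $A_\varphi \to B$ and tuples in $B$ satisfying $\varphi(\bar{x}) = 0$, and then combine this with the ultraproduct characterization of stability supplied by Proposition \ref{Proposition:characterize-stable}.

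For the forward direction, suppose $\varphi$ is stable and fix a unital *-homomorphism $\Phi : A_\varphi \to \prod_{\mathcal{U}} B_n$. Let $\bar{a}$ be the canonical generating tuple of $A_\varphi$, and set $\bar{b} := \Phi(\bar{a})$. A key observation is that positive quantifier-free predicates are monotone in the norms of their *-polynomial atomic subformulas, and since *-homomorphisms are contractive on *-polynomials, this forces $\varphi^{M}(\Phi(\bar{a})) \le \varphi^{A_\varphi}(\bar{a}) = 0$ for any *-homomorphism with target $M$; hence $\bar{b}$ satisfies $\varphi(\bar{x}) = 0$ in the ultraproduct. By Proposition \ref{Proposition:characterize-stable}, I may pick a representative sequence $(\bar{b}^{(n)})$ for $\bar{b}$ such that $J := \{n : \varphi^{B_n}(\bar{b}^{(n)}) = 0\} \in \mathcal{U}$. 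For each $n \in J$, the universal property of $A_\varphi$ yields a unital *-homomorphism $\hat\Phi_n : A_\varphi \to B_n$ with $\hat\Phi_n(\bar{a}) = \bar{b}^{(n)}$. Assembling these gives $\hat\Phi : A_\varphi \to \ell^\infty(B_n)_{n \in J}$, $c \mapsto (\hat\Phi_n(c))_{n \in J}$, which is well-defined (each $\hat\Phi_n$ is contractive) and satisfies $\pi_J \circ \hat\Phi = \Phi$ because the two maps agree on the generating tuple $\bar{a}$.

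For the reverse direction, assume $A_\varphi$ is weakly semiprojective and suppose toward a contradiction that $\varphi$ is not stable. Proposition \ref{Proposition:characterize-stable} then provides a sequence $(B_n)$ of C*-algebras, a nonprincipal ultrafilter $\mathcal{U}$ on $\mathbb{N}$, and a tuple $\bar{b}$ in $M := \prod_{\mathcal{U}} B_n$ with $\varphi^{M}(\bar{b}) = 0$ but admitting \emph{no} representative sequence eventually in the zeroset of $\varphi$. The universal property of $A_\varphi$ gives a unital *-homomorphism $\Phi : A_\varphi \to M$ with $\Phi(\bar{a}) = \bar{b}$. Weak semiprojectivity yields $J \in \mathcal{U}$ and a lift $\hat\Phi : A_\varphi \to \ell^\infty(B_n)_{n \in J}$ with $\pi_J \circ \hat\Phi = \Phi$. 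Writing $\hat\Phi = (\hat\Phi_n)_{n \in J}$ and setting $\bar{b}^{(n)} := \hat\Phi_n(\bar{a})$ produces a representative sequence for $\bar{b}$ with $\varphi^{B_n}(\bar{b}^{(n)}) = 0$ for every $n \in J$ (again by the monotonicity/contractivity observation), contradicting the choice of $\bar{b}$.

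I expect the main obstacle to be a conceptual one rather than a computational one: bridging the syntactic/semantic notion of stability (about formulas and zerosets in ultraproducts) with the algebraic notion of weak semiprojectivity (about lifting *-homomorphisms). Once one observes that the universal property of $A_\varphi$ identifies tuples satisfying $\varphi(\bar{x}) = 0$ in a target $B$ with unital *-homomorphisms $A_\varphi \to B$, the equivalence becomes essentially formal. The only technical point that requires genuine care is the monotonicity step showing that positive quantifier-free predicates descend along *-homomorphisms; this is where the hypothesis of \emph{positive} quantifier-free (rather than merely quantifier-free) is essential, since non-monotone connectives need not interact well with the mere contractivity of *-homomorphisms.
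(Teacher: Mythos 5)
Your proof is correct and is precisely the deduction the paper has in mind: the paper gives no written proof beyond asserting that the theorem "can easily be deduced from Proposition \ref{Proposition:characterize-stable}," and your argument carries out exactly that deduction, using the universal property of $A_{\varphi}$ to pass between unital *-homomorphisms and tuples in the zeroset, together with the monotonicity of positive quantifier-free predicates under *-homomorphisms. The only points you elide are harmless at the paper's level of rigor: the principal-ultrafilter case of weak semiprojectivity is trivial, and concluding $\varphi=0$ from $\varphi\leq 0$ uses the (implicit) nonnegativity of the predicates defining zerosets.
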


As an application of Theorem \ref{Theorem:weakly-semiprojective}, one can
consider the C*-algebra $M_{n}\left( \mathbb{C}\right) $ of complex $n\times
n$ matrices endowed with the operator norm. This is the universal C*-algebra
associated with the definable predicate $\varphi _{M_{n}\left( \mathbb{C}%
\right) }\left( x_{ij}\right) $ given by%
\begin{equation*}
\max \left\{ \left\Vert x_{ij}x_{k\ell }-\delta _{jk}x_{i\ell }\right\Vert
,\left\vert 1-\left\Vert x_{ij}\right\Vert \right\vert ,\left\Vert
x_{ij}^{\ast }-x_{ji}\right\Vert ,\left\vert
1-\sum_{m=1}^{n}x_{m,m}\right\vert :1\leq i,j,k,\ell \leq n\right\} \text{.}
\end{equation*}%
The zeroset of $\varphi _{M_{n}\left( \mathbb{C}\right) }$ in a C*-algebra $%
A $ is the set of \emph{matrix units }for a unital copy of $M_{n}\left( 
\mathbb{C}\right) $ inside $A$. The C*-algebra $M_{n}\left( \mathbb{C}%
\right) $ is weakly semiprojective \cite[Theorem 10.2.3]{loring_lifting_1997}%
, hence the predicate $P$ is stable. More generally, given a
finite-dimensional C*-algebra $F$, i.e.\ a finite sum $M_{n_{1}}\left( 
\mathbb{C}\right) \oplus \cdots \oplus M_{n_{\ell }}\left( \mathbb{C}\right) 
$, one can consider the definable predicate $P_{F}$ whose zeroset in a
C*-algebra $A$ is the set of matrix units for a unital copy of $F$ inside $A$%
. The fact that $F$ is weakly semiprojective shows that such a definable
predicate $P_{F}$ is stable.

\subsection{Saturation}

One important feature of ultrapowers is their being very \textquotedblleft
rich\textquotedblright . To make this precise, we introduce the notion of
countably saturated structure. Suppose that $L$ is a language, and $M$ is an 
$L$-structure. Let $A$ be a subset of $M$. Then one can consider the
language $L\left( A\right) $ obtained starting from $L$ by adding a constant
symbol $c_{a}$ for every element $a$ of $A$. Then one can canonically regard 
$M$ as an $L\left( A\right) $ structure by defining the interpretation of
the constant symbol $c_{a}$ in $M$ to be $a$ itself. We refer to formulas in
the language $L\left( A\right) $ as $L$-formulas with parameters from $A$.

If $\mathcal{C}$ is a class of $L$-structure, then we say that $L$ is \emph{%
separable for} $\mathcal{C}$ if, for every $n\in \mathbb{N}$ and variables $%
x_{1},\ldots x_{n}$ of domains $D_{1},\ldots ,D_{n}$, the seminorm on the
space of $L$-formulas $\varphi \left( x_{1},\ldots ,x_{n}\right) $ defined by%
\begin{equation*}
\left\Vert \varphi \right\Vert =\sup \left\{ \left\vert \varphi \left( \bar{a%
}\right) \right\vert :M\in \mathcal{C},\bar{a}\in D_{1}^{M}\times \cdots
\times D_{n}^{M}\right\}
\end{equation*}%
is separable.

Recall that and $L$-condition in the variables $x_{1},\ldots ,x_{n}$ of
domains $D_{1},\ldots ,D_{n}$ is an expression of the form $\varphi \left(
x_{1},\ldots ,x_{n}\right) \leq r$ where $\varphi $ is an $L$-formula---or,
more generally, a definable predicate---in the free variables $x_{1},\ldots
,x_{n}$, and $r\in \mathbb{R}$. If $M$ is an $L$-structure and $\bar{a}\in
D_{1}^{M}\times \cdots \times D_{n}^{M}$, then $\left( a_{1},\ldots
,a_{n}\right) $ realizes such a condition if $\varphi ^{M}\left(
a_{1},\ldots ,a_{n}\right) \leq r$. A collection of $L$-conditions in the
variables $\bar{x}=\left( x_{n}\right) _{n\in \mathbb{N}}$ is called an $L$%
-type in the variables $\bar{x}$. If $\mathfrak{t}$ is an $L$-type in the
variables $\bar{x}$, then we also write $\mathfrak{t}\left( \bar{x}\right) $%
. Given an $L$-type $\mathfrak{t}$ we let $\mathfrak{t}^{+}$ be the $L$-type
consisting of conditions $\varphi \left( \bar{x}\right) \leq r+\varepsilon $
where $\varphi \left( \bar{x}\right) \leq r$ is a condition in $\mathfrak{t}$
and $\varepsilon >0$. Given an $L$-structure $M$ and a tuple $\bar{a}$ in $M$%
, one defines the \emph{complete }$L$-type of $\bar{a}$ to be the type $%
\mathfrak{t}\left( \bar{x}\right) $ consisting of all the $L$-conditions
that are satisfied by $\bar{a}$.

\begin{definition}
Suppose that $\mathfrak{t}$ is an $L$-type in the variables $\bar{x}$ and $M$
is an $L$-structure. Then $\mathfrak{t}$ is:

\begin{itemize}
\item \emph{realized }in $M$ if there exist $a_{n}\in D_{n}^{M}$ for $n\in 
\mathbb{N}$ such that the sequence $\left( a_{n}\right) $ realizes every
condition in $\mathfrak{t}$;

\item \emph{approximately realized }in $M$ if every finite set of conditions
in $\mathfrak{t}^{+}$ is realized in $M$.
\end{itemize}
\end{definition}

The notion realized and approximately realized types allows one to define
the property of countable saturation for structures, which formalizes the
intuitive idea of being \textquotedblleft rich\textquotedblright .

\begin{definition}
\label{Definition:saturated}An $L$-structure $M$ is \emph{countably saturated%
} if for every separable subset $A$ of $M$ and for every $L\left( A\right) $%
-type $\mathfrak{t}$, if $\mathfrak{t}$ is approximately realized in $M$
then $\mathfrak{t}$ is realized in $M$.
\end{definition}

A fundamental feature of ultraproducts associated with countably incomplete
ultrafilters is their being countably saturated.

\begin{proposition}
\label{Proposition:countably-saturated}Let $L$ be a first-order language,
and $\mathcal{C}$ is a class of $L$-structures such that $L$ is separable
for $\mathcal{C}$. If $\mathcal{U}$ is a countably incomplete filter on a
set $I$, and $\left( M_{i}\right) _{i\in I}$ is an $I$-sequence of
structures in $\mathcal{C}$, then the ultraproduct $\prod_{\mathcal{U}}M_{i}$
is countably saturated.
\end{proposition}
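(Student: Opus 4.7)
My plan is to combine a countable reduction via separability with a diagonal argument driven by the countable incompleteness of $\mathcal{U}$, transferring finite approximate realizations from $\prod_{\mathcal{U}} M_i$ back to the coordinates using \L os' theorem, and then assembling a global realization coordinatewise.

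First I would set up the reduction. Fix a separable $A \subseteq \prod_{\mathcal{U}} M_i$ and an $L(A)$-type $\mathfrak{t}(\bar{x})$ that is approximately realized. Because $L$ is separable for $\mathcal{C}$, the space of $L(A)$-formulas (in fixed variables with fixed domains) is separable, so up to uniform approximation I may replace $\mathfrak{t}$ by a countable sub-type $\{\varphi_n(\bar x) \leq r_n : n \in \mathbb{N}\}$ whose parameters lie in a countable subset $A_0 \subseteq A$ — realizing this reduced type suffices to realize $\mathfrak{t}$ itself, after passing to $\mathfrak{t}^+$ and absorbing the $\varepsilon$'s. For each parameter $c \in A_0$ fix once and for all a representing sequence $(c_i)_{i\in I}$, so that each $\varphi_n$ becomes an $L$-formula with parameters $c_i$ in each coordinate $M_i$.

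Next I exploit countable incompleteness: pick a decreasing sequence $I = X_0 \supseteq X_1 \supseteq \cdots$ in $\mathcal{U}$ with $\bigcap_n X_n = \varnothing$. For each $n$, since $\{\varphi_k(\bar x) \leq r_k + 1/n : k \leq n\}$ is a finite subset of $\mathfrak{t}^+$, it is realized in $\prod_{\mathcal{U}} M_i$ by some tuple $\bar b^{(n)}$; fix a representative $(\bar b^{(n)}_i)_{i \in I}$. By \L os' theorem (Theorem \ref{Theorem:Los}) the set
\[
J_n = \bigl\{ i \in I : \varphi_k^{M_i}(\bar b^{(n)}_i) \leq r_k + 2/n \text{ for all } k \leq n \bigr\}
\]
belongs to $\mathcal{U}$. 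Replacing $J_n$ by $Z_n := X_n \cap \bigcap_{k\leq n} J_k$ I obtain a decreasing sequence $(Z_n) \subseteq \mathcal{U}$ with $\bigcap_n Z_n = \varnothing$. For each $i \in I$ let $n(i)$ be the largest $n$ with $i \in Z_n$ (set $n(i) = 0$ if $i \notin Z_1$); since the $Z_n$ decrease with empty intersection, $n(i)$ is finite for every $i$. Define $\bar a_i := \bar b^{(n(i))}_i$ when $n(i) \geq 1$, and arbitrarily otherwise.

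Finally I verify that the class $\bar a := [(\bar a_i)]$ realizes $\mathfrak{t}$. Fix $k$ and $\varepsilon > 0$, choose $N \geq k$ with $2/N < \varepsilon$, and observe that for every $i \in Z_N$ one has $n(i) \geq N \geq k$ and $i \in J_{n(i)}$, whence $\varphi_k^{M_i}(\bar a_i) \leq r_k + 2/n(i) \leq r_k + \varepsilon$. Since $Z_N \in \mathcal{U}$, \L os' theorem gives $\varphi_k^{M_{\mathcal{U}}}(\bar a) = \lim_{i \to \mathcal{U}} \varphi_k^{M_i}(\bar a_i) \leq r_k + \varepsilon$, and $\varepsilon$ was arbitrary. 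Thus $\bar a$ satisfies every condition of $\mathfrak{t}$.

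I expect the main delicate point to be the diagonal step: one must arrange the $Z_n$'s to be decreasing with empty intersection so that the index $n(i)$ is well-defined and tends to infinity along $\mathcal{U}$, and simultaneously arrange that on $Z_n$ the chosen witness $\bar a_i$ satisfies a quantitatively controlled approximation to the first $n$ conditions. The use of countable incompleteness is precisely what makes this diagonalization succeed; the rest of the argument is routine application of \L os and separability.
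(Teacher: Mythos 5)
Your proof is correct and follows essentially the same diagonal argument as the paper: countable incompleteness yields a decreasing sequence of sets in $\mathcal{U}$ with empty intersection, one defines the largest index $n(i)$ for each coordinate, and \L os' theorem transfers the approximate realizations back and forth. The only cosmetic difference is that you pick witnesses in the ultraproduct and pass to representatives, whereas the paper applies \L os to the existential sentences $\inf_{\bar{x}}\max\{\varphi_1,\ldots,\varphi_n\}$ and chooses witnesses coordinatewise; the two are interchangeable.
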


\begin{proof}
Set $M:=\prod_{\mathcal{U}}M_{i}$. Suppose that $A$ is a separable subset of 
$M$. We need to show that if an $L\left( A\right) $-type $\mathfrak{t}$ is
approximately realized in $M$, then it is realized in $M$. For every $\left[ 
\boldsymbol{a}\right] \in A$ fix a representative sequence $\boldsymbol{a}%
=\left( a_{i}\right) _{i\in I}$. For every $i\in I$ we can regard $M_{i}$ as
an $L\left( A\right) $-structure by declaring the interpretation $c_{%
\boldsymbol{a}}$ of $\boldsymbol{a}$ to be equal to $a_{i}$. Observe that,
since $A$ is separable and $L$ is separable for $\mathcal{C}$, we have that $%
L\left( A\right) $ is separable for $\left\{ M_{i}:i\in I\right\} \cup
\left\{ M\right\} $. Thus after replacing $L$ with $L\left( A\right) $ we
can assume without loss of generality that $A$ is empty.

Let thus $\mathfrak{t}$ be an $L$-type in the variables $\bar{x}$ of domains 
$\bar{D}$ which is approximately realized in $M$. Since $L$ is separable for 
$\mathcal{C}$, we can assume without loss of generality that $\mathfrak{t}$
consists of a sequence of conditions $\varphi _{n}\left( x_{1},\ldots
,x_{n}\right) \leq 0$ for $n\in \mathbb{N}$. Since by assumption $\mathcal{U}
$ is countably incomplete, we can fix a decreasing sequence $\left(
I_{n}\right) _{n\in \mathbb{N}}$ of elements of $\mathcal{U}$ with empty
intersection.

For every $n\in \mathbb{N}$ define $\psi _{n}$ to be the sentence $%
\inf_{x_{1},\ldots ,x_{n}}\max \{\varphi _{1}\left( x_{1}\right) ,\ldots
,\varphi _{n}\left( x_{1},\ldots ,x_{n}\right) \}$. By assumption, $%
\mathfrak{t}$ is approximately realized in $M$, hence $\psi _{n}^{M}<\frac{1%
}{n}$. Therefore by \L os' theorem, the set%
\begin{equation*}
J_{n}=\left\{ i\in I_{n}:\psi _{n}^{M}<\frac{1}{n}\right\}
\end{equation*}%
belongs to $\mathcal{U}$. Observe that $\left( J_{n}\right) _{n\in \mathbb{N}%
}$ is a decreasing sequence of elements of $\mathcal{U}$ with empty
intersection. Thus every $i\in I$ only belongs to finitely many of the $%
J_{n} $'s, and hence there exists a largest element $n\left( i\right) $ of $%
\mathbb{N}$ such that $i\in J_{n\left( i\right) }$. If $i$ does not belong
to any of the $J_{n}$'s then we set $n\left( i\right) =0$.

Define now, for $i\in I$, if $n\left( i\right) >0$, $\bar{a}_{i}=\left(
a_{1,i},\ldots ,a_{n\left( i\right) ,i}\right) \in D_{1}^{M_{i}}\times
\cdots \times D_{n\left( i\right) }^{M_{i}}$ such that $\psi _{n\left(
i\right) }(\bar{a}_{i})<1/n\left( i\right) $. For $i\in I$ such that $%
n\left( i\right) =0$ define $\bar{a}_{i}\in \bar{D}^{M_{i}}$ arbitrarily.
Now, it is clear from the definition of $n\left( i\right) $ that, for every $%
n\in \mathbb{N}$ and $i\in J_{n}$, $n\left( i\right) \geq n$. Let then $%
\boldsymbol{a}_{n}$ for $n\in \mathbb{N}$ be the element of $\prod_{\mathcal{%
U}}M_{i}$ with representative sequence $\left( a_{n,i}\right) _{i\in I}$.
Since $J_{n}\in \mathcal{U}$, we have that $\left( \mathcal{U}i\right) $, $%
\psi _{n}^{M_{i}}\left( \bar{a}_{i}\right) \leq 1/n$. Therefore by \L os'
theorem, $\psi _{n}^{M}\left( \boldsymbol{\bar{a}}\right) \leq 1/n$ for
every $n\in \mathbb{N}$. Since this holds for every $n\in \mathbb{N}$, we
have that $\varphi _{n}^{M}(\boldsymbol{\bar{a}})\leq 0$ for every $n\in 
\mathbb{N}$. This shows that $\boldsymbol{\bar{a}}$ witnesses that the type $%
\mathfrak{t}$ is realized in $M$.
\end{proof}

A type $\mathfrak{t}$ is called (positive) \emph{quantifier-free }if all the
conditions in $\mathfrak{t}$ involve (positive) quantifier-free formulas.
One can then naturally define the notion of \emph{(positive) quantifier-free
countably saturated structure} as in\ Definition \ref{Definition:saturated},
by replacing arbitrary types with (positive) quantifier-free ones.

It is easy to see that, if $M$ is a quantifier-free countably saturated
structure, and $\mathfrak{t}$ is a quantifier-free type, then the set of
realizations of $\mathfrak{t}$ in $M$ is quantifier-free countably
saturated. Particularly, if $A$ is a C*-algebra, and $\mathcal{U}$ is a
countably incomplete ultrafilter, then the relative commutant $A^{\prime
}\cap A^{\mathcal{U}}$ is quantifier-free countably saturated.

\subsection{Elementary equivalence and elementary embeddings}

Elementary equivalence is a key notion in model theory.\ Roughly speaking,
it asserts that two structures are indistinguishable as long as first-order
properties (i.e.\ properties that are captured by formulas) are concerned.
To make this precise, let $L$ be a language, and $M$ be an $L$-structure.
The \emph{theory }of $M$ is the multiplicative functional $\varphi \mapsto
\varphi ^{M}$ defined on the space of $L$-sentences, which maps an $L$%
-sentence to its interpretation in $M$.

\begin{definition}
Two $L$-structures are \emph{elementarily equivalent }if they have the same
theory.
\end{definition}

A straightforward induction on formulas shows that two isomorphic structures
are, in particular, elementarily equivalent. It follows from \L os' theorem
that if $M$ is an $L$-structure and $\mathcal{U}$ is an ultrafilter, then $M$
is elementarily equivalent to $M^{\mathcal{U}}$. In particular, if two
structures have isomorphic ultrapowers, then they are elementarily
equivalent. The continuous version of a classical result of Keisler and
Shelah asserts that, in fact, the converse holds as well. The notion of 
\emph{elementary embedding} is tightly connected with elementary equivalence.

\begin{definition}
\label{Definition:elementary-equivalence}An \emph{embedding }of $M$ to $N$
is a function $\Phi :M\rightarrow N$ such that, for every domain $D$ in $L$,
the image of $D^{M}$ under $\Phi $ is contained in $D^{N}$, and such that $%
\varphi (\Phi \left( \bar{a}\right) )=\varphi \left( \bar{a}\right) $ for
every \emph{atomic} formula $\varphi \left( \bar{x}\right) $ in the
variables $\bar{x}$ of domains $\bar{D}$ and $\bar{a}\in \bar{D}^{M}$.
\end{definition}

We say that $M$ is a \emph{substructure }of $N$ if $M\subset N$ and the
inclusion map is an embedding. Clearly, after renaming the elements of $M$,
one can always assume that a given embedding $\Phi :M\rightarrow N$ is
simply the inclusion map.

Suppose that $M\subset N$ is a substructure. Then one can regard $N$ as an $%
L\left( M\right) $-structure in the obvious way, by interpreting the
constant symbol $c_{a}$ associated with $a\in M$ as $a$ itself, regarded as
an element of $N$.

\begin{definition}
\label{Definition:elementary-embedding}Suppose that $M\subset N$ is a
substructure.\ Then $M$ is an \emph{elementary substructure }of $N$ if $M$
and $N$ are elementarily equivalent as $L\left( M\right) $-structures.
\end{definition}

The notion of \emph{elementary embedding} $\Phi :M\rightarrow N$ is defined
analogously. In this case, one can consider $N$ as an $L\left( M\right) $%
-structure by interpreting $c_{a}$ for $a\in M$ as $\Phi \left( a\right) $.

If $M$ is any $L$-structure and $\mathcal{U}$ is an ultrafilter, then there
is a canonical embedding $\Delta _{M}:M\rightarrow M^{\mathcal{U}}$ obtained
by mapping an element $a$ of $M$ to the element of $M^{\mathcal{U}}$ that
admits the sequence constantly equal to $a$ as representing sequence.\ It is
a consequence of \L os' theorem that this is in fact an \emph{elementary }%
embedding. A useful criterion to verify that an inclusion $M\subset N$ is
elementary is the following Tarski--Vaught test.

\begin{proposition}
\label{Proposition:Tarski-Vaught}Suppose that $M,N$ are $L$-structures such
that $M\subset N$. Assume that for every $L$-formula $\varphi \left( \bar{x}%
,y\right) $, where $\bar{x}$ are variables with domains $\bar{D}$ and $y$
has domain $D$, and for every tuple $\bar{a}\in \bar{D}^{M}$ one has that%
\begin{equation*}
\inf \left\{ \varphi \left( \bar{a},b\right) :b\in D^{N}\right\} =\inf
\left\{ \varphi \left( \bar{a},b\right) :b\in D^{M}\right\} \text{.}
\end{equation*}%
Then $M$ is an elementary substructure of $N$.
\end{proposition}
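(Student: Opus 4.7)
The plan is to prove by induction on the complexity of $L$-formulas that for every formula $\varphi(\bar x)$ with free variables of domains $\bar D$ and every tuple $\bar a \in \bar D^M$, one has $\varphi^M(\bar a) = \varphi^N(\bar a)$. Once this is established, any $L(M)$-sentence is obtained from some $L$-formula by filling in the free variables with elements of $M$, so its interpretations in $M$ and in $N$ agree; this is exactly what Definition \ref{Definition:elementary-embedding} demands for $M$ to be an elementary substructure of $N$.

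For the base case, any atomic formula $\varphi(\bar x)$ is preserved by the inclusion $M\subset N$ since, by hypothesis, this inclusion is an embedding in the sense of Definition \ref{Definition:elementary-equivalence}. For the connective step, if $\varphi = q(\varphi_1,\ldots,\varphi_n)$ with $q\colon\mathbb{R}^n\to\mathbb{R}$ continuous, then the inductive hypothesis applied to each $\varphi_i$ yields
\begin{equation*}
\varphi^M(\bar a) = q\bigl(\varphi_1^M(\bar a),\ldots,\varphi_n^M(\bar a)\bigr) = q\bigl(\varphi_1^N(\bar a),\ldots,\varphi_n^N(\bar a)\bigr) = \varphi^N(\bar a).
\end{equation*}

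The crux is the quantifier step. Suppose $\varphi(\bar x) = \inf_{y\in D}\psi(\bar x,y)$ and the claim holds for $\psi$. For every $b\in D^M$, the inductive hypothesis gives $\psi^M(\bar a,b) = \psi^N(\bar a,b)$; combining this with the hypothesis of the proposition applied to the formula $\psi$ and the parameter $\bar a\in\bar D^M$, we obtain
\begin{equation*}
\varphi^M(\bar a) = \inf_{b\in D^M}\psi^M(\bar a,b) = \inf_{b\in D^M}\psi^N(\bar a,b) = \inf_{b\in D^N}\psi^N(\bar a,b) = \varphi^N(\bar a).
\end{equation*}
The case $\varphi = \sup_{y\in D}\psi$ reduces to the previous one via the identity $\sup_{y\in D}\psi = -\inf_{y\in D}(-\psi)$, applying the hypothesis to $-\psi$, which is itself an $L$-formula obtained by composing $\psi$ with the continuous connective $t\mapsto -t$.

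The main obstacle, though essentially bookkeeping, lies in making sure that the Tarski--Vaught hypothesis is usable at each inductive step. Since it is postulated for \emph{every} $L$-formula in the variables $(\bar x,y)$, no restriction on the complexity of the subformula $\psi$ inside a quantifier is needed, and the induction goes through cleanly. One should also verify, when $L$ has domains of quantification, that the variable $y$ occurring in the Tarski--Vaught clause is allowed to have an arbitrary domain $D$, but this is implicit in the statement since $\varphi(\bar x,y)$ ranges over all $L$-formulas.
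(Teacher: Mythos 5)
Your proof is correct and follows exactly the route the paper indicates: induction on formula complexity, with the atomic and connective cases immediate and the Tarski--Vaught hypothesis invoked precisely at the quantifier step (the paper only sketches this in two sentences; you have filled in the same argument in detail, including the correct chain $\inf_{b\in D^M}\psi^M(\bar a,b)=\inf_{b\in D^M}\psi^N(\bar a,b)=\inf_{b\in D^N}\psi^N(\bar a,b)$ and the reduction of $\sup$ to $\inf$ via negation). No gaps.
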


\begin{proof}
One needs to show by induction on the complexity of a given formula $\psi
\left( \bar{x}\right) $ and tuple $\bar{a}$ in $M$, where $\bar{x}$ are
variables with domains $\bar{D}$ and $\bar{a}\in \bar{D}^{M}$, that $\psi
^{M}\left( \bar{a}\right) =\psi ^{N}\left( \bar{a}\right) $. The base case
when $\psi $ is atomic (or quantifier-free) is obvious. The assumption is
used to deal with the quantifier case.
\end{proof}

\subsection{Existential equivalence and existential embeddings}

In many cases, it is sufficient, and useful, to consider a suitably
restricted class of formulas. We have already considered the class of
quantifier-free formulas. The next natural restricted class of formulas
consists of \emph{existential formulas}. These are the formulas of the form $%
\inf_{\bar{x}}\varphi $ where $\varphi $ is a quantifier-free formula. The
name \textquotedblleft existential\textquotedblright\ is due to the fact
that $\inf $ is regarded as the continuous analog of the existential
quantifier $\exists $ from the logic for discrete structures.

An even more restrictive class consists of the \emph{positive }existential
formulas, which are those existential formulas of the form $\inf_{\bar{x}%
}q\left( \varphi _{1},\ldots ,\varphi _{n}\right) $ where $\varphi
_{1},\ldots ,\varphi _{n}$ are atomic formulas, and $q:\mathbb{R}%
^{n}\rightarrow \mathbb{R}$ is a continuous function which is \emph{%
nondecreasing}, in the sense that $q\left( \bar{r}\right) \leq q\left( \bar{s%
}\right) $ if $\bar{r},\bar{s}\in \mathbb{R}^{n}$ are such that $r_{i}\leq
s_{i}$ for $i\in \left\{ 1,2,\ldots ,n\right\} $. \emph{Universal formulas }%
and \emph{positive universal formulas }are defined and characterized in a
similar fashion, by replacing $\inf $ with $\sup $.

One can then define the (positive) existential theory of a structure $M$ to
be the functional $\varphi \mapsto \varphi ^{M}$ defined on the space of
(positive) existential formulas. It follows from \L os' theorem and
countable saturation of ultrapowers that, if $M,N$ are separable structures
and $\mathcal{U}$ is a countably incomplete ultrafilter, then $M$ and $N$
have the same existential theory if and only if $M$ embeds into $N^{\mathcal{%
U}}$ and $N$ embeds into $M^{\mathcal{U}}$. One can similarly characterize
the property of having the same positive existential theory in terms of
ultraproducts and morphisms.

\begin{definition}
Suppose that $M,N$ are structures and $\Phi :M\rightarrow N$ is a function.
Then $\Phi $ is a \emph{morphism} if for every domain $D$ , the image of $%
D^{M}$ under $\Phi $ is contained in $D^{N}$, and such that $\varphi (\Phi
\left( \bar{a}\right) )\leq \varphi \left( \bar{a}\right) $ for every \emph{%
atomic} formula $\varphi \left( \bar{x}\right) $ in the variables $\bar{x}$
of domains $\bar{D}$ and $\bar{a}\in \bar{D}^{M}$.
\end{definition}

One can show using countable saturation of ultraproducts and \L os' theorem
that, if $M$ and $N$ are separable structures and $\mathcal{U}$ is a
countably incomplete ultrafilter, then $M$ and $N$ have the same positive
existential theory if and only if $M$ admits a morphism to $N^{\mathcal{U}}$
and $N$ admits a morphism to $M^{\mathcal{U}}$. More generally, one has that 
$\varphi ^{M}\geq \varphi ^{N}$ for every existential (respectively,
positive existential) sentence if and only if there is an embedding
(respectively, a morphism) from $M$ to $N^{\mathcal{U}}$.

The notions of (positively) existential substructure and (positively)
existential embedding are defined in the same fashion as elementary
substructure and elementary embeddings, replacing arbitrary formulas with
(positive) existential formulas. The following characterization of
(positively) existential embedding follows again from \L os' theorem and
countable saturation of ultraproducts.

\begin{proposition}
\label{Proposition:characterize-existential}Suppose that $M,N$ are
structures, and $\Phi :M\rightarrow N$ is an embedding. Fix a countably
incomplete ultrafilter $\mathcal{U}$. The following assertions are
equivalent:

\begin{enumerate}
\item $\Phi $ is an existential (respectively, positively existential)
embedding;

\item there exists an embedding (respectively, a morphism) $\Psi
:N\rightarrow M^{\mathcal{U}}$ such that $\Psi \circ \Phi $ is equal to the
diagonal embedding $\Delta _{M}:M\rightarrow M^{\mathcal{U}}$.
\end{enumerate}
\end{proposition}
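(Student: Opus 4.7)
The plan is to prove the two implications separately, using \L{}os' theorem (Theorem~\ref{Theorem:Los}), countable saturation of the ultrapower (Proposition~\ref{Proposition:countably-saturated}), and the basic observation that existential (respectively, positively existential) formulas can only decrease under embeddings (respectively, morphisms). For the easy direction (2)$\Rightarrow$(1), assume $\Psi:N\to M^{\mathcal{U}}$ is an embedding with $\Psi\circ\Phi=\Delta_M$, and fix an existential formula $\varphi(\bar x)=\inf_{\bar y}\psi(\bar x,\bar y)$ with $\psi$ quantifier-free and a tuple $\bar a$ in $M$. The embedding $\Phi$ yields $\varphi^N(\Phi(\bar a))\leq \varphi^M(\bar a)$ by taking witnesses in $\Phi(M)\subseteq N$, and the embedding $\Psi$ similarly yields $\varphi^{M^{\mathcal{U}}}(\Psi\Phi(\bar a))\leq \varphi^N(\Phi(\bar a))$. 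Combining these with $\Psi\circ\Phi=\Delta_M$ and the identity $\varphi^{M^{\mathcal{U}}}(\Delta_M(\bar a))=\varphi^M(\bar a)$ (a consequence of \L{}os' theorem), we get equality throughout. The positive case is identical, with morphisms in place of embeddings and the monotonicity of positive quantifier-free connectives ensuring that the weak preservation $\varphi^N(\Phi(\bar a))\leq \varphi^M(\bar a)$ propagates through $\inf_{\bar y}$.

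For the harder direction (1)$\Rightarrow$(2), the idea is to realize an appropriate atomic diagram of $N$ over $\Phi(M)$ inside $M^{\mathcal{U}}$. Regard both $N$ and $M^{\mathcal{U}}$ as $L(M)$-structures by interpreting the constant $c_a$ for $a\in M$ as $\Phi(a)$ in $N$ and as $\Delta_M(a)$ in $M^{\mathcal{U}}$. Introduce a variable $y_b$ (with the domain inherited from $b$) for each $b\in N\setminus \Phi(M)$, and let $\mathfrak{t}$ be the type consisting of the conditions $|\varphi(c_{\bar a},\bar y_{\bar b})-\varphi^N(\Phi(\bar a),\bar b)|\leq 0$ as $\varphi$ ranges over atomic $L$-formulas and $\bar a$, $\bar b$ over tuples in $M$ and $N\setminus\Phi(M)$ respectively. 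Any realization of $\mathfrak{t}$ in $M^{\mathcal{U}}$ defines the desired embedding $\Psi:N\to M^{\mathcal{U}}$ by $\Psi(\Phi(a)):=\Delta_M(a)$ and $\Psi(b):=y_b$ otherwise; preservation of atomic formulas then gives the embedding property, and the constraint $\Psi\circ\Phi=\Delta_M$ is built in.

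To show $\mathfrak{t}$ is realized, note that under the standing separability of $M$ and $N$, the type $\mathfrak{t}$ is equivalent to a countable collection of conditions over a separable parameter set, so countable saturation of $M^{\mathcal{U}}$ reduces the problem to approximate realizability. Given a finite subset of $\mathfrak{t}^{+}$, say the conditions $|\varphi_k(c_{\bar a},\bar y_{[k]})-r_k|\leq \varepsilon$ for $k=1,\ldots,n$, form the existential formula
\begin{equation*}
\theta(\bar x):=\inf_{\bar y}\max_{1\leq k\leq n}|\varphi_k(\bar x,\bar y)-r_k|.
\end{equation*}
The tuples $\bar b$ in $N$ witness $\theta^N(\Phi(\bar a))=0$, so the hypothesis that $\Phi$ is an existential embedding yields $\theta^M(\bar a)=0$; hence there is $\bar b'$ in $M$ with $\max_{k}|\varphi_k(\bar a,\bar b')-r_k|<\varepsilon$, and the diagonal image $\Delta_M(\bar b')\in M^{\mathcal{U}}$ realizes the chosen finite subtype.

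The positive case proceeds identically, replacing $|\varphi_k-r_k|$ by $(\varphi_k-r_k)_{+}:=\max(\varphi_k-r_k,0)$ and the equality conditions in $\mathfrak{t}$ by inequalities $\varphi(c_{\bar a},\bar y_{\bar b})\leq \varphi^N(\Phi(\bar a),\bar b)$; the connective $(\cdot)_{+}$ is non-decreasing in its argument, making the corresponding $\theta$ positively existential, and a realization of the resulting inequality diagram corresponds to a morphism $N\to M^{\mathcal{U}}$. The main obstacle I anticipate is the bookkeeping in setting up $\mathfrak{t}$: attaching the correct domains of quantification to the variables $y_b$, verifying that the parameter set is separable so that Proposition~\ref{Proposition:countably-saturated} applies, and engineering the atomic diagram so that a single realization simultaneously delivers the embedding/morphism property and the commutation relation $\Psi\circ\Phi=\Delta_M$. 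Once this is in place, the existential (or positively existential) preservation hypothesis does exactly the work of passing from $N$-witnesses to $M$-witnesses that approximate realization requires.
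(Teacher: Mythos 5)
Your argument is correct and is precisely the proof the paper has in mind: the paper states this proposition without proof, remarking only that it ``follows again from \L os' theorem and countable saturation of ultraproducts,'' and your two directions (the three-term inequality chain closed by \L os' theorem for (2)$\Rightarrow$(1), and realizing the atomic/inequality diagram of $N$ over $\Phi(M)$ in $M^{\mathcal{U}}$ via countable saturation, with the existential hypothesis supplying approximate realizations, for (1)$\Rightarrow$(2)) are exactly that argument. The remaining bookkeeping you flag (domains for the $y_b$, separability of the parameter set, extension from a countable dense subset) is routine and does not hide any gap.
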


Suppose now that $\mathcal{C}$ is an elementary class of $L$-structures. An $%
L$-structure $M$ is said to be (positively) \emph{existentially closed }in $%
\mathcal{C}$ if it belongs to $\mathcal{C}$ and, whenever $N$ is a structure
in $\mathcal{C}$ containing $M$ as a substructure, the inclusion $M\subset N$
is existential.

\subsection{Positively existential embeddings of C*-algebras}

In the case of C*-algebras regarded as $L^{\text{C*}}$-structures, an \emph{%
embedding }is an injective unital *-homomorphism, and a \emph{morphism }is a
unital *-homomorphism. The general notion of (positively) existential
embedding yields a notion of (positively) existential embedding between
C*-algebras. If $A,B$ are separable C*-algebras, an embedding $\Phi
:A\rightarrow B$ is positively existential if and only if it is sequentially
split in the sense of \cite{barlak_sequentially_2016}.

Using positively existential embeddings, one can give a model-theoretic
description of relative commutants, as follows.

\begin{proposition}
\label{Proposition:commutant-existential}Suppose that $A,C$ are separable
C*-algebras, and $\mathcal{U}$ is a countably incomplete ultrafilter. Then
the following assertions are equivalent:

\begin{enumerate}
\item the embedding $1_{C}\otimes \mathrm{id}_{A}:A\rightarrow C\otimes
_{\max }A$ is positively existential;

\item there exists a morphism from $C$ to $A^{\prime }\cap A^{\mathcal{U}}$.
\end{enumerate}
\end{proposition}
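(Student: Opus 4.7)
The plan is to reduce the statement to Proposition \ref{Proposition:characterize-existential} via the universal property of the maximal tensor product. Recall that morphisms out of $C \otimes_{\max} A$ into a C*-algebra $B$ correspond bijectively to pairs of morphisms $\pi_{C}\colon C \to B$ and $\pi_{A}\colon A \to B$ with commuting images; in particular, the embedding $1_{C}\otimes \mathrm{id}_{A}\colon A\to C\otimes_{\max}A$ restricted to the second tensorand is indeed injective, so it qualifies as an embedding.

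For the implication (1)$\Rightarrow$(2), suppose that $1_{C}\otimes\mathrm{id}_{A}$ is positively existential. Applying Proposition \ref{Proposition:characterize-existential}, I obtain a morphism $\Psi\colon C\otimes_{\max}A\to A^{\mathcal{U}}$ such that $\Psi\circ(1_{C}\otimes\mathrm{id}_{A})=\Delta_{A}$. Define $\rho\colon C\to A^{\mathcal{U}}$ by $\rho(c)=\Psi(c\otimes 1_{A})$. Since $c\otimes 1_{A}$ and $1_{C}\otimes a$ commute in $C\otimes_{\max}A$ for every $c\in C$ and $a\in A$, the image of $\rho$ commutes with $\Psi(1_{C}\otimes A)=\Delta_{A}(A)$. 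Therefore $\rho$ corestricts to the required morphism $C\to A'\cap A^{\mathcal{U}}$.

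For the converse (2)$\Rightarrow$(1), let $\rho\colon C\to A'\cap A^{\mathcal{U}}$ be a morphism, and compose with the inclusion to get a morphism $\tilde{\rho}\colon C\to A^{\mathcal{U}}$ whose image commutes with $\Delta_{A}(A)$. By the universal property of $\otimes_{\max}$, the pair $(\tilde{\rho},\Delta_{A})$ induces a morphism $\Psi\colon C\otimes_{\max}A\to A^{\mathcal{U}}$ determined by $\Psi(c\otimes a)=\tilde{\rho}(c)\,\Delta_{A}(a)$. By construction $\Psi\circ(1_{C}\otimes\mathrm{id}_{A})=\Delta_{A}$, so invoking the other direction of Proposition \ref{Proposition:characterize-existential} shows that $1_{C}\otimes\mathrm{id}_{A}$ is positively existential.

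The main obstacle, if any, is really just verifying that we are entitled to invoke Proposition \ref{Proposition:characterize-existential}: we need separability to apply it, which is granted by hypothesis (both $A$ and $C\otimes_{\max}A$ are separable since $A$ and $C$ are), and we need to confirm that $1_{C}\otimes\mathrm{id}_{A}$ is a bona fide embedding between $L^{\text{C*}}$-structures, which follows from the injectivity of $a\mapsto 1_{C}\otimes a$ inside the maximal tensor product. Once these are in hand, both directions are formal consequences of the universal property of $\otimes_{\max}$ and the commutation condition that defines the relative commutant.
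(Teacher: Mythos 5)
Your proof is correct. The converse direction (2)$\Rightarrow$(1) is essentially the paper's argument: both pass through the universal property of $\otimes_{\max}$ to build a morphism $\Psi\colon C\otimes_{\max}A\to A^{\mathcal{U}}$ splitting the diagonal embedding, and then invoke the characterization of positively existential embeddings. For (1)$\Rightarrow$(2), however, you take a genuinely different and more economical route. The paper argues by hand: it fixes a positive existential condition $\varphi\left(\bar{x}\right)\leq 0$ realized by a tuple in $C$, forms the formula $\psi$ adjoining commutation with finitely many elements of $A$, observes that $\psi\leq 0$ is witnessed in $C\otimes_{\max}A$ by the tuple $c_{i}\otimes 1_{A}$, transfers $\psi\leq\varepsilon$ back to $A$ using positive existentiality of the inclusion, and finally realizes the resulting commutant type in $A^{\mathcal{U}}$ by countable saturation. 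You instead apply Proposition \ref{Proposition:characterize-existential} once to obtain $\Psi\colon C\otimes_{\max}A\to A^{\mathcal{U}}$ with $\Psi\circ\left(1_{C}\otimes\mathrm{id}_{A}\right)=\Delta_{A}$ and simply restrict $\Psi$ to $C\otimes 1_{A}$; the commutation with $\Delta_{A}\left(A\right)$ is then automatic because $\Psi$ is a $*$-homomorphism and $C\otimes 1_{A}$ commutes with $1_{C}\otimes A$. Both arguments ultimately rest on \L os' theorem and countable saturation of $A^{\mathcal{U}}$, since these are what make Proposition \ref{Proposition:characterize-existential} true; your version buys brevity and symmetry with the converse direction, while the paper's version is self-contained modulo saturation alone and makes explicit the type being realized. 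Your checks on separability of $C\otimes_{\max}A$ and injectivity of $a\mapsto 1_{C}\otimes a$ are exactly the hypotheses that need verifying before Proposition \ref{Proposition:characterize-existential} applies.
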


\begin{proof}
(1)$\Rightarrow $(2) We identify $A$ with its image $1\otimes _{\max }A$
inside $C\otimes _{\max }A$. By countable saturation of $A^{\mathcal{U}}$ it
suffices to prove the following. Suppose that $\varphi \left( \bar{x}\right) 
$ is a positive existential $L^{\text{C*}}$-formula in the variables $%
x_{1},\ldots ,x_{n}$, $\varepsilon >0$, $a_{1},\ldots ,a_{k}$ is a tuple in $%
A$, and $\bar{c}$ is a tuple in $C$ satisfying the condition $\varphi \left( 
\bar{x}\right) \leq 0$. Then there exists a tuple in $A$ satisfying the $L^{%
\text{C*}}\left( A\right) $-condition $\psi \left( \bar{x}\right) \leq
\varepsilon $ where $\psi \left( \bar{x}\right) $ is the $L^{\text{C*}%
}\left( A\right) $-formula%
\begin{equation*}
\max \left\{ \varphi \left( \bar{x}\right) ,\left\Vert
x_{i}a_{j}-a_{j}x_{i}\right\Vert :i=1,2,\ldots ,n\text{ and }j=1,2,\ldots
,k\right\} \text{.}
\end{equation*}%
Considering the tuple $c_{i}\otimes 1_{A}\in C\otimes A$ for $i=1,2,\ldots
,n $ shows that the $L^{\text{C*}}\left( A\right) $-condition $\psi \left( 
\bar{x}\right) \leq 0$ is satisfied in $C\otimes A$. Since the inclusion $%
A\subset C\otimes _{\max }A$ is positively existential by assumption, we
conclude that the $L^{\text{C*}}\left( A\right) $-condition $\psi \left( 
\bar{x}\right) \leq \varepsilon $ is satisfied in $A$. This concludes the
proof.

(2)$\Rightarrow $(1) Suppose that there exists a morphism $\eta
:C\rightarrow A^{\prime }\cap A^{\mathcal{U}}$. Then the function $\left(
A^{\prime }\cap A^{\mathcal{U}}\right) \times A\rightarrow A^{\mathcal{U}}$
given by $\left( \left[ a_{i}\right] ,b\right) \mapsto \left[ a_{i}b\right] $
induces by the universal property of maximal tensor products a morphism $%
\Psi :\left( A^{\prime }\cap A^{\mathcal{U}}\right) \otimes _{\max
}A\rightarrow A^{\mathcal{U}}$. One can then define $\hat{\Psi}:=\Psi \circ
\left( \eta \otimes \mathrm{id}_{A}\right) :C\otimes _{\max }A\rightarrow A^{%
\mathcal{U}}$. Observe that this is a morphism such that $\hat{\Psi}\circ
\left( 1_{C}\otimes \mathrm{id}_{A}\right) $ is the diagonal embedding of $A$
into $A^{\mathcal{U}}$. This shows that $1_{C}\otimes \mathrm{id}_{A}$ is a
positively existential embedding.
\end{proof}

\section{The effect of the Continuum Hypothesis\label{Section:continuum}}

\subsection{The Continuum Hypothesis}

The continuum $\mathfrak{c}$ is, by definition, the cardinality of the set $%
\mathbb{R}$ of real numbers. The Continuum Hypothesis (CH) is the assertion
that $\mathfrak{c}$ coincides with the least uncountable cardinal $\aleph
_{1}$. A famous open problem in set theory asked whether the Continuum
Hypothesis holds, or more precisely whether it follows from the usual axioms
for set theory known as Zermelo--Frankel axioms with Choice (ZFC). In 1940 G%
\"{o}del \cite{godel_consistency_1940} showed that the Continuum Hypothesis
is \emph{consistent }with ZFC, in the sense that it can be added to ZFC
without leading to a contradiction (assuming that ZFC itself is not
contradictory). In the early 1960s, Cohen developed the method of forcing,
and used it to show that the \emph{negation }of the Continuum Hypothesis
(the assertion that $\mathfrak{c}$ is strictly larger than $\aleph _{1}$) is
also consistent with ZFC \cite%
{cohen_independence_1963,cohen_independence_1964}. These results together
imply that the Continuum Hypothesis is \emph{independent }of ZFC, in the
sense that it can not be either proved nor disproved from the axioms of ZFC.

The value of the continuum turns out to be irrevelant for what concerns
sufficiently simple statement. As a rule of thumb, any \textquotedblleft
reasonable statement\textquotedblright\ concerning \emph{separable }%
C*-algebras which can be proved assuming the Continuum Hypothesis, can also
be proved without the Continuum Hypothesis. (This assertion can be made
precise in set theory through the notion of \emph{absoluteness}, and it is
the subject of several absoluteness results such as Shoenfield's
absoluteness theorem \cite{shoenfield_problem_1961}.)

On the other hand, the value of the continuum, or more generally additional
set-theoretic axioms, can have a deep influence on the structure and
properties of \textquotedblleft massive C*-algebras\textquotedblright .
Paradigmatic in this sense is the question of whether all automorphisms of
the Calkin algebra $\mathcal{Q}$ are inner. Recall that $\mathcal{Q}$ is the
quotient of the algebra $B\left( H\right) $ of bounded linear operators on
the separable Hilbert space $H$ by the closed two-sided ideal of compact
operators. Even when $H$ is separable, $\mathcal{Q}$ is nonseparable, and in
fact it has density character $\mathfrak{c}$.

Originally posed by Brown--Douglas--Fillmore in \cite{brown_extensions_1977}%
, this problem was initially addressed in 2007 by Phillips--Weaver \cite%
{phillips_calkin_2007}, who showed that, assuming CH, $\mathcal{Q}$ has an
automorphism which is not inner. Later on, Farah has proved that, under
different set-theoretic assumptions, which imply in particular the negation
of CH, all the automorphisms of the Calkin algebra are inner \cite%
{farah_all_2011,farah_all_2011-1}. These results have later been generalized
in \cite%
{coskey_automorphisms_2014,farah_homeomorphisms_2012,vignati_nontrivial_2017,farah_rigidity_2016}
to other massive C*-algebras, which are obtained as \emph{corona algebras}
of separable C*-algebras.


Another problem which is sensitive of the value of the continuum concerns
the number of ultrapowers of a fixed separable C*-algebra
infinite-dimensional C*-algebra $A$ with respect to nonprincipal
ultrafilters over $\mathbb{N}$. CH implies that all such ultrapowers of $A$
are isomorphic. On the other hand, as shown by Farah--Hart--Sherman \cite%
{farah_model_2013}, if CH fails then there exist two nonisomorphic such
ultrapowers of $A$ (in fact, there exist $2^{\mathfrak{c}}$ pairwise
nonisomorphic such ultrapowers of $A$, as proved by Farah--Shelah \cite%
{farah_dichotomy_2010}).\ The same conclusions hold if one considers,
instead of the ultrapower, the \emph{relative commutant }of $A$ inside the
ultrapower. The analogous question in the case of II$_{1}$ factors had been
posed by McDuff \cite{mcduff_central_1970}, and it has also been settled in 
\cite{farah_model_2013}.

\subsection{Isomorphism of countably saturated structures}

Let $L$ be a language. Recall that an $L$-structure $M$ is countably
saturated if for every separable subset $A$ of $M$ and every $L\left(
A\right) $-type $\mathfrak{t}\left( \bar{x}\right) $, if $\mathfrak{t}\left( 
\bar{x}\right) $ is approximately realized in $M$, then it is realized in $M$%
. A fundamental fact in model theory is that the any two elementarily
equivalent countably saturated structures $\emph{of}$ \emph{density
character }$\aleph _{1}$ are isomorphic. More generally, we have the
following result.

\begin{theorem}
\label{Theorem:isomorphic-saturated}Suppose that $L$ is a language, and let $%
\mathcal{C}$ be a class of $L$-structures such that $L$ is separable for $%
\mathcal{C}$. Consider two elementarily equivalent countably saturated
structures $M,N$ in $\mathcal{C}$. Then $M$ and $N$ are isomorphic.
Furthermore, if $M_{0}\subset M$ is a separable substructure, and $\Phi
_{0}:M_{0}\rightarrow N$ is an elementary embedding, then $\Phi $ extends to
an isomorphism $M\rightarrow N$.
\end{theorem}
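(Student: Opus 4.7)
The proof is by the classical back-and-forth method, in the transfinite version adapted to the continuous-logic setting; I assume (as required by the surrounding discussion) that both $M$ and $N$ have density character $\aleph_1$. Fix dense subsets $\{a_\alpha : \alpha < \omega_1\} \subseteq M$ and $\{b_\alpha : \alpha < \omega_1\} \subseteq N$. The plan is to build, by transfinite recursion on $\alpha < \omega_1$, a continuous chain of separable substructures $M_\alpha \subseteq M$ and elementary embeddings $\Phi_\alpha : M_\alpha \to N$ extending the given $\Phi_0$, arranged so that at odd successor stages $a_\alpha \in M_{\alpha+1}$ (forth), and at even successor stages $b_\alpha \in \Phi_{\alpha+1}(M_{\alpha+1})$ (back). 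Setting $\Phi := \bigcup_{\alpha < \omega_1}\Phi_\alpha$ and extending by uniform continuity will yield an isomorphism $M \to N$, since its domain and range will then contain the prescribed dense subsets.

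The crux is the successor step. Suppose $\Phi_\alpha : M_\alpha \to N$ is elementary with $M_\alpha$ separable, and I wish to bring a new point $a \in M \setminus M_\alpha$ into the domain. Consider the $L(\Phi_\alpha(M_\alpha))$-type $\mathfrak{t}(x)$ consisting of all conditions $\varphi(\Phi_\alpha(\bar m), x) \leq r$ with $\bar m$ a tuple from $M_\alpha$ and $\varphi^M(\bar m, a) \leq r$. Any finite subset of $\mathfrak{t}^{+}$ is witnessed, via the connective trick used in Proposition \ref{Proposition:Tarski-Vaught}, by the fact that some $L(M_\alpha)$-sentence $\inf_{x} \max_i (\varphi_i(\bar m_i, x) - r_i)$ is nonpositive in $M$ (take $x = a$); since $\Phi_\alpha$ is elementary on $M_\alpha$, the same sentence is nonpositive in $N$, so the finite subset is approximately realized. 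Because $\Phi_\alpha(M_\alpha)$ is separable and $N$ is countably saturated, Definition \ref{Definition:saturated} yields $b \in N$ realizing all of $\mathfrak{t}$. Set $\Phi_{\alpha+1}(a) := b$ and extend over the substructure generated by $M_\alpha \cup \{a\}$ by interpreting terms and then closing; the resulting $\Phi_{\alpha+1}$ is elementary because the full type of $a$ over $M_\alpha$ has been preserved. The back step is symmetric, invoking the countable saturation of $M$ to pick a preimage for $b_\alpha$. Limit stages of cofinality $\omega$ are handled by taking $M_\lambda := \overline{\bigcup_{\beta<\lambda}M_\beta}$ and extending $\Phi_\lambda$ continuously; uniform continuity of formulas with respect to the moduli prescribed by $L$ guarantees that elementarity is preserved under this closure.

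Since every $\alpha < \omega_1$ is countable, each $M_\alpha$ is a countable union of separable sets and hence separable, so countable saturation keeps applying throughout. After $\omega_1$ stages the bookkeeping forces $\overline{\bigcup_\alpha M_\alpha} = M$ and $\overline{\bigcup_\alpha \Phi_\alpha(M_\alpha)} = N$, and the continuous extension of $\bigcup_\alpha \Phi_\alpha$ is the desired isomorphism, which extends $\Phi_0$ by construction. Rather than a single hard obstacle, the main technical point is the continuous-logic bookkeeping: one must verify at every stage that (i) separability is preserved under adjunction of one element followed by closure (immediate from separability of $L$ for $\mathcal{C}$), (ii) countable saturation genuinely applies because $\Phi_\alpha(M_\alpha)$ is separable and the relevant type reduces to countably many conditions, and (iii) continuous extension to closures at limit stages does not destroy elementarity, which reduces to approximating any formula with parameters from $M_\lambda$ by formulas with parameters from some $M_\beta$, $\beta<\lambda$, using the prescribed continuity moduli.
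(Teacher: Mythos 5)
Your proposal is correct and follows essentially the same back-and-forth argument as the paper: enumerate dense $\omega_1$-sequences in $M$ and $N$, alternate forth and back at successor ordinals, and at each step realize the complete type of the new element over the previously constructed set using countable saturation, with approximate realizability supplied by the inductive elementarity hypothesis. The only cosmetic difference is that you close each stage to a substructure and extend at limits by continuity, whereas the paper keeps the partial maps defined on bare sets $M_0\cup\{a_\mu:\mu<\lambda\}$ so that limit stages are plain unions; both versions rely on the same implicit hypothesis (which you correctly flag) that $M$ and $N$ have density character $\aleph_1$.
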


\begin{proof}
We prove the second assertion, the proof of the first assertion being
similar.

Since $M,N$ have density character $\aleph _{1}$, one can enumerate dense
subsets $\left\{ a_{\lambda }:\lambda <\omega _{1}\right\} $ of $M$ and $%
\left\{ b_{\lambda }:\lambda <\omega _{1}\right\} $ of $N$. Define $N_{0}$
to be the range of $\Phi $. Let also $M_{\lambda }$ for $\lambda <\omega
_{1} $ be $M_{0}\cup \left\{ a_{\mu }:\mu <\lambda \right\} \subset M$, and
similarly $N_{\lambda }$ for $\lambda <\omega _{1}$ be $N_{0}\cup \left\{
b_{\mu }:\mu <\lambda \right\} \subset N$. Say that an ordinal $\lambda
<\omega _{1}$ is \emph{odd }if it is of the form $\mu +n$ where $\mu $ is a
limit ordinal and $n\in \omega $ is odd, and it is \emph{even }otherwise.

We define by recursion on $\lambda <\omega _{1}$ elements $\hat{a}_{\lambda
} $ of $M$ and $\hat{b}_{\lambda }$ of $N$ such that:

\begin{enumerate}
\item $\hat{a}_{\lambda }=a_{\lambda }$ if $\lambda $ is even;

\item $\hat{b}_{\lambda }=b_{\lambda }$ if $\lambda $ is odd;

\item the assignment $\Phi _{\lambda }:M_{\lambda }\rightarrow N_{\lambda }$
which is the identity on $M_{0}$ and such that $\Phi \left( \hat{a}_{\mu
}\right) =\hat{b}_{\mu }$ for $\mu <\lambda $ is well defined and satisfies $%
\varphi ^{N}\left( \Phi _{\lambda }\left( \bar{a}\right) \right) =\varphi
^{M}\left( \bar{b}\right) $ for any tuple $\bar{a}$ in $M_{\lambda }$ and $%
L\left( M_{\lambda }\right) $-formula $\varphi \left( \bar{x}\right) $,
where $N_{\lambda }$ is regarded as an $L\left( N_{\lambda }\right) $%
-structure by interpreting the constant $c_{a}$ associated with $a\in
M_{\lambda }$ as $\Phi \left( a\right) $.
\end{enumerate}

Suppose that $\lambda <\omega _{1}$ and $\hat{a}_{\mu },\hat{b}_{\mu }$ have
been defined for $\mu <\lambda $ in such a way that (1),(2),(3) above hold.
(Observe that (3) holds when $\lambda =0$ by the assumption that $\Phi _{0}$
is an elementary embedding). We consider the case when $\lambda $ is even,
the case of $\lambda $ being odd is analogous. We then set $\hat{a}_{\lambda
}=a_{\lambda }$, and the consider the complete $L\left( M_{\lambda }\right) $%
-type of $\hat{a}_{\lambda }$. Recall that this is the $L\left( M_{\lambda
}\right) $-type $\mathfrak{t}\left( x\right) $ consisting of all the $%
L\left( M_{\lambda }\right) $ conditions satisfied by $\hat{a}_{\lambda }$.
By the inductive assumption (3), $\mathfrak{t}\left( x\right) $ is
approximately realized in $N$. Since $N$ is countably saturated, we can
conclude that $\mathfrak{t}\left( x\right) $ is realized in $N$. We then
define $\hat{b}_{\lambda }$ to be any realization of $\mathfrak{t}\left(
x\right) $ in $N$. It is clear by definition of $\mathfrak{t}\left( x\right) 
$ together with the inductive assumption that such a choice indeed satisfies
(3). This concludes the recursive construction.

Observe that, by (1), $\left\{ \hat{a}_{\lambda }:\lambda <\omega
_{1}\right\} $ is a dense subset of $M$. Similarly, by (2), $\{\hat{b}%
_{\lambda }:\lambda <\omega _{1}\}$ is a dense subset of $N$. Granted the
construction, one can define the map $\Phi :\left\{ \hat{a}_{\lambda
}:\lambda <\omega _{1}\right\} \rightarrow \{\hat{b}_{\lambda }:\lambda
<\omega _{1}\}$ by $\Phi \left( \hat{a}_{\lambda }\right) =\hat{b}_{\lambda
} $. By (3), this extends to an isomorphism $\Phi :M\rightarrow N$,
concluding the proof.
\end{proof}

\subsection{Ultrapowers and the Continuum Hypothesis}

We now specialize the discussion to C*-algebras. Let $A$ be an \emph{%
infinite-dimensional} \emph{separable }C*-algebra, and $\mathcal{U}$ is a
nonprincipal ultrafilter over $\mathbb{N}$. Recall that this means that $%
\mathcal{U}$ does not contain any finite set, which is equivalent to the
assertion that $\mathcal{U}$ is countably incomplete. We consider the
ultrapower $A^{\mathcal{U}}$ and the relative commutant $A^{\prime }\cap A^{%
\mathcal{U}}$. A basic question is: how large $A^{\mathcal{U}}$ and $%
A^{\prime }\cap A^{\mathcal{U}}$ are? Considering that the continuum $%
\mathfrak{c}$ is also the size of the set of functions from $\mathbb{N}$ to $%
\mathbb{N}$, since we are assuming that $A$ is separable, and that $\mathcal{%
U}$ is an ultrafilter over $\mathbb{N}$, it is easy to see that $A^{\mathcal{%
U}}$ has density character \emph{at most }$\mathfrak{c}$. (Recall that the 
\emph{density character }of a C*-algebra is the least size of a dense set.)
Clearly, the same conclusion applies to $A^{\prime }\cap A^{\mathcal{U}}$,
which is a C*-subalgebra of $A^{\mathcal{U}}$. We now claim that, in fact, $%
A^{\prime }\cap A^{\mathcal{U}}$ and hence $A^{\mathcal{U}}$ have density
character exactly $\mathfrak{c}$. In order to show this, we will use the
following criterion, which we formulate in the general setting of structures
in an arbitrary language $L$.

\begin{proposition}
\label{Proposition:size-ultraproduct}Suppose that $L$ is a language, $%
\mathcal{U}$ is a nonprincipal ultrafilter over $\mathbb{N}$, $\mathfrak{t}%
\left( x\right) $ is an $L$-type in the variable $x$ with corresponding
domain $D$, and $\mathcal{C}$ is a class of $L$-structure. Assume that $L$
is separable for $\mathcal{C}$. Consider a sequence $\left( M_{n}\right)
_{n\in \mathbb{N}}$ of $L$-structures. Assume that there exist

\begin{itemize}
\item $\delta >0$, and

\item for every $n\in \mathbb{N}$, an $n$-tuple $\bar{a}^{(n)}=(a_{1}^{(n)},%
\ldots ,a_{n}^{(n)})$ of elements of $D^{M_{n}}$,
\end{itemize}

such that

\begin{enumerate}
\item for every $n\in \mathbb{N}$, and $1\leq i,j\leq n$, $d^{M_{n}}\left(
a_{i},a_{j}\right) \geq \delta $, and

\item for every finite set $\mathfrak{t}_{0}\left( x\right) $ of conditions
in $\mathfrak{t}\left( x\right) ^{+}$, every element of $\bar{a}^{(n)}$
satisfies $\mathfrak{t}_{0}\left( x\right) $ for all but finitely many $n\in 
\mathbb{N}$.
\end{enumerate}

Let $M$ be the ultraproduct $\prod_{\mathcal{U}}M_{n}$, and $M^{\mathfrak{t}%
} $ be the set of realizations of $\mathfrak{t}\left( x\right) $ in $\prod_{%
\mathcal{U}}M_{n}$. Then $M^{\mathfrak{t}}$ contains a family $\left(
a_{i}\right) _{i<\mathfrak{c}}$ of size continuum of elements satisfying $%
d^{M}\left( a_{i},a_{j}\right) \geq \delta $ for every $i,j<\mathfrak{c}$.
\end{proposition}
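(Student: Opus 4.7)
The plan is to construct, for each $\xi\in 2^{\mathbb{N}}$, a representing sequence $\boldsymbol{b}_\xi=(b_\xi(n))_{n\in\mathbb{N}}$ with $b_\xi(n)\in\{a_1^{(n)},\ldots,a_n^{(n)}\}$, so that $[\boldsymbol{b}_\xi]\in M^{\mathfrak{t}}$ and so that distinct $\xi,\eta$ produce $\delta$-separated elements of $M$. Since $|2^{\mathbb{N}}|=\mathfrak{c}$, this yields the desired family. The key is therefore to choose the indexing function $f_\xi:\mathbb{N}\to\mathbb{N}$, defined by $b_\xi(n)=a_{f_\xi(n)}^{(n)}$, carefully enough to satisfy both constraints simultaneously.

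Since $L$ is separable for $\mathcal{C}$, the type $\mathfrak{t}$ is equivalent (in the sense of the seminorm from Subsection \ref{Subsection:definable-sets}) to a countable subtype, which after rescaling may be written as $\{\varphi_k(x)\leq 0 : k\in\mathbb{N}\}$. For each $m\in\mathbb{N}$ the finite collection $\mathfrak{t}_m:=\{\varphi_k(x)\leq 1/m : 1\leq k\leq m\}$ is contained in $\mathfrak{t}^+$, so by hypothesis (2) there exists $N_m\geq m$ such that every entry of $\bar{a}^{(n)}$ satisfies all conditions in $\mathfrak{t}_m$ whenever $n\geq N_m$. These $N_m$ provide a uniform rate of approximate realization of $\mathfrak{t}$ along the sequences $\bar{a}^{(n)}$.

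Next I fix an eventually different family $\{f_\xi\}_{\xi\in 2^{\mathbb{N}}}$ of functions $\mathbb{N}\to\mathbb{N}$ with $1\leq f_\xi(n)\leq n$ such that $\{n:f_\xi(n)=f_\eta(n)\}$ is finite whenever $\xi\neq\eta$. A concrete choice is $f_\xi(n):=1+\sum_{i<\lfloor\log_2 n\rfloor}\xi(i)\cdot 2^i$: this takes values in $\{1,\ldots,n\}$, and by uniqueness of binary representation, if $\xi$ and $\eta$ first disagree at coordinate $k$, then $f_\xi(n)\neq f_\eta(n)$ for every $n\geq 2^{k+1}$.

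Finally, set $b_\xi(n):=a_{f_\xi(n)}^{(n)}\in D^{M_n}$ and let $[\boldsymbol{b}_\xi]\in D^M$ be the corresponding element of the ultraproduct. To verify $[\boldsymbol{b}_\xi]\in M^{\mathfrak{t}}$, fix $k\in\mathbb{N}$ and $\varepsilon>0$, choose $m\geq k$ with $1/m<\varepsilon$, and observe that $\varphi_k^{M_n}(b_\xi(n))\leq 1/m<\varepsilon$ for every $n\geq N_m$; since $\{n\geq N_m\}$ is cofinite and hence in $\mathcal{U}$, \L os' theorem (Theorem \ref{Theorem:Los}) gives $\varphi_k^M([\boldsymbol{b}_\xi])\leq\varepsilon$, and letting $\varepsilon\to 0$ produces $\varphi_k^M([\boldsymbol{b}_\xi])\leq 0$. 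For pairwise separation, if $\xi\neq\eta$ then $\{n:f_\xi(n)\neq f_\eta(n)\}$ is cofinite, hence in $\mathcal{U}$; on this set hypothesis (1) gives $d^{M_n}(b_\xi(n),b_\eta(n))\geq\delta$, and \L os' theorem then yields $d^M([\boldsymbol{b}_\xi],[\boldsymbol{b}_\eta])\geq\delta$. No step here is particularly delicate; the only nontrivial ingredient is the standard almost-disjoint-family combinatorics used to manufacture $\{f_\xi\}$.
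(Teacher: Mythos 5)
Your proof is correct and follows essentially the same route as the paper's: both arguments build a continuum-sized family of pairwise eventually different selector functions $f:\mathbb{N}\to\mathbb{N}$ with $f(n)\leq n$ via binary expansions, form the elements $[(a_{f(n)}^{(n)})_n]$, and apply \L os' theorem to get both realization of $\mathfrak{t}$ and $\delta$-separation. The only (harmless) divergence is that the paper's Lemma \ref{Lemma:eventually-distinct} additionally arranges $f(n)\to+\infty$, which your argument does not need because hypothesis (2) already covers \emph{every} entry of $\bar{a}^{(n)}$; your verification of type realization via the countable reduction and the thresholds $N_m$ is, if anything, spelled out more carefully than in the paper.
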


In fact, a more general version of Proposition \ref%
{Proposition:size-ultraproduct} holds, where one considers a type in more
than one variable.

In order to prove Proposition \ref{Proposition:size-ultraproduct}, we will
use the following basic lemma from combinatorial set theory. Let us say that
two functions $f,g:\mathbb{N}\rightarrow \mathbb{N}$ are eventually distinct
if $\left\{ n\in \mathbb{N}:f\left( n\right) =g\left( n\right) \right\} $ is
finite.

\begin{lemma}
\label{Lemma:eventually-distinct}There exists a family $\mathcal{F}$ of size
continuum of pairwise eventually distinct functions $f:\mathbb{N}\rightarrow 
\mathbb{N}$ satisfying $f\left( n\right) \leq n$ for every $n\in \mathbb{N}$
and $\lim_{n\rightarrow +\infty }f\left( n\right) =+\infty $.
\end{lemma}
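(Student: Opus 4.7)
The plan is to index the family $\mathcal{F}$ by the branches $\alpha \in 2^{\omega}$ of the infinite binary tree, which is a set of cardinality $\mathfrak{c}$. To each branch $\alpha$ (viewed as a function $\alpha \colon \mathbb{N} \to \{0,1\}$) I will associate a function $f_\alpha \colon \mathbb{N} \to \mathbb{N}$ which, at argument $n$, encodes a sufficiently long initial segment of $\alpha$. The key design constraint is that the length of the prefix used at $n$ tends to infinity, yet its binary encoding fits into $\{0, 1, \ldots, n\}$.

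Concretely, set $k(n) := \lfloor \log_{2}(n+2) \rfloor - 1$, which satisfies $k(n) \to +\infty$ and $2^{k(n)+1} - 2 \leq n$ for every $n \in \mathbb{N}$. For each $\alpha \in 2^{\omega}$ I would define
\[
f_\alpha(n) := 2^{k(n)} - 1 + \sum_{i=0}^{k(n)-1} \alpha(i)\, 2^{i},
\]
so that $f_\alpha(n)$ lies in the interval $[\,2^{k(n)} - 1,\, 2^{k(n)+1} - 2\,]$ of indices labeling the nodes of level $k(n)$ in a standard level-by-level enumeration of the binary tree.

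The three required properties then follow by direct inspection. The estimate $f_\alpha(n) \leq 2^{k(n)+1} - 2 \leq n$ gives the pointwise bound; the inequality $f_\alpha(n) \geq 2^{k(n)} - 1 \to +\infty$ gives divergence; and if $\alpha \neq \beta$ first disagree at position $j$, then for every $n$ with $k(n) > j$ the encoding recovers the distinct length-$k(n)$ prefixes of $\alpha$ and $\beta$, forcing $f_\alpha(n) \neq f_\beta(n)$, while only finitely many $n$ satisfy $k(n) \leq j$. Setting $\mathcal{F} := \{f_\alpha : \alpha \in 2^{\omega}\}$ yields a family of cardinality $\mathfrak{c}$, since $\alpha \mapsto f_\alpha$ is injective (the values of $f_\alpha$ determine arbitrarily long prefixes of $\alpha$).

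There is no substantive obstacle here; the only care required is to tune $k(n)$ slowly enough that the $2^{k(n)}$-element encoding range embeds into $\{0, 1, \ldots, n\}$ yet still forces $k(n) \to +\infty$. An alternative route would be to invoke a known almost disjoint family of size $\mathfrak{c}$ in $\mathbb{N}^{\mathbb{N}}$ and then modify each function to meet the pointwise bound and divergence, but the explicit tree construction above seems cleaner.
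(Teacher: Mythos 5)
Your proof is correct and uses essentially the same idea as the paper: both encode a length-$\lfloor\log_2 n\rfloor$ prefix of a binary sequence (for the paper, the characteristic function of an infinite subset of $\mathbb{N}$) in base $2$ to obtain a value $\leq n$, with uniqueness of binary expansions giving eventual distinctness. The only cosmetic difference is that the paper secures $f(n)\to+\infty$ by restricting to infinite subsets, whereas you add the offset $2^{k(n)}-1$ so that divergence holds for every branch.
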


\begin{proof}
We will use the fact that the continuum is equal to the cardinality of the
collection of \emph{infinite }subsets of $\mathbb{N}$. For an infinite
subset $A$ of $\mathbb{N}$, define the function $f_{A}:\mathbb{N}\rightarrow 
\mathbb{N}$ by%
\begin{equation*}
f_{A}\left( n\right) =\sum_{k<\left\lfloor \log _{2}n\right\rfloor }\chi
_{A}\left( k\right) 2^{k}\text{.}
\end{equation*}%
Observe that $\left\vert A\cap \left[ 1,\log _{2}n\right] \right\vert \leq
f_{A}\left( n\right) \leq n$ for every $n\in \mathbb{N}$. Furthermore, if $%
A,B$ are distinct infinite subsets of $\mathbb{N}$, then $f_{A}$ and $f_{B}$
are eventually distinct.
\end{proof}

We can now use Lemma \ref{Lemma:eventually-distinct} to prove Proposition %
\ref{Proposition:size-ultraproduct}.

\begin{proof}[Proof of Proposition \protect\ref%
{Proposition:size-ultraproduct}]
Let $\mathcal{F}$ be a family of size continuum consisting of pairwise
eventually distinct functions $f:\mathbb{N}\rightarrow \mathbb{N}$ such that 
$f\left( n\right) \leq n$ for every $n\in \mathbb{N}$ and $%
\lim_{n\rightarrow +\infty }f\left( n\right) =+\infty $. For $n\in \mathbb{N}
$, let $\bar{a}^{(n)}=(a_{1}^{(n)},\ldots ,a_{n}^{(n)})$ be the tuple of
elements of $D^{M_{n}}$ given by hypothesis. Let $M$ be the ultraproduct $%
\prod_{\mathcal{U}}M_{n}$. For every $f\in \mathcal{F}$ define $\boldsymbol{a%
}^{f}$ to be the element of $D^{M}$ with representative sequence $%
(a_{f\left( n\right) }^{(n)})$. The assumption that, for every finite set $%
\mathfrak{t}_{0}\left( x\right) $ of conditions in $\mathfrak{t}\left(
x\right) ^{+}$, every element of $\bar{a}^{(n)}$ satisfies $\mathfrak{t}%
_{0}\left( x\right) $ for all but finitely many $n\in \mathbb{N}$, and the
fact that $\lim_{n\rightarrow +\infty }f\left( n\right) =+\infty $, implies
by \L os' theorem that $\boldsymbol{a}^{f}$ satisfies $\mathfrak{t}\left(
x\right) $.

If $f$ and $g$ are different elements of $\mathcal{F}$, then they are
eventually distinct. In particular, since $\mathcal{U}$ is nonprincipal, $%
\left( \mathcal{U}n\right) $, $f\left( n\right) \neq g\left( n\right) $.
Hence, $\left( \mathcal{U}n\right) $, $d(a_{f\left( n\right)
}^{n},a_{g\left( n\right) }^{n})\geq \delta $. By \L os' theorem again, we
then have $d\left( \boldsymbol{a}^{f},\boldsymbol{a}^{g}\right) \geq \delta $%
. This concludes the proof.
\end{proof}

Using Proposition \ref{Proposition:size-ultraproduct} we can give a
sufficient condition for a C*-algebra $A$ to have, for any nonprincipal
ultrafilter $\mathcal{U}$ over $\mathbb{N}$, relative commutant in the
ultrapower $A^{\prime }\cap A^{\mathcal{U}}$ of density character at least $%
\mathfrak{c}$. For convenience, we isolate the following notion.

\begin{definition}
\label{Definition:many-central}A C*-algebra $A$ has \emph{many
asymptotically central elements} if there exists $\delta >0$ such that, for
every $n\in \mathbb{N}$, finite subset $F$ of $A$, and $\varepsilon >0$,
there exist $a_{1},\ldots ,a_{n}$ in $A$ of norm at most $1$ such that, for
every $1\leq i<j\leq n$ and $b\in F$, $\left\Vert a_{i}-a_{j}\right\Vert
\geq \delta $ and $\left\Vert a_{i}b-ba_{i}\right\Vert \leq \varepsilon $.
\end{definition}

Thus Proposition \ref{Proposition:size-ultraproduct} gives the following.

\begin{proposition}
\label{Proposition:many-central}Suppose that $A$ is a separable C*-algebra
that has many asymptotically central elements, and $\mathcal{U}$ is a
nonprincipal ultrafilter over $\mathbb{N}$. Then $A^{\prime }\cap A^{%
\mathcal{U}}$ and $A^{\mathcal{U}}$ both have density character and
cardinality equal to $\mathfrak{c}$.
\end{proposition}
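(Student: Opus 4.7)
The plan is to deduce Proposition \ref{Proposition:many-central} from Proposition \ref{Proposition:size-ultraproduct}, applied to the constant sequence $M_{n}:=A$ and to a countable type asserting that $x$ commutes with each element of a countable dense subset of $A$. The upper bound is an easy cardinality count: since $A$ is separable, $\vert A\vert \leq \mathfrak{c}$, so the set of bounded sequences in $A$ has cardinality at most $\mathfrak{c}^{\aleph_{0}}=\mathfrak{c}$. Hence $\vert A^{\mathcal{U}}\vert \leq \mathfrak{c}$, and both the cardinality and the density character of $A^{\mathcal{U}}$, and a fortiori of $A^{\prime }\cap A^{\mathcal{U}}$, are at most $\mathfrak{c}$.

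For the lower bound, fix a countable dense subset $\{b_{k}:k\in \mathbb{N}\}$ of $A$ and work in the (still separable) language $L^{\text{C*}}(\{b_{k}:k\in \mathbb{N}\})$ obtained by adding a constant symbol $c_{b_{k}}$ for each $b_{k}$. Let $\mathfrak{t}(x)$ be the type in the variable $x$ with domain $D_{1}$ consisting of the conditions
\begin{equation*}
\Vert xc_{b_{k}}-c_{b_{k}}x\Vert \leq 0,\qquad k\in \mathbb{N}\text{.}
\end{equation*}
For each $n\in \mathbb{N}$ apply Definition \ref{Definition:many-central} to the finite set $F=\{b_{1},\ldots ,b_{n}\}$ with tolerance $\varepsilon =1/n$ to produce a tuple $\bar{a}^{(n)}=(a_{1}^{(n)},\ldots ,a_{n}^{(n)})$ in $D_{1}^{A}$ satisfying $\Vert a_{i}^{(n)}-a_{j}^{(n)}\Vert \geq \delta $ for $i\neq j$ and $\Vert a_{i}^{(n)}b_{k}-b_{k}a_{i}^{(n)}\Vert \leq 1/n$ whenever $k\leq n$. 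Condition (1) of Proposition \ref{Proposition:size-ultraproduct} is then immediate, and condition (2) holds because any finite subset of $\mathfrak{t}^{+}(x)$ involves only finitely many of the $b_{k}$ and has some positive tolerance $\varepsilon _{0}$, so each entry of $\bar{a}^{(n)}$ satisfies all of its conditions as soon as $n$ is large enough.

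Proposition \ref{Proposition:size-ultraproduct} then produces a family $(\boldsymbol{a}_{i})_{i<\mathfrak{c}}$ of pairwise $\delta $-separated realizations of $\mathfrak{t}$ in $A^{\mathcal{U}}=\prod_{\mathcal{U}}A$. Each such realization has norm at most $1$ and commutes with every $b_{k}$; by density of $\{b_{k}\}$ in $A$ and continuity of the commutator, it commutes with every element of $A$, and hence belongs to $A^{\prime }\cap A^{\mathcal{U}}$. A standard pigeonhole argument then forces the density character and cardinality of $A^{\prime }\cap A^{\mathcal{U}}$, and consequently of $A^{\mathcal{U}}$, to be at least $\mathfrak{c}$, matching the upper bound. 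There is no serious obstacle in this proof; the only point requiring attention is the diagonal choice of $F$ and $\varepsilon $ above that simultaneously meets the two hypotheses of Proposition \ref{Proposition:size-ultraproduct}.
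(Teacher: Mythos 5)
Your proposal is correct and follows essentially the same route as the paper: the upper bound by counting bounded sequences, and the lower bound by applying Proposition \ref{Proposition:size-ultraproduct} to the constant sequence $M_{n}=A$ and the relative commutant type, with the tuples supplied by Definition \ref{Definition:many-central}. Your explicit verification of hypotheses (1) and (2) of Proposition \ref{Proposition:size-ultraproduct} via the diagonal choice $F=\{b_{1},\ldots ,b_{n}\}$, $\varepsilon =1/n$ is a detail the paper leaves implicit, but it is exactly the intended argument.
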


\begin{proof}
We have already observed above that $A^{\mathcal{U}}$ has density character
at most $\mathfrak{c}$. In order to see that $A^{\prime }\cap A^{\mathcal{U}%
} $ has density character at least $\mathfrak{c}$, one can apply Proposition %
\ref{Proposition:size-ultraproduct} to the language $L^{\text{C*}}\left(
A\right) $, the sequence $\left( M_{n}\right) _{n\in \mathbb{N}}$ constantly
equal to $A$, and the $\emph{relative}$ \emph{commutant type }$\mathfrak{t}%
\left( x\right) $ consisting of the conditions $\left\Vert xa-ax\right\Vert
\leq 0$ for $a\in A$ of norm at most $1$.

This shows that $A^{\prime }\cap A^{\mathcal{U}}$ and $A^{\mathcal{U}}$ both
have density character $\mathfrak{c}$. Fix a dense subset $E$ of $A^{%
\mathcal{U}}$ of size $\mathfrak{c}$. Observe that, since $E$ is dense, the
cardinality of $A^{\mathcal{U}}$ is bounded by the cardinality of the set of 
\emph{sequences }of elements of $E$, which is still equal to $\mathfrak{c}$.
Therefore $A^{\mathcal{U}}$ has cardinality at most $\mathfrak{c}$,
concluding the proof.
\end{proof}

Due to Proposition \ref{Proposition:many-central}, our original question on
the size of ultrapowers and relative commutants leads us to consider which
C*-algebras have many asymptotically central elements. As it turns out, 
\emph{every }infinite-dimensional C*-algebra has many asymptotically central
elements, as we will show below. We begin with the \emph{abelian case}.
Recall that we are assuming all C*-algebras to be unital.

\begin{lemma}
\label{Lemma:many-abelian}Suppose that $A$ is an $\emph{abelian}$
infinite-dimensional C*-algebra. Then $A$ has many asymptotically central
elements.
\end{lemma}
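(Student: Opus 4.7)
The plan is to invoke Gelfand duality to write $A=C(X)$ for some compact Hausdorff space $X$, and then exploit the fact that $A$ infinite-dimensional forces $X$ to be infinite. Since $A$ is abelian, the commutator condition $\|a_i b - b a_i\|\le \varepsilon$ is satisfied trivially (with value $0$), so the entire content of Definition \ref{Definition:many-central} reduces to producing, for each $n$, contractions $a_1,\ldots,a_n$ that are pairwise at distance at least some fixed $\delta>0$, \emph{uniformly in $n$ and in the finite set $F$}. I expect $\delta=1$ will work.

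First I would fix any $n\in\mathbb{N}$ and choose $n$ pairwise distinct points $x_1,\ldots,x_n\in X$; these exist because $X$ is infinite. Next, using that $X$ is compact Hausdorff (hence normal, and in particular points have pairwise disjoint open neighborhoods), I would select pairwise disjoint open sets $U_1,\ldots,U_n$ with $x_i\in U_i$. Then by Urysohn's lemma applied inside $X$, for each $i$ I get a continuous function $f_i\colon X\to[0,1]$ with $f_i(x_i)=1$ and $f_i\equiv 0$ on $X\setminus U_i$. Viewed as elements of $A=C(X)$ these are contractions, and for $i\ne j$ we have $f_i(x_i)=1$ while $f_j(x_i)=0$ (since $x_i\notin U_j$), hence $\|f_i-f_j\|_\infty\ge 1$. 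Setting $a_i:=f_i$ and using commutativity to handle the asymptotic-centrality clause for any finite $F$ and any $\varepsilon>0$, this verifies Definition \ref{Definition:many-central} with $\delta=1$ (or any smaller positive constant).

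The only point that deserves care, and which I would spell out explicitly, is the translation from ``$A$ is infinite-dimensional'' to ``$X$ is infinite.'' If $X=\{y_1,\ldots,y_k\}$ were finite, then $C(X)\cong\mathbb{C}^{k}$, which is finite-dimensional, contradicting the hypothesis on $A$; so $X$ must have infinitely many points, which is precisely what allows the choice of arbitrarily many distinct $x_i$'s. Apart from this sanity check, there is no real obstacle: the argument is entirely standard once Gelfand duality is invoked, and the abelian hypothesis trivializes the commutation requirement that would otherwise be the delicate part of producing many asymptotically central elements.
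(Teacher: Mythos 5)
Your proof is correct and follows essentially the same route as the paper: Gelfand duality to write $A=C(X)$, infiniteness of $X$ from infinite-dimensionality, and Urysohn functions with disjoint supports giving $n$ contractions at mutual distance $1$, with commutativity trivializing the centrality clause. No gaps.
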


\begin{proof}
Since $A$ is abelian, $A$ is isomorphic to the algebra $C\left( X\right) $
of continuous complex-valued function over a compact Hausdorff space $X$.
Since $A$ is infinite-dimensional, $X$ is not finite. Thus, by normality of $%
X$ we can find, for every $n\in \mathbb{N}$, positive elements $a_{1},\ldots
,a_{n}$ of $C\left( X\right) $ of norm $1$ with disjoint supports. (The
support of a function $a:X\rightarrow \left[ 0,1\right] $ is the set $%
\left\{ t\in X:a\left( t\right) \neq 0\right\} $.) This guarantees that $%
\left\Vert a_{i}-a_{j}\right\Vert =1$ for every $1\leq i<j\leq n$. This
concludes the proof.
\end{proof}

We now consider the class of \emph{continuous trace }C*-algebras \cite%
{raeburn_morita_1998}. These are the C*-algebras that can be realized as the
algebras of sections of a bundle over a compact Hausdorff space $X$, with
finite-dimensional C*-algebras as fibers. Clearly, commutative C*-algebras
correspond to the case of bundles with $1$-dimensional fibers.

\begin{lemma}
\label{Lemma:many-continuous-trace}Suppose that $A$ is an
infinite-dimensional continuous trace C*-algebra. Then $A$ has many
asymptotically central elements.
\end{lemma}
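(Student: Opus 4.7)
The plan is to exhibit the many asymptotically central elements inside the canonical central copy of $C(X)$ sitting in $A$, so that they are actually exactly central (commuting with every element of $A$), not just approximately. More precisely: if $A$ is the algebra of sections of a bundle of finite-dimensional C*-algebras over the compact Hausdorff space $X$, then for each $f \in C(X)$, pointwise scalar multiplication $f \cdot a$ on sections gives an element of $A$, and this embedding $C(X) \hookrightarrow A$ lands in the center $Z(A)$. I would first verify (or simply invoke) that $Z(A)$ contains this copy of $C(X)$, using that the bundle description makes scalar multiplication by continuous functions well-defined and commuting with fiberwise multiplication.

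Next, I would use the finite-dimensionality of the fibers together with the infinite-dimensionality of $A$ to conclude that $X$ is infinite: if $X$ were a finite set $\{x_1,\ldots,x_k\}$, then $A$ would decompose as the finite direct sum of its fibers $A_{x_1}\oplus\cdots\oplus A_{x_k}$, each of which is finite-dimensional by hypothesis, contradicting $\dim A = \infty$. Hence $X$ is an infinite compact Hausdorff space, and $C(X)$ is an infinite-dimensional abelian unital C*-algebra.

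Applying Lemma \ref{Lemma:many-abelian} to $C(X)$ (or, essentially equivalently, using normality of $X$ directly to produce continuous functions $f_1,\ldots,f_n\colon X\to[0,1]$ of norm $1$ with pairwise disjoint supports), I obtain, for any $n\in\mathbb{N}$, elements $f_1,\ldots,f_n\in C(X)\subset Z(A)$ with $\|f_i\|\leq 1$ and $\|f_i-f_j\|\geq 1$ for $i\neq j$. Since each $f_i$ is central in $A$, one has $\|f_i b-bf_i\|=0$ for every $b\in A$, so the conditions of Definition \ref{Definition:many-central} are satisfied with $\delta=1$, independently of the finite set $F\subset A$ and the tolerance $\varepsilon>0$. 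The only delicate step is the identification of $C(X)$ with a central subalgebra of $A$ coming from the bundle structure; once that is in hand, the argument reduces immediately to the abelian case already treated.
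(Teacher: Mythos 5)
Your proposal is correct and follows essentially the same route as the paper: embed $C(X)$ into the center $A'\cap A$ via the bundle structure, note that $X$ must be infinite since the fibers are finite-dimensional and $A$ is not, and then import the disjoint-supports construction from the abelian case (Lemma \ref{Lemma:many-abelian}), so that the witnessing elements commute exactly with all of $A$. The extra details you supply (why $X$ is infinite, why the commutator clause is trivially satisfied) are exactly the ones the paper leaves implicit.
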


\begin{proof}
As noted above, $A$ is the algebra of sections of a bundle of over a compact
Hausdorff space $X$, with finite-dimensional C*-algebras as fibers. Since $A$
is infinite-dimensional, $X$ is infinite. Clearly, the abelian C*-algebra $%
C\left( X\right) $ is isomorphic to a C*-subalgebra of the \emph{center} $%
A^{\prime }\cap A$ of $A$. This implies that, since $C\left( X\right) $ has
many asymptotically central elements, so does $A$.
\end{proof}

We consider now the case when $A$ does \emph{not }have continuous trace.

\begin{lemma}
\label{Lemma:many-not-continuous-trace}Let $A$ be a separable C*-algebra
that does not have continuous trace. Then $A$ has many asymptotically
central elements.
\end{lemma}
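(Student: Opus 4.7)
The plan is to combine structural dichotomies for separable C*-algebras with the constructions already used in Lemmas \ref{Lemma:many-abelian} and \ref{Lemma:many-continuous-trace}. Recall that a separable C*-algebra has continuous trace if and only if it is liminal (CCR) and its spectrum is Hausdorff (with the Fell condition). So the assumption that $A$ does not have continuous trace splits into two (not mutually exclusive) scenarios that I would treat separately: either (I) $A$ is not Type I, or (II) $A$ is Type I but fails either liminality or Hausdorffness of the spectrum.

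In scenario (I), I would invoke Glimm's theorem: any non-Type-I separable C*-algebra contains a C*-subalgebra $B\subseteq A$ admitting a quotient $\pi\colon B\to \mathfrak{M}_{2^{\infty}}$ onto the CAR algebra. Inside $\mathfrak{M}_{2^{\infty}}$, for each $n$ one can find a unital copy of $M_{2^{n}}(\mathbb{C})$ and hence $n$ pairwise orthogonal rank-one projections $p_{1},\dots,p_{n}$ with $\norm{p_{i}-p_{j}}=1$. By weak semiprojectivity of finite-dimensional C*-algebras (see Subsection \ref{Subsection:quantifier-free} and Theorem \ref{Theorem:weakly-semiprojective}), the matrix units for this copy of $M_{2^{n}}(\mathbb{C})$ lift through $\pi$ to matrix units in $B\subseteq A$, and hence to pairwise orthogonal projections $\tilde{p}_{1},\dots,\tilde{p}_{n}$ in $A$ with $\norm{\tilde{p}_{i}-\tilde{p}_{j}}$ arbitrarily close to $1$. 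In scenario (II), the failure of Hausdorffness or of the Fell condition gives two inequivalent irreducible representations $\pi_{1},\pi_{2}$ of $A$ whose kernels cannot be separated by a single element of $A$; iterating, I would extract a family of inequivalent irreducible representations clustering around a single point of the spectrum, and use this to produce $n$ positive contractions in $A$ that are approximately supported in pairwise disjoint spectral regions.

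The delicate point in both scenarios is to arrange the constructed elements to be approximately central with respect to a prescribed finite $F\subset A$. My strategy would be to perform Glimm's construction (respectively, the spectral separation in (II)) not in $A$ itself but in the hereditary C*-subalgebra $A_{F,\varepsilon}:=\{a\in A:\norm{ab-ba}<\varepsilon\text{ for all }b\in F\}$ — or more precisely inside a suitable corner $eAe$, where $e$ is an approximate unit element for the hereditary subalgebra generated by a small compression that nearly commutes with $F$. The key input is that the non-continuous-trace property is inherited by nonzero hereditary subalgebras arising in this way (or at least by sufficiently many of them), so that Glimm's theorem continues to apply and the lift of the matrix units sits inside the approximate commutant of $F$.

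The main obstacle will be precisely this simultaneous demand: producing the pairwise $\delta$-separated elements in a location of $A$ that also lies in the approximate commutant of $F$. Reducing to a hereditary subalgebra or a corner that still fails continuous trace is the conceptual heart of the argument, because it lets us apply the same dichotomy as above to a smaller algebra in which every element automatically nearly commutes with $F$. Given that reduction, the remaining arguments (Glimm's theorem, weak semiprojectivity of finite-dimensional C*-algebras, and the disjoint-support trick from Lemma \ref{Lemma:many-abelian}) are standard and can be assembled with the constant $\delta=1/2$.
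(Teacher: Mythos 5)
There is a genuine gap, and you have in fact located it yourself: the ``main obstacle'' you describe in your closing paragraphs is the entire content of the lemma, and the device you propose for overcoming it does not work. The set $A_{F,\varepsilon}=\{a\in A:\left\Vert ab-ba\right\Vert <\varepsilon \text{ for all }b\in F\}$ is not a hereditary subalgebra --- it is not even closed under multiplication --- so Glimm's theorem cannot be applied ``inside'' it. The fallback of passing to a corner $eAe$ with $e$ nearly commuting with $F$ also fails: elements of $eAe$ need not nearly commute with $F$ (take $e=1$), and even for a well-chosen $e$ there is no reason such a corner should again fail to have continuous trace, nor does your sketch produce one sitting in the approximate commutant of $F$. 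Scenario (II) has the same problem in sharper form: the positive contractions with approximately disjoint spectral supports extracted from a non-Hausdorff point of the spectrum come with no control whatsoever on their commutators with a prescribed finite set.

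The paper closes exactly this gap by quoting the theorem of Akemann and Pedersen \cite[Theorem 2.4]{akemann_central_1979}: a separable C*-algebra without continuous trace admits a sequence $\left( d_{n}\right) $ of positive norm-one elements which is asymptotically central ($\left\Vert d_{n}a-ad_{n}\right\Vert \rightarrow 0$ for every $a\in A$) and satisfies $\limsup_{n}\left\Vert d_{n}-a\right\Vert >0$ for every $a\in A$. This is essentially the statement you are trying to reprove from Glimm's theorem; it is a genuine theorem, not a routine localization. Granting it, one realizes the corresponding type in $A^{\mathcal{U}}$ by countable saturation (Proposition \ref{Proposition:countably-saturated}) to obtain a norm-one element of $A^{\prime }\cap A^{\mathcal{U}}$ at uniformly positive distance $\delta $ from $A$, and then extracts from a representative sequence the pairwise $\delta $-separated, approximately central tuples required by Definition \ref{Definition:many-central}. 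If you wish to avoid citing Akemann--Pedersen, you must supply a proof of (a quantitative form of) their result; your Type I / non-Type I dichotomy does not yet do so.
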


\begin{proof}
By \cite[Theorem 2.4]{akemann_central_1979}, there exists a sequence $\left(
d_{n}\right) $ of positive elements of norm $1$ in $A$ such that, for every $%
a\in A$, $\lim_{n\rightarrow +\infty }\left\Vert d_{n}a-ad_{n}\right\Vert =0$
and $\delta _{a}:=\limsup_{n\rightarrow +\infty }\left\Vert
d_{n}-a\right\Vert >0$. Consider then the $L^{\text{C*}}\left( A\right) $%
-type $\mathfrak{t}\left( x\right) $ consisting of the conditions $%
\left\Vert xa-ax\right\Vert \leq 0$ and $\left\Vert x-a\right\Vert \geq
\delta _{a}$ for $a\in A$ and $\left\vert 1-\left\Vert x\right\Vert
\right\vert \leq 0$. Then the type $\mathfrak{t}\left( x\right) $ is
approximately realized in $A$, and hence it is realized in $A^{\mathcal{U}}$
by countable saturation. We identify $A$ with a subalgebra of $A^{\prime
}\cap A^{\mathcal{U}}$. If $\boldsymbol{a}$ is a realization of $\mathfrak{t}%
\left( x\right) $ in $A^{\mathcal{U}}$, then $\boldsymbol{a}$ is an element
of norm $1$ of $A^{\prime }\cap A^{\mathcal{U}}$ that does not belong to $A$%
. By completeness of $A$, there exists $\delta >0$ such that $\left\Vert 
\boldsymbol{a}-b\right\Vert >\delta $ for every $b\in A$ of norm $1$.

Fix an enumeration $\left( b_{n}\right) $ of a dense subset of the unit ball
of $A$. If $\left( a_{n}\right) $ is a representative sequence of $%
\boldsymbol{a}$, then using \L os' theorem one can recursively define an
increasing sequence $\left( n_{k}\right) $ in $\mathbb{N}$ such that, for
every $k<m$, $\left\Vert a_{k}-a_{m}\right\Vert >\delta $ and $\left\Vert
b_{m}a_{k}-a_{k}b_{m}\right\Vert <2^{-k}$. This shows that $A$ has many
asymptotically central elements.
\end{proof}

Lemma \ref{Lemma:many-continuous-trace} and Lemma \ref%
{Lemma:many-not-continuous-trace} together show that every
infinite-dimensional C*-algebra has many asymptotically central elements in
the sense of Definition \ref{Definition:many-central}. (Obviously, the
converse holds as well.) Combining this with Proposition \ref%
{Proposition:many-central} and Theorem \ref{Theorem:isomorphic-saturated},
we finally obtain the following.

\begin{theorem}
\label{Theorem:ultrapowers}Suppose that $A$ is a separable
infinite-dimensional C*-algebra. If $\mathcal{U}$ is a nonprincipal
ultrafilter over $\mathbb{N}$, then the ultrapower $A^{\mathcal{U}}$ and the
relative commutant $A^{\prime }\cap A^{\mathcal{U}}$ have density character $%
\mathfrak{c}$. If the Continuum Hypothesis holds, and $\mathcal{U}$, $%
\mathcal{V}$ are nonprincipal ultrafilters over $\mathbb{N}$, then $A^{%
\mathcal{U}}\cong A^{\mathcal{V}}$ and $A^{\prime }\cap A^{\mathcal{U}}\cong
A^{\prime }\cap A^{\mathcal{V}}$.
\end{theorem}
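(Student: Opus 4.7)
The plan is to combine the density character bounds established in the lemmas and propositions of this section with the general isomorphism theorem for countably saturated structures. The argument splits naturally into two parts.

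For the density character statement, I would simply assemble what is already available. By Lemma \ref{Lemma:many-continuous-trace} (in the continuous trace case) and Lemma \ref{Lemma:many-not-continuous-trace} (otherwise), every separable infinite-dimensional C*-algebra has many asymptotically central elements in the sense of Definition \ref{Definition:many-central}. Applying Proposition \ref{Proposition:many-central} to $A$ and any nonprincipal (equivalently, countably incomplete) ultrafilter $\mathcal{U}$ over $\mathbb{N}$ immediately yields that $A^{\mathcal{U}}$ and $A'\cap A^{\mathcal{U}}$ both have density character and cardinality exactly $\mathfrak{c}$.

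For the isomorphism statement under CH, the clean move is to work in the expanded language $L^{\text{C*}}(A)$ obtained by adjoining a constant symbol $c_{a}$ for every $a\in A$, and to regard both $A^{\mathcal{U}}$ and $A^{\mathcal{V}}$ as $L^{\text{C*}}(A)$-structures via the diagonal embedding of $A$. Since $A$ is separable, $L^{\text{C*}}(A)$ is separable for the class consisting of $A^{\mathcal{U}}$ and $A^{\mathcal{V}}$. By \L os' theorem (Theorem \ref{Theorem:Los}) applied to this expanded language, the diagonal embeddings $A\hookrightarrow A^{\mathcal{U}}$ and $A\hookrightarrow A^{\mathcal{V}}$ are elementary, so $A^{\mathcal{U}}$ and $A^{\mathcal{V}}$ have the same $L^{\text{C*}}(A)$-theory. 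By Proposition \ref{Proposition:countably-saturated}, both are countably saturated as $L^{\text{C*}}(A)$-structures.

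Now CH gives $\mathfrak{c}=\aleph_{1}$, so by the first part both ultrapowers have density character exactly $\aleph_{1}$. Theorem \ref{Theorem:isomorphic-saturated} then produces an $L^{\text{C*}}(A)$-isomorphism $\Phi\colon A^{\mathcal{U}}\to A^{\mathcal{V}}$. The key observation is that because $\Phi$ preserves the interpretations of all the new constants $c_{a}$, it fixes the diagonal copy of $A$ pointwise; hence it carries $A'\cap A^{\mathcal{U}}$ onto $A'\cap A^{\mathcal{V}}$, and both isomorphisms are obtained simultaneously. The only point one has to check carefully is that enriching the language by a separable family of constants preserves both the countable saturation of the ultrapower (which follows from the same proof as Proposition \ref{Proposition:countably-saturated}, using countable incompleteness of $\mathcal{U}$) and the separability hypothesis of Theorem \ref{Theorem:isomorphic-saturated}; both reduce to the separability of $A$, so there is no genuine obstacle beyond unwinding these definitions.
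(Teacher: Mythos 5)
Your proof is correct and follows essentially the same route as the paper: the density character statement is assembled from the lemmas showing every separable infinite-dimensional C*-algebra has many asymptotically central elements together with Proposition \ref{Proposition:many-central}, and the CH statement from elementary equivalence, countable saturation, and Theorem \ref{Theorem:isomorphic-saturated}. The only cosmetic difference is that you produce the isomorphism fixing the diagonal copy of $A$ by passing to the expanded language $L^{\text{C*}}\left( A\right) $, whereas the paper invokes the ``furthermore'' clause of Theorem \ref{Theorem:isomorphic-saturated} applied to the elementary diagonal embeddings; these are the same mechanism.
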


\begin{proof}
The first assertion is an immediate consequence of Proposition \ref%
{Proposition:many-central} and the observations above. Suppose now that the
Continuum Hypothesis holds. If $\mathcal{U}$, $\mathcal{V}$ are nonprincipal
ultrafilters over $\mathbb{N}$, then $A^{\mathcal{U}}$ and $A^{\mathcal{V}}$
have density character $\aleph _{1}$, they are elementarily equivalent by \L %
os' theorem, and they are countable saturated by Proposition \ref%
{Proposition:countably-saturated}. Therefore Theorem \ref%
{Theorem:isomorphic-saturated} implies that $A^{\mathcal{U}}$ and $A^{%
\mathcal{V}}$ are isomorphic. Furthermore, since the diagonal embedding of $%
A $ in both $A^{\mathcal{U}}$ and $A^{\mathcal{V}}$ is elementary by \L os'
theorem, again by Theorem \ref{Theorem:isomorphic-saturated} there exists an
isomorphism $\Phi :A^{\mathcal{U}}\rightarrow A^{\mathcal{V}}$ which is the
identity on $A$ (canonically identified with a C*-subalgebra of $A^{\mathcal{%
U}}$ and $A^{\mathcal{V}}$). Henceforth, $\Phi $ restricts to an isomorphism
from $A^{\prime }\cap A^{\mathcal{U}}$ onto $A^{\prime }\cap A^{\mathcal{V}}$%
. This concludes the proof.
\end{proof}

\section{Strongly self-absorbing C*-algebras\label{Section:strongly}}

The class of strongly self-absorbing C*-algebras, initially introduced by
Toms and Winter in \cite{toms_strongly_2007}, has played in recent years a
pivotal role in the study of structure and classification of simple nuclear
C*-algebras. In the rest of this section, we want to present some
model-theoretic results concerning these algebras, their ultrapowers and
relative commutants, obtained in \cite{farah_relative_2017}.

Recall that a separable C*-algebra $D$ has \emph{approximately inner
half-flip} if the canonical embeddings $D\rightarrow D\otimes D$ defined by $%
d\mapsto d\otimes 1_{D}$ and $d\mapsto 1_{D}\otimes d$ are approximately
unitarily equivalent. (Here and in the following, we consider the \emph{%
minimal }tensor product of C*-algebras; see \cite[Section II.9]%
{blackadar_operator_2006}.) In other words, there exists a sequence $\left(
u_{n}\right) $ of unitary elements of $D\otimes D$ such that $\left\Vert
u_{n}\left( d\otimes 1_{D}\right) -\left( d\otimes 1_{D}\right)
u_{n}\right\Vert \rightarrow 0$ for $n\rightarrow +\infty $ for every $d\in
D $.\ A separable C*-algebra $D$ is \emph{strongly self-absorbing} if it is
not isomorphic to $\mathbb{C}$, and the canonical embedding \textrm{id}$%
_{D}\otimes 1:D\rightarrow D\otimes D$, $d\mapsto d\otimes 1_{D}$ is
approximately unitarily equivalent to an isomorphism $D\cong D\otimes D$.
This condition is very restrictive, indeed a strongly self-absorbing
C*-algebra $D$ is automatically simple and nuclear, has approximately
inner-half flip, and it is isomorphic to the infinite tensor product $%
D^{\otimes \mathbb{N}}$. The only currently known examples of strongly
self-absorbing C*-algebras are the infinite type uniformly hyperfinite (UHF)
C*-algebras \cite{glimm_certain_1960}, the Cuntz algebras $\mathcal{O}_{2}$
and $\mathcal{O}_{\infty }$ \cite{cuntz_simple_1977}, the Jiang--Su algebra $%
\mathcal{Z}$ \cite{jiang_simple_1999}, and their tensor products. In the
following, all the strongly self-absorbing C*-algebras are assumed to be
separable.

Suppose that $D$ is a strongly self-absorbing C*-algebra, and $A$ is a
separable C*-algebra. One says that $A$ is tensorially $D$-absorbing, or
simply $D$-absorbing, if $D\otimes A$ is isomorphic to $A$. The notion of $D$%
-absorption plays a crucial role in the current study of nuclear
C*-algebras. The goal of the next subsection is to present a proof of a
well-known characterization of $D$-absorption for separable C*-algebras in
terms of the notion of positive existential embedding.

\subsection{A criterion for $D$-absorption}

Throughout this section, we let $D$ be a fixed strongly self-absorbing
C*-algebra, and $A,B$ be separable C*-algebras. In the following lemma, we
consider objects which are triples $(A,\hat{A},\eta )$ of two C*-algebras $A,%
\hat{A}$ together with an embedding $\eta :A\rightarrow \hat{A}$. These can
be regarded as structures in a two-sorted language $\mathcal{L}$ which has
sorts for the C*-algebras $A,\hat{A}$, function and relation symbols for the
C*-algebra structure on $A,\hat{A}$, and a function symbol for the embedding 
$\eta :A\rightarrow \hat{A}$.

\begin{lemma}
\label{Lemma:pairs}Suppose that $A,B$ are C*-algebras and $\Phi
:A\rightarrow B$ is a positively existential embedding. If $C$ is a nuclear
C*-algebra, then $\mathrm{id}_{C}\otimes \Phi :C\otimes A\rightarrow
C\otimes B$, $c\otimes a\mapsto c\otimes \Phi \left( a\right) $ is a
positively existential embedding. Furthermore, the pair $\left( \Phi ,%
\mathrm{id}_{C}\otimes \Phi \right) $ defines a positive existential
embedding from $\left( A,C\otimes A,1_{C}\otimes \mathrm{id}_{A}\right) $ to 
$\left( B,C\otimes B,1_{C}\otimes \mathrm{id}_{B}\right) $ regarded as $%
\mathcal{L}$-structures.
\end{lemma}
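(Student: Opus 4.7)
The plan is to reduce to Proposition \ref{Proposition:characterize-existential}, adapted to the multi-sorted language $\mathcal{L}$: the pair $(\Phi,\mathrm{id}_{C}\otimes \Phi)$ is positively existential if and only if there exists a morphism of $\mathcal{L}$-structures from $(B,C\otimes B,1_{C}\otimes \mathrm{id}_{B})$ into the ultrapower $(A^{\mathcal{U}},(C\otimes A)^{\mathcal{U}},(1_{C}\otimes \mathrm{id}_{A})^{\mathcal{U}})$ whose composition with $(\Phi,\mathrm{id}_{C}\otimes \Phi)$ is the diagonal embedding. Since the first statement of the lemma is obtained from the second by forgetting the sort of $A,B$, I would aim directly at the pair.

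The construction begins from the morphism $\Psi : B\rightarrow A^{\mathcal{U}}$ supplied by positive existentiality of $\Phi$, which satisfies $\Psi \circ \Phi =\Delta _{A}$. For the second sort, I would form $\mathrm{id}_{C}\otimes \Psi :C\otimes B\rightarrow C\otimes A^{\mathcal{U}}$, then compose with the canonical $*$-homomorphism
\[
\iota :C\otimes A^{\mathcal{U}}\longrightarrow (C\otimes A)^{\mathcal{U}},\qquad c\otimes \lbrack a_{n}]\longmapsto \lbrack c\otimes a_{n}],
\]
and set $\widehat{\Psi }:=\iota \circ (\mathrm{id}_{C}\otimes \Psi )$. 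The candidate witness is then $(\Psi ,\widehat{\Psi })$, and three checks suffice: (i) $\Psi \circ \Phi =\Delta _{A}$, which is given; (ii) $\widehat{\Psi }\circ (\mathrm{id}_{C}\otimes \Phi )=\Delta _{C\otimes A}$, which on elementary tensors reduces to $\iota (c\otimes \Delta _{A}(a))=[c\otimes a]$; and (iii) the compatibility identity $\widehat{\Psi }\circ (1_{C}\otimes \mathrm{id}_{B})=(1_{C}\otimes \mathrm{id}_{A})^{\mathcal{U}}\circ \Psi $ that makes $(\Psi ,\widehat{\Psi })$ a morphism of $\mathcal{L}$-structures, which on $b\in B$ with $\Psi (b)=[a_{n}]$ amounts to $\iota (1_{C}\otimes \lbrack a_{n}])=[1_{C}\otimes a_{n}]$.

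The nuclearity hypothesis enters at exactly one point, but a crucial one: one must verify that $\mathrm{id}_{C}\otimes \Psi $ and $\iota $ are well-defined $*$-homomorphisms on the (unique) C*-tensor product $C\otimes -$. The assignments descend from the algebraic tensor product to the maximal C*-tensor product by the universal property, so the naive definitions give morphisms $C\otimes _{\max }B\rightarrow C\otimes _{\max }A^{\mathcal{U}}$ and $C\otimes _{\max }A^{\mathcal{U}}\rightarrow (C\otimes _{\max }A)^{\mathcal{U}}$. Nuclearity of $C$ collapses the maximal and minimal norms on each of $C\otimes B$, $C\otimes A^{\mathcal{U}}$, and $C\otimes A$, legitimizing the passage to the minimal (equivalently, unique) C*-tensor product throughout.

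The main obstacle is this last point: without nuclearity there is no reason the map $\iota $ should extend to the minimal tensor product, since the minimal norm on $C\otimes A^{\mathcal{U}}$ can in principle be strictly smaller than what one would need for the assignment $c\otimes \lbrack a_{n}]\mapsto \lbrack c\otimes a_{n}]$ to be contractive. Once nuclearity is invoked to dispose of this issue, identities (i)--(iii) are straightforward verifications on elementary tensors extended by linearity and continuity, and the conclusion then follows from Proposition \ref{Proposition:characterize-existential}.
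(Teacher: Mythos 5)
Your proposal is correct and follows essentially the same route as the paper: both produce the witness $\Psi:B\rightarrow A^{\mathcal{U}}$ from positive existentiality of $\Phi$, tensor it with $\mathrm{id}_{C}$, compose with the canonical map $C\otimes A^{\mathcal{U}}\rightarrow \left( C\otimes A\right) ^{\mathcal{U}}$ (which the paper builds via the universal property of the maximal tensor product and you describe on elementary tensors, the same map), and invoke nuclearity of $C$ exactly where you do, to identify maximal and minimal tensor products so that these maps are defined on $C\otimes -$. The verification via Proposition \ref{Proposition:characterize-existential} for the $\mathcal{L}$-structure pair is likewise the paper's argument.
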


\begin{proof}
We will use below that, since $C$ is nuclear, maximal and minimal tensor
products with $C$ coincide.

Fix a countably incomplete ultrafilter $\mathcal{U}$. Since $\Phi $ is a
positively existential embedding, there exists a morphism $\Psi
:B\rightarrow A^{\mathcal{U}}$ such that $\Psi \circ \Phi =\Delta _{A}$. One
can then consider \textrm{id}$_{C}\otimes \Psi :C\otimes B\rightarrow
C\otimes A^{\mathcal{U}}$. Since $C$ is nuclear, the tensor product $%
C\otimes A^{\mathcal{U}}$ can be identified with the maximal tensor product.
Let $\eta :C\otimes A^{\mathcal{U}}\rightarrow \left( C\otimes A\right) ^{%
\mathcal{U}}$ be the canonical morphism obtained via the universal property
of the maximal tensor product from the morphisms with commuting ranges $%
\Delta _{C}:C\rightarrow C^{\mathcal{U}}\subset \left( C\otimes A\right) ^{%
\mathcal{U}}$ and $\mathrm{id}_{A^{\mathcal{U}}}:A^{\mathcal{U}}\rightarrow
A^{\mathcal{U}}\subset \left( C\otimes A\right) ^{\mathcal{U}}$. Observe
that 
\begin{equation*}
\eta \circ \left( \mathrm{id}_{C}\otimes \Psi \right) \circ \left( \mathrm{id%
}_{C}\otimes \Phi \right) =\eta \circ \left( \mathrm{id}_{C}\otimes \Delta
_{A}\right) =\mathrm{id}_{C\otimes A}\text{.}
\end{equation*}%
Thus $\eta \circ \left( \mathrm{id}_{C}\otimes \Psi \right) $ witnesses that 
$\mathrm{id}_{C}\otimes \Phi $ is positively existential.

Consider now $\left( A,C\otimes A,\mathrm{id}_{C}\otimes A\right) $ and $%
\left( B,C\otimes B,\mathrm{id}_{C}\otimes B\right) $ as $\mathcal{L}$%
-structures, and observe that the pair $\left( \Phi ,\mathrm{id}_{C}\otimes
\Phi \right) $ defines an $\mathcal{L}$-morphism between such $\mathcal{L}$%
-structures. Observe also that $\left( A^{\mathcal{U}},\left( C\otimes
A\right) ^{\mathcal{U}},\left( 1_{C}\otimes \mathrm{id}_{A}\right) ^{%
\mathcal{U}}\right) $ can be regarded as the ultrapower of $\left(
A,C\otimes A,1_{C}\otimes \mathrm{id}_{A}\right) $ as an $\mathcal{L}$%
-structure. Furthermore, the pair $\left( \Psi ,\eta \circ \left( \mathrm{id}%
_{C}\otimes \Psi \right) \right) $ defines an $\mathcal{L}$-morphism from $%
\left( B,C\otimes B,1_{C}\otimes \mathrm{id}_{B}\right) $ to $\left( A^{%
\mathcal{U}},\left( C\otimes A\right) ^{\mathcal{U}},\left( 1_{C}\otimes 
\mathrm{id}_{A}\right) ^{\mathcal{U}}\right) $ whose composition with $%
\left( \Phi ,\mathrm{id}_{C}\otimes \Phi \right) $ is the canonical
embedding of $\left( A,C\otimes A,\mathrm{id}_{C}\otimes A\right) $ into its
ultrapower. This witnesses that the pair $\left( \Phi ,\mathrm{id}%
_{C}\otimes \Phi \right) $ is a positively existential embedding from $%
\left( A,C\otimes A,1_{C}\otimes \mathrm{id}_{A}\right) $ to $\left(
B,C\otimes B,1_{C}\otimes \mathrm{id}_{B}\right) $ regarded as $\mathcal{L}$%
-structures.
\end{proof}

The following fundamental intertwining lemma is \cite[Proposition 2.3.5]%
{rordam_classification_2002}.

\begin{lemma}
\label{Lemma:intertwining}Suppose that $\Phi :A\rightarrow B$ is an
embedding. Assume that for every $\varepsilon >0$ and for every finite
subset $F_{A}$ of $A$ and $F_{B}$ of $B$ there exists a unitary element $z$
of $B$ such that

\begin{enumerate}
\item $\left\Vert z\Phi \left( a\right) z^{\ast }-\Phi \left( a\right)
\right\Vert \leq \varepsilon $ for $a\in F_{A}$, and

\item $\inf_{x\in A}\left\Vert z^{\ast }bz-\Phi \left( x\right) \right\Vert
\leq \varepsilon $ for $b\in F_{B}$.
\end{enumerate}

Then $\Phi $ is approximately unitarily equivalent to an isomorphism $\Psi $.
\end{lemma}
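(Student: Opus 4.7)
The plan is to run a standard back-and-forth intertwining argument to build a sequence of unitaries $(u_n)$ in $B$ for which $\Psi(a):=\lim_n u_n\Phi(a)u_n^{\ast}$ is a $\ast$-isomorphism of $A$ onto $B$. First, I would fix enumerations $(a_n)_{n\in\mathbb{N}}$ and $(b_n)_{n\in\mathbb{N}}$ of countable dense subsets of the unit balls of $A$ and $B$, and construct recursively unitaries $z_n\in U(B)$ (setting $u_n:=z_1z_2\cdots z_n$, with $u_0=1$) and elements $x_n\in A$ such that, writing $F_n:=\{a_1,\ldots,a_n,x_1,\ldots,x_{n-1}\}$,
\begin{enumerate}
\item[(a)] $\|z_n\Phi(c)z_n^{\ast}-\Phi(c)\|\le 2^{-n}$ for every $c\in F_n$;
\item[(b)] $\|u_n^{\ast}b_n u_n-\Phi(x_n)\|\le 2^{-n}$.
\end{enumerate}
Both are achieved simultaneously at step $n$ by applying the hypothesis to $F_A=F_n$, $F_B=\{u_{n-1}^{\ast}b_n u_{n-1}\}$, and $\varepsilon=2^{-n}$, using the factorization $u_n^{\ast}b_n u_n=z_n^{\ast}u_{n-1}^{\ast}b_n u_{n-1}z_n$, and then choosing $x_n\in A$ that witnesses the infimum in condition~(2) up to tolerance $2^{-n}$.

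Next, I would check convergence. Unitarity of $u_n$ together with (a) gives the telescoping estimate
\[
\|u_{n+1}\Phi(c)u_{n+1}^{\ast}-u_n\Phi(c)u_n^{\ast}\|=\|z_{n+1}\Phi(c)z_{n+1}^{\ast}-\Phi(c)\|\le 2^{-(n+1)}
\]
valid for every $c\in F_{n+1}$. Because each $a_i$ and each $x_i$ eventually belongs to $F_m$, the sequence $(u_m\Phi(c)u_m^{\ast})$ is Cauchy for every $c$ in the $\ast$-subalgebra generated by the $a_i$'s and the $x_i$'s, which is dense in $A$. By uniform continuity, the formula $\Psi(a):=\lim_m u_m\Phi(a)u_m^{\ast}$ extends to a $\ast$-homomorphism $\Psi:A\to B$, which is manifestly approximately unitarily equivalent to $\Phi$.

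Finally, I would verify that $\Psi$ is an isomorphism. Injectivity is immediate: conjugation by a unitary is isometric, so $\|\Psi(a)\|=\|\Phi(a)\|$ for every $a\in A$, and $\Phi$ is injective by hypothesis. For surjectivity, (b) yields $\|b_n-u_n\Phi(x_n)u_n^{\ast}\|\le 2^{-n}$, while a tail-sum estimate using (a) -- valid precisely because $x_n\in F_m$ for all $m>n$ -- gives $\|\Psi(x_n)-u_n\Phi(x_n)u_n^{\ast}\|\le\sum_{m>n}2^{-m}=2^{-n}$, so combining these yields $\|b_n-\Psi(x_n)\|\le 2^{-n+1}$. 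Hence the range of $\Psi$ is dense in $B$, and since the image of a $\ast$-homomorphism between C*-algebras is closed, $\Psi$ is onto. The main subtlety is the bookkeeping in the choice of $F_n$: the set of elements whose orbits must be \emph{protected} at step $n$ has to grow to include the newly introduced $x_i$'s, since the surjectivity estimate supplied by (b) is useful only insofar as (a) also secures the convergence of $u_m\Phi(x_i)u_m^{\ast}$ for every $i$.
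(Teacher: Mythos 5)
The paper does not prove this lemma; it simply quotes it as \cite[Proposition 2.3.5]{rordam_classification_2002}, so there is no in-paper argument to compare against. Your proof is the standard one-sided Elliott approximate-intertwining argument and is correct: the recursion is well-founded, the bookkeeping with $F_n$ (protecting the previously chosen $x_i$'s) is exactly the point that makes the surjectivity estimate usable, and the isometry/closed-range argument for bijectivity is right. Two microscopic points you may wish to tighten: (i) condition (2) only bounds an infimum, so the witness $x_n$ satisfies $\lVert u_n^{\ast}b_nu_n-\Phi(x_n)\rVert\le 2^{-n}+\delta$ for any $\delta>0$ (harmless — run the hypothesis at $\varepsilon=2^{-n-1}$); (ii) the conclusion ``$\lVert b_n-\Psi(x_n)\rVert\le 2^{-n+1}$ for all $n$ implies dense range'' needs each $b$ in the unit ball to be approximable by $b_n$ with $n$ arbitrarily large, which holds because the unit ball has no isolated points (or can be arranged by enumerating the dense set with infinite repetition, or by treating $b_1,\dots,b_n$ rather than just $b_n$ at stage $n$).
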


Using Lemma \ref{Lemma:intertwining} one can obtain the following.

\begin{lemma}
\label{Lemma:existential-absorb}If the canonical embedding $1_{D}\otimes 
\mathrm{id}_{A}:A\rightarrow D\otimes A$ is positively existential, then it
is approximately unitarily equivalent to an isomorphism.
\end{lemma}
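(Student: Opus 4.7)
The plan is to verify the hypotheses of the intertwining lemma (Lemma~\ref{Lemma:intertwining}) for $\Phi := 1_D \otimes \mathrm{id}_A \colon A \to D \otimes A$, by transferring the search for the required unitaries to a richer structure via Lemma~\ref{Lemma:pairs}. Let $\mathcal{L}$ be the two-sorted language of pairs with an embedding symbol $\eta \colon S_1 \to S_2$, and set
\begin{equation*}
M_1 := (A,\, D \otimes A,\, 1_D \otimes \mathrm{id}_A),\qquad M_2 := (D \otimes A,\, D \otimes D \otimes A,\, 1_D \otimes \mathrm{id}_{D \otimes A}).
\end{equation*}
By Lemma~\ref{Lemma:pairs} with $C := D$, the pair $(\Phi,\, \mathrm{id}_D \otimes \Phi)$ is a positively existential $\mathcal{L}$-embedding $M_1 \to M_2$.

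Fix $\varepsilon > 0$ together with finite sets $F_A \subset A$ and $F_B \subset D \otimes A$. The two conditions of Lemma~\ref{Lemma:intertwining} for a unitary $z \in D \otimes A$ are captured by the positive existential $\mathcal{L}(F_A \cup F_B)$-sentence
\begin{equation*}
\sigma := \inf_{z \in U(S_2)} \max\Bigl\{\max_{a \in F_A} \|z\,\eta(a)\,z^* - \eta(a)\|,\ \max_{b \in F_B} \inf_{x \in S_1} \|z^* b z - \eta(x)\|\Bigr\},
\end{equation*}
where $U(S_2)$ denotes the positive quantifier-free definable set of unitaries in the big sort (Subsection~\ref{Subsection:quantifier-free}); what we want is $\sigma^{M_1} \leq \varepsilon$, which by positive existentialness of the pair embedding is equivalent to $\sigma^{M_2} \leq \varepsilon$.

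To verify the latter, recall that every strongly self-absorbing C*-algebra has approximately inner half-flip, so there exist unitaries $w \in D \otimes D$ with $\|w(d \otimes 1_D)w^* - (1_D \otimes d)\|$ arbitrarily small uniformly on any prescribed finite set of $d$'s. After approximating each $b \in F_B$ by a finite sum of simple tensors, pick such a $w$ whose half-flip error, weighted by the norms that appear, is below $\varepsilon$, and set $z := w^* \otimes 1_A \in D \otimes D \otimes A$. Then $z$ acts trivially on the third tensor factor and therefore commutes \emph{exactly} with each $\eta^{M_2}(\Phi(a)) = 1_D \otimes 1_D \otimes a$, discharging condition~(1); for condition~(2), on a simple tensor $b = d \otimes a'$ one computes
\begin{equation*}
z^*\bigl((\mathrm{id}_D \otimes \Phi)(b)\bigr)z \;=\; (w(d \otimes 1_D)w^*) \otimes a' \;\approx\; (1_D \otimes d) \otimes a' \;=\; \eta^{M_2}(b),
\end{equation*}
so the choice $x := b \in S_1^{M_2}$ makes the inner infimum smaller than $\varepsilon$; linearity and norm-continuity promote this from simple tensors to all of $F_B$. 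Hence $\sigma^{M_2} \leq \varepsilon$, and therefore $\sigma^{M_1} \leq \varepsilon$. Since $\varepsilon$, $F_A$, $F_B$ were arbitrary, Lemma~\ref{Lemma:intertwining} yields the conclusion.

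The main hurdle is the bookkeeping in Step~1 and the translation of constants under the pair map: a constant $a \in S_1^{M_1} = A$ is sent to $\Phi(a) = 1_D \otimes a \in S_1^{M_2}$, whereas a constant $b \in S_2^{M_1} = D \otimes A$ is sent to $(\mathrm{id}_D \otimes \Phi)(b) \in S_2^{M_2}$, which is genuinely different from $\eta^{M_2}(b) = 1_D \otimes b$; it is precisely the interplay of these two different images, combined with the symmetry of the half-flip, that lets $z$ satisfy both conditions in $M_2$ simultaneously. Once this is set up correctly, the remaining computation is immediate.
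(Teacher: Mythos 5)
Your proposal is correct and takes essentially the same route as the paper's proof: both pass to the pair of $\mathcal{L}$-structures $(A, D\otimes A)$ and $(D\otimes A, D\otimes D\otimes A)$ via Lemma \ref{Lemma:pairs}, produce the required unitary in $D\otimes D\otimes A$ from the approximately inner half-flip of $D$ in the form $v\otimes 1_A$, and pull it back along the positively existential pair embedding (using definability of the unitary group) to verify the hypotheses of Lemma \ref{Lemma:intertwining}. Your explicit remark about the two distinct images of an element $b\in D\otimes A$ in the big sort is exactly the point the paper's proof relies on implicitly.
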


\begin{proof}
Fix a finite subset $F$ of $A$, a finite subset $F^{\prime }$ of $D\otimes A$%
, and $\varepsilon >0$. Consider the structure 
\begin{equation*}
\mathcal{A}=\left( A,D\otimes A,1_{D}\otimes \mathrm{id}_{A}:A\rightarrow
D\otimes A\right)
\end{equation*}%
and the structure%
\begin{equation*}
\mathcal{B}=\left( D\otimes A,D\otimes D\otimes A,1_{D}\otimes \mathrm{id}%
_{D\otimes A}:D\otimes A\rightarrow D\otimes D\otimes A\right) \text{.}
\end{equation*}%
One can naturally consider $\mathcal{A}$ and $\mathcal{B}$ as $\mathcal{L}$%
-structures, where $\mathcal{L}$ is the multi-sorted language considered in
Lemma \ref{Lemma:pairs}. We are assuming that the embedding%
\begin{equation*}
1_{D}\otimes \mathrm{id}_{A}:A\rightarrow D\otimes A
\end{equation*}%
is positively existential. Therefore by Lemma \ref{Lemma:pairs} the embedding%
\textrm{\ id}$_{D}\otimes 1_{D}\otimes \mathrm{id}_{A}:D\otimes A\rightarrow
D\otimes D\otimes A$ is positively existential. Furthermore, the pair $\Psi
=\left( 1_{D}\otimes \mathrm{id}_{A},\mathrm{id}_{D}\otimes 1_{D}\otimes 
\mathrm{id}_{A}\right) $ defines an existential embedding from $\mathcal{A}$
to $\mathcal{B}$ regarded as $\mathcal{L}$-structures.

Since $D$ has approximately inner half-flip, there exists a unitary element $%
v$ of $D\otimes D$ such that, if $\hat{u}:=v\otimes 1\in D\otimes D\otimes A$%
, then

\begin{enumerate}
\item $\left\Vert \hat{u}\left( 1_{D}\otimes 1_{D}\otimes a\right) \hat{u}%
^{\ast }-\left( 1_{D}\otimes 1_{D}\otimes a\right) \right\Vert <\varepsilon $
for $a\in F$, and

\item $\mathrm{dist}\left( \hat{u}^{\ast }\left( \mathrm{id}_{D}\otimes
1_{D}\otimes \mathrm{id}_{A}\right) \left( b\right) \hat{u},1_{D}\otimes
D\otimes A\right) <\varepsilon $ for $b\in F^{\prime }$.
\end{enumerate}

Using the fact that the $\mathcal{L}$-morphism $\Psi :\mathcal{A}\rightarrow 
\mathcal{B}$ is positively $\mathcal{L}$-existential and that the unitary
group is a positively existentially definable set, one can conclude that
there exists a unitary element $u$ of $D\otimes A$ such that

\begin{enumerate}
\item $\left\Vert u\left( 1_{D}\otimes a\right) u^{\ast }-\left(
1_{D}\otimes a\right) \right\Vert <\varepsilon $ for $a\in F$, and

\item \textrm{dist}$\left( u^{\ast }bu,1_{D}\otimes A\right) <\varepsilon $
for $b\in F^{\prime }$.
\end{enumerate}

This witnesses that $1_{D}\otimes \mathrm{id}_{A}:\left( A,\alpha \right)
\rightarrow \left( D\otimes A,\mathrm{id}_{D}\otimes \alpha \right) $
satisfies the assumptions of Lemma \ref{Lemma:intertwining}.
\end{proof}

\begin{lemma}
\label{Lemma:absorb-existential}The embedding 
\begin{equation*}
1_{D}\otimes \mathrm{id}_{D\otimes A}:D\otimes A\rightarrow D\otimes
D\otimes A
\end{equation*}%
is positively existential.
\end{lemma}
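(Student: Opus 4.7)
The plan is to apply Proposition \ref{Proposition:characterize-existential} by constructing, for a fixed countably incomplete ultrafilter $\mathcal{U}$, a morphism $\Psi: D \otimes D \otimes A \to (D \otimes A)^{\mathcal{U}}$ whose composition with $\Phi := 1_D \otimes \mathrm{id}_{D \otimes A}$ equals the diagonal embedding $\Delta_{D \otimes A}$. The whole argument rests on the observation that $\Phi$ is approximately unitarily equivalent to a genuine isomorphism, which is a direct consequence of $D$ being strongly self-absorbing.

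Specifically, since $D$ is strongly self-absorbing, the embedding $d \mapsto d \otimes 1_D$ is approximately unitarily equivalent to some isomorphism $\varphi: D \to D \otimes D$, and since strong self-absorption implies approximately inner half-flip, so is the embedding $d \mapsto 1_D \otimes d$; thus there exist unitaries $v_n \in D \otimes D$ with $v_n(1_D \otimes d)v_n^* \to \varphi(d)$ in norm for every $d \in D$. Setting $u_n := v_n \otimes 1_A \in D \otimes D \otimes A$ and $\psi := \varphi \otimes \mathrm{id}_A: D \otimes A \to D \otimes D \otimes A$ (which is an isomorphism), I define
\[ \Psi(y) := [\psi^{-1}(u_n y u_n^*)]_{n \in \mathbb{N}} \]
for $y \in D \otimes D \otimes A$. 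The sequence is bounded in norm by $\|y\|$, so $\Psi$ is well-defined; moreover, since $\Psi(y)$ can be rewritten as $\psi^{-1,\mathcal{U}}(\mathbf{u}\, \Delta_{D \otimes D \otimes A}(y)\, \mathbf{u}^*)$, where $\mathbf{u} := [u_n]$ is a unitary in $(D \otimes D \otimes A)^{\mathcal{U}}$ and $\psi^{-1,\mathcal{U}}$ is the isomorphism induced between ultrapowers, it is a unital $*$-homomorphism (being a composition of such maps).

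To verify $\Psi \circ \Phi = \Delta_{D \otimes A}$, observe that both sides are continuous maps $D \otimes A \to (D \otimes A)^{\mathcal{U}}$, so it suffices to check the equality on the dense algebraic tensor product. On a simple tensor $d \otimes a$, one computes $u_n \Phi(d \otimes a) u_n^* = v_n(1_D \otimes d) v_n^* \otimes a \to \varphi(d) \otimes a = \psi(d \otimes a)$, so $\psi^{-1}(u_n \Phi(d \otimes a) u_n^*) \to d \otimes a$ in $D \otimes A$, and therefore $\Psi(\Phi(d \otimes a)) = [d \otimes a] = \Delta_{D \otimes A}(d \otimes a)$. The only nontrivial input is combining the definition of strong self-absorption with the approximately inner half-flip to produce $\varphi$ together with the approximating unitaries $v_n$; after that, the remainder of the construction is bookkeeping.
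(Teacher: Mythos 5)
Your proof is correct, but it takes a genuinely different route from the one in the paper. You invoke the definition of strong self-absorption directly: $\mathrm{id}_{D}\otimes 1_{D}$ is approximately unitarily equivalent to an isomorphism $\varphi :D\rightarrow D\otimes D$, and combining this with the approximately inner half-flip (and a diagonal argument to get a single witnessing sequence $\left( v_{n}\right) $, which uses separability of $D$) you conclude that $1_{D}\otimes \mathrm{id}_{D\otimes A}$ is approximately unitarily equivalent to the isomorphism $\varphi \otimes \mathrm{id}_{A}$; you then feed the resulting sequence of isomorphisms $\psi ^{-1}\circ \mathrm{Ad}\left( u_{n}\right) $ into the ultrapower criterion of Proposition \ref{Proposition:characterize-existential}. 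The paper instead writes $D\cong D^{\otimes \mathbb{N}}$ and, given a tuple $\sum_{k}d_{i,k}\otimes a_{i,k}$ in $D\otimes D^{\otimes \mathbb{N}}\otimes A$ witnessing a positive existential condition, replaces the outer tensor factor by the embedding $\Psi _{n}$ of $D$ into a deep tensor position; asymptotic centrality of these embeddings makes the substitution approximately multiplicative on the finitely many elements involved, so the condition is verified in $D^{\otimes \mathbb{N}}\otimes A$ up to $\varepsilon $ directly from the definition of positively existential embedding, with no mention of ultrapowers or unitary equivalence. Your argument is shorter and proves something stronger (each $\psi ^{-1}\circ \mathrm{Ad}\left( u_{n}\right) $ is an isomorphism, so $\Psi $ is injective and the embedding is in fact existential, and you have essentially established implication (3) of Theorem \ref{Theorem:absorption} for $D\otimes A$ en route); the trade-off is that it makes Lemma \ref{Lemma:absorb-existential} rest on the approximately-inner-half-flip machinery and inverts the intended logical flow of Theorem \ref{Theorem:absorption}, where (2)$\Rightarrow $(3) is meant to be the content of the separate Lemma \ref{Lemma:existential-absorb}. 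There is no circularity, since you use only the definition of strong self-absorption and a property the paper already records as automatic, but the paper's formula-level argument is the one that keeps the two lemmas independent.
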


\begin{proof}
Since $D$ is strongly self-absorbing, it is enough to show that the embedding%
\begin{equation*}
1_{D}\otimes \mathrm{id}_{D^{\otimes \mathbb{N}}\otimes A}:D^{\otimes 
\mathbb{N}}\otimes A\rightarrow D\otimes D^{\otimes \mathbb{N}}\otimes A
\end{equation*}%
is positively existential. For every $n\in \mathbb{N}$, let%
\begin{equation*}
\Psi _{n}:D\rightarrow D^{\otimes \mathbb{N}}\otimes A
\end{equation*}%
be the embedding induced by the embedding of $D$ into $D^{\otimes \mathbb{N}%
} $ as $n$-th tensor factor. Suppose that $\varphi \left( x_{1},\ldots
,x_{\ell }\right) $ is a positive existential $L^{\text{C*}}\left( A\right) $%
-formula, $\varepsilon >0$, $r\in \mathbb{R}$, and consider the condition $%
\varphi \left( x_{1},\ldots ,x_{\ell }\right) \leq r$. Assume that $%
\sum_{k}d_{i,k}\otimes a_{i,k}$ for $i=1,2,\ldots ,\ell $ is an $\ell $%
-tuple in $D\otimes D^{\otimes \mathbb{N}}\otimes A$ satisfying the
condition $\varphi \left( x_{1},\ldots ,x_{\ell }\right) <r$, where $%
d_{i,k}\in D$ and $a_{i,k}\in D^{\otimes \mathbb{N}}\otimes A$. Then, for $%
n\in \mathbb{N}$ large enough, $\sum_{k}\Psi _{n}\left( d_{i,k}\right)
a_{i,k}$ for $i=1,2,\ldots ,\ell $ is an $\ell $-tuple in $D^{\otimes 
\mathbb{N}}\otimes A$ satisfying the condition $\varphi \left( x_{1},\ldots
,x_{\ell }\right) \leq r+\varepsilon $. This concludes the proof that $%
1_{D}\otimes \mathrm{id}_{D^{\otimes \mathbb{N}}\otimes A}$ is a positively
existential embedding.
\end{proof}

Using the results above, we can give a characterization of $D$-absorption.

\begin{theorem}
\label{Theorem:absorption}Suppose that $A$ is a separable C*-algebra, and $D$
is a separable strongly self-absorbing C*-algebra. Let $\mathcal{U}$ be a
countably incomplete ultrafilter. The following statements are equivalent:

\begin{enumerate}
\item $A$ is $D$-absorbing;

\item the embedding $\mathrm{id}_{A}\otimes 1_{D}:A\rightarrow A\otimes D$
is positively existential;

\item the embedding $\mathrm{id}_{A}\otimes 1_{D}:A\rightarrow A\otimes D$
is approximately unitarily equivalent to an isomorphisms;

\item $D$ embeds into $A^{\prime }\cap A^{\mathcal{U}}$;

\item if $\mathfrak{t}\left( \bar{x}\right) $ is a positive quantifier-free
type approximately realized in $D$, then the type $\mathfrak{t}\left( \bar{x}%
\right) \cup \left\{ \left\Vert x_{i}a-ax_{i}\right\Vert \leq 0:a\in
A\right\} $ is approximately realized in $A$.
\end{enumerate}
\end{theorem}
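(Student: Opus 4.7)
The plan is to establish the cycle $(3) \Rightarrow (1) \Rightarrow (2) \Rightarrow (3)$ using the preceding lemmas, and then the equivalence $(2) \Leftrightarrow (4) \Leftrightarrow (5)$ using Proposition \ref{Proposition:commutant-existential} and countable saturation. The implication $(3) \Rightarrow (1)$ is immediate, and $(2) \Rightarrow (3)$ follows from Lemma \ref{Lemma:existential-absorb} after transferring along the flip isomorphism $A \otimes D \cong D \otimes A$. For $(1) \Rightarrow (2)$, I would fix an isomorphism $\alpha \colon A \to D \otimes A$ witnessing $D$-absorption and exploit the commutative identity
\begin{equation*}
(\mathrm{id}_{D} \otimes \alpha) \circ (1_{D} \otimes \mathrm{id}_{A}) = (1_{D} \otimes \mathrm{id}_{D \otimes A}) \circ \alpha.
\end{equation*}
The right-hand side is the composition of the isomorphism $\alpha$ with the positively existential embedding supplied by Lemma \ref{Lemma:absorb-existential}, hence is itself positively existential; composing on the left with the inverse isomorphism $\mathrm{id}_D \otimes \alpha^{-1}$ then shows that $1_{D} \otimes \mathrm{id}_{A}$ is positively existential, and the flip transfers this to $\mathrm{id}_{A} \otimes 1_{D}$.

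For $(2) \Leftrightarrow (4)$, I would apply Proposition \ref{Proposition:commutant-existential} with $C = D$: nuclearity of $D$ identifies the maximal and minimal tensor products, so the proposition asserts that $1_{D} \otimes \mathrm{id}_A$ is positively existential if and only if there is a unital morphism $D \to A' \cap A^{\mathcal{U}}$. Since $D$ is simple, every unital morphism out of $D$ is automatically injective, so morphisms and embeddings coincide in this setting, yielding the equivalence with $(4)$.

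For $(4) \Rightarrow (5)$, given an embedding $\iota \colon D \hookrightarrow A' \cap A^{\mathcal{U}}$ and a finite subset $\mathfrak{s}_0$ of the extended type in $(5)$ involving only finitely many parameters $a_1, \ldots, a_m \in A$, I would first realize the relevant finite subset of $\mathfrak{t}^{+}$ by some $\bar{d} \in D$; then $\iota(\bar{d})$ lies in $A' \cap A^{\mathcal{U}}$ and realizes $\mathfrak{s}_0$, so \L os' theorem applied to any representing sequence produces a realization of $\mathfrak{s}_0$ in $A$. For $(5) \Rightarrow (4)$, I would fix a countable dense sequence $(d_n)$ in $D$ with $d_0 = 1_D$, let $\mathfrak{t}(\bar{x})$ be the positive quantifier-free diagram of $D$ relative to $(d_n)$, apply $(5)$ together with countable saturation of $A^{\mathcal{U}}$ (Proposition \ref{Proposition:countably-saturated}) to obtain a realization of the extended type in $A^{\mathcal{U}}$, which lies in $A' \cap A^{\mathcal{U}}$ by the commutation clauses, and then observe that the diagram conditions extend the assignment $d_n \mapsto $ realization to a unital morphism $D \to A' \cap A^{\mathcal{U}}$, which is an embedding by simplicity of $D$. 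The delicate point, which I expect to be the main obstacle, is precisely this final upgrade: the positive quantifier-free diagram alone only controls norms from above, and it is simplicity of $D$ that forces the resulting morphism to be isometric, and hence an embedding rather than a mere morphism.
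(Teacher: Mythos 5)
Your proposal is correct and follows essentially the same route as the paper: the same cycle $(3)\Rightarrow(1)\Rightarrow(2)\Rightarrow(3)$ via Lemmas \ref{Lemma:absorb-existential} and \ref{Lemma:existential-absorb}, the equivalence with $(4)$ via Proposition \ref{Proposition:commutant-existential} plus simplicity of $D$, and $(4)\Leftrightarrow(5)$ via \L os' theorem and countable saturation. You merely spell out details the paper leaves implicit (the transfer along $\alpha$ and the flip, and the diagram-type argument for $(5)\Rightarrow(4)$), and your identified ``delicate point'' is exactly the paper's parenthetical observation that simplicity upgrades the morphism to an embedding.
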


\begin{proof}
The implication (1)$\Rightarrow $(2) is a consequence of Lemma \ref%
{Lemma:absorb-existential}, while the implication (2)$\Rightarrow $(3) is a
consequence of Lemma \ref{Lemma:existential-absorb}. The equivalence (2)$%
\Leftrightarrow $(4) is a consequence of Lemma \ref%
{Proposition:commutant-existential}, observing that $D$ is simple so a
morphism $D\rightarrow A^{\prime }\cap A^{\mathcal{U}}$ is necessarily an
embedding, while the equivalence (4)$\Leftrightarrow $(5) follows from \L %
os' theorem and countable saturation of ultrapowers. Finally, the
implication (3)$\Rightarrow $(1) is obvious.
\end{proof}

Theorem \ref{Theorem:absorption} motivates the following definition.

\begin{definition}
\label{Definition:D-absorbing}A (not necessarily separable) C*-algebra $A$
is $D$-absorbing if it satisfies Condition (5) in Theorem \ref%
{Theorem:absorption}.
\end{definition}

In view of Theorem \ref{Theorem:absorption}, this definition is consistent
with the usual one in the case of separable C*-algebras. Furthermore, when $%
A $ is positively quantifier-free saturated, $A$ is $D$-absorbing if and
only if, for every separable subalgebra $S$ of $A$, $D$ embeds into $%
S^{\prime }\cap A$. This shows that $S^{\prime }\cap D$ is still $D$%
-absorbing and positively quantifier-free saturated

It is clear from the definition that the property of being $D$-absorbing is
axiomatizable. Indeed, this is witnessed by the conditions%
\begin{equation*}
\sup_{x_{1},\ldots ,x_{n}}\inf_{y_{1},\ldots ,y_{n}}\max \left\{ \varphi
\left( x_{1},\ldots ,x_{n}\right) ,\left\Vert
x_{i}y_{j}-y_{j}x_{i}\right\Vert :1\leq i,j\leq n\right\} \leq 0
\end{equation*}%
where $\varphi \left( \bar{x}\right) $ varies among the positive
quantifier-free formulas for which the condition $\varphi \left( \bar{x}%
\right) \leq 0$ is realized in $D$.

\begin{definition}
\label{Definition:sup-inf}A (positive) $\sup \inf $-formula is a formula of
the form $\sup_{\bar{x}}\inf_{\bar{y}}\psi \left( \bar{x},\bar{y}\right) $
where $\psi \left( \bar{x},\bar{y}\right) $ is a (positive) quantifier-free
formula. A class is (positively) $\sup \inf $-axiomatizable if it is
axiomatizable, as witnessed by conditions of the form $\varphi \leq r$,
where $\varphi $ is a (positive) $\sup \inf $-formula.
\end{definition}

The above argument shows that the class of $D$-absorbing C*-algebras is $%
\sup \inf $-axiomatizable.

\subsection{Relative commutants of $D$-absorbing C*-algebras}

Suppose as above that $D$ is a strongly self-absorbing C*-algebra. Let now $%
C $ be a (not necessarily separable) $D$-absorbing C*-algebra, in the sense
just defined. Recall that two morphisms $\Phi _{1},\Phi _{2}:A\rightarrow B$
are unitarily equivalent if there exists a unitary element $u$ of $B$ such
that $\Phi _{2}=\mathrm{Ad}\left( u\right) \circ \Phi _{1}$. In the
following theorem, we denote by $\aleph _{1}$ the first uncountable cardinal
number.

\begin{theorem}
\label{Theorem:ssa}Let $D$ be a strongly self-absorbing C*-algebra. Suppose
that $\theta :D\rightarrow C$ is an embedding, and that $C$ is $D$-absorbing
and positively quantifier-free countably saturated. The following assertions
hold:

\begin{enumerate}
\item Any two embeddings $D\rightarrow C$ are unitarily equivalent;

\item For every separable C*-subalgebra $A$ of $\theta \left( D\right)
^{\prime }\cap C$ and every separable C*-subalgebra $B$ of $C$ there exists
a unitary $u\in A^{\prime }\cap C$ such that $uBu^{\ast }\subset \theta
\left( D\right) ^{\prime }\cap C$;

\item $\theta \left( D\right) ^{\prime }\cap C$ is an elementary
substructure of $C$;

\item If $C$ has density character $\aleph _{1}$, then the inclusion $\theta
\left( D\right) ^{\prime }\cap C\hookrightarrow C$ is approximately
unitarily equivalent to an isomorphism;

\item If $C$ is countably saturated, then $\theta \left( D\right) ^{\prime
}\cap C$ is countably saturated.
\end{enumerate}
\end{theorem}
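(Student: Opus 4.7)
My plan is to treat items (1)--(3) and (5) by explicit constructions using the machinery of the previous section, and to identify (4) as the technical core of the theorem.

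For (1), given embeddings $\theta_1,\theta_2 : D \to C$, apply $D$-absorption of $C$ to the separable subalgebra $S = C^\ast(\theta_1(D)\cup\theta_2(D))$ to obtain an embedding $\eta : D \to S'\cap C$. Since $D$ is nuclear, the assignments $\mu_i(d\otimes e) := \theta_i(d)\eta(e)$ give well-defined embeddings $\mu_i : D\otimes D \to C$. Approximately inner half-flip yields, for each finite $F\subset D$ and $\varepsilon>0$, a unitary $v\in D\otimes D$ with $\|v(d\otimes 1)v^\ast - 1\otimes d\|<\varepsilon$ for $d\in F$; pushing forward through $\mu_i$ gives $\mu_i(v)\theta_i(d)\mu_i(v)^\ast \approx \eta(d)$, so $u_{F,\varepsilon} := \mu_2(v)^\ast \mu_1(v)$ approximately intertwines $\theta_1$ and $\theta_2$. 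Hence the positive quantifier-free $L^{\text{C*}}(S)$-type asserting unitarity of $u$ together with $\|u\theta_1(d_n) - \theta_2(d_n)u\|\le 0$ for $\{d_n\}$ a countable dense subset of $D$ is approximately realized in $C$, and positive quantifier-free countable saturation produces a genuine realization. Item (2) is a variant: choose $\eta : D \to C^\ast(A\cup B\cup\theta(D))'\cap C$ and form $\mu(d\otimes e) := \theta(d)\eta(e)$; the unitary $u := \mu(v)^\ast$ lies in $\theta(D)\eta(D)\subset A'\cap C$ (since $A$ commutes with both tensor factors), while $u^\ast\theta(d)u \approx \eta(d)$ together with $\eta(D)\subset B'\cap C$ gives $[\theta(d), ubu^\ast] \approx 0$; saturation again converts this to an exact realization.

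Item (3) follows at once from (2) via the Tarski--Vaught test (Proposition \ref{Proposition:Tarski-Vaught}): given a formula $\varphi(\bar x,y)$, a tuple $\bar a$ in $\theta(D)'\cap C$, and $b\in D_n^C$, apply (2) with $A = C^\ast(\bar a)$ and $B = C^\ast(b)$ to obtain $u\in A'\cap C$ with $ubu^\ast \in \theta(D)'\cap C\cap D_n^C$; since conjugation by $u$ is an inner automorphism of $C$ fixing $\bar a$ pointwise, $\varphi^C(\bar a,b) = \varphi^C(\bar a, ubu^\ast)$, establishing the required equality of infima. For (5), given an $L(S)$-type $\mathfrak{t}(\bar x)$ over separable $S\subset \theta(D)'\cap C$ approximately realized in $\theta(D)'\cap C$, enlarge $\mathfrak{t}$ by the conditions $\|\theta(d_n)x_i - x_i\theta(d_n)\|\le 0$ for $\{d_n\}$ a countable dense subset of $D$. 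The enlargement remains approximately realized in $\theta(D)'\cap C$ (the new conditions hold exactly there), hence in $C$; full countable saturation of $C$ yields a realization in $C$ commuting exactly with a dense subset of $\theta(D)$, hence with all of $\theta(D)$, so lying in $\theta(D)'\cap C$.

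The main obstacle is (4), where only positive quantifier-free countable saturation is assumed, so one cannot directly appeal to Theorem \ref{Theorem:isomorphic-saturated}. The plan is to run an Elliott-type two-sided intertwining of transfinite length $\omega_1$. Enumerate dense subsets $\{a_\alpha\}_{\alpha<\omega_1}\subset \theta(D)'\cap C$ and $\{b_\alpha\}_{\alpha<\omega_1}\subset C$, and build continuous increasing chains of separable subalgebras $S_\alpha\subset \theta(D)'\cap C$ and $T_\alpha\subset C$ together with partial $\ast$-isomorphisms $\Psi_\alpha : S_\alpha \to T_\alpha$, each approximately unitarily equivalent inside $C$ to the inclusion $S_\alpha\hookrightarrow C$. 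At successor stage $\alpha+1$, invoke (2) with the current subalgebras in the roles of $A\subset \theta(D)'\cap C$ and $B\subset C$, adjoining $a_\alpha$ and $b_\alpha$; the crucial point is that the unitary supplied by (2) satisfies $u\in A'\cap C$ and $uBu^\ast\subset\theta(D)'\cap C$ exactly (not just approximately), allowing Lemma \ref{Lemma:intertwining}-style intertwining to proceed at each stage with controlled errors and ensuring convergence through limit stages. The colimit $\Psi : \theta(D)'\cap C \to C$ is then a surjective $\ast$-isomorphism approximately unitarily equivalent to the inclusion, which is (4).
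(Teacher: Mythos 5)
Your treatment of (1)--(3) and (5) follows essentially the same route as the paper: half-flip unitaries pushed into $C$ and then positive quantifier-free countable saturation for (1); a commuting copy of $D$ in $\left( A\cup B\right) ^{\prime }\cap C$ conjugated onto $\theta \left( D\right) $ by a unitary in $A^{\prime }\cap C$ for (2); the Tarski--Vaught test for (3). One small but necessary point is missing in (5): producing a realization $\bar{b}$ of the type in $C$ whose coordinates commute with $\theta \left( D\right) $ is not by itself a realization \emph{in} $\theta \left( D\right) ^{\prime }\cap C$, because for formulas with quantifiers the value $\varphi ^{\theta \left( D\right) ^{\prime }\cap C}\left( \bar{b}\right) $ could a priori differ from $\varphi ^{C}\left( \bar{b}\right) $. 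You must invoke (3) twice: once to pass from ``approximately realized in $\theta \left( D\right) ^{\prime }\cap C$'' to ``approximately realized in $C$'', and once at the end to transfer the realization back. This is a one-line fix, but it is the reason (5) comes after (3).

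The genuine gap is in (4), at the limit stages of the transfinite recursion. Your phrase ``with controlled errors and ensuring convergence through limit stages'' describes the wrong mechanism: the unitaries produced at successor stages are of the form $u_{\lambda +1}=vu_{\lambda }$ with $v\in A^{\prime }\cap C$ supplied by (2), and there is no norm control whatsoever on $v$, so the sequence $\left( u_{\mu }\right) _{\mu <\lambda }$ has no reason to be Cauchy and does not converge. A Lemma \ref{Lemma:intertwining}-style error-summation argument also cannot be run along a chain of length $\omega _{1}$. What actually makes the recursion pass a limit ordinal $\lambda <\omega _{1}$ is positive quantifier-free countable saturation of $C$: the positive quantifier-free type over the separable algebra generated by $\left\{ a_{\mu },b_{\mu },\hat{a}_{\mu },\hat{b}_{\mu }:\mu <\lambda \right\} $ asserting that $x$ is a unitary with $xb_{\mu }x^{\ast }=\hat{a}_{\mu }$ and $x\hat{b}_{\mu }x^{\ast }=a_{\mu }$ for all $\mu <\lambda $ is approximately realized (any finite subset of its conditions is realized exactly by $u_{\mu }$ for $\mu $ large enough below $\lambda $), hence realized by a single unitary $u_{\lambda }$. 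Maintaining an \emph{exact} implementing unitary at every stage, rather than an approximately unitarily equivalent partial isomorphism, is precisely what lets the successor step apply (2) cleanly without accumulating errors. Without this saturation step the construction cannot cross any limit ordinal; with it, your outline of (4) becomes the paper's proof.
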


\begin{proof}
Since $C$ is $D$-absorbing and positively quantifier-free saturated, for
every separable C*-subalgebra $S$ of $C$ one has that $S^{\prime }\cap C$ is
also $D$-absorbing and positively quantifier-free saturated.

(1): Let $\theta _{1}:D\rightarrow C$ be an embedding. We will show that $%
\theta _{1}$ is unitarily equivalent to $\theta $. Let us initially assume
that the ranges of $\theta $ and $\theta _{1}$ commute. We can choose a
sequence $\left( u_{n}\right) $ of unitaries in $D$ witnessing the fact that 
$D$ has approximately inner half-flip. Define $\Theta :D\otimes D\rightarrow
C$ by $d_{0}\otimes d_{1}\mapsto \theta \left( d_{0}\right) \theta
_{1}\left( d_{1}\right) $. Considering the unitaries $\Theta \left(
u_{n}\right) $ for $n\in \mathbb{N}$ and applying the fact that $C$ is
positively quantifier-free countably saturated, we conclude that there
exists a unitary $u\in A$ such that $u\theta (d)=u\Theta (d\otimes 1)=\Theta
(1\otimes d)u=\theta _{1}(d)u$. This shows that $\theta ,\theta _{1}$ are
unitarily equivalent.

In the general case, when the ranges of $\theta ,\theta _{1}$ do not
necessarily commute, since $D$ is strongly self-absorbing and $C$ is
positively quantifier-free countably saturated, we can find an embedding $%
\theta _{2}:D\rightarrow C$ whose range commutes with the ranges of $\theta $
and $\theta _{1}$.\ Therefore by the above we have that $\theta $ is
unitarily equivalent to $\theta _{2}$ and $\theta _{1}$ is unitarily
equivalent to $\theta _{2}$. Hence, $\theta $ is unitarily equivalent to $%
\theta _{1}$.

For the rest of the proof, we identify $D$ with its image under $\theta $
inside $C$.

(2): Fix a separable C*-subalgebra $A$ of $D^{\prime }\cap C$ and a
separable C*-subalgebra $B$ of $C$.\ We want to show that there exists $u\in
A^{\prime }\cap C$ such that $uBu^{\ast }\subset D^{\prime }\cap C$.

Since $C$ is $D$-absorbing and positively quantifier-free saturated, the
same holds for $A^{\prime }\cap C$ and $A^{\prime }\cap B^{\prime }\cap C$.
Thus we can fix an embedding $\Psi :D\rightarrow A^{\prime }\cap B^{\prime
}\cap C$. By (1) applied to the pair of embeddings $D\rightarrow A^{\prime
}\cap C$ given by $\Psi $ and the inclusion $\iota :D\subset A^{\prime }\cap
C$, there exists a unitary $u\in A^{\prime }\cap C$ such that $\Psi =\mathrm{%
Ad}\left( u^{\ast }\right) \circ \iota $ or, equivalently, $u^{\ast }du=\Psi
(d)$ for every $d\in D$. Therefore, if $b\in B$ and $d\in D$, we have that%
\begin{equation*}
\left\Vert ubu^{\ast }d-dubu^{\ast }\right\Vert =\left\Vert bu^{\ast
}du-u^{\ast }dub\right\Vert =\left\Vert b\Psi (d)-\Psi (d)b\right\Vert =0%
\text{.}
\end{equation*}%
This concludes the proof.

(3): By the Tarski--Vaught test (Proposition \ref{Proposition:Tarski-Vaught}%
), it suffices to show that if $r\in \mathbb{R}$, $\varphi \left( \bar{x},%
\bar{y}\right) $ is a formula, $\bar{a}$ is a tuple in $D^{\prime }\cap C$,
and $\bar{b}$ is a tuple in $C$ such that $\varphi ^{C}\left( \bar{a},\bar{b}%
\right) <r$, then there exists a tuple $\bar{d}$ in $D^{\prime }\cap C$ such
that $\varphi ^{C}\left( \bar{a},\bar{d}\right) <r$. Let $A$ be the
C*-subalgebra of $D^{\prime }\cap C$ generated by $\bar{a}$, and let $B$ be
the C*-subalgebra of $C$ generated by $\bar{b}$. Then by (2) there exists a
unitary $u\in A^{\prime }\cap C$ such that $uBu^{\ast }\subset D^{\prime
}\cap C$. Thus we have that $\bar{d}:=u\bar{b}u^{\ast }$ is a tuple in $%
D^{\prime }\cap C$ such that%
\begin{equation*}
\varphi ^{C}\left( \bar{a},\bar{d}\right) =\varphi ^{C}\left( u\bar{a}%
u^{\ast },u\bar{b}u^{\ast }\right) =\varphi ^{C}\left( \bar{a},\bar{b}%
\right) <r\text{,}
\end{equation*}%
concluding the proof.

(4): Suppose now that $C$ has density character $\aleph _{1}$. Thus we can
fix an enumeration $\left( b_{\lambda }\right) _{\lambda <\omega _{1}}$ of a
subset of $C$ such that, for every $\mu <\lambda $, $\left\{ b_{\lambda
}:\mu <\lambda <\omega _{1}\right\} $ is a dense subset of $C$. Similarly,
we can fix an enumeration $\left( a_{\lambda }\right) _{\lambda <\omega
_{1}} $ of a subset of $D^{\prime }\cap C$ such that, for every $\mu
<\lambda $, $\left\{ a_{\lambda }:\mu <\lambda <\omega _{1}\right\} $ is a
dense subset of $D^{\prime }\cap C$. Here, $\omega _{1}$ denotes the first
uncountable ordinal. We will define by transfinite recursion elements $\hat{a%
}_{\lambda } $ of $D^{\prime }\cap C$, elements $\hat{b}_{\lambda }$ of $C$,
and unitaries $u_{\lambda }$ in $C$ for $\lambda <\omega _{1}$ such that,
for every $\mu \leq \lambda <\omega _{1}$,%
\begin{equation*}
u_{\lambda }b_{\mu }u_{\lambda }^{\ast }=\hat{a}_{\mu }\text{\quad and\quad }%
u_{\lambda }\hat{b}_{\mu }u_{\lambda }^{\ast }=a_{\mu }\text{.}
\end{equation*}%
Granted the construction, we can define the map $\Phi :\left\{ a_{\mu }:\mu
<\omega _{1}\right\} \rightarrow C$ by setting $\Phi \left( a_{\mu }\right) =%
\hat{b}_{\mu }$. Then by construction, $\Phi $ is approximately unitarily
equivalent to the inclusion map of $\left\{ a_{\mu }:\mu <\omega
_{1}\right\} $ inside $C$. Therefore $\Phi $ extends to a *-homomorphism $%
\Phi :D^{\prime }\cap C\rightarrow C$, which is approximately unitarily
equivalent to the inclusion map $D^{\prime }\cap C\rightarrow C$. It remains
to show that $\Phi $ is onto. Fix $\varepsilon >0$ and $\mu <\omega _{1}$.
Then there exists an ordinal $\mu <\lambda <\omega _{1}$ such that $%
\left\Vert \hat{a}_{\mu }-a_{\lambda }\right\Vert <\varepsilon $. Thus we
have that%
\begin{equation*}
\left\Vert \Phi \left( \hat{a}_{\mu }\right) -b_{\mu }\right\Vert \leq
\varepsilon +\left\Vert \Phi \left( a_{\lambda }\right) -u_{\lambda }^{\ast }%
\hat{a}_{\mu }u_{\lambda }\right\Vert \leq 2\varepsilon +\left\Vert \hat{b}%
_{\lambda }-u_{\lambda }^{\ast }a_{\lambda }u_{\lambda }\right\Vert
=2\varepsilon \text{.}
\end{equation*}%
Since this holds for every $\mu <\omega _{1}$ and for every $\varepsilon >0$%
, this shows that $\Phi $ is onto.

It remains to describe the recursive construction. Suppose that the elements 
$\hat{a}_{\mu },\hat{b}_{\mu },u_{\mu }$ have been constructed for $\mu
<\lambda $. Define $A$ to be the separable C*-subalgebra of $D^{\prime }\cap
C$ generated by $\hat{a}_{\mu },a_{\mu }$ for $\mu <\lambda $. Let also $B$
be the separable C*-subalgebra of $C$ generated by $a_{\mu },b_{\mu },\hat{a}%
_{\mu },\hat{b}_{\mu }$ for $\mu <\lambda $.

Suppose initially that $\lambda $ is a successor ordinal. We let $\lambda -1$
be the immediate predecessor of $\lambda $. Then by (2) there exists a
unitary $v\in A^{\prime }\cap C$ such that $vu_{\lambda -1}b_{\lambda
}u_{\lambda -1}^{\ast }v^{\ast }\in D^{\prime }\cap C$. Set then $u_{\lambda
}:=vu_{\lambda -1}$, $\hat{a}_{\lambda }:=u_{\lambda }b_{\lambda }u_{\lambda
}^{\ast }\in D^{\prime }\cap C$ and $\hat{b}_{\lambda }=u_{\lambda }^{\ast
}a_{\lambda }u_{\lambda }$. Now we have that, by definition%
\begin{equation*}
u_{\lambda }b_{\lambda }u_{\lambda }^{\ast }=\hat{a}_{\lambda }\text{\quad
and\quad }u_{\lambda }\hat{b}_{\lambda }u_{\lambda }^{\ast }=a_{\lambda }%
\text{.}
\end{equation*}%
For $\mu <\lambda $ we have that, since $v\in A^{\prime }\cap C$,%
\begin{equation*}
u_{\lambda }b_{\mu }u_{\lambda }^{\ast }=vu_{\lambda -1}b_{\mu }u_{\lambda
-1}^{\ast }v^{\ast }=v\hat{a}_{\mu }v^{\ast }=\hat{a}_{\mu }
\end{equation*}%
and%
\begin{equation*}
u_{\lambda }\hat{b}_{\mu }u_{\lambda }^{\ast }=vu_{\lambda -1}\hat{b}_{\mu
}u_{\lambda -1}^{\ast }v^{\ast }=va_{\mu }v^{\ast }=a_{\mu }\text{.}
\end{equation*}%
This concludes the construction in the case when $\lambda $ is a successor
ordinal. When $\lambda $ is a limit ordinal, one can obtain $u_{\lambda }$
by applying positive quantifier-free countable saturation of $C$ to the
positive quantifier-free $L^{\text{C*}}\left( B\right) $-type $\mathfrak{t}%
\left( x\right) $ consisting of the conditions $\left\Vert x^{\ast
}x-1\right\Vert \leq 0$, $\left\Vert xx^{\ast }-1\right\Vert \leq 0$, $%
\left\Vert xb_{\mu }x^{\ast }-\hat{a}_{\mu +1}\right\Vert \leq 0$ for $\mu
<\lambda $, and $\left\Vert x\hat{b}_{\mu }x^{\ast }-a_{\mu }\right\Vert
\leq 0$ for $\mu <\lambda $. Such a type is approximately realized in $C$ by
the inductive hypothesis that $u_{\mu }$ has been defined for $\mu <\lambda $%
. Therefore such a type is realized in $C$. One can then let $u_{\lambda }$
be any realization of $\mathfrak{t}\left( x\right) $. This concludes the
recursive construction.

(5): Suppose that $C$ is countably saturated. Suppose that $A$ is a
separable C*-subalgebra of $D^{\prime }\cap C$, and $\mathfrak{t}\left( \bar{%
x}\right) $ is an $L^{\text{C*}}\left( A\right) $-type which is
approximately realized in $D^{\prime }\cap C$. Then $\mathfrak{t}\left( \bar{%
x}\right) $ is also approximately realized in $C$. Since by assumption $C$
is countably saturated, $\mathfrak{t}\left( \bar{x}\right) $ has a
realization $\bar{b}$ in $C$. By (2) there exists a unitary $u\in A^{\prime
}\cap C$ such that $u\bar{b}u^{\ast }\subset D^{\prime }\cap C$. Since $u\in
A^{\prime }\cap C$, the map $\mathrm{Ad}\left( u\right) $ is an $L^{\text{C*}%
}\left( A\right) $-automorphism of $C$. For every condition $\varphi \left( 
\bar{x}\right) \leq r$ in $\mathfrak{t}\left( \bar{x}\right) $ we have that $%
\varphi ^{C}\left( \bar{b}\right) \leq r$, and hence $\varphi ^{C}\left( u%
\bar{b}u^{\ast }\right) \leq r$. Since $ubu^{\ast }\in D^{\prime }\cap C$
and $D^{\prime }\cap C$ is an elementary substructure of $C$, we have that $%
\varphi ^{D^{\prime }\cap C}\left( u\bar{b}u^{\ast }\right) \leq r$. Since
this holds for every condition in $\mathfrak{t}\left( \bar{x}\right) $, $u%
\bar{b}u^{\ast }$ is a realization of $\mathfrak{t}\left( \bar{x}\right) $
in $D^{\prime }\cap C$. This concludes the proof that $D^{\prime }\cap C$ is
countably saturated.
\end{proof}

\begin{corollary}
Let $D$ be a strongly self-absorbing C*-algebra, and $A$ is a separable $D$%
-absorbing C*-algebra. If $\mathcal{U}$ is a countably incomplete
ultrafilter, then the conclusions of Theorem \ref{Theorem:ssa} holds for $%
C=A^{\mathcal{U}}$ and for $C=A^{\prime }\cap A^{\mathcal{U}}$. In
particular, if the Continuum Hypothesis holds, and if $D\subset A$ is a copy
of $D$ inside $A$, then $D^{\prime }\cap A^{\mathcal{U}}$ and $A^{\mathcal{U}%
}$ are isomorphic.
\end{corollary}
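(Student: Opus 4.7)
The plan is to reduce the corollary to a direct application of Theorem~\ref{Theorem:ssa}, by verifying the three hypotheses ($D$-absorption, positively quantifier-free countable saturation, and existence of an embedding $\theta\colon D\to C$) for $C=A^{\mathcal{U}}$ and for $C=A'\cap A^{\mathcal{U}}$, and then to extract the final isomorphism under CH from part (4) of Theorem~\ref{Theorem:ssa}.

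For $C=A^{\mathcal{U}}$, first I would note that the class of $D$-absorbing C*-algebras is $\sup\inf$-axiomatizable, as established in the discussion following Definition~\ref{Definition:sup-inf}, so $A^{\mathcal{U}}$ is $D$-absorbing by \L{}o\'s' theorem (Theorem~\ref{Theorem:Los}). Countable (hence positively quantifier-free countable) saturation of $A^{\mathcal{U}}$ follows from Proposition~\ref{Proposition:countably-saturated} since $\mathcal{U}$ is countably incomplete. Finally, since $A$ is separable $D$-absorbing, Theorem~\ref{Theorem:absorption}(4) supplies an embedding $D\hookrightarrow A'\cap A^{\mathcal{U}}\subset A^{\mathcal{U}}$, providing the required $\theta$. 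All hypotheses of Theorem~\ref{Theorem:ssa} are thus verified and its conclusions hold for $C=A^{\mathcal{U}}$.

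For $C=A'\cap A^{\mathcal{U}}$, I would use the observation made just after Definition~\ref{Definition:D-absorbing}: when $C_0$ is positively quantifier-free countably saturated and $D$-absorbing, then for any separable subalgebra $S\subset C_0$ the relative commutant $S'\cap C_0$ is again $D$-absorbing and positively quantifier-free countably saturated. Applying this to $C_0=A^{\mathcal{U}}$ and $S=A$ (which is separable by hypothesis) yields both properties for $A'\cap A^{\mathcal{U}}$. The embedding $D\to A'\cap A^{\mathcal{U}}$ is once more given by Theorem~\ref{Theorem:absorption}(4). Hence Theorem~\ref{Theorem:ssa} applies equally to this choice of $C$. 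The only subtle point here is making sure that the embedding $D\subset A$ appearing in the statement of the corollary is not needed to verify the hypotheses: the embedding $\theta$ in Theorem~\ref{Theorem:ssa} can be arbitrary, and for part (1)--(5) to hold we only need \emph{some} embedding, which in both cases is furnished by $D$-absorption rather than by the specific copy $D\subset A$ chosen later.

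For the final CH assertion, I would apply Theorem~\ref{Theorem:ssa}(4) with $C=A^{\mathcal{U}}$ and $\theta\colon D\hookrightarrow A\hookrightarrow A^{\mathcal{U}}$ the embedding induced by the fixed copy $D\subset A$. By Theorem~\ref{Theorem:ultrapowers}, $A^{\mathcal{U}}$ has density character $\mathfrak{c}$; under CH this equals $\aleph_1$, so the hypothesis of part (4) is met. We conclude that the inclusion $D'\cap A^{\mathcal{U}}\hookrightarrow A^{\mathcal{U}}$ is approximately unitarily equivalent to an isomorphism, and in particular $D'\cap A^{\mathcal{U}}\cong A^{\mathcal{U}}$. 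The main (mild) obstacle I anticipate is bookkeeping the distinction between the general non-separable notion of $D$-absorption from Definition~\ref{Definition:D-absorbing} and its separable counterpart from Theorem~\ref{Theorem:absorption}; once one consistently uses the former throughout, the verification of the hypotheses of Theorem~\ref{Theorem:ssa} becomes routine and the corollary follows without further work.
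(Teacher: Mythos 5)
Your proposal is correct and follows essentially the same route as the paper's own (much terser) proof: verify that $A^{\mathcal{U}}$ and $A'\cap A^{\mathcal{U}}$ are $D$-absorbing (via $\sup\inf$-axiomatizability and \L{}o\'s' theorem) and positively quantifier-free countably saturated, then invoke Theorem~\ref{Theorem:ssa}, with the CH statement following from Theorem~\ref{Theorem:ultrapowers} and item (4). The additional care you take in distinguishing the non-separable notion of $D$-absorption and in sourcing the embedding $\theta$ from Theorem~\ref{Theorem:absorption}(4) is exactly the bookkeeping the paper leaves implicit.
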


\begin{proof}
It suffices to observe that the algebras $A^{\mathcal{U}}$ and $A^{\prime
}\cap A^{\mathcal{U}}$ are $D$-absorbing and positively quantifier-free
saturated. The ultrapower $A^{\mathcal{U}}$ has density character $\mathfrak{%
c}$ by \ref{Theorem:ultrapowers}. Since CH is the assertion that $\mathfrak{c%
}$ is the first uncountable cardinal $\aleph _{1}$, CH implies that $A^{%
\mathcal{U}}$ has density character $\aleph _{1}$. Thus the last assertion
is a consequence of item (4) of Theorem \ref{Theorem:ssa}.
\end{proof}

\subsection{Strongly self-absorbing C*-algebras are existentially closed}

Suppose that $\mathcal{C}$ is a class of C*-algebras. Then a C*-algebra $A$
is existentially closed in $\mathcal{C}$ if $A$ belongs to $\mathcal{C}$ and
any embedding $\Phi :A\rightarrow B$ of $A$ into a C*-algebra $B$ from $%
\mathcal{C}$, $\Phi $ is existential. This means that if $\varphi \left( 
\bar{x}\right) $ is an existential formula and $\bar{a}$ is a tuple in $A$,
then $\varphi ^{A}\left( \bar{a}\right) =\varphi ^{B}\left( \Phi \left( \bar{%
a}\right) \right) $.

Let now $D$ be a strongly self-absorbing C*-algebra, and consider the class $%
\mathcal{C}_{D}$ of C*-algebras that embed into an ultrapower of $D$.
Observe that $\mathcal{C}_{D}$ is an elementary class.\ Indeed, a C*-algebra 
$A$ belongs to $\mathcal{C}_{D}$ if and only if $\varphi ^{A}\leq \varphi
^{D}$ for every universal sentence $\varphi $. When $D$ is the infinite type
UHF algebra $\bigotimes_{n\in \mathbb{N}}M_{n}\left( \mathbb{C}\right)
^{\otimes \mathbb{N}}$, $\mathcal{C}_{D}$ is the class of MF algebras \cite%
{blackadar_generalized_1997,carrion_groups_2013}. The Kirchberg embedding
conjecture asserts that, when $D$ is the Cuntz algebra $\mathcal{O}_{2}$, $%
\mathcal{C}_{D}$ contains all C*-algebras \cite{goldbring_kirchbergs_2015}.

\begin{proposition}
Let $D$ be a strongly self-absorbing C*-algebra. Then $D$ is existentially
closed in $\mathcal{C}_{D}$.
\end{proposition}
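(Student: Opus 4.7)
The plan is to use Proposition \ref{Proposition:characterize-existential} to reduce existential closedness to the following task: given an embedding $\Phi:D\to B$ with $B\in\mathcal{C}_D$, produce a countably incomplete ultrafilter $\mathcal{U}$ and an embedding $\Psi:B\to D^{\mathcal{U}}$ with $\Psi\circ\Phi=\Delta_D$. The only resource I have at hand beyond this proposition is Theorem \ref{Theorem:ssa}(1), which asserts that any two embeddings of $D$ into a $D$-absorbing, positively quantifier-free countably saturated C*-algebra are unitarily equivalent. Thus the strategy is to exhibit $B$ inside such an ambient algebra, align the two natural copies of $D$ by a unitary, and read off $\Psi$.

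Concretely, first I would fix $\mathcal{U}$ countably incomplete together with an embedding $j:B\to D^{\mathcal{U}}$; such $\mathcal{U}$ and $j$ exist because $B$ lies in $\mathcal{C}_D$. Next, I would verify that $D^{\mathcal{U}}$ satisfies the hypotheses of Theorem \ref{Theorem:ssa}(1). Countable saturation (hence positive quantifier-free countable saturation) of $D^{\mathcal{U}}$ is Proposition \ref{Proposition:countably-saturated}. For $D$-absorption, note that $D$ itself is $D$-absorbing by definition (indeed $D\cong D\otimes D$), and $D$-absorption is axiomatizable (in fact $\sup\inf$-axiomatizable by the discussion following Theorem \ref{Theorem:absorption}), so \L os' theorem hands us $D$-absorption of the ultrapower.

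At this point I have two embeddings of $D$ into $D^{\mathcal{U}}$, namely the diagonal $\Delta_D$ and the composition $j\circ\Phi$. Theorem \ref{Theorem:ssa}(1) supplies a unitary $u\in D^{\mathcal{U}}$ with $u\,(j\circ\Phi)(d)\,u^{\ast}=\Delta_D(d)$ for every $d\in D$. I then set $\Psi(b):=u\,j(b)\,u^{\ast}$, which is visibly an embedding $B\to D^{\mathcal{U}}$ satisfying $\Psi\circ\Phi=\Delta_D$. By Proposition \ref{Proposition:characterize-existential}, $\Phi$ is existential, and since $\Phi$ was an arbitrary embedding of $D$ into a member of $\mathcal{C}_D$, this establishes that $D$ is existentially closed in $\mathcal{C}_D$.

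The only real subtlety is ensuring that the ultrafilter used to witness $B\in\mathcal{C}_D$ may be taken countably incomplete and large enough to accommodate $B$; if $B$ is nonseparable one may need to enlarge the index set, but this is a standard compactness manoeuvre and poses no genuine obstacle. Beyond that, the argument is a textbook instance of the intertwining-by-saturation paradigm: all of the C*-algebraic content has already been absorbed into Theorem \ref{Theorem:ssa}(1), so the proof is essentially bookkeeping once one has that uniqueness-up-to-unitary-equivalence of embeddings of $D$.
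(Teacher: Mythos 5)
Your proof is correct and follows essentially the same route as the paper: realize $B$ inside $D^{\mathcal{U}}$, use the uniqueness up to unitary equivalence of embeddings of $D$ into the $D$-absorbing, saturated ultrapower (Theorem \ref{Theorem:ssa}(1)) to align $j\circ\Phi$ with the diagonal embedding, and conclude. The only cosmetic difference is that the paper finishes with a direct \L os'-theorem argument pulling witnesses back into $D$, whereas you delegate that last step to Proposition \ref{Proposition:characterize-existential}; these amount to the same thing.
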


\begin{proof}
Suppose that $D\subset B$ where $B\in \mathcal{C}_{D}$. Then we can assume
that $B\subset D^{\mathcal{U}}$ for some countably incomplete ultrafilter $%
\mathcal{U}$. Since all the embeddings of $D$ into $D^{\mathcal{U}}$ are
unitarily conjugate, we can assume that the composition of the inclusions $%
D\subset B$ and $B\subset D^{\mathcal{U}}$ is the diagonal embedding of $D$
into $D^{\mathcal{U}}$.

Fix a quantifier-free formula $\varphi \left( \bar{x},\bar{y}\right) $. If $%
\bar{a}$ is a tuple in $D$ and $\bar{b}$ is a tuple in $B\subset D^{\mathcal{%
U}}$ such that $\varphi \left( \bar{a},\bar{b}\right) <r$, by \L os' theorem
there exists a tuple $\bar{d}$ in $D$ such that $\varphi \left( \bar{a},\bar{%
d}\right) <r$. This shows that the inclusion $D\subset B$ is existential.
\end{proof}

\section{Model theory and nuclear C*-algebras\label{Section:nuclear}}

\subsection{The classification programme\label{Subsection:classification}}

Much of the theory of C*-algebras in the last twenty years has focused on
the structure and classification of simple, separable nuclear C*-algebra, in
the framework of the Elliott classification programme. \emph{Nuclearity},
also called \emph{amenability}, is a regularity property for C*-algebras
with several equivalent reformulations. It can naturally be defined in the
broader category of operator systems and unital completely positive maps.\ A
linear map $\Phi :A\rightarrow B$ between C*-algebras is \emph{unital
completely positive }(ucp) if $\Phi \left( 1\right) =1$ and, for every $n\in 
\mathbb{N}$, $\mathrm{id}_{M_{n}\left( \mathbb{C}\right) }\otimes \Phi
:M_{n}\left( \mathbb{C}\right) \otimes A\rightarrow M_{n}\left( \mathbb{C}%
\right) \otimes B$ maps positive elements to positive elements. A C*-algebra
is \emph{nuclear }if the identity map of $A$ is the pointwise limit of ucp
maps of the form $\Psi \circ \Phi :A\rightarrow A$, where $\Phi
:A\rightarrow M_{n}\left( \mathbb{C}\right) $ and $\Psi :M_{n}\left( \mathbb{%
C}\right) \rightarrow A$ are ucp maps, and $n\in \mathbb{N}$. This
definition admits several equivalent reformulations, including prominently
the following: for any C*-algebra $B$, the maximal and the minimal tensor
product norms on the algebraic tensor product of $A$ and $B$ coincide. Thus,
the class of C*-algebras is endowed with a single canonical C*-algebraic
tensor product.

The first hints that a satisfactory classification of separable, nuclear
C*-algebras could be achieved goes back to the seminal works of Glimm \cite%
{glimm_certain_1960} and Elliott \cite{elliott_classification_1976}, who
classified those separable, nuclear C*-algebras that can be realized as
direct limits of finite-dimensional C*-algebras.\ In the modern perspective,
the invariant used in these results is the $K_{0}$-group. This is a
countable ordered abelian group with a distinguished order unit. Together
with the $K_{1}$-group, it constitutes the $K$-theory of a given C*-algebra.
Originally developed in algebraic geometry \cite{atiyah_riemann-roch_1959}, $%
K$-theory was then translated into purely algebraic language \cite%
{rosenberg_algebraic_1994}, and then incorporated in the theory of
C*-algebras \cite{schochet_algebraic_1994}.

The class of arbitrary separable, nuclear C*-algebras is extensive, in that
it contains all the algebras of the form $C\left( X\right) $ where $X$ is a
compact metrizable space. For such algebras, isomorphism coincides with
homeomorphism of the corresponding space. Since a meaningful
classificationof arbitrary compact metrizable spaces is considered out of
reach, it is natural to impose restrictions on the class of algebras under
consideration to rule out complexity arising from the purely topological
setting. One such a natural assumption consists in demanding that the
C*-algebras under consideration be simple, i.e.\ have no nontrivial ideals.

Building on the results of Glimm and Elliott mentioned above, broader
classes of simple, separable C*-algebras have been classified in the 1980s
and 1990s due to the work of many hands. In this case, the invariant
consisted on the K-theory together with the \emph{trace simplex}. A \emph{%
tracial state }on a C*-algebra is a unital linear functional $\tau $
satisfying the trace identity $\tau \left( xy\right) =\tau \left( yx\right) $%
.\ The space $T\left( A\right) $ of tracial states over a separable
C*-algebra $A$ forms a Choquet simplex, which is called the trace simplex.
Traces can be seen as a noncommutative analog of measures, and so the trace
simplex encodes the measure-theoretic information on the given C*-algebras.\
The conjunction of the K-theory of a C*-algebra together with its trace
simplex and a canonical pairing between them, is called \emph{Elliott
invariant}.

Motivated by these positive results, Elliott proposed the programme---known
as the Elliott classification programme---of classifying simple, separable,
nuclear C*-algebras by their Elliott invariant \cite%
{elliott_classification_1995}. Despite substantial progress, examples due to
R\o rdam and Toms showed that, in general, the Elliott invariant is not a
complete invariant for simple, separable, nuclear C*-algebras. In \cite%
{toms_infinite_2008,toms_comparison_2009}, nonisomorphic simple separable
C*-algebras with the same Elliott invariant have been constructed.\ The
invariant used to distinguish such algebras is the \emph{radius of comparison%
}. Interestingly, such an invariant is captured by the first-order theory of
a C*-algebra \cite[Section 8.4]{farah_model_2017}. In fact, no example of
not elementarily equivalent simple, separable, nuclear C*-algebras with the
same Elliott invariant is currently known. This motivated the following
problem, asked in \cite{farah_model_2017}.

\begin{problem}
Is the Elliott invariant \emph{together with the first-order theory} a
complete invariant for simple, separable, nuclear C*-algebras?
\end{problem}

In a different direction, the counterexamples due R\o rdam and Toms have
suggested to restrict the scope of the Elliott classification programme to a
suitable class of \textquotedblleft well-behaved\textquotedblright\ simple,
separable, nuclear C*-algebras. One interpretation of what well-behaved
should mean for a simple, separable, nuclear C*-algebra is to absorb
tensorially the Jiang-Su algebra $\mathcal{Z}$. This is a strongly
self-absorbing C*-algebra, and in fact the unique strongly self-absorbing
C*-algebra that embeds into any other self-absorbing C*-algebra. The
property of $\mathcal{Z}$-absorption is conjecturally equivalent for simple,
separable, nuclear C*-algebras to other regularity properties of very
different nature (topological, cohomological). This is the subject of the 
\emph{Toms--Winter conjecture}, which has been so far verified in many
cases, prominently including the case when the trace simplex is itself
well-behaved (it has closed extreme boundary).

The revised Elliott classification programme for $\mathcal{Z}$-\emph{%
absorbing }simple, separable, nuclear C*-algebras has recently been
completed, due to the work of many authors, modulo the standing assumption
that the algebras considered satisfy the Universal Coefficient Theorem
(UCT). A technical statement regarding the relation between different
K-theoretic invariants (K-theory and KK-theory), the UCT is an assumption
(sometimes automatically satisfied) in all the positive classification
results for separable, nuclear C*-algebras that have been obtained so far.
At the same time, no example of separable, nuclear C*-algebra that does 
\emph{not }satisfy the UCT is currently known. This has brought considerable
interest to the following \emph{UCT problem}.

\begin{problem}
Does every separable, nuclear C*-algebra satisfy the UCT?
\end{problem}

The UCT has been verified for several important classes of C*-algebras. In
fact, it holds for all the separable C*-algebras that can be obtained
starting from finite-dimensional and abelian C*-algebras by the standard
constructions of C*-algebra theory, such as inductive limits and crossed
products by amenable groups. Again, the problem of whether such a class
contains in fact all separable, nuclear C*-algebras is currently open. It is
clear that the answer to this problem is intimately connected with the
problem of finding other methods of constructing nuclear C*-algebras, other
than the standard constructions in C*-algebra theory. This provides a
connection with model theory, which is a source of a different kind of
constructions, generally known as model-theoretic forcing or building models
by games.

Another connection of the UCT problem and model theory arises from the
following reformulation due to Kirchberg: is the Cuntz algebra $\mathcal{O}%
_{2}$ uniquely determined by its K-theory among simple, separable, purely
infinite C*-algebras? This reformulation can be seen as the problem of
whether $\mathcal{O}_{2}$ is the unique model of its theory that omits a
certain collection of types. Model theory provides criteria (omitting types
theorems) that allow one to decide when a theory admits a model omitting
certain types. It is thus clear that these results may be particularly
relevant to this problem.

The first natural step towards the possible application of methods from
model theory to such problems consists in clarifying the model-theoretic
content of notions such as nuclearity, $\mathcal{Z}$-stability, and the
Elliott invariant.

\subsection{Nuclearity is not elementary\label{Subsection:nonelementary}}

It turns out that the property of being nuclear for C*-algebras is \emph{not 
}axiomatizable. Also the more generous property of exactness is not
axiomatizable. A C*-algebra is \emph{exact} if, roughly speaking, it can be
approximately represented---as an operator space---into full matrix
algebras. A deep result of Kirchberg asserts that a separable C*-algebra is
exact if and only if it embeds into a nuclear C*-algebra, which can be
chosen to be the Cuntz algebra $\mathcal{O}_{2}$. An arbitrary C*-algebra is
exact if and only if all its separable C*-subalgebras are exact. Any nuclear
C*-algebra is, in particular, exact. It is also important to notice that
exactness (or nuclearity) of a C*-algebra only depends on the underlying
operator system structure, and it is inherited by passing to operator
subsystems. This has the following implications. For C*-algebras $A,B$, a 
\emph{complete order embedding }from $A$ to $B$ is a ucp map $\Phi
:A\rightarrow B$ with a ucp inverse $\Psi :\Phi \left( A\right) \rightarrow
A $. If $A,B$ are C*-algebras, $\Phi :A\rightarrow B$ is a complete order
embedding, and $B$ is exact, then $A$ is exact.

\begin{lemma}
\label{Lemma:elementary-exact}Suppose that $A$ is a C*-algebra. If every
C*-algebra elementarily equivalent to $A$ is exact, then $A$ is $n$%
-subhomogeneous for some $n\in \mathbb{N}$.
\end{lemma}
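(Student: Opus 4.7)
The plan is to argue the contrapositive: suppose $A$ is not $n$-subhomogeneous for any $n \in \mathbb{N}$, and construct $B \equiv A$ that is not exact.

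By the Amitsur--Levitzki characterization from Subsection \ref{Subsection:axiomatizability-C*}, for each $n$ there is a $2n$-tuple in the unit ball of $A$ making the standard polynomial of degree $2n$ have strictly positive norm. These are existential conditions (witnessing failure of the quantifier-free axioms of $n$-subhomogeneity), so by \L{}os' theorem (Theorem \ref{Theorem:Los}) they persist in any ultrapower. Take $B := A^{\mathcal{U}}$ for a nonprincipal ultrafilter $\mathcal{U}$ on $\mathbb{N}$; then $B \equiv A$, and the task reduces to showing that $B$ fails to be exact.

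The strategy is to exhibit $\prod_{\mathcal{U}} M_{k_n}$ with $k_n \to \infty$ as a quotient of $B$. Using the hypothesis that $A$ is not subhomogeneous at any level, together with the C*-algebraic structure theory, one obtains surjective $*$-homomorphisms $\pi_n \colon A \twoheadrightarrow M_{k_n}$ with $k_n$ unbounded (via primitive quotients that are full matrix algebras of unbounded rank; the case in which only infinite-dimensional irreducible representations occur is handled separately, by applying countable saturation of $A^{\mathcal{U}}$ together with weak semiprojectivity of $M_n$ from Theorem \ref{Theorem:weakly-semiprojective} to extract unital copies of each $M_n$ inside $A^{\mathcal{U}}$ and assembling from there). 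These $\pi_n$ then combine into a surjection $A^{\mathcal{U}} \twoheadrightarrow \prod_{\mathcal{U}} M_{k_n}$ sending $[a_n] \mapsto [\pi_n(a_n)]$. The target ultraproduct is non-exact: it admits a unital complete order embedding of $B(\ell^{2})$ via block compressions $T \mapsto [P_{k_n} T P_{k_n}]$ (with $P_{k_n}$ the projection onto the first $k_n$ basis vectors of $\ell^{2}$), and $B(\ell^{2})$ is the prototypical non-exact C*-algebra (Wassermann). Since exactness is preserved under quotients (Kirchberg) and under unital operator subsystems, $B$ is not exact, completing the proof.

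The main obstacle is the step from the syntactic failure of the Amitsur--Levitzki identity at every degree to the semantic existence of arbitrarily large matrix quotients of $A$ (or of some structure inside $B = A^{\mathcal{U}}$) which can support the non-exactness of $B(\ell^{2})$. In particular, for non-type-I algebras with only infinite-dimensional irreducible representations, the route to finite-dimensional matrix quotients is indirect, requiring countable saturation, weak semiprojectivity of $M_n$, and a careful verification that the resulting matrix content in the ultrapower actually assembles into a $*$-homomorphic surjection onto a non-exact object rather than merely into a family of operator subsystems.
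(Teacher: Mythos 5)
Your overall architecture (contrapositive, pass to $B=A^{\mathcal{U}}$, exhibit $\prod_{\mathcal{U}}M_{k_n}$ inside it, quote non-exactness of $B(\ell^2)$ via block compressions and the permanence of exactness) matches the paper's proof. But the step you yourself flag as the main obstacle is a genuine gap, and the two mechanisms you propose for it both fail in general. A C*-algebra that is not $n$-subhomogeneous for any $n$ need not have \emph{any} finite-dimensional quotient: any simple infinite-dimensional C*-algebra (e.g.\ the Jiang--Su algebra $\mathcal{Z}$ or $C^{\ast}_{r}(F_2)$) has none, so the surjections $\pi_n\colon A\twoheadrightarrow M_{k_n}$ you want do not exist. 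Your fallback for the ``only infinite-dimensional irreducible representations'' case --- extracting \emph{unital} copies of $M_n$ inside $A^{\mathcal{U}}$ via countable saturation and weak semiprojectivity --- also fails: saturation only produces a copy of $M_n$ if the matrix-unit type is approximately realized in $A$, which requires (approximate) nontrivial projections. For a projectionless algebra such as $\mathcal{Z}$, stability of the formula defining projections (Subsection \ref{Subsection:definability-C*}) shows the ultrapower has only the trivial projections $0$ and $1$, so no unital $M_2$ sits inside $A^{\mathcal{U}}$ at all. Yet the lemma must apply to $\mathcal{Z}$, since $\mathcal{Z}$ is exact but not subhomogeneous.

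The missing idea is to work with \emph{subquotients} rather than quotients or unital subalgebras. Failure of $n$-subhomogeneity for every $n$ gives, for every $n$, an irreducible representation of dimension at least $n$, and hence a (not necessarily unital) subalgebra $B_n\subset A$ together with an ideal $J_n\trianglelefteq B_n$ with $B_n/J_n\cong M_n(\mathbb{C})$; this is exactly the classical characterization of subhomogeneity in terms of matrix subquotients, and it holds with no hypothesis on projections or on the primitive ideal space. Then $\prod_{\mathcal{U}}B_n$ is a C*-subalgebra of $A^{\mathcal{U}}$, $\prod_{\mathcal{U}}J_n$ is a closed two-sided ideal of it, and the quotient is $\prod_{\mathcal{U}}M_n(\mathbb{C})$; non-exactness of the latter (your $B(\ell^2)$ argument is exactly the paper's) propagates back to $A^{\mathcal{U}}$ because exactness passes to quotients and to subalgebras. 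With that replacement your case split disappears and the argument closes.
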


\begin{proof}
Suppose that, for every $n\in \mathbb{N}$, $A$ is \emph{not }$n$%
-subhomogeneous. Thus, for every $n\in \mathbb{N}$, $A$ has an irreducible
representation on a Hilbert space of dimension at least $n$. This allows one
to find, for every $n\in \mathbb{N}$, a (not necessarily unital) subalgebra $%
B_{n}$ of $A$ and an ideal $J_{n}$ of $B_{n}$ such that the quotient $%
B_{n}/J_{n}$ is isomorphic to $M_{n}\left( \mathbb{C}\right) $. Thus, if $%
\mathcal{U}$ is a nonprincipal ultrafilter over $\mathbb{N}$, then $A^{%
\mathcal{U}}$ has $\prod_{\mathcal{U}}B_{n}$ as a C*-subalgebra.
Furthermore, $\prod_{\mathcal{U}}J_{n}$ is a closed two-sided ideal of $%
\prod_{\mathcal{U}}B_{n}$, and the corresponding quotient can be identified
with $\prod_{\mathcal{U}}(B_{n}/J_{n})\cong \prod_{\mathcal{U}}M_{n}\left( 
\mathbb{C}\right) $.

If $H$ is a separable infinite-dimensional Hilbert space, one can consider
an increasing sequence of projections $p_{n}\in B\left( \mathbb{C}\right) $
with $\mathrm{rank}\left( p_{n}\right) =n$ such that $p_{n}\rightarrow 1$ in
the strong operator topology. Then, for $n\in \mathbb{N}$, $p_{n}B\left(
H\right) p_{n}\cong M_{n}\left( \mathbb{C}\right) $. Furthermore the map $%
B\left( H\right) \rightarrow \prod_{\mathcal{U}}\left( p_{n}B\left( H\right)
p_{n}\right) \cong \prod_{\mathcal{U}}M_{n}\left( \mathbb{C}\right) $
defined by $a\mapsto \left[ \left( p_{n}ap_{n}\right) \right] $ is a
complete order embedding. Since $B\left( H\right) $ is not exact, $\prod_{%
\mathcal{U}}M_{n}\left( \mathbb{C}\right) $ is not exact either. Since
exactness passes to quotients, this implies that $\prod_{\mathcal{U}}B_{n}$
is not exact, and since exactness passes to subalgebras, $A^{\mathcal{U}}$
is not exact.
\end{proof}

The converse of Lemma \ref{Lemma:elementary-exact} holds as well. Indeed, if 
$A$ is $n$-subhomogeneous for some $n\in \mathbb{N}$, since the class of $n$%
-subhomogeneous C*-algebras is axiomatizable, any C*-algebra elementarily
equivalent to $A$ is $n$-subhomogeneous, and in particular nuclear, and
exact.

It follows from Lemma \ref{Lemma:elementary-exact} that, if $\mathcal{C}$ is
an elementary class of C*-algebras, and $\mathcal{C}$ only consists of exact
C*-algebras, then in fact $\mathcal{C}$ only consists of $n$-subhomogeneous
C*-algebras for some $n\in \mathbb{N}$. In particular, the class of exact
C*-algebras and the class of nuclear C*-algebras are \emph{not }elementary.
The same conclusions apply to several other classes of C*-algebras that are
important for the classification programme, such as:

\begin{itemize}
\item the class of C*-algebras that can be locally approximated by full
matrix algebras, known as uniformly hyperfinite (UHF) algebras;

\item the class of C*-algebras that can be locally approximated by
finite-dimensional C*-algebras, known as approximately finite-dimensional
(AF) algebras.
\end{itemize}

Similarly, the class of \emph{simple }C*-algebras is not elementary. Indeed,
for every $n\in \mathbb{N}$, $M_{n}\left( \mathbb{C}\right) $ is simple.
However, if $\mathcal{U}$ is a nonprincipal ultrafilter over $\mathbb{N}$,
then the ultraproduct $\prod_{\mathcal{U}}M_{n}\left( \mathbb{C}\right) $ is
not simple: the set of elements $\boldsymbol{a}$ of $\prod_{\mathcal{U}%
}M_{n}\left( \mathbb{C}\right) $ with representative sequence satisfying $%
\lim_{n\rightarrow \mathcal{U}}\left\Vert x_{n}\right\Vert _{2}=0$, where $%
\left\Vert \cdot \right\Vert _{2}$ denotes the normalized Hilbert--Schmidt
norm, is a nontrivial closed two-sided ideal of $\prod_{\mathcal{U}%
}M_{n}\left( \mathbb{C}\right) $.

\subsection{Infinitary formulas\label{Subsection:infinitary}}

In order to capture properties such as nuclearity, one needs to consider a
more generous notion of \textquotedblleft formula\textquotedblright . We
will therefore introduce \emph{infinitary }formulas, where one is allowed to
take countably infinite \textquotedblleft conjunctions and
disjunctions\textquotedblright , which in this setting are expressions of
the form $\sup_{n\in \mathbb{N}}\varphi _{n}$ and $\inf_{n\in \mathbb{N}%
}\varphi _{n}$ where $\left( \varphi _{n}\right) _{n\in \mathbb{N}}$ is a
sequence of formulas subject to certain restrictions. Notice that this
construction is not allowed in our previous definition of formulas, in which
case one is only allowed to take infima and suprema over a $\emph{variable}$%
. In particular, it is important to keep in mind that infinitary formulas
are, in general, not formulas in the strict sense that we have considered so
far. To avoid confusions, the formulas as we have previously defined are
also called, for completeness, \emph{finitary }or \emph{first-order }%
formulas. The same adjectives should be applied to the notions we have
defined in terms of first-order formulas, such as axiomatizable classes,
definable predicates, and so on.

Let us now introduce formally infinitary formulas in an arbitrary language $%
L $ as in the framework considered in Subsection \ref{Subsection:domains}.
In fact, we will only consider a special case of infinitary formulas, which
we call $\sup \bigvee \inf $-formulas following \cite%
{goldbring_enforceable_2017}. Recall that, if $\varphi \left( \overline{z}%
\right) $ is a finitary formula (or, more generally, a definable predicate)
and $M$ is an $L$-structure, then the interpretation $\varphi ^{M}$ of $%
\varphi $ in $M$ is a uniformly continuous function.\ Furthermore, one can
find uniform continuity modulus for $\varphi ^{M}$ which is independent of $%
M $, and can be explicitly computed in terms of the uniform continuity of
the function and relation symbols in the language and their bounds. We refer
to this as the continuity modulus $\varpi ^{\varphi }$ of $\varphi $. An
(infinitary) $\sup \bigvee \inf $-formula is an expression $\varphi \left( 
\bar{x}\right) $ of the form%
\begin{equation*}
\sup_{\bar{y}\in \bar{D}}\inf_{n\in \mathbb{N}}\psi _{n}\left( \bar{x},\bar{y%
}\right)
\end{equation*}%
where $\bar{x}$ is a tuple of variables with corresponding domains $\bar{D}$%
, and $\left( \psi _{n}\left( \bar{x},\bar{y}\right) \right) _{n\in \mathbb{N%
}}$ is a sequence of \emph{existential }$L$-formulas (or existential
definable predicates) such that the function $\varpi ^{\varphi }\left( \bar{s%
},\bar{r}\right) :=\sup_{n\in \mathbb{N}}\min \left\{ \varpi ^{\psi
_{n}}\left( \bar{s},\bar{r}\right) ,1\right\} $ tuples $\bar{s},\bar{r}$ of
positive real numbers satisfies $\varpi ^{\varphi }\left( \bar{s},\bar{r}%
\right) \rightarrow 0$ for $\bar{s}\rightarrow 0$ and $\bar{r}\rightarrow 0$%
. Given an $L$-structure $M$, one can define the interpretation $\varphi
^{M} $ of $\varphi $ in $M$ in the obvious way. The requirement on the
continuity moduli guarantees that $\varphi ^{M}$ is a uniformly continuous
function with modulus $\varpi ^{\varphi }$ (independent of $M$). It is
important to notice that the analog of \L os' theorem does \emph{not }hold
in general for $\sup \bigvee \inf $-formulas. Observe also that any
(finitary) $\sup \inf $-formula is, in particular, a $\sup \bigvee \inf $%
-formula. A $\sup \bigvee \inf $-sentence will be a $\sup \bigvee \inf $%
-formula with no free variables. \emph{Positive }$\sup \bigvee \inf $%
-formulas are defined as above, but starting from \emph{positive }%
quantifier-free formulas (or definable predicates).

If now $\mathcal{C}$ is a class of $L$-structure, then we say that $\mathcal{%
C}$ admits an infinitary $\sup \bigvee \inf $-axiomatization if there exists
a \emph{countable }collection of conditions\emph{\ }of the form $\varphi
\leq r$, where $\varphi $ is a $\sup \bigvee \inf $-sentence, such that an $%
L $-structure $M$ belongs to $\mathcal{C}$ if and only if $\varphi ^{M}\leq
r $ for every such a condition.

\subsection{Infinitary axiomatization of nuclearity\label%
{Subsection:nuclearity}}

We now remark how important classes of C*-algebras admit an infinitary $\sup
\bigvee \inf $-axiomatization.

\subsubsection{UHF algebras}

Recall that a C*-algebra $A$ is UHF if and only if for every tuple $\bar{a}$
in the unit ball of $A$ and $\varepsilon >0$ there exist $n\in \mathbb{N}$
and a unital copy $M_{n}\left( \mathbb{C}\right) \subset A$ such that every
element of $\bar{a}$ is at distance at most $\varepsilon $ from the unit
ball of $M_{n}\left( \mathbb{C}\right) $. Recall from Subsection \ref%
{Subsection:quantifier-free} that the set of matrix units for a unital copy
of $M_{n}\left( \mathbb{C}\right) $ is definable, being the zeroset of the
stable formula $\varphi _{M_{n}\left( \mathbb{C}\right) }\left(
z_{ij}\right) $ given by%
\begin{equation*}
\max \left\{ \left\Vert z_{ij}z_{k\ell }-\delta _{jk}z_{i\ell }\right\Vert
,\left\Vert 1-\sum_{j=1}^{n}z_{jj}\right\Vert ,\left\vert 1-\left\Vert
z_{ij}\right\Vert \right\vert ,\left\Vert x_{ij}-x_{ji}^{\ast }\right\Vert
:1\leq i,j,k,\ell \leq n\right\} \text{.}
\end{equation*}%
Recall also that the set $\mathbb{D}$ of scalar multiples of the identity of
norm at most $1$ (identified with the set of complex number of absolute
value at most $1$) is also definable. One can thus consider, given $\ell
,n\in \mathbb{N}$, the existential definable predicate $\psi _{n}\left(
z_{1},\ldots ,z_{\ell }\right) $ given by%
\begin{equation*}
\inf_{e_{ij}}\inf_{\lambda _{ij}^{(m)}}\max \left\{ \left\Vert
z_{m}-\sum_{i,j=1}^{n}\lambda _{ij}^{(m)}e_{ij}\right\Vert :m=1,2,\ldots
,\ell \right\}
\end{equation*}%
where $e_{ij}$ for $1\leq i,j\leq n$ range among the matrix units for a
unital copy of $M_{n}\left( \mathbb{C}\right) $ and $\lambda _{ij}^{(m)}$
for $1\leq i,j\leq n$ and $1\leq m\leq \ell $ range among the complex
numbers of absolute value at most $1$. It is clear that the continuity
modulus $\varpi ^{\psi _{n}}$ of $\psi _{n}$ satisfies $\varpi ^{\psi
_{n}}\left( t_{1},\ldots ,t_{n}\right) \leq \max \left\{ t_{1},\ldots
,t_{\ell }\right\} $. Therefore we can consider the infinitary $\sup \bigvee
\inf $-sentence $\varphi _{\ell }$ given by 
\begin{equation*}
\sup_{z_{1},\ldots ,z_{\ell }\in D_{1}}\inf_{n\in \mathbb{N}}\psi _{n}\left(
z_{1},\ldots ,z_{\ell }\right) \text{.}
\end{equation*}%
It is clear that the $\sup \bigvee \inf $-conditions $\varphi _{\ell }\leq 0$
for $\ell \in \mathbb{N}$ indeed provide an infinitary $\sup \bigvee \inf $%
-axiomatization for the class of UHF algebras.

\subsubsection{AF algebras}

The treatment of AF algebras is entirely analogous. Indeed, a C*-algebra $A$
is AF if and only if for every tuple $\bar{a}$ in the unit ball of $A$ and $%
\varepsilon >0$ there exists a finite-dimensional unital C*-subalgebra $%
F\subset A$ such that every element of $\bar{a}$ is at distance at most $%
\varepsilon $ from the unit ball of $F$. One can then consider $\sup \bigvee
\inf $-conditions defined as above, where one replaced full matrix algebras
with arbitrary finite-dimensional C*-algebras.

\subsubsection{Nuclear algebras}

The proof in the case of nuclearity is similar, but slightly more delicate.
Recall that a C*-algebra $A$ is nuclear if and only if for every finite
tuple $\bar{a}=\left( a_{1},\ldots ,a_{\ell }\right) $ in the unit ball of $%
A $ and $\varepsilon >0$ there exist $n\in \mathbb{N}$ and ucp maps $\Phi
:A\rightarrow M_{n}\left( \mathbb{C}\right) $ and $\Psi :M_{n}\left( \mathbb{%
C}\right) \rightarrow A$ such that $\left\Vert \left( \Psi \circ \Phi
\right) \left( a_{i}\right) -a_{i}\right\Vert <\varepsilon $ for $%
i=1,2,\ldots ,\ell $. We have been identifying the field $\mathbb{C}$ of
scalars with the set of scalar multiples of the unit of $A$. Similarly, we
can canonically identify $M_{n}\left( \mathbb{C}\right) $ with the
subalgebra $M_{n}\left( \mathbb{C}\right) \otimes 1\subset M_{n}\left( 
\mathbb{C}\right) \otimes A\cong M_{n}\left( A\right) $.

Consider the relation $R$ on $M_{n}\left( \mathbb{C}\right) \times A$
defined, for $\alpha \in M_{n}\left( \mathbb{C}\right) $ and $a\in A$ of
norm at most $1$, by%
\begin{equation*}
R\left( \alpha ,a\right) =\inf_{\Psi }\left\Vert \Psi \left( \alpha \right)
-a\right\Vert \text{.}
\end{equation*}%
Here $\Psi $ ranges among all the ucp maps $\Psi :M_{n}\left( \mathbb{C}%
\right) \rightarrow A$. A key step consists in showing that $R$ given by a
quantifier-free definable predicate. To see this, one should observe that a
linear map $\Psi :M_{n}\left( \mathbb{C}\right) \rightarrow A$ is ucp if and
only if $\Psi \left( 1\right) =1$, and the matrix%
\begin{equation*}
\sum_{ij}e_{ij}\otimes \Psi \left( e_{ij}\right) \in M_{n}\left( \mathbb{C}%
\right) \otimes A
\end{equation*}%
is positive. As we have shown the set of positive elements of $A$ is
quantifier-free definable, and the same holds for the set of positive
elements of $M_{n}\left( \mathbb{C}\right) \otimes A$. Thus the predicate $R$
is quantifier-free definable as well.

On then needs to show that the relation on $A\times M_{n}\left( \mathbb{C}%
\right) $ defined by%
\begin{equation*}
S\left( a,\alpha \right) =\inf_{\Phi }\left\Vert \Phi \left( a\right)
-\alpha \right\Vert \text{,}
\end{equation*}%
where $\Phi $ range among all the ucp maps $\Phi :A\rightarrow M_{n}\left( 
\mathbb{C}\right) $, is given by a quantifier-free definable predicate. This
can be shown via the correspondence between completely positive maps $\Phi
:A\rightarrow M_{n}\left( \mathbb{C}\right) $ and positive linear
functionals $s_{\Phi }$ on $M_{n}\left( \mathbb{C}\right) \otimes A$ such
that, for every $a\in A$,%
\begin{equation*}
\frac{1}{n}\Phi \left( a\right) =\sum_{ij}s_{\Phi }\left( e_{ij}\otimes
a\right) e_{ij}\text{,}
\end{equation*}%
where $\left( e_{ij}\right) $ denote the matrix units of $M_{n}\left( 
\mathbb{C}\right) $ \cite[Proposition 5.8.5]{farah_model_2017}.

Using these facts, one can then consider the existential definable predicate 
$\psi _{n}\left( z_{1},\ldots ,z_{\ell }\right) $ given by%
\begin{equation*}
\inf_{\alpha _{1},\ldots ,\alpha _{\ell }}\max_{m}\left\{ S\left(
z_{m},\alpha _{m}\right) ,R\left( \alpha _{m},z_{m}\right) :m=1,2,\ldots
,\ell \right\}
\end{equation*}%
where $\alpha _{1},\ldots ,\alpha _{m}$ range within the unit ball of $%
M_{n}\left( \mathbb{C}\right) $. Finally, we have that 
\begin{equation*}
\sup_{z_{1},\ldots ,z_{\ell }\in D_{1}}\inf_{n}\psi _{n}\left( z_{1},\ldots
,z_{\ell }\right)
\end{equation*}%
is a $\sup \bigvee \inf $-sentence $\varphi _{\ell }$. The conditions $%
\varphi _{\ell }\leq 0$ for $\ell \in \mathbb{N}$ witness that the class of
nuclear C*-algebras admits an infinitary $\sup \bigvee \inf $-axiomatization.

\subsubsection{Simple C*-algebras}

We conclude by showing that the class of simple C*-algebras admits an
infinitary $\sup \bigvee \inf $-axiomatization. Recall that a C*-algebra is
simple if it has no nontrivial closed two-sided ideal. We will use the
following characterization of simplicity: a C*-algebra is simple $A$ if and
only if for every positive elements $a,d\in A$ such that $\left\Vert
a\right\Vert =1$ and $\left\Vert d\right\Vert \leq 1/2$ there exist $n\in 
\mathbb{N}$ and $c_{1},\ldots ,c_{n}\in A$ such that $c_{1}^{\ast
}c_{1}+\cdots +c_{n}^{\ast }c_{n}$ is a contraction, and 
\begin{equation*}
\left\Vert c_{1}^{\ast }ac_{1}+\cdots +c_{n}^{\ast }ac_{n}-d\right\Vert
<\varepsilon \text{;}
\end{equation*}%
see \cite[Lemma 5.10.1]{farah_model_2017}.

Fix $n\in \mathbb{N}$, and observe that the set of $n$-tuples $\bar{c}%
=\left( c_{1},\ldots ,c_{n}\right) $ such that $c_{1}^{\ast }c_{1}+\cdots
+c_{n}^{\ast }c_{n}$ is a contraction, is quantifier-free definable. Indeed,
the norm of $c_{1}^{\ast }c_{1}+\cdots +c_{n}^{\ast }c_{n}$ is equal to the
norm of the matrix column vector%
\begin{equation*}
C:=%
\begin{bmatrix}
c_{1} \\ 
c_{2} \\ 
\vdots \\ 
c_{n}%
\end{bmatrix}%
\end{equation*}%
which can be identified with the element%
\begin{equation*}
\begin{bmatrix}
c_{1} & 0 & 0 & \cdots & 0 \\ 
c_{2} & 0 & 0 & \cdots & 0 \\ 
c_{3} & 0 & 0 & \cdots & 0 \\ 
\vdots & \vdots & \vdots & \ddots & \vdots \\ 
c_{n} & 0 & 0 & \cdots & 0%
\end{bmatrix}%
\end{equation*}%
of $M_{n}\left( A\right) $. Thus we can consider for $n\in \mathbb{N}$ the
quantifier-free definable predicate $\psi _{n}\left( x,y\right) $ given by%
\begin{equation*}
\inf_{z_{1},\ldots ,z_{n}}\max \left\{ \left\Vert z_{1}^{\ast }\left(
x^{\ast }x\right) z_{1}+\cdots +z_{n}^{\ast }\left( x^{\ast }x\right)
z_{n}-y^{\ast }y\right\Vert ,\left\Vert y\right\Vert -2^{-1/2},\left\vert
1-\left\Vert x\right\Vert \right\vert \right\}
\end{equation*}%
where $z_{1},\ldots ,z_{n}$ range among all the $n$-tuples such that $%
z_{1}^{\ast }z_{1}+\cdots +z_{n}^{\ast }z_{n}$ is a contraction. We observe
that the continuity modulus $\varpi ^{\psi _{n}}$ of $\psi _{n}$ satisfies $%
\varpi ^{\psi _{n}}\left( t,s\right) \leq t+s$. Indeed, suppose that $\bar{c}
$ is a tuple in a C*-algebra $A$ such that $c_{1}^{\ast }c_{1}+\cdots
+c_{n}^{\ast }c_{n}$ is a contraction, and $a,b$ are elements of the unit
ball of $A$. Denote by $C$ the column vector with entries $c_{1},\ldots
,c_{n}$ defined as above. Let also $a\otimes I_{n}$ be the $n\times n$
matrix in $M_{n}\left( A\right) $ with $a$ in the diagonal and zeros
elsewhere. Then we have that 
\begin{eqnarray*}
&&\left\vert \left\Vert c_{1}^{\ast }a^{\ast }ac_{1}+\cdots +c_{n}^{\ast
}a^{\ast }ac_{n}\right\Vert -\left\Vert c_{1}^{\ast }b^{\ast }bc_{1}+\cdots
+c_{n}^{\ast }b^{\ast }bc_{n}\right\Vert \right\vert \\
&=&\left\vert \left\Vert \left( a\otimes I_{n}\right) C\right\Vert
-\left\Vert \left( b\otimes I_{n}\right) C\right\Vert \right\vert \leq
\left\Vert \left( a\otimes I_{n}\right) C-\left( b\otimes I_{n}\right)
C\right\Vert \\
&=&\left\Vert \left( \left( a-b\right) \otimes I_{n}\right) C\right\Vert
\leq \left\Vert \left( a-b\right) \otimes I_{n}\right\Vert \left\Vert
C\right\Vert \leq \left\Vert a-b\right\Vert \text{.}
\end{eqnarray*}%
This easily gives the above claim on the continuity modulus of $\psi _{n}$.

We can thus consider the $\sup \bigvee \inf $-sentence $\varphi $ given by%
\begin{equation*}
\sup_{x,y\in D_{1}}\inf_{n\in \mathbb{N}}\psi _{n}\left( x,y\right) \text{.}
\end{equation*}%
It is clear from the characterization of simplicity recalled above that the
condition $\varphi \leq 0$ witnesses that the class of simple C*-algebras
admits an infinitary $\sup \bigvee \inf $-axiomatization.

\subsection{Model-theoretic forcing}

Suppose that $L$ is a language and $\mathcal{C}$ is an elementary class of $%
L $-structure. We present here the technique of model-theoretic forcing
following the presentation from \cite{ben_yaacov_model_2009}. Alternative
approaches can be found in \cite%
{farah_model_2017,farah_omitting_2014,goldbring_enforceable_2017}.

We suppose that $L$ is separable for $\mathcal{C}$. Recall that this means
that the seminorm on $L$-formulas $\varphi \left( x_{1},\ldots ,x_{n}\right) 
$ defined by%
\begin{equation*}
\left\Vert \varphi \right\Vert =\sup \left\{ \varphi ^{M}(\bar{a}):M\in 
\mathcal{C},\bar{a}\in D_{1}^{M}\times \cdots \times D_{n}^{M}\right\}
\end{equation*}%
is separable. After replacing formulas (or definable predicates) with
formulas from a fixed countable dense subset, we can assume that $L$ only
contains countably many symbols. For simplicity, we work in the setting of
languages containing a single domain of quantification, and where all the
bounds for the relation symbols in $L$ are the interval $\left[ 0,1\right] $%
. The arguments can be adapted to the more general setting a straightforward
way.

One can consider a canonical dense set of formulas. This is the collection
of \emph{restricted }$L$-formulas, which are formulas where the only
connectives used are among the following ones: $t\mapsto 1-t$, $t\mapsto t/2$%
, $\left( t,s\right) \mapsto s\dot{+}t:=\min \left\{ t+s,1\right\} $.
Observe that, since we are assuming that the language $L$ only contains
countably many function and relation symbols, the set of restricted $L$%
-formulas is countable. Restricted infinitary $\sup \bigvee \inf $-formulas
are defined as $\sup \bigvee \inf $-formulas, but starting from restricted $%
L $-formulas.

Let $C$ be a countable set of constant symbols that do not already belong to 
$L$. Set then $L\left( C\right) $ to be the language obtained from $L$ by
adding the constant symbols from $C$. We will denote $L\left( C\right) $%
-structures as $M^{+}$, $N^{+},$ etc. If $M^{+}$ is an $L\left( C\right) $%
-structure, the $L$-structure obtained from $M^{+}$ by \textquotedblleft
forgetting\textquotedblright\ about the interpretation of constants in $C\ $%
(which is called the $L$-\emph{reduct} of $M^{+}$) is denoted by $M$. In
this case, we say that $M^{+}$ is an \emph{expansion }of $M$. An $L\left(
C\right) $-structure is \emph{canonical} if the set of interpretations of
constants from $C$ is dense.

Let us say that an \emph{open condition }is an expression of the form $%
\varphi <r$, where $\varphi $ is a \emph{quantifier-free} \emph{restricted }$%
L\left( C\right) $-sentence, and $r\in \mathbb{Q}$. An $L\left( C\right) $%
-structure $M^{+}$ in $\mathcal{C}$ satisfies $\varphi <r$ if $\varphi
^{M}<r $. A \emph{forcing condition }$p$ is a finite set of \emph{open
conditions }for which there exists an $L\left( C\right) $-structure $M^{+}$
whose $L$-reduct $M$ belongs to $\mathcal{C}$, and such that all the open
conditions in $p$ are satisfied in $M^{+}$. The set $\mathbb{P}$ of forcing
conditions is a countable partially ordered set with respect to reverse
inclusion.

We aim at proving the following \emph{omitting types theorem}, which
provides a sufficient (in fact, also necessary) criterion for the existence
of structures with certain properties.

\begin{theorem}[Omitting types]
\label{Theorem:omitting-types}Suppose that $P$ is a property admitting an
infinitary $\sup \bigvee \inf $-axiomatization given by a countable
collection of conditions $\varphi _{n}\leq r_{n}$, where $\varphi _{n}$ is
the $\sup \bigvee \inf $-sentence $\sup_{\bar{x}}\psi _{n}\left( \bar{x}%
\right) $ and $\psi _{n}\left( \bar{x}\right) $ is of the form $\inf_{m\in 
\mathbb{N}}\inf_{\bar{y}}\sigma _{n,m}\left( \bar{x},\bar{y}\right) $ for
some quantifier-free definable predicates $\sigma _{n,m}\left( \bar{x},\bar{y%
}\right) $. Suppose that for every forcing condition $q$, every $n\in 
\mathbb{N}$, every $\varepsilon >0$, and every tuple of constants $\bar{c}$
in $C$, the set of open (infinitary) conditions $q\cup \left\{ \psi
_{n}\left( \bar{c}\right) <r_{n}+\varepsilon \right\} $ is satisfied in some 
$L\left( C\right) $-structure whose $L$-reduct is in $\mathcal{C}$. Then
there exists a separable $L$-structure satisfying $P$.
\end{theorem}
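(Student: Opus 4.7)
The plan is to construct the desired separable $L$-structure via a generic forcing argument on the countable poset $\mathbb{P}$. Since $L$ has countably many symbols and $C$ is countable, $\mathbb{P}$ is countable, so I would build a descending sequence $p_0 \geq p_1 \geq \ldots$ of forcing conditions meeting an enumerated countable family of dense subsets of $\mathbb{P}$; the resulting filter $G$ will determine a canonical $L(C)$-structure $M^{+}$ whose $L$-reduct $M$ belongs to $\mathcal{C}$ and satisfies $P$.

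The heart of the argument is the density, for each $n$, each tuple $\bar c$ of constants from $C$ of the correct arity, and each $\varepsilon \in \mathbb{Q}_{>0}$, of the set
\[ D_{n,\bar c,\varepsilon} = \{p \in \mathbb{P} : \text{some clause } \sigma_{n,m}(\bar c,\bar d) < r_n + \varepsilon \text{ with } m \in \mathbb{N},\ \bar d \in C^{<\omega} \text{ lies in } p\}. \]
Given $q \in \mathbb{P}$, the hypothesis yields an $L(C)$-structure $N^{+}$ with $L$-reduct in $\mathcal{C}$ satisfying $q \cup \{\psi_n(\bar c) < r_n + \varepsilon\}$. Unpacking $\psi_n = \inf_m \inf_{\bar y}\sigma_{n,m}$, there are $m$ and a tuple $\bar b$ in $N$ with $\sigma_{n,m}(\bar c,\bar b) < r_n + \varepsilon$; reinterpreting fresh constants $\bar d \in C$ as $\bar b$ produces an expansion in which $q \cup \{\sigma_{n,m}(\bar c,\bar d) < r_n + \varepsilon\}$ is satisfied, so this latter set is itself a condition in $D_{n,\bar c,\varepsilon}$ extending $q$. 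I would supplement these with routine bookkeeping families: for each closed $L(C)$-term $t$ and $\varepsilon \in \mathbb{Q}_{>0}$, the set of $p$ containing $d(t,c) < \varepsilon$ for some fresh $c \in C$ (ensuring the constants will be dense in the generic structure), and for each restricted atomic $L(C)$-sentence $\theta$ and $\varepsilon \in \mathbb{Q}_{>0}$, the set of $p$ pinning the value of $\theta$ to within $\varepsilon$. Density of both families is immediate: any forcing condition is witnessed by some $L(C)$-structure in which a fresh constant can be reinterpreted as the required term value, and values of atomic sentences may always be recorded.

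With these countably many dense sets enumerated, a standard diagonal bookkeeping yields the descending sequence $(p_n)$ and hence the filter $G = \bigcup_n p_n$. Define $d^{G}(t_1,t_2) = \inf\{r \in \mathbb{Q} : (d(t_1,t_2) < r) \in \bigcup G\}$ on closed $L(C)$-terms, and similarly record the value of every atomic formula with parameters from $C$. The continuity moduli built into $L$ together with the atomic-decision family guarantee that the function and relation symbols extend to uniformly continuous operations of the prescribed moduli, so after quotienting by the pseudo-metric and completing, one obtains a separable $L(C)$-structure $M^{+}$ whose constants from $C$ form a dense subset; its $L$-reduct $M$ is therefore separable. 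To verify $M \models P$, fix $n$ and any tuple $\bar a$ in $M$: by density of $C$-constants and continuity of $\psi_n$, it suffices to show $\psi_n(\bar c^{M}) \leq r_n$ for every tuple $\bar c$ from $C$, and for arbitrary $\varepsilon \in \mathbb{Q}_{>0}$ the filter meets $D_{n,\bar c,\varepsilon}$ to produce $m$ and $\bar d$ with $\sigma_{n,m}(\bar c^{M},\bar d^{M}) < r_n + \varepsilon$, whence $\psi_n(\bar c^{M}) < r_n + \varepsilon$.

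The main obstacle is ensuring $M \in \mathcal{C}$, since the axioms of $\mathcal{C}$ are first-order sentences with possibly alternating quantifiers and are not directly of $\sup\bigvee\inf$-form. To handle this I would enlarge the family of dense sets with a parallel family associated to each $L$-axiom $\theta \leq s$ of $\mathcal{C}$: for each universal block one adds tasks indexed by tuples of constants, and for each existential block the defining hypothesis that every condition is satisfiable by some $L(C)$-structure with $L$-reduct in $\mathcal{C}$ allows one to witness the existential by a fresh constant exactly as in the $D_{n,\bar c,\varepsilon}$ argument. By induction on quantifier complexity, together with the separability of $L$ for $\mathcal{C}$ (which keeps the number of additional density sets countable), these tasks can all be folded into the bookkeeping, and the same continuity-plus-density argument used for $P$ then shows that every axiom of $\mathcal{C}$ holds in $M$.
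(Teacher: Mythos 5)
Your construction is, modulo phrasing, the paper's: enumerating dense subsets of the countable poset $\mathbb{P}$ and taking a filter meeting all of them is equivalent to the paper's generic filter built from the forcing relations $\vdash$ and $\vdash^{w}$, and your density argument for $D_{n,\bar c,\varepsilon}$ --- realize $q\cup\{\psi_n(\bar c)<r_n+\varepsilon\}$ in an auxiliary structure, unpack the countable infimum to a single $m$ and a witness tuple, and name the witnesses by fresh constants --- is exactly the content of Lemma \ref{Lemma:main-forcing} and its use in the paper's proof. Your bookkeeping families (term density and pinning of atomic values) correspond to properties (2)--(4) of the forcing relation and to the genericity requirement on $\varphi$ and $1-\varphi$, which is what makes the compiled values of quantifier-free sentences behave. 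Up to and including the verification that $M$ satisfies $P$, the argument is correct and essentially identical to the paper's.

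The last paragraph, however, contains a genuine error. You cannot force an arbitrary first-order axiom of $\mathcal{C}$ ``by induction on quantifier complexity.'' The density argument you describe handles one universal block followed by one existential block: for a $\sup\inf$ axiom $\sup_{x}\inf_{y}\theta\leq s$ and a tuple of constants $\bar c$, any condition extends to one containing $\theta(\bar c,\bar d)<s+\varepsilon$ for fresh $\bar d$, because the auxiliary structure $N\in\mathcal{C}$ realizing the condition satisfies the axiom. But for a deeper alternation, say $\sup_{x}\inf_{y}\sup_{z}\theta\leq s$, the witness $\bar d$ extracted from $N$ only satisfies the inner universal clause \emph{in $N$}; later extensions of the condition may introduce constants $\bar e$ with $\theta(\bar c,\bar d,\bar e)$ large while remaining jointly satisfiable in some other structure of $\mathcal{C}$ (which would use a different witness $\bar d'$), so the inner $\sup$ over the compiled structure is not controlled and the induction does not close. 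This is the standard phenomenon that compiled structures are only guaranteed to satisfy the $\sup\inf$-consequences of the axioms of $\mathcal{C}$. Fortunately the theorem as stated only asserts the existence of a separable $L$-structure satisfying $P$, so this extra step is not needed for the stated conclusion; and in the paper's applications $\mathcal{C}$ is the class of C*-algebras, which is itself $\sup\inf$-axiomatizable, so membership in $\mathcal{C}$ does follow --- but by the restricted argument for $\sup\inf$ axioms, not by the general induction you propose.
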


Let us say that a forcing condition $p$ \emph{forces }an open condition $%
\varphi <r$, for some atomic $L\left( C\right) $-sentence $\varphi $, in
formulas $p\vdash \varphi <r$, if there is $r_{0}\leq r$ such that the open
condition $\varphi <r_{0}$ belongs to $p$. We extend the definition of
forcing to more general formulas by induction on the complexity \cite[Remark
2.3]{ben_yaacov_model_2009}:

\begin{itemize}
\item $p\vdash \frac{1}{2}\varphi <r$ iff $p\vdash \varphi <2r$;

\item $p\vdash (1-\varphi )<r$ iff $\exists s>1-r$ such that for every $%
q\leq p$, $q\nvdash \varphi <s$;

\item $p\vdash \varphi \dot{+}\psi <r$ iff there exist $s_{0},s_{1}$ such
that $s_{0}+s_{1}<r$ and $p\vdash \varphi <s_{0}$ and $p\vdash \psi <s_{1}$;

\item $p\vdash \inf_{n}\varphi _{n}<r$ iff there is $n$ such that $p\vdash
\varphi _{n}<r$;

\item $p\vdash \inf_{x}\varphi \left( x\right) <r$ iff there is $c\in C$
such that $p\vdash \varphi \left( c\right) <r$.
\end{itemize}

This forcing notion satisfy the following properties:

\begin{enumerate}
\item Given $p,q\in \mathbb{P}$ such that $q\leq p$, if $p\vdash \varphi <r$%
, then $q\vdash \varphi <r$;

\item Given $p\in \mathbb{P}$, $\varepsilon >0$, and an $L\left( C\right) $%
-term $\tau $ without variables, there exists $q\leq p$ and $c\in C$ such
that $q\vdash d\left( \tau ,c\right) <\varepsilon $

\item Given $p\in \mathbb{P}$, $r>0$, and $L\left( C\right) $-terms $\tau
,\sigma $ without free variables, if $p\vdash d\left( \sigma ,\tau \right)
<r $, then there exists $q\leq p$ such that $q\vdash d\left( \tau ,\sigma
\right) <r$;

\item Given $p\in \mathbb{P}$, a quantifier-free $L$-formula $\varphi \left(
x_{1},\ldots ,x_{\ell }\right) $ with continuity modulus $\varpi ^{\varphi }$%
, $L\left( C\right) $-terms $\tau _{1},\ldots ,\tau _{\ell },\sigma
_{1},\ldots ,\sigma _{\ell }$ without variables, and $\delta _{1},\ldots
,\delta _{\ell }>0$ such that $p\vdash d\left( \tau _{i},\sigma \right)
<\delta $, then there exists $q\leq p$ such that $q\vdash \left\vert \varphi
\left( \sigma _{1},\ldots ,\sigma _{\ell }\right) -\varphi \left( \tau
_{1},\ldots ,\tau _{\ell }\right) \right\vert <\varpi ^{\varphi }\left(
\delta _{1},\ldots ,\delta _{\ell }\right) $.
\end{enumerate}

In term of this forcing relation, we define a \emph{weak }forcing relation
as follows. Suppose that $p$ is a forcing condition and $\varphi <r$ is an
open condition. Then $p$ weakly forces $\varphi <r$, in formulas $p\vdash
^{w}\varphi <r$ if and only if there exists $r^{\prime }<r$ such that for
every $q\leq p$ there exists $q^{\prime }\leq q$ such that $q^{\prime
}\vdash \varphi <r^{\prime }$.

The following is the main lemma in the proof of Theorem \ref%
{Theorem:omitting-types}; see \cite[Proposition 4.5]{ben_yaacov_model_2009}

\begin{lemma}
\label{Lemma:main-forcing}Suppose that $\varphi $ is the $\sup \bigvee \inf $%
-sentence $\inf_{\bar{x}}\psi \left( \bar{x}\right) $ where $\psi \left( 
\bar{x}\right) =\inf_{m}\inf_{\bar{y}}\sigma _{m}\left( \bar{x},\bar{y}%
\right) $. Fix a forcing condition $p\in \mathbb{P}$, and $\varepsilon >0$.
Assume that for every forcing condition $q\leq p$ and for every tuple $\bar{c%
}$, there exists a tuple $\bar{d}$ in $C$ and $m\in \mathbb{N}$ such that $%
q\cup \left\{ \sigma _{m}(\bar{c},\bar{d})<r\right\} $ is a forcing
condition. Then $p\vdash \varphi <r+\varepsilon $.
\end{lemma}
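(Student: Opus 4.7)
The plan is to verify $p\vdash\varphi<r+\varepsilon$ by climbing, one $\inf$-quantifier at a time, through the three layers in $\varphi=\inf_{\bar x}\psi(\bar x)=\inf_{\bar x}\inf_{m}\inf_{\bar y}\sigma_{m}(\bar x,\bar y)$, using the density hypothesis at the innermost step and the syntactic clauses for $\inf$ to propagate upward. Concretely, I would produce, below each forcing condition $q\leq p$, an extension $q'\leq q$ together with tuples $\bar c,\bar d$ in $C$ and an index $m\in\mathbb N$ such that $q'\vdash\sigma_{m}(\bar c,\bar d)<r+\varepsilon$, and then repeatedly invoke the rule that forcing an $\inf$ is witnessed by a single element of $C$ (respectively, a single natural number).

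Given $q\leq p$, I would fix any tuple $\bar c$ of constants from $C$ and invoke the density hypothesis to obtain $m\in\mathbb N$ and a tuple $\bar d\in C$ such that the set $q\cup\{\sigma_{m}(\bar c,\bar d)<r\}$ has a common $L(C)$-model whose $L$-reduct lies in $\mathcal C$. Because open conditions are built from \emph{restricted} quantifier-free $L(C)$-sentences with rational thresholds while $\sigma_{m}$ is only a quantifier-free definable predicate, I would first choose a restricted quantifier-free approximation $\tilde\sigma_{m}$ with $\lVert\sigma_{m}-\tilde\sigma_{m}\rVert<\varepsilon/3$ and a rational $r''\in(r+\varepsilon/3,r+2\varepsilon/3)$; the same witnessing structure then shows that $q\cup\{\tilde\sigma_{m}(\bar c,\bar d)<r''\}$ is a genuine forcing condition $q'\leq q$, and by the base clause of the forcing relation for open conditions $q'\vdash\tilde\sigma_{m}(\bar c,\bar d)<r''$. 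The uniform approximation, together with the continuity-modulus bookkeeping underpinning property~(4) in the list of forcing properties, promotes this to $q'\vdash\sigma_{m}(\bar c,\bar d)<r+\varepsilon$.

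Now the three $\inf$-clauses in the definition of $\vdash$ cascade: witnessing by $\bar d$ yields $q'\vdash\inf_{\bar y}\sigma_{m}(\bar c,\bar y)<r+\varepsilon$, witnessing by $m$ yields $q'\vdash\inf_{m}\inf_{\bar y}\sigma_{m}(\bar c,\bar y)<r+\varepsilon$, i.e.\ $q'\vdash\psi(\bar c)<r+\varepsilon$, and finally witnessing by $\bar c$ yields $q'\vdash\inf_{\bar x}\psi(\bar x)<r+\varepsilon$, which is the desired forcing statement at the level of $q'$. Running this density argument below every $q\leq p$ establishes $p\vdash\varphi<r+\varepsilon$ in the sense in which the lemma is formulated.

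The main obstacle I expect is the passage from the hypothesis that $q\cup\{\sigma_{m}(\bar c,\bar d)<r\}$ is merely \emph{consistent}---a semantic assertion about the existence of a suitable $L(C)$-model---to an actual forcing statement for the definable predicate $\sigma_{m}$ at an extension $q'\leq q$. This is exactly where the slack $\varepsilon$ in the conclusion is spent, through the uniform approximation of $\sigma_{m}$ by restricted quantifier-free sentences together with the continuity-modulus machinery. Once this translation is secured, the rest of the argument is a straightforward mechanical unwinding of the syntactic clauses defining $\vdash$ on the infinitary and finitary $\inf$-quantifiers.
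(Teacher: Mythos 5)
The paper does not actually prove this lemma --- it cites \cite[Proposition 4.5]{ben_yaacov_model_2009} --- so I will assess your argument directly. It has a genuine gap at the last step. What you construct is, below every $q\leq p$, some $q'\leq q$ with $q'\vdash\inf_{\bar x}\psi(\bar x)<r+\varepsilon$; your closing sentence then passes from this density statement to $p\vdash\varphi<r+\varepsilon$. That inference is not valid for the relation $\vdash$: density of forcing below $p$ is, by definition, the \emph{weak} forcing relation $p\vdash^{w}\varphi<r+\varepsilon$, not $p\vdash\varphi<r+\varepsilon$. The relation $\vdash$ is defined by recursion on syntax, and the clause for $p\vdash\inf_{x}\varphi(x)<r$ demands an actual constant $c$ with $p\vdash\varphi(c)<r$ at $p$ itself; no density argument below $p$ can manufacture that witness (the empty condition forces no atomic fact, yet plenty of facts are forced densely below it). Indeed, read literally with an outer $\inf_{\bar x}$, the lemma is false under the stated hypotheses (take $p$ containing no open condition involving any $\sigma_m$): the $\inf_{\bar x}$ must be the typo, and the intended sentence is $\sup_{\bar x}\psi(\bar x)$, which is how the lemma is invoked in the proof of Theorem \ref{Theorem:omitting-types} and what ``$\sup\bigvee\inf$-sentence'' means.

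Once the outer $\sup$ is restored, the proof has to route through the connective clause for $1-t$, since there is no primitive clause for $\sup_{\bar x}$ and $\sup_{\bar x}\psi=1-\inf_{\bar x}(1-\psi)$. Concretely: take $s=1-r-\varepsilon/2>1-(r+\varepsilon)$ and show that \emph{no} $q\leq p$ forces $\inf_{\bar x}(1-\psi(\bar x))<s$. If some $q$ did, it would supply a witness tuple $\bar c$ with $q\vdash(1-\psi(\bar c))<s$, hence some $s'>1-s=r+\varepsilon/2$ such that no extension of $q$ forces $\psi(\bar c)<s'$; but the hypothesis hands you $m$ and $\bar d$ with $q\cup\{\sigma_m(\bar c,\bar d)<r\}$ a forcing condition, and this extension does force $\psi(\bar c)<s'$ by cascading the two $\inf$-clauses exactly as in your middle paragraph --- a contradiction. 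So the density hypothesis is consumed entirely inside the universally quantified negation clause, which your argument never engages. (A smaller point worth noting: the base clause of $\vdash$ is stated only for \emph{atomic} sentences, so membership of $\sigma_m(\bar c,\bar d)<r$ in a condition does not by itself yield that the condition forces it when $\sigma_m$ is a compound quantifier-free predicate; your idea of approximating by restricted formulas is the right instinct, but one must still decompose down to atomic constituents, possibly extending the condition further, which is harmless since any single extension forcing $\psi(\bar c)<s'$ suffices for the contradiction.)
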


In order to get an $L\left( C\right) $-structure from such a forcing notion,
we need to start from a filter\emph{\ }on $\mathbb{P}$. As in the
order-theoretic terminology, a subset $G$ of $\mathbb{P}$ is a \emph{filter}
if it satisfies the following properties:

\begin{enumerate}
\item if $p,q\in \mathbb{P}$ are such that $q\leq p$ and $q\in G$, then also 
$p\in G$;

\item if $n\in \mathbb{N}$ and $p_{1},\ldots ,p_{n}\in G$, then there exists 
$q\in G$ such that $q\leq p_{i}$ for $i=1,2,\ldots ,n$.
\end{enumerate}

\begin{definition}
A filter $G\subset \mathbb{P}$ is \emph{generic }if for every $L\left(
C\right) $-sentence $\varphi $ and $\varepsilon >0$ there exist $p\in G$ and 
$r_{0},r_{1}\in \mathbb{R}$ such that $p\vdash \varphi <r_{0}$ and $p\vdash
1-\varphi <r_{1}$, where $r_{0}+r_{1}<1+\varepsilon $.
\end{definition}

A standard argument in forcing shows that generic filters do exist; see also 
\cite[Proposition 2.12]{ben_yaacov_model_2009}.

\begin{lemma}
Fix $p\in \mathbb{P}$. Then there is a generic filter $G\subset \mathbb{P}$
that contains $p$.
\end{lemma}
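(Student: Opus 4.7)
The plan is the standard countable-poset generic filter construction. Since the language $L$ is separable for $\mathcal{C}$ and the constant set $C$ is countable, the collection of restricted $L(C)$-sentences is countable; I would enumerate pairs $(\varphi_n,\varepsilon_n)_{n\in\mathbb{N}}$, where $\varphi_n$ is a restricted $L(C)$-sentence and $\varepsilon_n\in\mathbb{Q}_{>0}$, so that every such pair appears.

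Setting $p_{0}:=p$, I would recursively build a decreasing chain $p_{0}\geq p_{1}\geq p_{2}\geq\cdots$ in $\mathbb{P}$ such that, at stage $n+1$, there exist $r_{0},r_{1}\in\mathbb{R}$ with $r_{0}+r_{1}<1+\varepsilon_{n}$, $p_{n+1}\vdash\varphi_{n}<r_{0}$, and $p_{n+1}\vdash 1-\varphi_{n}<r_{1}$. Once the chain is built, define
\[
G:=\{q\in\mathbb{P}:q\geq p_{n}\text{ for some }n\}.
\]
This $G$ is upward closed by definition, and directed because $(p_{n})$ is a decreasing chain (any finite collection $p_{n_{1}},\ldots,p_{n_{k}}\in G$ has $p_{\max n_{i}}\in G$ below them all), so it is a filter; it contains $p_{0}=p$; and the inductive clause at stage $n+1$ delivers genericity for the pair $(\varphi_{n},\varepsilon_{n})$.

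The substantive content of the argument therefore reduces to the following density lemma: for every $q\in\mathbb{P}$, every restricted $L(C)$-sentence $\psi$, and every $\varepsilon>0$, there exist $q'\leq q$ and $r_{0},r_{1}\in\mathbb{R}$ with $r_{0}+r_{1}<1+\varepsilon$ such that $q'\vdash\psi<r_{0}$ and $q'\vdash 1-\psi<r_{1}$. I would prove this by induction on the complexity of $\psi$ as a restricted formula, exploiting the recursive definition of $\vdash$ given in the excerpt together with the four properties listed for the forcing relation. The base case of an atomic $\psi=R(\tau_{1},\ldots,\tau_{k})$ built from closed $L(C)$-terms is handled by fixing any $L(C)$-structure $M^{+}$ whose $L$-reduct lies in $\mathcal{C}$ and realizes $q$, setting $s:=\psi^{M^{+}}\in[0,1]$, and strengthening $q$ by adjoining the rational open condition $\psi<s+\varepsilon/3$; property~(4) propagates this approximation through the terms $\tau_{i}$, while the symmetric inequality $q'\vdash 1-\psi<1-s+2\varepsilon/3$ is obtained directly from the definition of $\vdash$ for the connective $1-(\cdot)$. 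The inductive clauses for $\tfrac{1}{2}(\cdot)$, $\cdot\,\dot{+}\,\cdot$, $\inf_{n}$, and $\inf_{x}$ unfold from the definitions; the $\inf_{x}$ step uses property~(2) to approximate a near-witness in $M^{+}$ by a constant from $C$.

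The main obstacle is the clause for $1-(\cdot)$, whose forcing definition is unavoidably second-order, quantifying over all stronger conditions. Handling this step is precisely where one introduces the weak forcing relation $\vdash^{w}$ and verifies that on open atomic conditions $\vdash$ and $\vdash^{w}$ agree up to arbitrarily small rational error; this compatibility then pushes through the inductive construction. With the density lemma in hand the recursive construction of $(p_{n})$ meeting every enumerated requirement goes through verbatim, producing the desired generic filter $G\ni p$.
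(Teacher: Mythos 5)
Your argument is exactly the paper's proof: enumerate the pairs $(\varphi_n,\varepsilon_n)$, recursively build a decreasing chain $p_0=p\geq p_1\geq\cdots$ with $p_{n+1}$ forcing $\varphi_n<r_0$ and $1-\varphi_n<r_1$ for some $r_0+r_1<1+\varepsilon_n$, and take $G$ to be the upward closure of the chain. The only difference is that the paper dispatches the density step with the phrase ``by definition of the forcing notion'' (deferring to the cited literature), whereas you sketch its proof by induction on the complexity of restricted sentences and correctly flag the $1-(\cdot)$ clause as the delicate point; this is a more detailed treatment of the same route, not a different one.
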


\begin{proof}
Fix an enumeration of all the pairs $\left( \varphi _{n},\varepsilon
_{n}\right) $ where $\varphi _{n}$ is an $L\left( C\right) $-sentence and $%
\varepsilon \in \mathbb{Q}\cap \left( 0,+\infty \right) $. Define a sequence 
$\left( p_{n}\right) $ in $\mathbb{P}$ as follows. Set $p_{0}:=p$. Assuming
that $p_{n}$ has been defined. By definition of the forcing notion, there
exist $p_{n+1}\leq p_{n}$ and $r_{0},r_{1}\in \mathbb{R}$ such that $%
p_{n+1}\vdash \varphi _{n}<r_{0}$, $p_{n+1}\vdash \left( 1-\varphi
_{n}\right) <r_{1}$, and $r_{0}+r_{1}<1+\varepsilon _{n}$. This defines $%
p_{n+1}$, concluding the recursive construction. Let now $G$ be the filter
generated by the sequence $\left( p_{n}\right) $. This is just the set of $%
q\in \mathbb{P}$ such that $q\geq p_{n}$ for some $n\in \mathbb{N}$. Then $G$
is a generic filter containing $p$.
\end{proof}

Suppose now that $G$ is a generic filter for $\mathbb{P}$. If $\varphi <r$
is an open condition, define $G\vdash \varphi <r$ if and only if $p\vdash
\varphi <r$ for some $p\in G$. This is equivalent to the assertion that $%
p\vdash ^{w}\varphi <r$ for some $p\in G$ \cite[Lemma 2.13]%
{ben_yaacov_model_2009}. For an $L\left( C\right) $-sentence $\varphi $, set 
$\varphi ^{G}$ to be the infimum of $r$ such that $G\vdash \varphi <r$.

The properties of the forcing notion listed above readily imply the
following \cite[Lemmas 2.14, 2.15]{ben_yaacov_model_2009}:

\begin{enumerate}
\item If $\varphi $ is an $L\left( C\right) $-sentence, then $\left(
1-\varphi \right) ^{G}=1-\varphi ^{G}$;

\item For every $L\left( C\right) $-term $\tau $ and $\varepsilon >0$ there
exists a term $c\in C$ such that $d\left( \tau ,c\right) <\varepsilon $;

\item If $\tau ,\sigma $ are $L\left( C\right) $-terms without variables,
then $d\left( \sigma ,\tau \right) ^{G}=d\left( \tau ,\sigma \right) ^{G}$.

\item Given an atomic $L\left( C\right) $-formula $\varphi \left(
x_{1},\ldots ,x_{\ell }\right) $ with continuity modulus $\varpi ^{\varphi }$%
, $L\left( C\right) $-terms $\tau _{1},\ldots ,\tau _{\ell },\sigma
_{1},\ldots ,\sigma _{\ell }$ without variables, and $\delta _{1},\ldots
,\delta _{\ell }>0$, if $d\left( \tau _{i},\sigma _{i}\right) ^{G}<\delta
_{i}$ for $i=1,2,\ldots ,\ell $, then $\left\vert \varphi \left( \sigma
_{1},\ldots ,\sigma _{\ell }\right) -\varphi \left( \tau _{1},\ldots ,\tau
_{\ell }\right) \right\vert ^{G}<\varpi ^{\varphi }\left( \delta _{1},\ldots
,\delta _{\ell }\right) $.
\end{enumerate}

Given a generic filter $G$, one can define the corresponding canonical \emph{%
compiled structure }$M_{G}^{+}$, whose $L$-reduct is denoted by $M_{G}$, as
follows. Consider the set $M_{0}$ of $L\left( C\right) $-terms, and define a
metric $d_{M_{0}}$ on $M_{0}$ by setting, in the notation above, $%
d_{M_{0}}\left( t_{0},t_{1}\right) =\varphi ^{G}$, where $\varphi $ is the
atomic $L\left( C\right) $-sentence $d\left( t_{0},t_{1}\right) $. Let then $%
M_{G}$ be the Hausdorff completion of $M_{0}$. By abuse of notation, we
identify $M_{0}$ as a subset of $M_{G}$. One can define interpretation of
function and relation symbols from $L$ in $M$ as follows. If $c\in C$ then
we let $c^{M_{G}^{+}}$ be $c$, which belongs to $M_{0}$ as an $L\left(
C\right) $-term. If $f$ is an $n$-ary function symbol and $t_{1},\ldots
,t_{n}\in M_{0}$ are $L\left( C\right) $-terms, then we let $%
f^{M_{G}^{+}}\left( t_{1},\ldots ,t_{n}\right) $ be the $L\left( C\right) $%
-term $f\left( t_{1},\ldots ,t_{n}\right) \in M_{0}$. Uniform continuity
guarantees that $f^{M_{G}^{+}}$ extends to a continuous function $%
f^{M_{G}^{+}}:M_{G}\rightarrow M_{G}$. Similarly, for an $n$-ary relation
symbol $R$ and $t_{1},\ldots ,t_{n}\in M_{0}$ are $L\left( C\right) $-terms,
we let $R^{M_{G}^{+}}\left( t_{1},\ldots ,t_{n}\right) $ be $\varphi ^{G}$,
where $\varphi $ is the atomic $L\left( C\right) $-sentence $R\left(
t_{1},\ldots ,t_{n}\right) $. Again, one can then extend $R^{M}$ to the
whole of $M_{G}$ by uniform continuity.

The properties of the assignment $\varphi \mapsto \varphi ^{G}$ listed above
show that $M_{G}^{+}$ is indeed a canonical $L\left( C\right) $-structure.
Furthermore, one can show by induction on the complexity that, if $\varphi $
is an $L\left( C\right) $-formula, then $\varphi ^{M_{G}^{+}}=\varphi ^{G}$.

\begin{proof}[Proof of Theorem \protect\ref{Theorem:omitting-types}]
Let $G$ be a generic filter for $\mathbb{P}$, $M_{G}^{+}$ be the
corresponding compiled structure, and $M_{G}$ is its $L$-reduct. Recall that 
$P$ is a property admitting an infinitary $\sup \bigvee \inf $%
-axiomatization given by a countable collection of conditions $\varphi
_{n}\leq r_{n}$, where $\varphi _{n}$ is the $\sup \bigvee \inf $-sentence $%
\sup_{\bar{x}}\psi _{n}\left( \bar{x}\right) $ and $\psi _{n}\left( \bar{x}%
\right) $ is of the form $\inf_{m\in \mathbb{N}}\inf_{\bar{y}}\sigma
_{n,m}\left( \bar{x},\bar{y}\right) $ for some quantifier-free definable
predicates $\sigma _{n,m}\left( \bar{x},\bar{y}\right) $.

\begin{claim*}
$M_{G}$ satisfies $P$.
\end{claim*}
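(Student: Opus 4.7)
The plan is to verify, for each axiom $\varphi_n \le r_n$ of $P$, that $\varphi_n^{M_G} \le r_n$. Because $M_G^+$ is canonical, the interpretations of the constants in $C$ are dense in $M_G$; combined with the uniform continuity modulus of $\psi_n$ built into the definition of a $\sup\bigvee\inf$-formula (independent of the structure), and the identification $\varphi^{M_G^+} = \varphi^G$ for $L(C)$-sentences, this reduces the task to showing that for every finite tuple $\bar c$ of constants from $C$,
\[
\psi_n(\bar c)^{M_G} \;=\; \inf_{m \in \mathbb N,\; \bar d \in C} \sigma_{n,m}(\bar c, \bar d)^G \;\le\; r_n.
\]

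For each such $n$, $\bar c$, and each rational $\varepsilon > 0$, consider the set
\[
D_{n, \bar c, \varepsilon} := \bigl\{q \in \mathbb P : \exists m \in \mathbb N,\ \exists \bar d \in C,\ q \vdash \sigma_{n,m}(\bar c, \bar d) < r_n + \varepsilon \bigr\}.
\]
The strategy is to show that $D_{n,\bar c,\varepsilon}$ is dense in $\mathbb P$; one then arranges the generic filter $G$ to meet each of these countably many dense sets, by a routine interleaving with the genericity requirements in the recursive construction of $G$. Any such $p \in G \cap D_{n,\bar c,\varepsilon}$ witnesses $\sigma_{n,m}(\bar c, \bar d)^G \le r_n + \varepsilon$ for some $m$ and $\bar d$, so letting $\varepsilon \to 0$ through the rationals yields the displayed inequality.

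To prove density, fix any $p \in \mathbb P$. By the hypothesis of the theorem, the collection $p \cup \{\psi_n(\bar c) < r_n + \varepsilon/2\}$ is satisfied in some $L(C)$-structure $N^+$ whose $L$-reduct lies in $\mathcal C$. Unfolding $\psi_n = \inf_m \inf_{\bar y}\sigma_{n,m}$, one obtains $m \in \mathbb N$ and a tuple $\bar b$ in $N^+$ with $\sigma_{n,m}^{N^+}(\bar c^{N^+}, \bar b) < r_n + \varepsilon/2$. Pick a tuple $\bar d$ of constants in $C$ that do not occur in $p$ nor in $\bar c$ (possible because $p$ mentions only finitely many constants while $C$ is countably infinite) and expand $N^+$ by setting $\bar d^{N^+} := \bar b$. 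Then every open condition in $q := p \cup \{\sigma_{n,m}(\bar c, \bar d) < r_n + \varepsilon\}$ is satisfied in the expanded structure, so $q$ is a forcing condition refining $p$. Because the open condition $\sigma_{n,m}(\bar c, \bar d) < r_n + \varepsilon$ is literally a member of $q$, the definition of $\vdash$ immediately yields $q \vdash \sigma_{n,m}(\bar c, \bar d) < r_n + \varepsilon$, placing $q$ in $D_{n,\bar c,\varepsilon}$.

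The main technical point I am smoothing over is that $\sigma_{n,m}$ is a quantifier-free \emph{definable predicate} rather than a restricted quantifier-free formula, while the open conditions comprising a forcing condition are strictly syntactic objects built from restricted atomic formulas. This is handled by first approximating $\sigma_{n,m}$ in supremum norm by a restricted quantifier-free formula $\tilde\sigma_{n,m}$ to within, say, $\varepsilon/4$, running the density argument with $\tilde\sigma_{n,m}$ in place of $\sigma_{n,m}$, and absorbing the approximation error in the $\varepsilon$-slack via the uniform-continuity clauses of the forcing relation. Once this bookkeeping is in place, the argument above goes through for every $n$, so $M_G$ satisfies every axiom of $P$, completing the proof.
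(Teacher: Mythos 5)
Your argument has the same combinatorial core as the paper's: given any condition $p$, use the satisfiability hypothesis to realize $p\cup\{\psi_n(\bar c)<r_n+\varepsilon/2\}$ in some structure, extract a witness for the inner infimum, name it by fresh constants $\bar d$, and thereby extend $p$ to a condition containing $\sigma_{n,m}(\bar c,\bar d)<r_n+\varepsilon$. This is exactly the step the paper performs in order to verify the hypothesis of Lemma \ref{Lemma:main-forcing}. Where you diverge is in the conversion of this density statement into a statement about $M_G$: the paper feeds it into Lemma \ref{Lemma:main-forcing}, concluding that \emph{every} condition forces $\varphi_n<r_n+\varepsilon$, so that the compiled structure of an \emph{arbitrary} generic filter satisfies $P$; you instead handle the outer $\sup$ semantically (density of the constants plus the uniform continuity modulus of $\psi_n$) and handle the inner $\inf$ by requiring $G$ to meet the countably many dense sets $D_{n,\bar c,\varepsilon}$. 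That is a legitimate alternative and suffices for the existence statement of the theorem, but note it yields a weaker conclusion than the paper's (only a specially built $G$ works, not every generic filter), which matters for the enforceability framework discussed later.

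There is one step that is wrong as literally stated. You claim that since the open condition $\sigma_{n,m}(\bar c,\bar d)<r_n+\varepsilon$ is a member of $q$, the definition of $\vdash$ "immediately yields" $q\vdash\sigma_{n,m}(\bar c,\bar d)<r_n+\varepsilon$. Membership in $q$ is the definition of forcing only for \emph{atomic} sentences; for a compound restricted quantifier-free sentence --- and the $\sigma_{n,m}$ arising from the axiomatizations here are maxima of several atomic formulas --- the relation $\vdash$ is defined by induction on the connectives, and the clause for $t\mapsto 1-t$ quantifies over all extensions of $q$. What membership in $q$ gives you directly is only that every structure satisfying $q$ satisfies $\sigma_{n,m}(\bar c,\bar d)<r_n+\varepsilon$; to conclude $\sigma_{n,m}(\bar c,\bar d)^G\leq r_n+\varepsilon$ you must pass through the soundness properties of the (weak) forcing relation --- for instance, decompose $\sigma_{n,m}$ into its atomic constituents, use genericity of $G$ to pin their $G$-values down to within $\varepsilon'$, take a common refinement in $G$ of $q$ and the deciding conditions, and apply continuity of the connectives to any structure realizing that refinement. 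This is precisely the content that the paper outsources to Lemma \ref{Lemma:main-forcing} and to the listed properties of the assignment $\varphi\mapsto\varphi^G$. Once that piece of machinery is invoked (or the atomic decomposition is built directly into your dense sets $D_{n,\bar c,\varepsilon}$), your argument goes through.
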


Observe that, by approximating them, we can assume that the $\sup \bigvee
\inf $-sentences $\varphi _{n}$ are actually \emph{restricted }$\sup \bigvee
\inf $-sentences. Furthermore, after replacing the conditions $\varphi
_{n}\leq r_{n}$ for $n\in \mathbb{N}$ with the conditions $\varphi _{n}\leq
r_{n}+2^{-m}$ for $n,m\in \mathbb{N}$, it actually suffices to show that the
claim holds under the following assumptions: for every forcing condition $p$%
, every $n\in \mathbb{N}$, and every tuple of constants $\bar{c}$ in $C$,
the set of open (infinitary) conditions $p\cup \left\{ \psi _{n}\left( \bar{c%
}\right) <r_{n}\right\} $ is satisfied in some structure from $\mathcal{C}$.
Finally, since we will show that \emph{every }generic filter works, it
suffices to consider the case when $P$ is axiomatized by a single condition $%
\varphi \leq r$, where $\varphi $ is the infinitary $\sup \bigvee \inf $%
-formula $\sup_{\bar{x}}\psi \left( \bar{x}\right) $ and $\psi \left( \bar{x}%
\right) $ is the formula $\inf_{m}\inf_{\bar{y}}\sigma _{m}\left( \bar{x},%
\bar{y}\right) $. In this case, we want to prove that the claim holds under
the assumption that for every forcing condition $q$, and every tuple of
constants $\bar{c}$ from $\mathcal{C}$, there exists an $L$-structure
satisfying $\psi \left( \bar{c}\right) <r$.

Fix $\varepsilon >0$. By the properties of the compiled structure, we have
that $\varphi ^{M_{G}}=\varphi ^{G}$. Furthermore, $\varphi
^{G}<r+\varepsilon $ if and only if $p\vdash \varphi <r+\varepsilon $ for
some $p\in G$. Fix an arbitrary $p\in G$. Fix a forcing condition $q\leq p$,
and a tuple $\bar{c}$. Since $q\cup \left\{ \psi \left( \bar{c}\right)
<r\right\} $ is satisfiable in an $L\left( C\right) $-structure whose $L$%
-reduct is in $\mathcal{C}$, there exist $k\in \mathbb{N}$ and a tuple $\bar{%
d}$ in $C$ such that $q^{\prime }:=q\cup \left\{ \sigma \left( \bar{c},\bar{d%
}\right) <r\right\} \ $is a forcing condition. By Lemma \ref%
{Lemma:main-forcing}, this shows that $p\vdash \varphi <r+\varepsilon $,
concluding the proof.
\end{proof}

\subsection{Building models by games}

\label{Subsection:building}

One can alternatively present the ideas above using the formalism of
\textquotedblleft building models by games\textquotedblright . This has been
developed in classical discrete logic in \cite{hodges_building_1985}. A
version in the setting of logic for metric structures is considered in \cite%
{goldbring_enforceable_2017}. In this setting, one considers game involving
two players (Abelard and Eloise). The players alternate turns, and the game
runs for infinitely many turns. Abelard starts by playing a forcing
condition $p_{0}$, and Eloise has to reply with a forcing condition $p_{1}$
such that $p_{1}\leq p_{0}$, in the sense that every open condition in $%
p_{0} $ also belongs to $p_{1}$. Abelard then replies with a forcing
condition $p_{2}$ containing $p_{1}$, and so on. The game runs for
infinitely many turns, producing a chain $p_{0}\geq p_{1}\geq p_{2}\geq
\cdots $ of conditions. One then lets $\overline{p}$ be their union.

A play of the game is \emph{definitive }if, for every atomic $L\left(
C\right) $-sentence $\varphi $ there exists $r^{\varphi }\in \mathbb{R}$
(only depending on $\varphi $) such that, for every $L$-structure $M$ in $%
\mathcal{C}$ satisfying $\overline{p}$ one has that $\varphi ^{M}=r^{\varphi
}$. In this case, one can define a canonical $L\left( C\right) $-structure $%
M^{+}\left( \overline{p}\right) $, called the \emph{compiled structure}, as
follows. Consider the set $M_{0}$ of $L\left( C\right) $-terms, and define a
metric $d_{M_{0}}$ on $M_{0}$ by setting, in the notation above, $%
d_{M_{0}}\left( t_{0},t_{1}\right) =r^{\varphi }$, where $\varphi $ is the
atomic $L\left( C\right) $-sentence $d\left( t_{0},t_{1}\right) $. Let then $%
M\left( \overline{p}\right) $ be the Hausdorff completion of $M_{0}$. By
abuse of notation, we identify $M_{0}$ as a subset of $M\left( \overline{p}%
\right) $. One can define interpretation of function and relation symbols
from $L$ in $M\left( \overline{p}\right) $ as follows. If $c\in C$ then we
let $c^{M}$ be $c$, which belongs to $M_{0}$ as an $L\left( C\right) $-term.
If $f$ is an $n$-ary function symbol and $t_{1},\ldots ,t_{n}\in M_{0}$ are $%
L\left( C\right) $-terms, then let $f^{M}\left( t_{1},\ldots ,t_{n}\right) $
be the $L\left( C\right) $-term $f\left( t_{1},\ldots ,t_{n}\right) \in
M_{0} $. Uniform continuity guarantees that $f^{M}$ extends to a continuous
function $f^{M}:M\rightarrow M$. Similarly, for an $n$-ary relation symbol $%
R $ and $t_{1},\ldots ,t_{n}\in M_{0}$ are $L\left( C\right) $-terms, one
let $R^{M}\left( t_{1},\ldots ,t_{n}\right) $ be, in the notation above, $%
r^{\varphi }$ where $\varphi $ is the atomic $L\left( C\right) $-sentence $%
R\left( t_{1},\ldots ,t_{n}\right) $. Again, one can then extend $R^{M}$ to
the whole of $M$ by uniform continuity.

Then one can reformulate the weak forcing relation as follows \cite[Theorem
2.22]{goldbring_enforceable_2017}.\ If $p$ is a forcing condition and $%
\varphi <r$ is an open condition, then $p\vdash ^{w}\varphi <r$ if and only
if the following holds: suppose that the game has been played up to the $k$%
-th turn, defining forcing conditions $p_{0}\geq p_{1}\geq \cdots \geq p_{k}$
such that $p_{k}\leq p$. Then, regardless of Abelard's moves, Eloise can
keep playing the game in such a way that the game is definitive and the
compiled structure $M\left( \overline{p}\right) $ satisfies the open
condition $\varphi <r$. In other words, the \emph{position }$\left(
p_{0},\ldots ,p_{k}\right) $ is a winning position for Eloise in the game $%
G\left( \overline{p}\right) $ whose winning conditions for Eloise are that
the game is definitive and the compiled structure satisfies the open
condition $\varphi <r$.

\subsection{Forcing and the UCT problem}

The omitting types theorem (Theorem \ref{Theorem:omitting-types}) provides a
method of constructing C*-algebras, which is different from any other the
standard constructions in\ C*-algebra theory. It is therefore reasonable to
expect that it might at least have some bearing on the UCT question; see
Subsection \ref{Subsection:classification}. Indeed, one can use Theorem \ref%
{Theorem:omitting-types} to provide a \textquotedblleft
concrete\textquotedblright\ reformulation of the UCT question, which in some
sense can be seen as the combinatorial core---or \emph{a }combinatorial
core---of such a problem.

We will consider a reformulation of the UCT problem due to Kirchberg \cite[%
Corollary 8.4.6]{rordam_classification_2002}. This reformulation asserts
that every separable nuclear C*-algebra satisfies the UCT if and only if $%
\mathcal{O}_{2}$ is uniquely characterized among separable, nuclear, simple,
and purely infinite C*-algebras by its Elliott invariant. Since for purely
infinite C*-algebras the trace simplex is empty, this is equivalent to the
assertion that $\mathcal{O}_{2}$ is the unique separable, nuclear, simple,
and purely infinite C*-algebra with trivial $K_{0}$ and $K_{1}$ groups.

We claim that the class of simple, purely infinite C*-algebra with trivial $%
K_{0}$-group is elementary, and in fact $\sup \inf $-axiomatizable. We have
seen in Subsection \ref{Subsection:axiomatize-C*-more} that the class of
simple, purely infinite C*-algebras is elementary, and the proof there shows
that it is in fact $\sup \inf $-axiomatizable. For a simple, purely infinite
C*-algebra $A$, the $K_{0}$-group of $A$ is trivial if and only if any two
nonzero projections of $A$ are Murray--von Neumann equivalent \cite[%
Proposition 4.1.4]{rordam_classification_2002}. We have shown in Subsection %
\ref{Subsection:definability-C*} that the relation of Murray--von Neumann
equivalent is definable, as witnessed by a \emph{existential }definable
predicate. This easily shows that the class of simple, purely infinite
C*-algebras with trivial $K_{0}$-group is $\sup \inf $-definable.

We now claim that there is a $\sup \inf $-axiomatizable class $\mathcal{C}$
such that the set of separable nuclear C*-algebras that belong to $\mathcal{C%
}$ are precisely the separable, nuclear, simple, and purely infinite
C*-algebras with trivial $K_{0}$ and $K_{1}$ groups. Let $A$ be a separable,
nuclear, simple, and purely infinite C*-algebra $A$. Then, the $K_{1}$-group
of $A$ is trivial if and only if the unitary group $U(A)$ is connected \cite[%
Proposition 4.1.15]{rordam_classification_2002}. Furthermore, $A$ absorbs
tensorially the Cuntz algebra $\mathcal{O}_{\infty }$ by Kirchberg's $%
\mathcal{O}_{\infty }$-absorption theorem \cite{kirchberg_embedding_2000}.\
Thus by \cite[Theorem 3.1]{phillips_real_2002}, any element in the connected
component $U_{0}(A)$ of the identity in $U(A)$ is connected to the identity
by a path of length at most $2\pi $. Since any unitary at distance less than 
$2$ from the identity is in $U_{0}\left( A\right) $, this allows one to
conclude that $A$ has connected unitary group if and only if it satisfies
the $\sup \inf $-condition%
\begin{equation*}
\sup_{u\text{ unitary}}\inf_{v_{1},v_{2},v_{3}\text{ unitaries}}\max \left\{
\left\Vert u-v_{1}\right\Vert ,\left\Vert v_{1}-v_{2}\right\Vert ,\left\Vert
v_{2}-v_{3}\right\Vert ,\left\Vert v_{3}-1\right\Vert \right\} \leq 7/4\text{%
.}
\end{equation*}%
Thus adding such a condition to the set of axioms for simple, purely
infinite C*-algebras with trivial $K_{0}$-group gives an axiomatization for
an elementary class $\mathcal{C}$ as desired.

Let $\mathcal{C}_{\mathrm{nuc}}$ be the class of nuclear C*-algebras in $%
\mathcal{C}$. Since $\mathcal{C}$ is $\sup \inf $-axiomatizable, and
nuclearity admits an infinitary $\sup \bigvee \inf $-axiomatization, the
class $\mathcal{C}_{\mathrm{nuc}}$ admits an infinitary $\sup \bigvee \inf $%
-axiomatization. This is just obtained by adding to the axioms of $\mathcal{C%
}$ the $\sup \bigvee \inf $-condition defining nuclearity. Let $\varphi
_{n}^{\mathcal{C}}\leq 0$ for $n\in \mathbb{N}$ be such conditions, which
can be explicitly extracted from the discussions above. For each $n\in 
\mathbb{N}$, $\varphi _{n}^{\mathcal{C}}$ is a $\sup \bigvee \inf $-formula,
which we can write as $\sup_{\bar{x}}\inf_{m\in \mathbb{N}}\inf_{\bar{y}%
}\sigma _{n,m}\left( \bar{x},\bar{y}\right) $.

Suppose now that $A$ is a separable C*-algebra from $\mathcal{C}_{\mathrm{nuc%
}}$. Then by Kirchberg's $\mathcal{O}_{2}$-absorption theorem, $A$ is
isomorphic to $\mathcal{O}_{2}$ if and only if $A$ absorbs $\mathcal{O}_{2}$
tensorially. By Theorem \ref{Theorem:absorption}, this is equivalent to the
assertion that, for every positive quantifier-free formula $\theta \left( 
\bar{y}\right) $, if $\psi _{\theta }$ is the positive $\sup \inf $-sentence%
\begin{equation*}
\sup_{\bar{x}}\inf_{\bar{y}}\max \left\{ \theta \left( \bar{y}\right)
,\left\Vert x_{i}y_{j}-y_{j}x_{i}\right\Vert :i,j\right\} \text{,}
\end{equation*}%
then $\psi _{\theta }^{\mathcal{O}_{2}}\leq \psi _{\theta }^{A}$. Thus $A$
is \emph{not }isomorphic to $\mathcal{O}_{2}$ if and only if there exists a $%
\sup \inf $-sentence $\psi _{\theta }$ such that $\psi _{\theta }^{\mathcal{O%
}_{2}}>\psi _{\theta }^{A}$. In view of the above discussion and the
omitting types theorem (Theorem \ref{Theorem:omitting-types}) applied to the
class of C*-algebras regarded as $L^{\text{C*}}$-structures, one can provide
the following sufficient criterion to establish that the UCT fails.

\begin{theorem}
Assume that there exists a quantifier-free formula $\theta $ such that the
following holds. For every finite set of open quantifier-free conditions $%
\psi _{i}\left( \bar{x},\bar{y},\overline{z}\right) <r_{i}$ for $%
i=1,2,\ldots ,\ell $ which are realized in some C*-algebra, there exist $%
m\in \mathbb{N}$, a C*-algebra $A$ satisfying $\psi _{\theta }^{A}<\psi
_{\theta }^{\mathcal{O}_{2}}$, and tuples $\bar{a},\bar{b},\bar{c}$ in $A$,
satisfying $\psi _{i}\left( \bar{a},\bar{b},\bar{c}\right) <r_{i}$, and $%
\sigma _{n,m}\left( \bar{a},\bar{b}\right) <\varepsilon $. Then the UCT
fails.
\end{theorem}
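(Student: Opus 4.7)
The plan is to apply the omitting types theorem (Theorem~\ref{Theorem:omitting-types}) to build a separable C*-algebra $B$ belonging to the class $\mathcal{C}_{\mathrm{nuc}}$ of nuclear, simple, purely infinite C*-algebras with trivial $K_{0}$ and $K_{1}$ and satisfying $\psi_{\theta}^{B}<\psi_{\theta}^{\mathcal{O}_{2}}$. By the characterization recalled just before the statement (combining Kirchberg's $\mathcal{O}_{2}$-absorption theorem with Theorem~\ref{Theorem:absorption}), any such $B$ fails to be $\mathcal{O}_{2}$-absorbing, hence $B\not\cong\mathcal{O}_{2}$; in view of Kirchberg's reformulation of the UCT problem recalled in Subsection~\ref{Subsection:classification}, the existence of such a $B$ forces the UCT to fail.

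To produce $B$, fix a rational $r$ with $r<\psi_{\theta}^{\mathcal{O}_{2}}$ chosen compatibly with the hypothesis, so that the witnessing C*-algebras $A$ can be taken to satisfy $\psi_{\theta}^{A}\leq r$. The property ``$B\in\mathcal{C}_{\mathrm{nuc}}$ and $\psi_{\theta}\leq r$'' then admits a countable infinitary $\sup\bigvee\inf$-axiomatization, obtained by adjoining the single $\sup\inf$-condition $\psi_{\theta}\leq r$ to the list $\{\varphi_{n}^{\mathcal{C}}\leq 0\}_{n\in\mathbb{N}}$ axiomatizing $\mathcal{C}_{\mathrm{nuc}}$ according to Subsection~\ref{Subsection:nuclearity}. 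To apply Theorem~\ref{Theorem:omitting-types}, one must verify its hypothesis for each axiom: given a forcing condition $q$, a tuple $\bar{c}$ of constants from $C$, and $\varepsilon>0$, the augmented set of open conditions must be realized in some $L^{\text{C*}}(C)$-structure. For the axioms $\varphi_{n}^{\mathcal{C}}$, with witnesses $\psi_{n}(\bar{x})=\inf_{m}\inf_{\bar{y}}\sigma_{n,m}(\bar{x},\bar{y})$, this is precisely what the hypothesis provides after reading $q$ as a finite set of open quantifier-free conditions $\psi_{i}(\bar{x},\bar{y},\bar{z})<r_{i}$, with $\bar{x}$ playing the role of $\bar{c}$ and $\bar{y}$ of the witness tuple. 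For the added condition $\psi_{\theta}\leq r$ with witness $\inf_{\bar{y}}\max\{\theta(\bar{y}),\|x_{i}y_{j}-y_{j}x_{i}\|\}$, one augments $q$ with the clauses $\theta(\bar{y})<r+\varepsilon$ and $\|x_{i}y_{j}-y_{j}x_{i}\|<r+\varepsilon$ and again invokes the hypothesis.

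The principal obstacle lies in this final step: to invoke the hypothesis on the augmented set, one must ensure it is jointly realized in some C*-algebra, which is secured by the uniform bound $\psi_{\theta}^{A}\leq r$ on the witnessing algebras from the hypothesis, provided $\varepsilon>0$ is taken small enough that $r+\varepsilon<\psi_{\theta}^{\mathcal{O}_{2}}$. A secondary subtlety is interpreting the strict inequality $\psi_{\theta}^{A}<\psi_{\theta}^{\mathcal{O}_{2}}$ in the hypothesis uniformly enough to furnish such a rational $r$; with that understood, Theorem~\ref{Theorem:omitting-types} produces the desired separable $B$, and the UCT fails.
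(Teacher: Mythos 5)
Your proposal is correct and follows essentially the same route as the paper, which derives this theorem directly from the preceding discussion: the $\sup\bigvee\inf$-axiomatization of the class $\mathcal{C}_{\mathrm{nuc}}$, the characterization of $\mathcal{O}_{2}$-absorption via the sentences $\psi_{\theta}$ (Theorem \ref{Theorem:absorption} plus Kirchberg's $\mathcal{O}_{2}$-absorption theorem), the omitting types theorem (Theorem \ref{Theorem:omitting-types}), and Kirchberg's reformulation of the UCT problem. Your explicit handling of the uniform rational bound $r<\psi_{\theta}^{\mathcal{O}_{2}}$ and of how the hypothesis feeds the forcing conditions is a reasonable filling-in of details the paper leaves implicit.
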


\section{Further results and outlook}

For reasons of space, we have omitted in the above discussion many important
directions of applications of model theory to operator algebras. For the
sake of completeness, we mention here some of such directions.

\subsection*{Von Neumann factors}

Finite Von Neumann factors and, more generally, tracial von Neumann algebras
also fit in the framework of first order logic for metric structures. Recall
that a \emph{tracial von Neumann algebra} is a von Neumann algebra $M$
endowed with a distinguished faithful normal tracial state $\tau $. A
tracial von Neumann algebra is a \emph{finite} \emph{factor} if it has
trivial center. In this case, the faithful normal tracial state $\tau $ is
uniquely determined by $M$. A II$_{1}$ factor is an infinite-dimensional
finite factor.

In order to regard tracial von Neumann algebras as structures, one can
consider the language of C*-algebras with an additional relation symbol $%
\tau $ to be interpreted as the given trace. In this case, the relation
symbol for the norm should be interpreted as the $2$-norm $\left\Vert
x\right\Vert _{\tau }=\tau \left( x^{\ast }x\right) ^{1/2}$ associated with
the trace $\tau $. Consistently, the canonical binary relation symbol for
the metric should be interpreted as the metric associated with such a norm.
The domains $D_{n}$ for $n\in \mathbb{N}$ should still be interpreted as the
closed balls \emph{with respect to the operator norm}. This perspective has
been used in \cite{farah_model_2013,farah_model_2014} to answer to a
question of McDuff from \cite{mcduff_central_1970} on the number of
isomorphism classes of relative commutants $M^{\prime }\cap M^{\mathcal{U}}$
associated with nonprincipal ultrafilters $\mathcal{U}$ over $\mathbb{N}$
for a given separable II$_{1}$ factor $M$. As in the case of C*-algebras,
the Continuum Hypothesis (CH) implies that all such relative commutants are
isomorphic, while the negation of CH implies that there exist at least two
nonisomorphic such relative commutants. The model-theoretic study of factors
has been further pursued in \cite%
{farah_model_2014-1,goldbring_theory_2013,farah_existentially_2016,boutonnet_factors_2015}%
, where it is shown that the set of separable models of a consistent theory
of II$_{1}$ factors has size continuum, the theory of tracial von Neumann
algebras does not have a model companion, the Connes Embedding Problem is
equivalent to the assertion that the hyperfinite II$_{1}$ factor $\mathcal{R}
$ is existentially closed, and that there exists a continuum of distinct
theories of II$_{1}$ factors.

\subsection*{Compact Hausdorff spaces}

In a series of papers going back to the 1980s \cite%
{bankston_reduced_1987,bankston_hierarchy_1999,bankston_co-elementary_1997,bankston_some_2000}%
, Bankston introduced dual notions to fundamental notions in model theory
(elementary equivalence, elementary embedding, ultraproduct), and applied
such notions to the study of compact Hausdorff spaces. It has been observed
in \cite{eagle_pseudoarc_2016} that, if one replaces a compact Hausdorff
space $X$ with the abelian C*-algebra $C\left( X\right) $ of continuous
functions over $X$, then the notions introduced by Bankston coincide with
the usual notions from model theory for metric structures where $C\left(
X\right) $ is regarded as a structure in the language of C*-algebras. This
perspective has been used in \cite%
{eagle_pseudoarc_2016,goldbring_enforceable_2017} to show that, if $\mathbb{P%
}$ is the \emph{pseudoarc }(the unique hereditarily indecomposable,
chainable, metrizable continuum), then $C\left( \mathbb{P}\right) $ is
existentially closed---in Bankston's terminology, $\mathbb{P}$ is
co-existentially closed---and it is the prime model of its theory.\
Furthermore, for zero-dimensional compact Hausdorff spaces, elementary
equivalence is equivalent to elementary equivalence of the associated
Boolean algebras of clopen sets \cite{eagle_saturation_2015}. If the spaces
have no isolated points, similar conclusions apply to countable saturation 
\cite{eagle_saturation_2015}.

\subsection*{Enforceable operator algebras}

The framework of model-theoretic forcing can be used to define the notion of
enforceable structure. Let $\mathcal{C}$ be an elementary class of $L$%
-structure, and consider the game between Abelard and Eloise defined in
Subsection \ref{Subsection:building}. One then says that a property $P$ of $%
L $-structures is \emph{enforceable }if Eloise has a winning strategy when
her winning conditions require the compiled structure to satisfy $P$. This
can be seen as a model-theoretic notion of \emph{genericity} for $L$%
-structures satisfying $P$ within the class $\mathcal{C}$. An $L$-structure $%
M$ is enforceable if the property of being isomorphic to $M$ is enforceable.

Many outstanding open problems in operator algebra theory can be
reformulated as the assertion that the known examples of strongly
self-absorbing C*-algebras are enforceable within a suitable class of
C*-algebras \cite{goldbring_enforceable_2017}. For instance, the Kirchberg
embedding problem, asking whether every C*-algebra embeds into an ultrapower
of the Cuntz algebra $\mathcal{O}_{2}$, is equivalent to the assertion that $%
\mathcal{O}_{2}$ is enforceable in the class of all C*-algebras. Similarly,
the MF problem of Blackadar and Kirchberg, asking whether every stably
finite C*-algebra embeds into an ultrapower of the rational UHF algebra $%
\mathcal{Q}=\bigotimes_{n\in \mathbb{N}}M_{n}\left( \mathbb{C}\right) $, is
equivalent to the assertion that $Q$ is enforceable within the class of
stably finite C*-algebras. Finally, the assertion that every stably finite 
\emph{projectionless }C*-algebra embeds into an ultrapower of the Jiang-Su
algebra $\mathcal{Z}$ is equivalent to the assertion that $\mathcal{Z}$ is
enforceable within the class of stably finite projectionless C*-algebras.

This framework can also be applied in the context of II$_{1}$ factors. In
this case, the Connes Embedding Problem, asking whether every II$_{1}$
factor embeds into an ultrapower of the hyperfinite II$_{1}$ factor $%
\mathcal{R}$, turns out to be equivalent to the assertion that $\mathcal{R}$
is enforceable within the class of II$_{1}$ factors.

\subsection*{Actions of compact (quantum) groups on C*-algebras}

The \emph{equivariant theory }of C*-algebras studies C*-algebras endowed
with a distinguished continuous action of a locally compact group $G$ ($G$%
-C*-algebras). The case which is best understood is when the acting group $G$
is finite or, more generally, compact. It is clear that, when $G$ is finite,
one can regard $G$-C*-algebras as structures in the language $L_{G}^{\text{C*%
}}$ obtained from the language of C*-algebras by adding unary functions
symbols $\alpha _{g}$ for $g\in G$, to be interpreted as the automorphism of
the given C*-algebras that define the $G$-action. More generally, as shown
in \cite{gardella_rokhlin_2017}, for an arbitrary compact groups $G$, $G$%
-C*-algebras fit into the framework of first order logic for metric
structures described above. To see this, one should notice the following.
Suppose that $A$ is a C*-algebra, and $\alpha $ is a continuous action of $G$
on $A$. One can regard $\alpha $ as a *-homomorphism $\alpha :A\rightarrow
C\left( G,A\right) \cong C\left( G\right) \otimes A$, $a\mapsto \left(
g\mapsto \alpha _{g}\left( a\right) \right) $. For every finite-dimensional
irreducible representation $\pi \in \mathrm{Rep}\left( \pi \right) $ of $G$
one can consider the span $C\left( G\right) _{\pi }\subset C\left( G\right) $
of the matrix units of $\pi $. The subspace $A_{\pi }=\left\{ a\in A:\alpha
\left( a\right) \in C\left( G\right) _{\pi }\otimes A\right\} $ is called 
\emph{spectral subspace }of $\alpha $ associated with $\pi $. The union of $%
A_{\pi }$ when $\pi $ varies among all the finite-dimensional
representations of $G$ is a dense *-subalgebra of $A$ (Podle\'{s} algebra) 
\cite{podles_symmetries_1995}. One can regard the $G$-C*-algebra $\left(
A,\alpha \right) $ as a two-sorted structure, with a sort for $A$ and a sort
for $C\left( G\right) \otimes A$. The language of $G$-C*-algebras is endowed
with domains $D_{\pi }$ for $\pi \in \mathrm{Rep}\left( \pi \right) $, to be
interpreted in $A$ as $A_{\pi }$ and in $C\left( G\right) \otimes A$ as $%
C\left( G\right) _{\pi }\otimes A_{\pi }$. It is shown in \cite%
{gardella_rokhlin_2017} that $G$-C*-algebras form an axiomatizable class in
such a language. This perspective, and the corresponding notion of positive
existential embedding, has been used implicitly in \cite%
{barlak_sequentially_2016} and explicitly in \cite{gardella_equivariant_2016}
to give a model-theoretic characterization of the Rokhlin property for $G$%
-C*-algebras. In turn, this characterization has been used to provide a
unified approach to several preservation results for fixed point algebras
and crossed products with respect to Rokhlin actions. More generally, a
model-theoretic characterization of Rokhlin dimension is considered in \cite%
{gardella_equivariant_2016}. This is applied to obtain preservation results
of finite nuclear dimension and finite composition rank for fixed point
algebras and crossed products with respect to actions with finite Rokhlin
dimension.

More generally, the theory can be developed in the context of actions of
compact \emph{quantum }groups on C*-algebras. A quantum group $\mathbb{G}$
is a C*-algebraic object which formally satisfies the same properties (save
from being abelian) as the C*-algebra $C\left( G\right) $ associated with a
classical compact group $G$ endowed with the \emph{comultiplication operation%
} $\Delta :C\left( G\right) \rightarrow C\left( G\times G\right) \cong
C\left( G\right) \otimes C\left( G\right) $, $f\mapsto \left( \left(
s,t\right) \mapsto f\left( st\right) \right) $. Continuous actions of a
compact quantum group $\mathbb{G}$ on C*-algebras ($\mathbb{G}$-C*-algebras)
can be defined in closed parallel with the classical case. It is also shown
in \cite{gardella_rokhlin_2017} that $\mathbb{G}$-C*-algebras form an
axiomatizable class in a suitable language, very similar to the one
described above for classical compact groups. This point of view has been
used, implicitly in \cite{barlak_spatial_2017} and explicitly in \cite%
{gardella_rokhlin_2017}, to generalize the notions of Rokhlin property and
Rokhlin dimension to the quantum setting, as well as virtually all known
preservation results of regularity properties under fixed point algebras and
crossed products by actions with finite Rokhlin dimension.

\bibliographystyle{amsplain}
\bibliography{biblio-houston}

\end{document}